\documentclass[titlepage,12pt]{article} 
\usepackage{hyperref}
\usepackage{amssymb,amsthm,amsmath} 
\usepackage[a4paper]{geometry}
\usepackage[utf8]{inputenc}
\usepackage[italian,english]{babel}

\date{}

\newcommand{\ep}{\varepsilon}
\newcommand{\re}{\mathbb{R}}

\newcommand{\al}{a_{\lambda}}
\newcommand{\albar}{{a}^*_{\lambda}}
\newcommand{\bl}{b_{\lambda}}
\newcommand{\cl}{c_{\lambda}}
\newcommand{\blbar}{{b}^*_{\lambda}}
\newcommand{\el}{\ep_{\lambda}}
\newcommand{\ul}{u_{\lambda}}
\newcommand{\rhol}{\rho_{\lambda}}
\newcommand{\thel}{\theta_{\lambda}}
\newcommand{\psil}{\psi_{\lambda}}
\newcommand{\Hl}{H_{\lambda}}
\newcommand{\Ekov}{E_{\mathrm{kov},\lambda}}
\newcommand{\Ehyp}{E_{\mathrm{hyp},\lambda}}

\newcommand{\D}{\mathcal{D}}
\newcommand{\F}{\mathcal{F}}
\newcommand{\G}{\mathcal{G}}
\newcommand{\M}{\mathcal{M}}
\newcommand{\Sp}{\mathcal{S}}

\newcommand{\osc}{\operatorname{osc}}

\newtheorem{thm}{Theorem}[section]

\newtheorem{rmk}[thm]{Remark}
\newtheorem{prop}[thm]{Proposition}
\newtheorem{defn}[thm]{Definition}
\newtheorem{cor}[thm]{Corollary}
\newtheorem{ex}[thm]{Example}
\newtheorem{lemma}[thm]{Lemma}

\title{Fifty Years of Wave Equations with Time-Dependent Propagation Speeds: A Variational Perspective and New Frontiers}

\author{Marina Ghisi\vspace{1ex}\\ 
{\normalsize Università degli Studi di Pisa} \\
{\normalsize Dipartimento di Matematica}\\ 
{\normalsize PISA (Italy)}\\
{\normalsize e-mail: \texttt{marina.ghisi@unipi.it}}
\and
Massimo Gobbino\vspace{1ex}\\ 
{\normalsize Università degli Studi di Pisa} \\
{\normalsize Dipartimento di Matematica}\\ 
{\normalsize PISA (Italy)}\\  
{\normalsize e-mail: \texttt{massimo.gobbino@unipi.it}}
}

\begin{document}

\maketitle

\begin{abstract}

We study abstract wave equations with time-dependent propagation speed under the strict hyperbolicity assumption. Our starting point is the observation that a substantial part of the classical energy theory, developed through apparently different constructions, can be organized around a common variational mechanism. Approximate energies naturally lead to regularity--fidelity problems in which a smooth approximation of the reciprocal propagation speed is chosen by balancing the size of its derivatives against its distance from the original coefficient.

We combine this approximation mechanism with higher-order energy corrections and obtain a hierarchy of variational problems together with a corresponding cascade of almost conserved energies of arbitrary order. The resulting variational problems are then studied independently of the evolution equation. A qualitative distinction emerges between the first and the higher orders: for arbitrary prescribed growth, all higher orders generate the same classes, while the first-order problem may behave differently. We also establish connections with fractional and generalized fractional Sobolev regularity.

This framework provides a unified interpretation of several classical sufficient conditions for energy control and derivative-loss estimates. At the same time, it identifies a natural limitation of the traditional approach, namely the use of absolute integrability of the energy errors. Once this limitation is recognized, a different regime becomes accessible: by retaining the oscillatory structure of the energy errors and exploiting cancellations, we construct strictly hyperbolic propagation speeds beyond the regularity thresholds detected by the variational theory, for which uniform energy estimates and hence no derivative loss nevertheless hold.

Thus the variational viewpoint both reveals an unexpected common structure behind a substantial part of the classical theory and indicates a genuinely different regime beyond it.
\vspace{6ex}

\noindent{\bf Mathematics Subject Classification 2020 (MSC2020):} 
35L90 (primary), 35L05, 35B65, 35B45, 49J45 (secondary).


\vspace{6ex}

\noindent{\bf Key words:} 
wave equation,
time-dependent propagation speed,
derivative loss,
energy estimates,
approximate energies,
variational problems,
fractional Sobolev regularity,
oscillatory integrals.

\end{abstract}



\section{Introduction}

We consider the abstract evolution equation
\begin{equation}
    u''(t)+c(t)^2Au(t)=0,
    \label{eqn:basic}
\end{equation}
where $A$ is a nonnegative self-adjoint operator on a Hilbert space $H$, and $c:(0,T)\to\mathbb R$ is a time-dependent propagation speed. The model example is the wave equation
\begin{equation*}
    u_{tt}(t,x)-c(t)^2\Delta u(t,x)=0.
\end{equation*}

By Fourier or spectral decomposition, the analysis reduces to the family of scalar equations
\begin{equation}
    u_\lambda''(t)+\lambda^2c(t)^2u_\lambda(t)=0,
    \label{eqn:ODE}
\end{equation}
where $\lambda$ is the frequency parameter. The regularity of solutions to the abstract problem is therefore governed by the high-frequency behavior of the energy of solutions to~(\ref{eqn:ODE}).

When the propagation speed is bounded away from zero and has bounded variation, the situation is classical: the hyperbolic energy
\[
    E(t):=|u'(t)|^2+c(t)^2|A^{1/2}u(t)|^2
\]
remains uniformly controlled and no derivative loss occurs. Beyond bounded variation, however, the picture changes dramatically. The seminal papers of F.~Colombini, E.~De~Giorgi and S.~Spagnolo~\cite{1979-DGCS}, and of F.~Colombini, E.~Jannelli and S.~Spagnolo~\cite{1983-CJS}, showed that low time regularity of the propagation speed can interact with the spatial regularity of the initial data and produce derivative loss, ranging from a finite loss of Sobolev regularity to instantaneous infinite loss. Subsequent examples have shown that such pathological behavior is not confined to exceptional coefficients and can even be residual in suitable spaces of propagation speeds~\cite{GhisiGobbino2023Residual}.

Over the last decades, a large number of sufficient conditions have been developed in order to prevent or quantify derivative loss for wave equations with time-dependent propagation speed. On finite time intervals, these results have been obtained through a variety of techniques, including regularization of the coefficient, approximate and progressively corrected energies, and, in a parallel language, successive diagonalization procedures for the associated first-order systems; see for instance~\cite{Yamazaki1990,CDSK2002,CDSR2003,HirosawaReissig2004,KinoshitaReissig2005,Tarama2007,Manfrin2008} and the references therein. At first sight, these approaches involve rather different assumptions and rather different constructions. The starting point of the present paper is the observation that a substantial part of this theory can instead be organized around a common variational mechanism. Our purpose is not to survey the existing results, but to identify this hidden structure, develop its consequences, and then investigate what lies beyond the regime that it describes.

\paragraph{\textmd{\textit{Aim of the paper}}}

The purpose of this paper is fourfold.

\begin{enumerate}

\item
Our first aim is to make this variational mechanism explicit and to show that several assumptions which historically required different energy constructions can be interpreted as different ways of producing efficient competitors for the same variational problem. To the best of our knowledge, this common variational structure has not been isolated explicitly in the previous literature.

\item
Our second aim is to combine the approximate-energy philosophy of~\cite{1979-DGCS} with the higher-order correction mechanism and extend the resulting construction to arbitrary order. Higher-order quadratic forms for related scalar equations have appeared previously in the literature, notably in the work of R.~Manfrin~\cite{Manfrin2008}. Here, in the recursive cascade, we replace the reciprocal propagation speed with a smooth auxiliary function $\gamma$. This yields a corresponding hierarchy of functionals of the form
\begin{equation}
    \frac1{\lambda^{k-1}}
    \int_0^T |\gamma^{(k)}(t)|\,dt
    +
    \lambda
    \int_0^T\left|\gamma(t)-\frac{1}{c(t)}\right|\,dt,
    \label{defn:Fk-intro}
\end{equation}
where the first term measures regularity and the second one fidelity.

\item
Our third aim is to study the variational problems associated with~(\ref{defn:Fk-intro}) as objects in their own right. We compare different differentiation orders and identify a genuine distinction between the first and the higher orders: for arbitrary prescribed growth, all variational problems of order $k\geq2$ generate the same classes, whereas the first-order problem may behave differently. We also establish connections with fractional and generalized fractional Sobolev regularity.

\item
Our final aim is to move beyond the regime in which energy errors are estimated through absolute integrability. The exact energy identities contain oscillatory factors which disappear when absolute values are taken. By retaining these oscillations and exploiting the resulting cancellations, we construct highly irregular propagation speeds for which the classical absolute-integrability criteria fail, while uniform energy estimates and absence of derivative loss still hold.

\end{enumerate}

The first three parts identify and develop a common structure behind a substantial portion of the classical theory. Somewhat unexpectedly, although approximate energies can be constructed at arbitrary order, the variational information relevant for controlling energy growth already saturates at second order. More concretely, the variational approach singles out a class of propagation speeds for which the corresponding wave equation is well posed in the Sobolev scale without derivative loss: boundedness of the minimum values associated with the second-order functional in~(\ref{defn:Fk-intro}), with $1/c$ as target function, is sufficient, and the same class is obtained at every higher order. This class is strictly larger than the classical bounded-variation class, but is still strictly contained in the intersection of all fractional Sobolev spaces $W^{s,1}$ with $s<1$.

The last part begins precisely where this framework reaches its natural limit. The absolute-value estimates underlying the variational theory discard the oscillatory structure of the energy errors. Once this absolute-integrability paradigm is abandoned, the same energy identities reveal cancellations which are invisible to the variational theory. The examples constructed in the final section show that uniform energy estimates can persist far beyond the preceding regularity regime: the propagation speeds can be chosen arbitrarily close to the fractional Sobolev threshold of order $1/2$, while failing to belong to $W^{s,1}$ for every $s>1/2$. Thus the passage from the variational theory to the oscillatory construction is not a marginal improvement of the regularity assumptions, but the emergence of a genuinely different mechanism.

\paragraph{\textmd{\textit{Related literature}}}

Beyond the finite-time theory considered above, closely related questions arise in several other directions.

One such direction concerns the long-time behavior of the energy of solutions to~(\ref{eqn:basic}) on the half-line. In this setting one asks for conditions on the propagation speed ensuring a generalized energy conservation law, namely the existence of positive constants $C_1$ and $C_2$ such that
\[
    C_1 E(0)\leq E(t)\leq C_2 E(0)
    \qquad
    \forall t\geq0
\]
for every solution. Conditions combining regularity, oscillation, and stabilization of the propagation speed have been developed for this purpose; see for example~\cite{Hirosawa2007,HirosawaWirth2009,BoitiManfrin2012}. Related linear equations also arise in dispersive and geometric settings. In particular, wave and Klein--Gordon equations with time-dependent coefficients occur naturally on Robertson--Walker and de Sitter backgrounds; see for example~\cite{YagdjianGalstian2008,YagdjianGalstian2009}.

The interaction with Kirchhoff equations is especially relevant to the present work, because the exchange of ideas between the linear and nonlinear theories has historically gone in both directions. In abstract form, the Kirchhoff equation can be written as
\begin{equation*}
    u''(t)
    +
    m\left(\left|A^{1/2}u(t)\right|^2\right)Au(t)
    =
    0.
\end{equation*}
Along a given solution this is a linear wave equation with time-dependent propagation speed
\begin{equation*}
    c(t)^2
    =
    m\left(\left|A^{1/2}u(t)\right|^2\right).
\end{equation*}
Progress in the linear theory therefore feeds directly into the nonlinear problem, and this mechanism already played a decisive role in the early 1980s. In his 1977 Rio de Janeiro lectures, J.-L.~Lions~\cite{Lions1978} formulated several open problems for Kirchhoff-type equations, including the case of a merely continuous nonlinearity and the weakly hyperbolic case in which the coefficient is allowed to vanish. The breakthrough in the linear theory due to~\cite{1979-DGCS} provided precisely the tools needed to treat such low-regularity time-dependent coefficients. Building on this development, A.~Arosio and S.~Spagnolo~\cite{ArosioSpagnolo1984} were able to address both of Lions' problems. Later developments extended the same interaction to nonlinearities with intermediate moduli of continuity and to the corresponding Gevrey-type regularity of the initial data~\cite{Hirosawa2003Loss,GhisiGobbino09}.

The influence also goes in the opposite direction. Remarkably, the second-order correction which later appears in the linear theory was already present, in essentially the same form, in a conservation law discovered by S.~I.~Pokhozhaev~\cite{Pokhozhaev1974} for a special Kirchhoff equation. In that setting it was explicitly identified as a second-order conservation law and used to obtain higher-order a priori estimates. Thus a structure later associated with second-order corrected energies in the linear theory had already emerged from the nonlinear Kirchhoff problem several decades earlier. Higher-order quadratic forms for the scalar Liouville equation were subsequently developed in~\cite{Manfrin2008} precisely with applications to Kirchhoff equations in mind, and recent work has returned once again to the nonlinear problem and produced further higher-order conservation laws~\cite{BoitiManfrin2023,BoitiManfrin2026}.

\paragraph{\textmd{\textit{Organization of the paper}}}

Section~\ref{sec:background} recalls the reduction of the abstract equation to the scalar family~(\ref{eqn:ODE}) and introduces the growth function which measures the high-frequency amplification of the standard energy and hence the amount of derivative loss.

In Section~\ref{sec:smooth} we construct a hierarchy of higher-order almost conserved energies for smooth propagation speeds, in the spirit of the higher-order quadratic forms introduced in~\cite{Manfrin2008}. The construction is organized as a recursive cascade of corrections which progressively cancels the leading terms in the derivative of the preceding energy. We also discuss the exceptional case in which the final remainder vanishes and the corresponding higher-order energy becomes exactly conserved.

Section~\ref{sec:approx-en} extends the same construction to approximate energies. The reciprocal propagation speed is replaced by a smooth auxiliary function, and the resulting estimates lead naturally to the regularity--fidelity functionals introduced in~(\ref{defn:Fk-intro}).

Section~\ref{sec:variational} develops the theory of the corresponding minimum problems independently of the evolution equation. We compare different differentiation orders, introduce the associated function spaces, study their relation with fractional Sobolev regularity, and analyze the constrained versions required by the energy estimates.

In Section~\ref{sec:var2derloss} we return to the evolution equation and convert the variational estimates into bounds for energy growth. Several classical assumptions on the propagation speed are recovered by constructing suitable competitors, including derivative conditions of different orders, mixed regularity assumptions, three-region constructions, and Zygmund-type conditions.

Section~\ref{sec:construction} leaves the absolute-integrability framework and exploits cancellations in oscillatory integrals. We construct strictly hyperbolic propagation speeds with very low fractional Sobolev regularity for which the corresponding wave equation nevertheless admits uniform energy estimates and hence well-posedness without derivative loss.

Finally, Section~\ref{sec:open} is devoted to open questions suggested by the results of the paper, concerning in particular the regularity thresholds that may separate good and bad propagation speeds.

 
\setcounter{equation}{0}
\section{Notation and background}
\label{sec:background}

In this section we fix once and for all the notation used throughout the paper, recall the standard spectral reduction of the abstract wave equation~(\ref{eqn:basic}) to the scalar family~(\ref{eqn:ODE}), and introduce a quantity that measures the growth of the corresponding scalar energies and describes the derivative loss for the abstract problem.

Let $H$ be a Hilbert space, let $A$ be a nonnegative self-adjoint operator on $H$, and let $c:(0,T)\to\re$ be a measurable function. Throughout the paper we assume that the propagation speed satisfies the strict hyperbolicity condition
\begin{equation}
    \nu_1\leq c(t)\leq\nu_2
    \qquad
    \text{for almost every }t\in(0,T),
    \label{hp:sh}
\end{equation}
for two constants $0<\nu_1\leq\nu_2$.

\paragraph{\textmd{\textit{Reduction to a family of scalar ODEs}}}

By the spectral theorem for self-adjoint operators (see for example~\cite[Theorem~VIII.4]{ReedSimonI}), the operator $A$ admits a spectral representation in which it acts as multiplication by the square of a nonnegative measurable function $\lambda(\xi)$ in a suitable $L^2$ space. Accordingly, the study of~(\ref{eqn:basic}) reduces to the family of scalar ordinary differential equations~(\ref{eqn:ODE}), where $\lambda\geq0$ plays the role of the frequency parameter.

These are the equations satisfied by the components of $u$ in the spectral decomposition of $A$. In the sequel we often work directly with~(\ref{eqn:ODE}), and the dependence of the corresponding estimates on $\lambda$ will provide information on the regularity of solutions to the abstract evolution problem.

\paragraph{\textmd{\textit{Energy growth}}}

For every solution $\ul$ to~(\ref{eqn:ODE}) we consider the standard energy
\begin{equation}
E_\lambda(t):=\ul'(t)^2+\lambda^2\ul(t)^2.
\label{defn:standard-energy}
\end{equation}
For simplicity, we omit from the notation the dependence of $E_\lambda$ on the solution $\ul$, which will always be clear from the context.

For every $\lambda>0$, we define
\begin{equation*}
\G(c,\lambda,T):=
\log\left(
\sup\left\{
E_\lambda(t):
\ul\text{ solves~(\ref{eqn:ODE})},\
E_\lambda(0)\leq1,\ 
t\in[0,T]
\right\}
\strut\right).
\end{equation*}

Equivalently, $\G(c,\lambda,T)$ is the smallest constant $C$ for which
\begin{equation*}
E_\lambda(t)\leq
\exp(C)\cdot E_\lambda(0)
\qquad
\forall t\in[0,T]
\end{equation*}
holds for every solution to~(\ref{eqn:ODE}).

There is also a useful operator-theoretic interpretation of this quantity. For every $\lambda>0$, we endow $\re^2$ with the $\lambda$-energy norm
\begin{equation*}
\|(a,b)\|_\lambda:=\left(b^2+\lambda^2a^2\right)^{1/2},
\end{equation*}
and $C^0([0,T];\re^2)$ with the corresponding supremum norm. Then the solution operator
$$\text{initial data}\rightsquigarrow\text{solution},$$
namely the linear operator
\begin{equation*}
\re^2\ni
(\ul(0),\ul'(0))\longmapsto
(\ul,\ul')\in
C^0([0,T];\re^2),
\end{equation*}
has norm equal to $\exp\left(\G(c,\lambda,T)/2\right)$.

\paragraph{\textmd{\textit{Derivative loss for the abstract equation}}}

The asymptotic behavior of $\G(c,\lambda,T)$ as $\lambda\to+\infty$ governs the derivative loss for solutions to~(\ref{eqn:basic}) according to the classical scheme
\begin{align*}
    \G(c,\lambda,T)\text{ bounded}
    &\quad\leadsto\quad
    \text{no derivative loss},
    \\
    1\ll\G(c,\lambda,T)\ll\log\lambda
    &\quad\leadsto\quad
    \text{arbitrarily small derivative loss},
    \\
    \G(c,\lambda,T)\simeq\log\lambda
    &\quad\leadsto\quad
    \text{finite derivative loss},
    \\
    \G(c,\lambda,T)\gg\log\lambda
    &\quad\leadsto\quad
    \text{infinite derivative loss}.
\end{align*}

The implications in this scheme have two complementary aspects. Upper bounds for $\G(c,\lambda,T)$ immediately yield corresponding upper bounds for the derivative loss. Conversely, large values of $\G(c,\lambda,T)$ can produce actual derivative loss for the abstract equation provided that they occur along frequencies which are effectively represented by the spectral decomposition of $A$. This is immediate when the squares of the relevant frequencies form a sequence of eigenvalues of $A$. More generally, it is enough that the growth occurs near an unbounded sequence of frequencies whose arbitrarily small neighborhoods correspond to spectral sets of positive measure. One can then choose, on each such spectral region, scalar solutions of~(\ref{eqn:ODE}) whose energy growth nearly realizes the supremum in the definition of $\G$, and use them as spectral components of the abstract solution. This mechanism is made precise in~\cite[Proposition~4.5]{gg:OptDerLoss}. In the model case $A=-\Delta$ in the whole space, there is no spectral obstruction, since the frequency parameter ranges over the whole positive half-line.

The meaning of the four kinds of derivative loss appearing in the previous scheme can be made more precise as follows.

\begin{itemize}

\item No derivative loss means well-posedness in Sobolev spaces. More precisely, for every $\alpha\geq0$, initial data
\begin{equation}
(u(0),u'(0))\in
D(A^{\alpha+1/2})\times D(A^\alpha)
\label{eqn:abstract-data}
\end{equation}
give rise to a solution satisfying
\begin{equation*}
u\in
C^0\left([0,T];D(A^{\alpha+1/2})\right)
\cap
C^1\left([0,T];D(A^\alpha)\right).
\end{equation*}

\item Arbitrarily small derivative loss means that, for every $\ep>0$ and every $\alpha\geq\ep$, every solution with initial data~(\ref{eqn:abstract-data}) belongs to
\begin{equation*}
C^0\left([0,T];D(A^{\alpha+1/2-\ep})\right)
\cap
C^1\left([0,T];D(A^{\alpha-\ep})\right).
\end{equation*}

\item Finite derivative loss means that there exists a nondecreasing function $\delta:(0,T]\to[0,+\infty)$ such that, for every $t\in(0,T]$ and every $\alpha\geq\delta(t)$, every solution with initial data~(\ref{eqn:abstract-data}) belongs to
\begin{equation*}
C^0\left([0,t];D(A^{\alpha+1/2-\delta(t)})\right)
\cap
C^1\left([0,t];D(A^{\alpha-\delta(t)})\right).
\end{equation*}

\item Infinite derivative loss means that even arbitrarily regular initial data may give rise to solutions that instantly leave every standard Sobolev regularity scale. More precisely, one can have initial data satisfying~(\ref{eqn:abstract-data}) for every $\alpha\geq0$, for which the corresponding solution, although it exists and is unique in a suitable large space of ultradistributions, satisfies
\begin{equation*}
(u(t),u'(t))
\notin
D(A^{\alpha+1/2})\times D(A^\alpha)
\qquad
\forall\alpha\geq0
\end{equation*}
for every $t>0$. In the strongest examples, the solution at positive times may even fail to belong to the usual distribution spaces associated with the operator $A$.

\end{itemize}

Thus the growth function $\G(c,\lambda,T)$ provides the link between estimates for the scalar equation and regularity properties of solutions to the abstract evolution problem. The next sections develop the energy and variational tools that will be used to investigate its high-frequency behavior.


\setcounter{equation}{0}
\section{Smooth propagation speeds}\label{sec:smooth}

It is classical that, under the strict hyperbolicity assumption, bounded variation of the propagation speed is sufficient to obtain uniform estimates for the hyperbolic energy
\[
\frac{\ul'(t)^2}{c(t)}
+\lambda^2c(t)\ul(t)^2.
\]
The same assumption also guarantees that this energy is uniformly equivalent to the standard energy $E_\lambda$, and therefore these estimates imply well-posedness in the Sobolev scale without derivative loss.

When the propagation speed is smoother, this basic hyperbolic energy can be progressively corrected in order to obtain higher-order energies whose relative variation becomes smaller and smaller at high frequencies. These corrections are chosen recursively so as to cancel the leading contributions to the time derivative of the previous energy. The higher the regularity of the propagation speed, the further this cancellation procedure can be continued.

Higher-order corrected quadratic forms for the scalar Liouville equation were systematically developed by R.~Manfrin~\cite{Manfrin2008}, who also identified a polynomial structure of their coefficients. The construction below is closely related to that approach, but we use a recursive formulation in terms of the reciprocal propagation speed which will be particularly convenient for the approximate-energy and variational developments of the following sections.

The outcome is a hierarchy of frequency-dependent quadratic forms which, for large $\lambda$, are uniformly equivalent to the standard energy and become increasingly close to conserved quantities. More precisely, if $c$ is of class $C^k$, then one can construct a quadratic form whose relative variation on $[0,T]$ is of order $\lambda^{1-k}$ in the exponential scale.

\begin{thm}[Higher-order almost conserved energies]\label{thm:cascade}
Let $T$ be a positive real number, let $k$ be a positive integer, and let
$c\in C^k([0,T])$ satisfy the strict hyperbolicity assumption~(\ref{hp:sh}).

Then there exist three functions
\[
\alpha_{k},\beta_{k},\gamma_{k}:(0,+\infty)\times[0,T]\to\re,
\]
and four positive constants $A_{k}$, $B_{k}$, $M_{k}$, $\lambda_{k}$ such that the following property holds.

For every $\lambda\geq\lambda_k$, and for every solution $\ul$ to~(\ref{eqn:ODE}), the quantity
\begin{equation}
\mathcal{E}_k(t):=
\alpha_{k}(\lambda,t)\ul'(t)^2
+\beta_{k}(\lambda,t)\ul(t)\ul'(t)
+\gamma_{k}(\lambda,t)\lambda^2\ul(t)^2
\label{defn:higher-order-energy}
\end{equation}
is uniformly equivalent to the standard energy (\ref{defn:standard-energy}) in the sense that
\begin{equation}
A_{k}E_\lambda(t)
\leq
\mathcal{E}_k(t)
\leq
B_{k}E_\lambda(t)
\qquad
\forall t\in[0,T],
\label{th:k-equivalence}
\end{equation}
and satisfies the growth/decay estimate
\begin{equation}
\mathcal{E}_k(0)\exp\left(
-\frac{M_{k}}{\lambda^{k-1}}
\right)
\leq
\mathcal{E}_k(t)
\leq
\mathcal{E}_k(0)\exp\left(
\frac{M_{k}}{\lambda^{k-1}}
\right)
\qquad
\forall t\in[0,T].
\label{th:k-conservation}
\end{equation}
\end{thm}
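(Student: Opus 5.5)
We will build the quadratic forms recursively, using the reciprocal speed $\theta:=1/c\in C^k([0,T])$ as the basic coefficient, and organize the whole argument around one identity: the exact time derivative of a general quadratic form along solutions of~(\ref{eqn:ODE}). For
\[
Q(t)=\alpha\ul'^2+\beta\ul\ul'+\gamma\lambda^2\ul^2,
\]
using $\ul''=-\lambda^2c^2\ul$ we obtain
\[
Q'=(\alpha'+\beta)\ul'^2+\bigl(\beta'+2\lambda^2(\gamma-\alpha c^2)\bigr)\ul\ul'+\bigl(\gamma'-\beta c^2\bigr)\lambda^2\ul^2 .
\]
At first order we take $\alpha_1=\theta$, $\beta_1=0$, $\gamma_1=c=1/\theta$, which is the hyperbolic energy. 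The cross term then cancels, and $|Q'|\le C|\theta'|\,Q$. Gronwall's lemma gives~(\ref{th:k-conservation}) with $M_1=C\|\theta'\|_{L^1}$ and $\lambda^{0}$, while~(\ref{th:k-equivalence}) follows from~(\ref{hp:sh}).

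For $k\ge2$ we look for coefficients of the form
\[
\alpha_k=\sum_{j=0}^{k-1}\lambda^{-j}a_j,\qquad \beta_k=\sum_{j=1}^{k-1}\lambda^{-j+1}b_j,\qquad \gamma_k=\sum_{j=0}^{k-1}\lambda^{-j}g_j,
\]
where $a_j,b_j,g_j$ are polynomials in $\theta,1/\theta,\theta',\dots,\theta^{(j)}$. The factor $\lambda^{+1}$ on $b_j$ accounts for the natural scaling $\ul\ul'\sim E_\lambda/\lambda$. The choices are made order by order so that, in $\mathcal{E}_k'$, all terms of size $\lambda^{-j}E_\lambda$ with $j\le k-2$ cancel.

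We expand the three bracketed coefficients of $Q'$ in powers of $\lambda$:
\begin{itemize}
\item The cross coefficient at order $\lambda^{2-j}$ gives an algebraic equation $g_j-c^2a_j=\text{(known terms from $b_{j-1}'$)}$.
\item The two diagonal coefficients give $a_{j-1}'+b_j=0$ and $g_{j-1}'-c^2b_j=0$, which are mutually compatible modulo a free choice of $a_j$.
\end{itemize}
Concretely, we set $b_j:=-a_{j-1}'$ and choose $a_j$ (and hence $g_j$) so that the residual $\gamma'$-term cancels against the $\alpha'$-term after comparison with $E_\lambda$. Equivalently, we pass to Liouville variables $\phi=(\ul',\lambda c\ul)$, so that each step is a diagonalization step of a $2\times2$ system, and a solvable linear algebraic equation determines the next correction. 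Each step consumes one derivative of $\theta$. After $k-1$ steps, the surviving remainder is of the form
\[
\lambda^{-(k-1)}\bigl(r_1\ul'^2+r_2\lambda\ul\ul'+r_3\lambda^2\ul^2\bigr),
\]
with $r_i$ polynomial in $\theta^{(0)},\dots,\theta^{(k)}$ and hence bounded on $[0,T]$ since $c\in C^k$.

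Granting the construction, the conclusions follow directly. All corrections beyond the leading one are $O(\lambda^{-1})$ uniformly in $t$, so for $\lambda\ge\lambda_k$ large they are absorbed and give~(\ref{th:k-equivalence}) with constants close to those of the first-order energy. The remainder then yields
\[
|\mathcal{E}_k'|\le C\lambda^{-(k-1)}E_\lambda\le (C/A_k)\lambda^{-(k-1)}\mathcal{E}_k,
\]
and integrating $(\log\mathcal{E}_k)'$ over $[0,t]\subseteq[0,T]$ gives~(\ref{th:k-conservation}) with $M_k=CT/A_k$.

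The main obstacle is the algebra of the recursion: we must show that at every order the cancellation system is solvable with coefficients involving only $j$ derivatives of $\theta$. The key point is to check that the diagonal equations are consistent, which is where an extra free function in $a_j$ must be used. I would first carry out $k=2$ explicitly, which should reproduce the Pokhozhaev-type correction $\beta_2\propto-\theta'$, and then set up the induction in the Liouville-rotated frame. In that frame, the coefficients of the off-diagonal part decouple and the correction at each order is obtained by dividing by $2i\lambda c$. This makes it transparent that one derivative is lost per step and no more.
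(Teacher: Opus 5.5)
Your scheme is the paper's cascade written in different bookkeeping. If you set the odd-indexed $a_j$ and the last coefficient $a_{k-1}$ to zero, your ansatz reproduces exactly the paper's $\mathcal{E}_{2i}$ and $\mathcal{E}_{2i+1}$; for example $\beta_2=-\theta'$ and $\gamma_3=c+\frac12\lambda^{-2}\theta''$. Your derivative identity, the absorption argument for~(\ref{th:k-equivalence}), and the Gronwall step are all fine.

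The gap is the step you defer as ``the main obstacle'', and it is not algebraic. At order $\lambda^{-j}$ your two diagonal equations read $b_{j+1}=-a_j'$ and $g_j'=c^2b_{j+1}$. They are compatible only if $g_j'=-c^2a_j'$. Inserting the cross equation $g_j=c^2a_j-\frac12 b_{j-1}'$ turns this into
\[
(c\,a_j)'=\frac{b_{j-1}''}{4c}=-\frac{a_{j-2}'''}{4c},
\]
which is a first-order linear ODE for $a_j$. This is precisely the paper's relation~(\ref{eqn:cascade-cancellation}) under the identification $a_i^{\mathrm{paper}}=a_{2i-2}$. For odd $j$ the right-hand side vanishes, so you may take $a_j=0$. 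The ODE is always solvable by integration, and by induction from $a_0=\theta\in C^k$ it gives $a_j\in C^{k-j}$. It only has to be imposed for $j\le k-2$. With $a_{k-1}=0$, the three sums telescope and the remainder is exactly
\[
\lambda^{-(k-1)}\bigl(a_{k-1}'\,u_\lambda'^2+\lambda\, b_{k-1}'\,u_\lambda u_\lambda'+g_{k-1}'\,\lambda^2u_\lambda^2\bigr).
\]
Its coefficients are continuous on $[0,T]$ because $a_{k-2}\in C^2$ and $a_{k-3}\in C^3$. With this step written out, your argument proves the theorem.

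Two of your assertions should be dropped rather than proved.

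\emph{Polynomial coefficients.} Since $a_j$ is obtained by integration, it is not automatically a polynomial in $\theta,\dots,\theta^{(j)}$. A local choice of the integration constants does exist, but that is a separate nontrivial fact (Manfrin; the paper's Lemma~\ref{lemma:pol-representation}, proved via a transport identity). It is not needed here, since continuity on $[0,T]$ already gives boundedness.

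\emph{Liouville/diagonalization remark.} This is not equivalent to your quadratic-form ansatz. In that frame the off-diagonal corrections are indeed algebraic (division by $2i\lambda c$). However, you would also have to show that the real parts of the accumulated diagonal terms are exact derivatives. Otherwise each one contributes $O(\lambda^{-j})$ to the exponent instead of $O(\lambda^{1-k})$. So that route does not make the recursion transparent for free.
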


\begin{proof}

We first discuss the cases with small $k$. Besides proving the first instances of the result, these computations reveal the cancellation mechanism behind the construction and lead naturally to its recursive form. We then complete the proof by describing the general cascade.

\paragraph{\textmd{\textit{The case $k=1$.}}}

We set
\begin{equation*}
a_1(t):=\frac{1}{c(t)}
\end{equation*}
and define
\begin{equation*}
\mathcal{E}_1(t):=
a_1(t)
\left(
\ul'(t)^2+\lambda^2c(t)^2\ul(t)^2
\right)
=
\frac{\ul'(t)^2}{c(t)}
+\lambda^2c(t)\ul(t)^2.
\end{equation*}

The equivalence with the standard energy follows from the strict hyperbolicity assumption (\ref{hp:sh}), and in this case is true for every $\lambda>0$.

On the other hand, using~(\ref{eqn:ODE}), a direct differentiation gives
\begin{equation*}
\mathcal{E}_1'(t)
=
a_1'(t)
\left(
\ul'(t)^2-\lambda^2c(t)^2\ul(t)^2
\right),
\end{equation*}
and therefore
\begin{equation*}
|\mathcal{E}_1'(t)|
\leq
\frac{|c'(t)|}{c(t)}
\mathcal{E}_1(t)
\leq
L_1\mathcal{E}_1(t)
\qquad
\forall t\in [0,T]
\end{equation*}
for a suitable constant $L_1$ that depends on $c'$ and on the lower bound for $c$. Integrating this differential inequality in both directions yields exactly (\ref{th:k-conservation}) with $k=1$.

\paragraph{\textmd{\textit{The case $k=2$.}}}

We introduce the first correction and set
\begin{equation*}
\mathcal{E}_2(t):=
\mathcal{E}_1(t)-a_1'(t)\ul(t)\ul'(t)=
\frac{\ul'(t)^2}{c(t)}
+\lambda^2c(t)\ul(t)^2
+\frac{c'(t)}{c(t)^2}\ul(t)\ul'(t).
\end{equation*}
Since
\begin{equation}
|\ul(t)\ul'(t)|
\leq
\frac{1}{2\lambda}E_\lambda(t),
\label{est:uuprime}
\end{equation}
we have
\begin{equation*}
|\mathcal{E}_2(t)-\mathcal{E}_1(t)|
\leq
\frac{H_{2}}{\lambda}E_\lambda(t)
\end{equation*}
for a suitable constant $H_2$ depending only on $c$ and $c'$ on $[0,T]$. Since we already know that $\mathcal{E}_1$ is equivalent to $E_\lambda$, this is enough to conclude that also $\mathcal{E}_2$ is equivalent to $E_\lambda$, at least when $\lambda$ is large enough, which proves (\ref{th:k-equivalence}) for $k=2$.

On the other hand, if $c\in C^2([0,T])$ we can compute
\begin{equation*}
\mathcal{E}_2'(t)=
-a_1''(t)\ul(t)\ul'(t)=
\left(
\frac{c''(t)}{c(t)^2}-2\frac{c'(t)^2}{c(t)^3}
\right)
\ul(t)\ul'(t).
\end{equation*}
Hence, by~(\ref{est:uuprime}) and the equivalence just proved, there exist constants $K_2$ and $L_2$ such that
\begin{equation*}
|\mathcal{E}_2'(t)|
\leq
\frac{K_2}{\lambda}E_\lambda(t)
\leq
\frac{L_{2}}{\lambda}\mathcal{E}_2(t)
\qquad
\forall t\in[0,T]
\end{equation*}
for every large enough $\lambda$. Integrating this differential inequality in both directions yields exactly (\ref{th:k-conservation}) with $k=2$. Now the constants involved depend on $a_1$ and its derivatives up to order~2, and hence ultimately on the first two derivatives of $c$.

\paragraph{\textmd{\textit{The case $k=3$.}}}

If $c\in C^3([0,T])$, we add a second correction and define
\begin{equation*}
\mathcal{E}_3(t):=
\mathcal{E}_2(t)
+\frac{1}{2}a_1''(t)\ul(t)^2.
\end{equation*}
Since
\begin{equation}
\ul(t)^2
\leq
\frac{1}{\lambda^2}E_\lambda(t),
\label{est:u-square}
\end{equation}
we obtain
\begin{equation*}
|\mathcal{E}_3(t)-\mathcal{E}_2(t)|
\leq
\frac{H_3}{\lambda^2}E_\lambda(t).
\end{equation*}
Since we already know that $\mathcal{E}_2$ is equivalent to $E_\lambda$, this shows that also $\mathcal{E}_3$ is equivalent to $E_\lambda$,  after increasing the frequency threshold if necessary.

A further differentiation gives
\begin{equation*}
\mathcal{E}_3'(t)
=
\frac{1}{2}a_1'''(t)\ul(t)^2.
\end{equation*}
Therefore, by~(\ref{est:u-square}) and the equivalence between
$\mathcal{E}_3$ and $E_\lambda$,
\begin{equation*}
|\mathcal{E}_3'(t)|
\leq
\frac{K_3}{\lambda^2}E_\lambda(t)
\leq
\frac{L_{3}}{\lambda^2}\mathcal{E}_3(t)
\end{equation*}
for all sufficiently large $\lambda$. At this point we can again integrate the differential inequality and obtain
(\ref{th:k-conservation}) for $k=3$. Now the constants involved depend on $a_1$ and its derivatives up to order~3, and hence ultimately on the first three derivatives of $c$.

\paragraph{\textmd{\textit{The case $k=4$.}}}

At this point the previous type of correction is no longer sufficient. In order to cancel the term containing $a_1'''$ in the derivative of $\mathcal{E}_3$, we choose a function $a_2$ satisfying
\begin{equation}
\left[c(t)a_2(t)\right]'
=
-\frac{1}{4}\frac{a_1'''(t)}{c(t)}.
\label{eqn:a2-cancellation}
\end{equation}
Such a function always exists, and is determined up to the addition of a constant multiple of $1/c$.

We now define
\begin{equation*}
\mathcal{E}_4(t):=
\mathcal{E}_3(t)
+
\frac{1}{\lambda^2}
\left[
a_2(t)
\left(
\ul'(t)^2+\lambda^2c(t)^2\ul(t)^2
\right)
-a_2'(t)\ul(t)\ul'(t)
\right].
\end{equation*}

The new correction is again small at high frequencies. Indeed, strict hyperbolicity together with~(\ref{est:uuprime}) gives
\begin{equation*}
|\mathcal{E}_4(t)-\mathcal{E}_3(t)|
\leq
\frac{H_{4,1}}{\lambda^2}E_\lambda(t)
+
\frac{H_{4,2}}{\lambda^3}E_\lambda(t),
\end{equation*}
from which in the usual way we obtain~(\ref{th:k-equivalence}) for $k=4$.

In addition, differentiating in time and exploiting~(\ref{eqn:a2-cancellation}), we obtain
\begin{equation*}
\mathcal{E}_4'(t)=
-\frac{a_2''(t)}{\lambda^2}\ul(t)\ul'(t).
\end{equation*}
Using again~(\ref{est:uuprime}) and the equivalence of the energies, we conclude that
\begin{equation*}
|\mathcal{E}_4'(t)|
\leq
\frac{K_4}{\lambda^3}E_\lambda(t)
\leq
\frac{L_4}{\lambda^3}\mathcal{E}_4(t)
\end{equation*}
for all sufficiently large $\lambda$, from which again we obtain~(\ref{th:k-conservation}) with $k=4$. We observe that the highest derivative of $a_2$ involved in these estimates is $a_2''$, which is well defined when $c$ is of class $C^4$.

\paragraph{\textmd{\textit{The general construction.}}}

The first four cases display the complete mechanism that will be iterated in the general construction, which we now describe for arbitrary $k\geq4$. Let
\[
m:=\left\lfloor\frac{k}{2}\right\rfloor .
\]
Starting from $a_1=1/c$, we choose recursively functions $a_{i+1}$ satisfying
\begin{equation}
\left[c(t)a_{i+1}(t)\right]'
=
-\frac{1}{4}\frac{a_i'''(t)}{c(t)}
\qquad
\forall i\in\{1,\ldots,m-1\}.
\label{eqn:cascade-cancellation}
\end{equation}
At each step such a function exists and is determined up to the addition of a constant multiple of $1/c$. No particular choice of these integration constants is needed in the construction.

The first cases suggest the following recursive construction of the corresponding energies. For every $i\geq2$ involved in the construction, we set
\begin{equation}
\mathcal{E}_{2i}(t):=
\mathcal{E}_{2i-1}(t)
+
\frac{1}{\lambda^{2i-2}}
\left[
a_i(t)
\left(
\ul'(t)^2+\lambda^2c(t)^2\ul(t)^2
\right)
-a_i'(t)\ul(t)\ul'(t)
\right],
\label{defn:cascade-even}
\end{equation}
while, for every $i\geq1$ involved in the construction, we set
\begin{equation}
\mathcal{E}_{2i+1}(t):=
\mathcal{E}_{2i}(t)
+
\frac{1}{2}\frac{1}{\lambda^{2i-2}}
a_i''(t)\ul(t)^2.
\label{defn:cascade-odd}
\end{equation}

Let us first check the equivalence with the standard energy. From strict hyperbolicity and~(\ref{est:uuprime}), for every $i\geq2$ we have
\begin{equation*}
|\mathcal{E}_{2i}(t)-\mathcal{E}_{2i-1}(t)|
\leq
\frac{H_{2i}}{\lambda^{2i-2}}E_\lambda(t)
\end{equation*}
for every $\lambda\geq1$. Therefore, if $\mathcal{E}_{2i-1}$ is equivalent to $E_\lambda$ for large $\lambda$, the same is true for $\mathcal{E}_{2i}$, after increasing the frequency threshold if necessary. Similarly, from~(\ref{est:u-square}),
\begin{equation*}
|\mathcal{E}_{2i+1}(t)-\mathcal{E}_{2i}(t)|
\leq
\frac{H_{2i+1}}{\lambda^{2i}}E_\lambda(t),
\end{equation*}
and hence the equivalence propagates from $\mathcal{E}_{2i}$ to $\mathcal{E}_{2i+1}$. Since the first cases have already been proved, this yields~(\ref{th:k-equivalence}) for every $k$.

It remains to estimate the time derivative. The same cancellations exhibited in the first four cases, together with~(\ref{eqn:cascade-cancellation}), give
\begin{equation*}
\mathcal{E}_{2i}'(t)
=
-\frac{a_i''(t)}{\lambda^{2i-2}}
\ul(t)\ul'(t)
\qquad\quad\text{and}\quad\qquad
\mathcal{E}_{2i+1}'(t)
=
\frac{1}{2}\frac{a_i'''(t)}{\lambda^{2i-2}}
\ul(t)^2.
\end{equation*}

If $k=2i$, using~(\ref{est:uuprime}) and the equivalence just proved, we obtain
\begin{equation*}
|\mathcal{E}_k'(t)|
\leq
\frac{K_k}{\lambda^{2i-1}}E_\lambda(t)
\leq
\frac{L_k}{\lambda^{k-1}}\mathcal{E}_k(t)
\qquad
\forall t\in[0,T].
\end{equation*}

If $k=2i+1$, using~(\ref{est:u-square}) we obtain instead
\begin{equation*}
|\mathcal{E}_k'(t)|
\leq
\frac{K_k}{\lambda^{2i}}E_\lambda(t)
\leq
\frac{L_k}{\lambda^{k-1}}\mathcal{E}_k(t)
\qquad
\forall t\in[0,T].
\end{equation*}
In both cases, integration of the differential inequality in both directions yields~(\ref{th:k-conservation}).

We finally check the regularity required by the construction. The differential relation~(\ref{eqn:cascade-cancellation}) shows that, in order to choose $a_{i+1}$ starting from $a_i$, three derivatives of $a_i$ are required, while solving the resulting first-order equation restores one derivative. An induction therefore gives
\begin{equation*}
a_i\in C^{k-2i+2}([0,T])
\qquad
\forall i\in\{1,\ldots,m\}.
\end{equation*}
If $k=2i$, then $a_i\in C^2$, which is exactly the regularity needed to control the term $a_i''$ appearing in $\mathcal{E}_{2i}'$. If $k=2i+1$, then $a_i\in C^3$, which is exactly the regularity needed to control $a_i'''$ in $\mathcal{E}_{2i+1}'$. Thus all the coefficients introduced above can be chosen with the required regularity, and the construction uses precisely the $k$ derivatives of $c$ assumed in the statement.

Finally, the recursive definitions~(\ref{defn:cascade-even}) and~(\ref{defn:cascade-odd}) show that every $\mathcal{E}_k$ is a quadratic form of the type~(\ref{defn:higher-order-energy}), and hence determine the corresponding coefficients $\alpha_k$, $\beta_k$, and $\gamma_k$. This completes the proof.
\end{proof}

\begin{rmk}[Local structure of the coefficients]
\begin{em}

Although the recursive relation~(\ref{eqn:cascade-cancellation}) determines each coefficient $a_{i+1}$ through a first-order differential equation, there is a distinguished choice of the integration constants for which all the coefficients become local differential polynomials in the reciprocal propagation speed $1/c$ and its derivatives.

More precisely, $a_i$ can be chosen as a homogeneous differential polynomial of differential weight $2i-2$. This local structure will be established in Section~\ref{sec:approx-en}, where it becomes essential in the construction and estimate of higher-order approximate energies.

\end{em}
\end{rmk}

\begin{rmk}[Exact higher-order conservation laws]
\label{rem:exact-higher-order-conservation}
\begin{em}

The cascade also reveals a somewhat exceptional phenomenon. For special propagation speeds, the remainder produced at the last step of the construction may vanish identically, so that the corresponding higher-order energy is not merely almost conserved, but exactly conserved.

Indeed, the identities obtained in the proof show that
\begin{equation*}
    \mathcal{E}_{2i}'(t)
    =
    -\frac{a_i''(t)}{\lambda^{2i-2}}
    \ul(t)\ul'(t)
    \qquad\quad\text{and}\quad\qquad
    \mathcal{E}_{2i+1}'(t)
    =
    \frac{1}{2}\frac{a_i'''(t)}{\lambda^{2i-2}}
    \ul(t)^2.
\end{equation*}
Therefore $\mathcal{E}_{2i}$ is exactly conserved whenever the corresponding coefficient $a_i$ is affine, while $\mathcal{E}_{2i+1}$ is exactly conserved whenever $a_i$ is a polynomial of degree at most two.

The first cases already give nontrivial examples. Setting
\begin{equation*}
    \gamma:=\frac1c,
\end{equation*}
the energy $\mathcal{E}_1$ is exactly conserved when $\gamma$ is constant. The energy $\mathcal{E}_2$ is exactly conserved whenever
\begin{equation*}
    \gamma(t)=At+B,
\end{equation*}
whereas $\mathcal{E}_3$ is exactly conserved whenever
\begin{equation*}
    \gamma(t)=At^2+Bt+C,
\end{equation*}
provided of course that $\gamma$ remains positive on the time interval under consideration.

Starting from the fourth-order energy, genuinely nonlinear families appear. In terms of $\gamma$, the cascade relation for $a_2$ reads
\begin{equation*}
    \left(\frac{a_2}{\gamma}\right)'
    =
    -\frac14\gamma\gamma'''.
\end{equation*}
One possible choice satisfying this relation is
\begin{equation*}
    a_2
    =
    -\frac{\gamma}{4}
    \left(
        \gamma\gamma''
        -
        \frac12(\gamma')^2
    \right).
\end{equation*}
For this choice, the fourth-order energy is exactly conserved whenever $a_2''=0$, namely whenever $a_2$ is affine.

If we write
\begin{equation*}
    \gamma=y^2,
\end{equation*}
then
\begin{equation*}
    a_2=-\frac12y^5y''.
\end{equation*}
Thus the condition
\begin{equation*}
    a_2(t)=At+B
\end{equation*}
is equivalent to the nonlinear equation
\begin{equation*}
    y''(t)
    =
    -2(At+B)y(t)^{-5}.
\end{equation*}
Any positive solution of this equation therefore generates a propagation speed
\begin{equation*}
    c(t)=\frac1{y(t)^2}
\end{equation*}
for which the corresponding fourth-order corrected energy is an exact conservation law.

Thus the hierarchy of higher-order energies also singles out distinguished classes of time-dependent propagation speeds carrying higher-order conservation laws. The first levels correspond to reciprocal speeds that are constant, affine, or quadratic, while from the fourth level on the corresponding classes are described by nonlinear differential equations.

\end{em}
\end{rmk}


\setcounter{equation}{0}
\section{Higher-order approximate energies}
\label{sec:approx-en}

The higher-order energies introduced in Theorem~\ref{thm:cascade} are constructed under the assumption that the propagation speed is smooth. The basic idea of the approximate energy method of~\cite{1979-DGCS} is to replace the actual coefficient in the smooth energy by an auxiliary smooth one, whose choice is left open and may depend on the frequency parameter $\lambda$. The auxiliary coefficient is then selected so as to balance the regularity cost of the approximation with its distance from the original propagation speed.

Our aim in this section is to extend the same idea to the whole cascade of higher-order energies. Since the first coefficient of the cascade is the reciprocal propagation speed, the natural quantity to approximate is $1/c$. We therefore introduce a positive smooth function $\gamma$, thought of as an approximation of $1/c$, and repeat the construction of Theorem~\ref{thm:cascade} replacing $1/c$ by $\gamma$ throughout.

Accordingly, we set $a_1:=\gamma$ and require the subsequent coefficients to satisfy
\begin{equation}
    \left(\frac{a_{i+1}}{\gamma}\right)'
    =
    -\frac14\gamma a_i'''
    \qquad
    \forall i\geq1.
    \label{defn:rec-gamma}
\end{equation}
The integration constants will always be chosen according to the homogeneous normalization established in Lemma~\ref{lemma:pol-representation} below. Thus, unlike in the smooth construction of Section~\ref{sec:smooth}, we now select a specific local representative of the coefficients generated by the recursion.

For every solution $u_\lambda$ to~\eqref{eqn:ODE}, we define the first three energies as
\begin{gather*}
    \mathcal E_{1,\gamma}(t)
    :=
    a_1(t)\left(
        u_\lambda'(t)^2+\lambda^2\gamma(t)^{-2}u_\lambda(t)^2
    \right),
    \\
    \mathcal E_{2,\gamma}(t)
    :=
    \mathcal E_{1,\gamma}(t)
    -
    a_1'(t)u_\lambda(t)u_\lambda'(t),
    \qquad
    \mathcal E_{3,\gamma}(t)
    :=
    \mathcal E_{2,\gamma}(t)
    +
    \frac12a_1''(t)u_\lambda(t)^2.
\end{gather*}

Then, for every $i\geq2$ we set
\begin{equation*}
    \mathcal E_{2i,\gamma}(t)
    :=
    \mathcal E_{2i-1,\gamma}(t)
    +
    \frac1{\lambda^{2i-2}}
    \left[
        a_i(t)\left(
            u_\lambda'(t)^2+\lambda^2\gamma(t)^{-2}u_\lambda(t)^2
        \right)
        -
        a_i'(t)u_\lambda(t)u_\lambda'(t)
    \right],
\end{equation*}
and
\begin{equation*}
    \mathcal E_{2i+1,\gamma}(t)
    :=
    \mathcal E_{2i,\gamma}(t)
    +
    \frac{1}{2}\frac1{\lambda^{2i-2}}a_i''(t)u_\lambda(t)^2.
\end{equation*}

If $\gamma=1/c$, these energies are consistent with the choice of the coefficients in the construction of Theorem~\ref{thm:cascade}. The equation is still driven by the actual propagation speed $c$, whereas the energies are constructed using the smooth reciprocal coefficient $\gamma$. The discrepancy between $\gamma$ and $1/c$ produces an additional error term, which plays the role of a fidelity cost.

To measure simultaneously the regularity cost of the smooth approximation and its discrepancy from the reciprocal propagation speed, for every positive integer $k$, every $\lambda>0$, and every $\gamma\in C^k([0,T])$ we introduce
\begin{equation*}
    \F_{k,\lambda}(\gamma;c)
    :=
    \frac1{\lambda^{k-1}}
    \int_0^T|\gamma^{(k)}(t)|\,dt
    +
    \lambda
    \int_0^T
    \left|
        \gamma(t)-\frac1{c(t)}
    \right|\,dt.
\end{equation*}

The first term measures the regularity cost of the smooth approximation, while the second one measures its fidelity to the reciprocal propagation speed. The next result shows that this balance is precisely the one that controls the growth of the higher-order approximate energies.

\begin{thm}[Higher-order approximate energy estimates]
\label{thm:approximate-cascade}

Let $k$ be a positive integer, and let $T$, $\nu_1$, and $\nu_2$ be positive real numbers with $\nu_1\leq\nu_2$.

Let $c:(0,T)\to\mathbb R$ be a measurable function satisfying the strict hyperbolicity assumption~(\ref{hp:sh}), and let $\gamma\in C^k([0,T])$ satisfy
\begin{equation*}
    \frac1{\nu_2}
    \leq
    \gamma(t)
    \leq
    \frac1{\nu_1}
    \qquad
    \forall t\in[0,T].
\end{equation*}

There exist four positive constants $A_k$, $B_k$, $C_k$, $\ep_k$, depending only on $k$, $T$, $\nu_1$, and $\nu_2$, such that the following property holds.

For every $\lambda>0$ such that
\begin{equation*}
    |\gamma^{(j)}(t)|
    \leq
    \ep_k\lambda^j
    \qquad
    \forall t\in[0,T],
    \qquad
    j=1,\ldots,k-1,
\end{equation*}
and for every solution $\ul$ to~(\ref{eqn:ODE}), the energy $\mathcal E_{k,\gamma}$ defined above is uniformly equivalent to the standard energy~(\ref{defn:standard-energy}) in the sense that
\begin{equation*}
    A_k E_\lambda(t)
    \leq
    \mathcal E_{k,\gamma}(t)
    \leq
    B_k E_\lambda(t)
    \qquad
    \forall t\in[0,T],
\end{equation*}
and satisfies the growth/decay estimate
\begin{multline}
    \qquad
    \mathcal E_{k,\gamma}(0)
    \exp\left\{
        -C_k\left[
            \frac1{\lambda^{k-1}}
            +
            \F_{k,\lambda}(\gamma;c)
        \right]
    \right\}
    \leq
    \mathcal E_{k,\gamma}(t)
    \\
    \leq
    \mathcal E_{k,\gamma}(0)
    \exp\left\{
        C_k\left[
            \frac1{\lambda^{k-1}}
            +
            \F_{k,\lambda}(\gamma;c)
        \right]
    \right\}
    \qquad
    \label{th:approx-energy}
\end{multline}
for every $t\in[0,T]$.

\end{thm}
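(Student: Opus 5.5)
The plan is to repeat the cascade computation of Theorem~\ref{thm:cascade} with $1/c$ replaced by $\gamma$ throughout. Two new error terms must then be tracked: the terms produced because the equation is driven by $c$ rather than $1/\gamma$, and the size of the higher coefficients $a_i$, which are no longer controlled by fixed constants but by the bounds $|\gamma^{(j)}|\leq\ep_k\lambda^j$.

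\textbf{Structure of the coefficients.} First I will invoke the homogeneous normalization (Lemma~\ref{lemma:pol-representation}, assumed available from later in the section). Under it, $a_i$ is a homogeneous differential polynomial in $\gamma$ of differential weight $2i-2$, and each of its monomials contains at least one derivative of $\gamma$ when $i\geq2$. Combined with $1/\nu_2\leq\gamma\leq1/\nu_1$ and $|\gamma^{(j)}|\leq\ep_k\lambda^j$ for $j\leq k-1$, this gives
\[
|a_i^{(h)}(t)|\leq C\,\ep_k\,\lambda^{2i-2+h}
\qquad\text{whenever } 2i-2+h\leq k-1 .
\]
The top-order derivative appearing in the final remainder needs separate treatment. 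It has weight exactly $k$ in the differential polynomial. Expanding it, it equals $c_0\gamma^{i}\gamma^{(k)}$ plus products of lower derivatives. For the lower products I plan to use the Gagliardo--Nirenberg type fact that a product $\prod\gamma^{(j_l)}$ with $\sum j_l=k$ and all $j_l\leq k-1$ is bounded by $\ep_k\lambda^{k-1}\cdot\sum_l|\gamma^{(j_l)}|$-type terms. Concretely, I will bound all factors except one by $\ep\lambda^{j}$, and bound the $L^1$ norm of the remaining factor $\gamma^{(j)}$ by interpolation between $\int|\gamma^{(k)}|$ and $\|\gamma'\|_\infty\leq\ep\lambda$.

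\textbf{Equivalence.} Each correction added in passing from $\mathcal E_{k-1,\gamma}$ to $\mathcal E_{k,\gamma}$ is bounded by $C\ep_k E_\lambda$. This follows from (\ref{est:uuprime}), (\ref{est:u-square}) and the coefficient bounds; for example, $\lambda^{-(2i-2)}|a_i|\leq C\ep_k$ and $\lambda^{-(2i-2)}|a_i'||u u'|\leq C\ep_k E_\lambda$. The main term $\gamma(u'^2+\lambda^2\gamma^{-2}u^2)$ is comparable to $E_\lambda$ with constants depending on $\nu_1,\nu_2$. Choosing $\ep_k$ small therefore gives $A_kE_\lambda\leq\mathcal E_{k,\gamma}\leq B_kE_\lambda$ for every $\lambda$, with no threshold on $\lambda$; this is the role of $\ep_k$.

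\textbf{Derivative.} Differentiating and using (\ref{eqn:ODE}), every occurrence of $\lambda^2c^2$ coming from $u''$ is rewritten as $\lambda^2\gamma^{-2}+\lambda^2(c^2-\gamma^{-2})$. The first part reproduces exactly the algebra of Theorem~\ref{thm:cascade}, since $\gamma$ now plays the role of $1/c$, and the cancellations from (\ref{defn:rec-gamma}) leave $-\lambda^{2-2i}a_i''uu'$ or $\frac12\lambda^{2-2i}a_i'''u^2$. The second part consists of terms of the form $\lambda^2(c^2-\gamma^{-2})\,b(t)\,u\,u'$ and $\lambda^{2}(c^2-\gamma^{-2})\,\tilde b\,u^2$ with bounded coefficients $b,\tilde b$. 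Since $|c^2-\gamma^{-2}|\leq C|\gamma-1/c|$, these are bounded by $C\lambda|\gamma-1/c|E_\lambda$. This yields
\[
|\mathcal E_{k,\gamma}'|\leq C\left(\lambda^{1-k}|a^{\mathrm{top}}(t)|\lambda^{-?}+\lambda|\gamma-1/c|\right)\mathcal E_{k,\gamma},
\]
where the remainder coefficient has weight $k$ and is estimated as described above.

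\textbf{Main obstacle.} The delicate point is the $L^1$ estimate of the weight-$k$ remainder by $\lambda^{1-k}\int|\gamma^{(k)}|+\lambda^{1-k}$, using only $L^\infty$ control up to order $k-1$ and $L^1$ control at order $k$. The terms that are not the pure $\gamma^{(k)}$ term have all their factors of order at most $k-1$. My first attempt will be to bound all but one factor pointwise and the last factor $\gamma^{(j)}$ in $L^1$ by $T\ep\lambda^{j}$; the product of these bounds is $C\lambda^{k}$, which is too large by a factor $\lambda$. To fix this, I will use $\int|\gamma^{(j)}|\leq C\bigl(\int|\gamma^{(k)}|\bigr)^{\frac{j-1}{k-1}}(\ep\lambda)^{\frac{k-j}{k-1}}$ together with an $L^1$ interpolation based on $\gamma'$ via $\int|\gamma''|$, followed by Young's inequality. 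If this is insufficient, I will instead redistribute the derivatives by integrating by parts within the cascade, so that every remainder becomes linear in $\gamma^{(k)}$, as in the case $k\leq3$ where the remainder is exactly $a_1^{(k)}=\gamma^{(k)}$. Finally, integrating the differential inequality in both directions gives (\ref{th:approx-energy}).
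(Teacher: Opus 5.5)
Your architecture matches the paper's: the homogeneous normalization of the $a_i$, the smallness of $\ep_k$ for the equivalence, and the splitting $\lambda^2c^2=\lambda^2\gamma^{-2}+\lambda^2(c^2-\gamma^{-2})$ that isolates the fidelity error and leaves a remainder of differential weight $k$. The gap is at the step you flag as the main obstacle, which is not closed, and the routes you propose for it cannot work. Any estimate of the weight-$k$ remainder that inserts the constraints $|\gamma^{(j)}|\le\ep_k\lambda^j$ on some factors is intrinsically lossy, because nothing forces these constraints to be saturated.

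Take the monomial $\gamma'\gamma^{(k-1)}$, which occurs in $a_2''$ when $k=4$. Let $\gamma=\gamma_0+\delta\sin(\mu t)$ with $\gamma_0$ interior to $[1/\nu_2,1/\nu_1]$, $\delta\le\ep_k$ small and $\mu=\lambda^{(k-1)/k}$. All the constraints hold, and $\lambda^{1-k}\int_0^T|\gamma^{(k)}|$ is of order $T\delta$. Your bound $\lambda^{1-k}\cdot\ep_k\lambda\int_0^T|\gamma^{(k-1)}|$, however, is of order $\ep_kT\delta\,\lambda^{1/k}$.

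Your proposed interpolation $\int|\gamma^{(j)}|\le C\bigl(\int|\gamma^{(k)}|\bigr)^{(j-1)/(k-1)}(\ep_k\lambda)^{(k-j)/(k-1)}$ fails for the same reason. With $Y:=\lambda^{1-k}\int_0^T|\gamma^{(k)}|$, keeping $\gamma^{(j)}$ in $L^1$ and the other factors pointwise gives $CY^{(j-1)/(k-1)}\lambda^{(k-j)/(k-1)}$. This is unbounded already when $Y$ is of order one, and Young turns it into $C(Y+\lambda)$, not $C(1+Y)$. The underlying reason is that the low endpoint $\int|\gamma'|\le T\ep_k\lambda$ is itself unbounded.

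The fallback of making the remainder linear in $\gamma^{(k)}$ is not available either. As the paper remarks after the statement, when $\gamma^{(k)}\equiv0$ the remainder of $\mathcal E_{k,\gamma}$ need not vanish. So it genuinely contains products of lower-order derivatives, and these are what produce the additive $\lambda^{1-k}$ in (\ref{th:approx-energy}).

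The missing idea is a scale-invariant interpolation. It uses only the $\lambda$-independent bound $\|\gamma\|_{L^\infty}\le1/\nu_1$ and $\int_0^T|\gamma^{(k)}|$, and not the derivative constraints at all. For a monomial $\gamma^m\prod_{\ell=1}^n\gamma^{(j_\ell)}$ with $j_1+\dots+j_n=k$, apply H\"older with exponents $p_\ell=k/j_\ell$; their reciprocals sum to $1$ precisely because the weight is $k$. Then use the one-dimensional Gagliardo--Nirenberg inequality
\[
\|\gamma^{(j)}\|_{L^{k/j}(0,T)}\le C\|\gamma^{(k)}\|_{L^1(0,T)}^{j/k}\|\gamma\|_{L^\infty(0,T)}^{1-j/k}+C\|\gamma\|_{L^\infty(0,T)} .
\]
Since the exponents $j_\ell/k$ also add up to $1$, this gives $\int_0^T|P[\gamma]|\le C\bigl(1+\int_0^T|\gamma^{(k)}|\bigr)$, which is Lemma~\ref{lem:weighted-polynomial}. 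Multiplying by $\lambda^{1-k}$ gives exactly $C_k\bigl[\lambda^{1-k}+\lambda^{1-k}\int_0^T|\gamma^{(k)}|\bigr]$.

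The constraints $|\gamma^{(j)}|\le\ep_k\lambda^j$ are needed only for the equivalence of energies and for the coefficients of the fidelity error. Those are $|S_i|\le C$ and $|S_i'|\le C\lambda$, where $S_i=\sum_{j\le i}\lambda^{2-2j}a_j$. Note that the coefficient of $\ul^2$ in that error term is $S_i'$, which is of size $\lambda$ and not bounded as you state. Your final bound $C\lambda|\gamma-1/c|\,E_\lambda$ still holds, because $\ul^2\le E_\lambda/\lambda^2$.
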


\begin{rmk}
\begin{em}

For $k\geq4$, the additional term $\lambda^{-(k-1)}$ in the exponential estimate is in general necessary. The functional $\F_{k,\lambda}(\gamma;c)$ only measures the $k$-th derivative of $\gamma$ and its fidelity to $1/c$, whereas the derivative of the $k$-th order energy may also contain products of lower-order derivatives of $\gamma$.

For example, if $\gamma=1/c$ is a nonconstant polynomial of degree strictly smaller than $k$, then $\gamma^{(k)}\equiv0$ and the fidelity term vanishes, so that $\F_{k,\lambda}(\gamma;c)=0$. Nevertheless, $\mathcal E_{k,\gamma}$ need not be exactly conserved, because the differential polynomial appearing in its derivative may still contain nonzero products of lower-order derivatives of $\gamma$.

The interpolation estimate used in the proof controls these terms by a constant, and this produces precisely a contribution of order $\lambda^{-(k-1)}$ in the exponent.

Notice also that, when $\gamma=1/c$, the fidelity term vanishes and, for all sufficiently large $\lambda$, the derivative constraints are automatically satisfied. The estimate then reduces to
\begin{equation*}
    \mathcal E_{k,\gamma}(0)
    \exp\left\{
        -\frac{C_k}{\lambda^{k-1}}
    \right\}
    \leq
    \mathcal E_{k,\gamma}(t)
    \leq
    \mathcal E_{k,\gamma}(0)
    \exp\left\{
        \frac{C_k}{\lambda^{k-1}}
    \right\},
\end{equation*}
which has exactly the form of the estimate in Theorem~\ref{thm:cascade}.

\end{em}
\end{rmk}

The remainder of this section is devoted to the proof of Theorem~\ref{thm:approximate-cascade}. We first examine the low-order cases, which make the cancellation mechanism and the role of the approximation error transparent. We then isolate the differential-polynomial structure of the coefficients and establish the interpolation estimate needed to control the final remainder. Finally, we combine these ingredients in the general inductive argument.

Since the propagation speed is only assumed to be measurable, all the differential identities and inequalities involving $c$ below are understood to hold for almost every $t\in(0,T)$.


\subsection{Low order cases}

The first four orders already contain all the mechanisms that enter the general proof. Besides proving the theorem in these cases, the computations below illustrate the successive cancellations, the role of the fidelity error, and the first appearance, at order four, of the interpolation argument needed to control the final remainder.

\subsubsection{Approximate energy of order one}

The first approximate energy is
\begin{equation*}
    \mathcal E_{1,\gamma}(t)
    =
    \gamma(t)\ul'(t)^2
    +
    \frac{\lambda^2}{\gamma(t)}\ul(t)^2.
\end{equation*}

Since $\gamma(t)$ is uniformly bounded from above and below, $\mathcal E_{1,\gamma}(t)$ is uniformly equivalent to the standard energy with constants that depend only on $\nu_1$ and $\nu_2$.

Taking~(\ref{eqn:ODE}) into account, its time derivative is
\begin{equation*}
    \mathcal E_{1,\gamma}'(t)=
    \gamma'(t)
    \left(
        \ul'(t)^2
        -
        \frac{\lambda^2}{\gamma(t)^2}\ul(t)^2
    \right)
    +
    2\lambda^2
    \left(
        \frac1{\gamma(t)}-\gamma(t)c(t)^2
    \right)
    \ul(t)\ul'(t).
\end{equation*}

In order to estimate this derivative, we observe that
\begin{equation*}
    \left|\frac1{\gamma(t)}-\gamma(t)c(t)^2\right|
    =
    \left(c(t)^2+\frac{c(t)}{\gamma(t)}\right)\left|\gamma(t)-\frac1{c(t)}\right|
    \leq
    2\nu_2^2\left|\gamma(t)-\frac1{c(t)}\right|
\end{equation*}
and
\begin{equation}
    2\lambda|\ul(t)\ul'(t)|
    \leq
    \mathcal E_{1,\gamma}(t),
    \label{est:uu'-1}
\end{equation}
while
\begin{equation*}
    \left|\gamma'(t)\left(\ul'(t)^2-\frac{\lambda^2}{\gamma(t)^2}\ul(t)^2\right)\right|
    \leq
    \frac{|\gamma'(t)|}{\gamma(t)}\mathcal E_{1,\gamma}(t)
    \leq
    \nu_2|\gamma'(t)|\mathcal E_{1,\gamma}(t).
\end{equation*}
We deduce that
\begin{equation*}
    |\mathcal E_{1,\gamma}'(t)|
    \leq
    \left\{
        \nu_2|\gamma'(t)|
        +
        2\nu_2^2\lambda
        \left|\gamma(t)-\frac1{c(t)}\right|
    \right\}
    \mathcal E_{1,\gamma}(t).
\end{equation*}

Integrating this differential inequality yields
\begin{equation*}
    \mathcal E_{1,\gamma}(0)
    \exp\left\{
        -C_1\F_{1,\lambda}(\gamma;c)
    \right\}
    \leq
    \mathcal E_{1,\gamma}(t)
    \leq
    \mathcal E_{1,\gamma}(0)
    \exp\left\{C_1\F_{1,\lambda}(\gamma;c)\right\}
\end{equation*}
with $C_1:=\max\{\nu_2,2\nu_2^2\}$. This proves~(\ref{th:approx-energy}) for $k=1$, with the stronger estimate in which the additional constant term in the exponent is absent.

\subsubsection{Approximate energy of order two}

The second energy of the cascade is
\begin{equation*}
    \mathcal E_{2,\gamma}(t)
    =
    \mathcal E_{1,\gamma}(t)
    -
    \gamma'(t)\ul(t)\ul'(t).
\end{equation*}
Since
\begin{equation*}
    2\lambda|\ul(t)\ul'(t)|
    \leq
    \mathcal E_{1,\gamma}(t),
\end{equation*}
the assumption
\begin{equation*}
    |\gamma'(t)|
    \leq
    \ep_2\lambda
    \qquad
    \forall t\in[0,T],
\end{equation*}
with $\ep_2:=1$, implies
\begin{equation}
    \frac12\mathcal E_{1,\gamma}(t)
    \leq
    \mathcal E_{2,\gamma}(t)
    \leq
    \frac32\mathcal E_{1,\gamma}(t)
    \qquad
    \forall t\in[0,T].
    \label{eqn:equiv-2}
\end{equation}
In particular, $\mathcal E_{2,\gamma}$ is uniformly equivalent to $\mathcal E_{1,\gamma}$, and hence also to the standard energy.

Differentiating and using~(\ref{eqn:ODE}) gives (for shortness we do not write the dependence on $t$)
\begin{equation*}
    \mathcal E_{2,\gamma}'
    ={}
    -\gamma''\ul\ul'
    +
    \lambda^2
    \left(
        c^2-\frac1{\gamma^2}
    \right)
    \left(
        \gamma'\ul^2
        -
        2\gamma\ul\ul'
    \right).
\end{equation*}
Moreover,
\begin{equation}
    \left|
        c^2-\frac1{\gamma^2}
    \right|
    =
    \frac{c}{\gamma}
    \left(
        c+\frac1{\gamma}
    \right)
    \left|
        \gamma-\frac1{c}
    \right|
    \leq
    2\nu_2^3
    \left|
        \gamma-\frac1{c}
    \right|.
    \label{est:c^2-gamma2}
\end{equation}

From (\ref{est:uu'-1}) and (\ref{eqn:equiv-2}) we deduce that
\begin{equation}
    |\ul(t)\ul'(t)|
    \leq
    \frac{1}{2\lambda}\mathcal E_{1,\gamma}(t)
    \leq
    \frac{1}{\lambda}\mathcal E_{2,\gamma}(t)
    \label{est:uu'-2}
\end{equation}
and
\begin{equation}
    \ul(t)^2
    \leq
    \frac{\gamma(t)}{\lambda^2}\mathcal E_{1,\gamma}(t)
    \leq
    \frac{1}{\nu_1\lambda^2}\mathcal E_{1,\gamma}(t)
    \leq
    \frac{2}{\nu_1\lambda^2}
    \mathcal E_{2,\gamma}(t),
    \label{est:u2-2}
\end{equation}
and therefore
\begin{equation*}
    |\mathcal E_{2,\gamma}'(t)|
    \leq
    \left\{
        \frac{|\gamma''(t)|}{\lambda}
        +
        \frac{8\nu_2^3}{\nu_1}
        \lambda
        \left|
            \gamma(t)-\frac1{c(t)}
        \right|
    \right\}
    \mathcal E_{2,\gamma}(t).
\end{equation*}

Integrating this differential inequality yields
\begin{equation*}
    \mathcal E_{2,\gamma}(0)
    \exp\left\{
        -C_2\F_{2,\lambda}(\gamma;c)
    \right\}
    \leq
    \mathcal E_{2,\gamma}(t)
    \leq
    \mathcal E_{2,\gamma}(0)
    \exp\left\{C_2\F_{2,\lambda}(\gamma;c)\right\}
\end{equation*}
with $C_2:=\max\{1,8\nu_2^3/\nu_1\}$. This proves~(\ref{th:approx-energy}) for $k=2$, with the stronger estimate in which the additional term $\lambda^{-1}$ is absent.

\subsubsection{Approximate energy of order three}

The third energy is
\begin{equation}
    \mathcal E_{3,\gamma}(t)
    =
    \mathcal E_{2,\gamma}(t)
    +
    \frac12\gamma''(t)\ul(t)^2.
    \label{defn:E3-gamma}
\end{equation}

If we assume that
\begin{equation*}
    |\gamma'(t)|
    \leq
    \ep_3\lambda
    \qquad\text{and}\qquad
    |\gamma''(t)|
    \leq
    \ep_3\lambda^2
    \qquad
    \forall t\in[0,T],
\end{equation*}
with $\ep_3:=\min\{1,\nu_1/2\}$, then from (\ref{est:u2-2}) we obtain that
\begin{equation*}
    \frac12\mathcal E_{2,\gamma}(t)
    \leq
    \mathcal E_{3,\gamma}(t)
    \leq
    \frac32\mathcal E_{2,\gamma}(t)
    \qquad
    \forall t\in[0,T].
\end{equation*}
In particular, $\mathcal E_{3,\gamma}(t)$ is uniformly equivalent to $\mathcal E_{2,\gamma}(t)$, which in turn is equivalent to the standard energy. As a consequence, now (\ref{est:uu'-2}) and (\ref{est:u2-2}) imply
\begin{equation}
    |\ul(t)\ul'(t)|
    \leq
    \frac{2}{\lambda}\mathcal E_{3,\gamma}(t)
    \qquad\quad\text{and}\quad\qquad
    \ul(t)^2
    \leq
    \frac{4}{\nu_1\lambda^2}\mathcal E_{3,\gamma}(t).
    \label{est:uu'-u2-3}
\end{equation}

Differentiating (\ref{defn:E3-gamma}), the term containing $\gamma''(t)\ul(t)\ul'(t)$ cancels, and we obtain
\begin{equation*}
    \mathcal E_{3,\gamma}'(t)=
    \frac12\gamma'''(t)\ul(t)^2+
    \lambda^2
    \left(
        c(t)^2-\frac1{\gamma(t)^2}
    \right)
    \left(
        \gamma'(t)\ul(t)^2
        -
        2\gamma(t)\ul(t)\ul'(t)
    \right).
\end{equation*}

Now from (\ref{est:uu'-u2-3}) we obtain that
\begin{equation*}
    |\gamma'(t)|\ul(t)^2
    +
    2\gamma(t)|\ul(t)\ul'(t)|
    \leq
    \frac{8}{\nu_1\lambda}
    \mathcal E_{3,\gamma}(t),
\end{equation*}
and taking (\ref{est:c^2-gamma2}) into account we deduce that
\begin{equation*}
    |\mathcal E_{3,\gamma}'(t)|
    \leq
    \left\{
        \frac{2}{\nu_1}
        \frac{|\gamma'''(t)|}{\lambda^2}
        +
        \frac{16\nu_2^3}{\nu_1}
        \lambda
        \left|
            \gamma(t)-\frac1{c(t)}
        \right|
    \right\}
    \mathcal E_{3,\gamma}(t).
\end{equation*}

Integrating this differential inequality yields
\begin{equation*}
    \mathcal E_{3,\gamma}(0)
    \exp\left\{
        -C_3\F_{3,\lambda}(\gamma;c)
    \right\}
    \leq
    \mathcal E_{3,\gamma}(t)
    \leq
    \mathcal E_{3,\gamma}(0)
    \exp\left\{C_3\F_{3,\lambda}(\gamma;c)\right\}
\end{equation*}
with $C_3:=\max\{2/\nu_1,16\nu_2^3/\nu_1\}$. This proves~(\ref{th:approx-energy}) for $k=3$, with the stronger estimate in which the additional term $\lambda^{-2}$ is absent.

\subsubsection{Approximate energy of order four}

This is the first case in which a nontrivial coefficient appears in the higher-order cascade. Solving the recursion for $a_2$, we choose
\begin{equation*}
    a_2(t)
    =
    -\frac{\gamma(t)}4
    \left(
        \gamma(t)\gamma''(t)
        -
        \frac12\gamma'(t)^2
    \right),
\end{equation*}
and the fourth energy is
\begin{equation}
    \mathcal E_{4,\gamma}(t)
    :=
    \mathcal E_{3,\gamma}(t)+
    \frac1{\lambda^2}
    \left[
        a_2(t)
        \left(
            \ul'(t)^2
            +
            \frac{\lambda^2}{\gamma(t)^2}\ul(t)^2
        \right)
        -
        a_2'(t)\ul(t)\ul'(t)
    \right].
    \label{defn:E4-gamma}
\end{equation}

Assuming in addition that $\ep_4\leq\min\{1,\ep_3\}$, standard calculations show that
\begin{equation}
    |\gamma^{(j)}(t)|
    \leq
    \ep_4\lambda^j
    \qquad
    \forall t\in[0,T]
    \qquad
    \forall j\in\{1,2,3\}
    \label{defn:ep4}
\end{equation}
imply that
\begin{equation}
    |a_2(t)|
    \leq
    C_{4,1}\ep_4\lambda^2
    \qquad\text{and}\qquad
    |a_2'(t)|
    \leq
    C_{4,2}\ep_4\lambda^3
    \qquad
    \forall t\in[0,T]
    \label{est:a2}
\end{equation}
for suitable constants $C_{4,1}$ and $C_{4,2}$ that depend only on $\nu_1$ and $\nu_2$. By choosing $\ep_4$ possibly smaller, depending only on $\nu_1$ and $\nu_2$, we obtain
\begin{equation*}
    \frac12\mathcal E_{3,\gamma}(t)
    \leq
    \mathcal E_{4,\gamma}(t)
    \leq
    \frac32\mathcal E_{3,\gamma}(t)
    \qquad
    \forall t\in[0,T],
\end{equation*}
which means that $\mathcal E_{4,\gamma}(t)$ is uniformly equivalent to $\mathcal E_{3,\gamma}(t)$, and hence also to the standard energy. Consequently, now (\ref{est:uu'-u2-3}) implies that
\begin{equation}
    |\ul(t)\ul'(t)|
    \leq
    \frac{4}{\lambda}\mathcal E_{4,\gamma}(t)
    \qquad\quad\text{and}\quad\qquad
    \ul(t)^2
    \leq
    \frac{8}{\nu_1\lambda^2}\mathcal E_{4,\gamma}(t).
    \label{est:uu'-u2-4}
\end{equation}

Differentiating (\ref{defn:E4-gamma}), and using~(\ref{eqn:ODE}) and the recursive identity (\ref{defn:rec-gamma}) with $i=1$, we deduce that (for shortness we do not write the dependence on $t$)
\begin{equation*}
    \mathcal E_{4,\gamma}'
    =
    -\frac{a_2''}{\lambda^2}\ul\ul'
    +
    \left(c^2-\frac1{\gamma^2}\right)
    \cdot
    \left[
        \left(\lambda^2\gamma'+a_2'\right)\ul^2
        -
        2\left(\lambda^2\gamma+a_2\right)\ul\ul'
    \right].
\end{equation*}

Exploiting (\ref{defn:ep4}), (\ref{est:c^2-gamma2}), (\ref{est:a2}) and (\ref{est:uu'-u2-4}), with some patience we obtain that
\begin{equation*}
    |\mathcal E_{4,\gamma}'(t)|
    \leq
    C_{4,3}
    \left\{
        \frac{|a_2''(t)|}{\lambda^3}
        +
        \lambda
        \left|
            \gamma(t)-\frac1{c(t)}
        \right|
    \right\}
    \mathcal E_{4,\gamma}(t).
\end{equation*}

Integrating this differential inequality we obtain
\begin{equation}
    \mathcal E_{4,\gamma}(t)
    \leq
    \mathcal E_{4,\gamma}(0)
    \exp\left(
        \frac{C_{4,3}}{\lambda^3}\int_0^T|a_2''(t)|\,dt
        +
        C_{4,3}\lambda\int_0^T\left|\gamma(t)-\frac1{c(t)}\right|\,dt
    \right),
    \label{est:E4-a2}
\end{equation}
and the corresponding estimate from below.

On the other hand, a direct computation gives
\begin{equation}
    a_2''(t)
    =
    -\frac14\left(
        \gamma(t)^2\gamma^{(4)}(t)
        +
        3\gamma(t)\gamma'(t)\gamma'''(t)
        +
        \gamma(t)\gamma''(t)^2
        -
        \frac12\gamma'(t)^2\gamma''(t)
    \right),
    \label{defn:a2''}
\end{equation}
and therefore
\begin{equation*}
    |a_2''(t)|
    \leq
    C_{4,4}\left(
        |\gamma^{(4)}(t)|
        +
        |\gamma'(t)|\cdot|\gamma'''(t)|
        +
        |\gamma''(t)|^2
        +
        |\gamma'(t)|^2\cdot|\gamma''(t)|
    \right),
\end{equation*}
where $C_{4,4}$ depends only on $\nu_1$. At this point the one-dimensional interpolation inequalities (see Lemma~\ref{lemma:first-derivative-reduction} below) imply that
\begin{equation*}
    \int_0^T|a_2''(t)|\,dt
    \leq
    C_{4,5}
    \left(
        1+
        \int_0^T|\gamma^{(4)}(t)|\,dt
    \right),
\end{equation*}
where now the constant depends also on $T$.

Plugging this estimate into (\ref{est:E4-a2}), and the corresponding estimate from below, we obtain exactly (\ref{th:approx-energy}) with $k=4$. In this case the new constant $C_4$ depends on $\nu_1$, $\nu_2$, $T$, and for the first time we have the term $1/\lambda^3$ that comes from the interpolation inequalities.


\subsection{Polynomial representation}

The low-order computations suggest that the coefficients generated by the recursive construction have a precise algebraic structure. In order to formulate this structure and exploit it in the general case, we introduce the following terminology.

\begin{defn}[Differential monomials and polynomials]
\begin{em}

Let $k$ be a positive integer.
\begin{itemize}
    \item A \emph{differential monomial} of \emph{differential weight} $k$ is a monomial in the formal variables $X_0$, $X_1$, \ldots, $X_k$ of the form
    \begin{equation}
        X_0^m\prod_{\ell=1}^n X_{j_\ell},
        \label{defn:diff-mon}
    \end{equation}
    where $m$ is a nonnegative integer, $n$ is a positive integer, and $j_1,\ldots,j_n$ are positive integers such that
    \begin{equation*}
        j_1+\cdots+j_n=k.
    \end{equation*}

    \item A \emph{homogeneous differential polynomial} of \emph{differential weight} $k$ is a finite linear combination, with real coefficients, of differential monomials of differential weight~$k$.

    \item If $\gamma$ is a function of class $C^k$, and $P$ is such a polynomial, its evaluation along $\gamma$ is the function
    \begin{equation*}
        P[\gamma](t)
        :=
        P\bigl(\gamma(t),\gamma'(t),\ldots,\gamma^{(k)}(t)\bigr).
    \end{equation*}
\end{itemize}

\end{em}
\end{defn}

A differential polynomial is therefore a formal algebraic expression whose variables represent a function and its derivatives. Its differential weight is the sum of the orders of the differentiated factors, while the variable $X_0$, representing the undifferentiated function, has weight zero. For instance, the expression of $a_2''$ obtained in~(\ref{defn:a2''}) is the evaluation along $\gamma$ of the homogeneous differential polynomial
\begin{equation*}
    -\frac14
    \left(
        X_0^2X_4
        +
        3X_0X_1X_3
        +
        X_0X_2^2
        -
        \frac12X_1^2X_2
    \right),
\end{equation*}
which has differential weight $4$.

The low-order computations suggest that the coefficients generated by the recursion are evaluations of homogeneous differential polynomials, with a differential weight determined by their position in the cascade. Since the recursion determines each coefficient only up to the addition of a multiple of $\gamma$, this amounts to selecting a distinguished local normalization of the integration constants. This structure is the key point that allows the final remainder to be controlled by interpolation in terms of the highest derivative of $\gamma$.

The polynomial structure underlying this type of higher-order quadratic construction was established by R.~Manfrin~\cite{Manfrin2008}, where graded families of differential polynomials arise through repeated integrations by parts in the recursive cancellation procedure. A closely related formulation has recently been used by C.~Boiti and R.~Manfrin in~\cite{BoitiManfrin2026}. The following lemma shows that such a homogeneous normalization can always be achieved and records the precise form that will be needed here.

\begin{lemma}[Representation as homogeneous differential polynomials]
\label{lemma:pol-representation}

Let $k\geq4$ be a positive integer, let $T$ be a positive real number, and let $\gamma\in C^k([0,T])$ be a function with positive values.

Then there exist functions
\begin{equation*}
    a_i\in C^{k-2i+2}([0,T]),
    \qquad
    i\in\{1,\ldots,\lfloor k/2\rfloor\},
\end{equation*}
with the following properties:
\begin{itemize}
    \item $a_1=\gamma$ and
    \begin{equation*}
        \left(\frac{a_i}{\gamma}\right)'
        =
        -\frac14\gamma a_{i-1}'''
        \qquad
        \forall i\in\{2,\ldots,\lfloor k/2\rfloor\};
    \end{equation*}

    \item for every $i\in\{2,\ldots,\lfloor k/2\rfloor\}$, there exists a homogeneous differential polynomial $P_i$ of differential weight $2i-2$ such that
    \begin{equation*}
        a_i(t)=P_i[\gamma](t)
        \qquad
        \forall t\in[0,T].
    \end{equation*}
\end{itemize}

\end{lemma}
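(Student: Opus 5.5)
The plan is to obtain the coefficients $a_i$ from an \emph{algebraic} identity rather than by integrating the recursion step by step. Integrating directly is hard because one has to show that $\gamma a_{i-1}'''$ is an exact derivative of a differential polynomial with nonnegative powers of $X_0$, and I do not see how to do that by hand. Instead I use the formal WKB/Ermakov structure behind the cascade.

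\textbf{Step 1: recast the recursion as a linear ODE.} Introduce the formal series
\begin{equation*}
s:=\sum_{i\geq1}\lambda^{-2i+2}a_i ,
\end{equation*}
and put $q:=\lambda^2\gamma^{-2}$. A direct computation gives
\begin{equation*}
s'''+4qs'+2q's
=s'''+\frac{4\lambda^2}{\gamma}\left(\frac{s}{\gamma}\right)'.
\end{equation*}
Hence the recursion in the statement, together with $a_1=\gamma$, is exactly the order-by-order form of the third-order linear equation
\begin{equation*}
s'''+4qs'+2q's=0 .
\end{equation*}
This is the equation satisfied by products of solutions of $u''+qu=0$.

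\textbf{Step 2: pass to the first integral.} This equation has the classical first integral $2ss''-(s')^2+4qs^2=\mathrm{const}$. I impose the normalization $\mathrm{const}=4\lambda^2$, which gives
\begin{equation}
s^2=\gamma^2\left(1-\frac{2ss''-(s')^2}{4\lambda^2}\right).
\label{eqn:plan-ermakov}
\end{equation}
Conversely, suppose a formal series $s=\gamma+O(\lambda^{-2})$ satisfies~(\ref{eqn:plan-ermakov}). Differentiating gives $2s\bigl(s'''+4qs'+2q's\bigr)=0$. Since the leading coefficient $\gamma$ is positive, $s$ is invertible as a formal series, and so $s$ solves the third-order equation. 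Therefore the coefficients defined through~(\ref{eqn:plan-ermakov}) satisfy the recursion of the lemma.

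\textbf{Step 3: solve (\ref{eqn:plan-ermakov}) order by order.} Define the $a_i$ by taking the square root:
\begin{equation*}
s=\gamma\sqrt{1-\frac{2ss''-(s')^2}{4\lambda^2}} ,
\end{equation*}
expanded binomially in powers of $\lambda^{-2}$. The coefficient of $\lambda^{-2i+2}$ on the right depends only on $a_1,\dots,a_{i-1}$ and their derivatives. Inductively it is $\gamma$ times a polynomial in these quantities, and no division by $\gamma$ ever occurs. Hence each $a_i$ is a polynomial in $\gamma,\gamma',\dots$ with only nonnegative powers of $X_0$.

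As a check, the $\lambda^{-2}$ term gives
\begin{equation*}
a_2=-\frac{\gamma}{8}\left(2\gamma\gamma''-(\gamma')^2\right),
\end{equation*}
which coincides with the choice made in the order-four computation.

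\textbf{Step 4: homogeneity and regularity.} For homogeneity, observe that $2ss''-(s')^2$ raises the differential weight by $2$, while $\lambda^{-2}$ tracks exactly this weight. Inducting on $i$, if $a_j$ has weight $2j-2$ for all $j<i$, then every term contributing to $a_i$ has weight $2i-2$. Since $2i-2\geq2$, each monomial contains at least one differentiated factor, as the definition of a differential monomial requires.

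For regularity, $a_i$ involves derivatives of $\gamma$ of order at most $2i-2$. Therefore $a_i\in C^{k-2i+2}([0,T])$ whenever $\gamma\in C^k$, and the relevant identities make sense for $i\leq\lfloor k/2\rfloor$.

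\textbf{Main obstacle.} The delicate step is Step 2: identifying the first integral and checking that the chosen normalization is compatible with $a_1=\gamma$. At order $\lambda^0$, (\ref{eqn:plan-ermakov}) reduces to $a_1^2=\gamma^2$, so this is fine. I also need to justify that differentiating (\ref{eqn:plan-ermakov}) really returns the recursion with the stated sign and the factor $-\tfrac14$. Once this is settled, the rest is bookkeeping of the formal expansion.
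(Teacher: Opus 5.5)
Your proposal is correct, and it takes a genuinely different route from the paper.

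\textbf{How the paper argues.} The paper never leaves the recursion. It sets $f_i=a_i/\gamma$ and proves, by integration by parts and the recursion, the transport identity $f_mf_n'\equiv f_{m-1}f_{n+1}'$, where $\equiv$ means equality modulo derivatives of homogeneous differential polynomials. It then deduces $f_if_2'\equiv0$ by moving indices toward the middle, using an antisymmetry argument in the odd case. From $\gamma a_i'''\equiv-\gamma'''a_i=4f_2'f_i\equiv0$ it obtains a homogeneous primitive $h_i$ of $\gamma a_i'''$ and sets $a_{i+1}=-\frac14\gamma h_i$. So the paper carries out exactly the exactness check you said you could not do by hand.

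\textbf{How your argument differs.} You avoid that check altogether. You package the cascade into the series $s$ and recognize the equation $s'''+4qs'+2q's=0$ for products of solutions of $u''+qu=0$. You then replace integration by the quadratic first integral. With $s=\rho^2$ and $\gamma=y^2$, this first integral is the Ermakov--Pinney equation $\rho''+\lambda^2y^{-4}\rho=\lambda^2\rho^{-3}$, whose first correction reproduces the paper's $a_2=-\frac12y^5y''$.

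\textbf{Your computations check out.}
\begin{itemize}
\item $4qs'+2q's=\frac{4\lambda^2}{\gamma}(s/\gamma)'$.
\item The derivative of $2ss''-(s')^2+4qs^2$ is $2s(s'''+4qs'+2q's)$.
\item In the fixed-point form $s=\gamma\sqrt{1-W/(4\lambda^2)}$, with $W=2ss''-(s')^2$, the coefficient of $\lambda^{-2i+2}$ is $\gamma$ times a polynomial in $a_1,\dots,a_{i-1}$ and their first two derivatives.
\end{itemize}
Polynomiality (with only nonnegative powers of $\gamma$) and homogeneity follow at once. Note that you do need the square-root form for this: solving the quadratic relation order by order directly produces an apparent factor $1/(2\gamma)$.

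\textbf{What each route buys.} Your route gives an explicit, integration-free recursion and a canonical normalization (first-integral constant $4\lambda^2$). The paper's route needs neither formal series in $\lambda$ nor the identification of a hidden first integral. Read as formal differential polynomials, the two normalizations actually coincide, because a homogeneous polynomial of positive weight annihilated by the derivation must vanish.

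\textbf{One point to tidy up.} The infinite series $s$ uses derivatives of $\gamma$ of every order. So the formal manipulations should be carried out in the differential algebra generated by $X_0^{\pm1},X_1,X_2,\dots$ (or for smooth $\gamma$). Only afterwards should you evaluate, along $\gamma\in C^k$, the finitely many identities of weight at most $k-1$ needed for $i\le\lfloor k/2\rfloor$.
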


\begin{proof}

For the convenience of the reader, we give a self-contained proof. The key point is the transport identity (\ref{eqn:transport}) that provides a compact form of the repeated integration-by-parts mechanism described above.

We set
\begin{equation*}
    f_i:=\frac{a_i}{\gamma}.
\end{equation*}
Thus $f_1=1$, and the recursive relation becomes
\begin{equation}
    f_i'
    =
    -\frac14\gamma a_{i-1}'''
    \qquad
    \forall i\geq2.
    \label{eqn:fi-recursion}
\end{equation}

For $i=2$, an elementary calculation shows that we can choose
\begin{equation*}
    a_2
    =
    -\frac{\gamma}{4}
    \left(
        \gamma\gamma''
        -
        \frac12(\gamma')^2
    \right),
\end{equation*}
which is the evaluation along $\gamma$ of a homogeneous differential polynomial of differential weight~$2$. Moreover, also $f_2=a_2/\gamma$ has the same property.

We now assume that $a_1,\ldots,a_i$ have already been constructed and, more precisely, that for every $j\in\{2,\ldots,i\}$ the functions $f_j$ and $a_j=\gamma f_j$ are evaluations along $\gamma$ of homogeneous differential polynomials of differential weight $2j-2$.

For two expressions $F$ and $G$ which are evaluations along $\gamma$ of homogeneous differential polynomials of the same differential weight $r$, we write
\begin{equation*}
    F\equiv G
\end{equation*}
if there exists an expression $H$, which is the evaluation along $\gamma$ of a homogeneous differential polynomial of differential weight $r-1$, such that
\begin{equation*}
    F-G=H'.
\end{equation*}
Thus $F\equiv0$ means that $F$ admits a primitive which is the evaluation along $\gamma$ of a homogeneous differential polynomial, with differential weight decreased by one.

We first establish the transport identity
\begin{equation}
    f_m f_n'
    \equiv
    f_{m-1}f_{n+1}',
    \label{eqn:transport}
\end{equation}
for all pairs $(m,n)$ such that $2\leq m\leq i$ and $2\leq n\leq i-1$. Indeed, integration by parts and~(\ref{eqn:fi-recursion}) yield
\begin{equation*}
    f_m f_n'
    \equiv
    -f_m'f_n
    =
    \frac14\gamma a_{m-1}'''f_n
    =
    \frac14a_{m-1}'''a_n
    \equiv
    -\frac14a_{m-1}a_n'''
    =
    f_{m-1}f_{n+1}'.
\end{equation*}

We claim that
\begin{equation}
    f_i f_2'\equiv0.
    \label{eqn:fi-f2}
\end{equation}
If $i=2r$, repeated application of~(\ref{eqn:transport}) gives
\begin{equation*}
    f_{2r}f_2'
    \equiv
    f_{2r-1}f_3'
    \equiv
    \cdots
    \equiv
    f_{r+1}f_{r+1}'
    =
    \frac12\left(f_{r+1}^2\right)',
\end{equation*}
and therefore~(\ref{eqn:fi-f2}) follows.

If $i=2r+1$, the same argument gives
\begin{equation*}
    f_{2r+1}f_2'
    \equiv
    f_{r+1}f_{r+2}'.
\end{equation*}
On the other hand, integration by parts and~(\ref{eqn:transport}) yield
\begin{equation*}
    f_{r+1}f_{r+2}'
    \equiv
    -f_{r+2}f_{r+1}'
    \equiv
    -f_{r+1}f_{r+2}'.
\end{equation*}
Hence
\begin{equation*}
    f_{r+1}f_{r+2}'\equiv0,
\end{equation*}
and~(\ref{eqn:fi-f2}) follows also in this case.

Now from the identity
\begin{equation*}
    \left(
        \gamma a_i''
        -
        \gamma'a_i'
        +
        \gamma''a_i
    \right)'
    =
    \gamma a_i'''
    +
    \gamma'''a_i
\end{equation*}
we deduce that
\begin{equation*}
    \gamma a_i'''\equiv-\gamma'''a_i.
\end{equation*}
Recalling that $a_i=\gamma f_i$, and that
\begin{equation*}
    \gamma\gamma'''=-4f_2'
\end{equation*}
because of~(\ref{eqn:fi-recursion}) with $i=2$, we conclude that
\begin{equation*}
    \gamma a_i'''
    \equiv
    -\gamma'''a_i
    =
    -\gamma'''\gamma f_i
    =
    4f_2'f_i
    \equiv0.
\end{equation*}

Since $\gamma a_i'''$ is the evaluation along $\gamma$ of a homogeneous differential polynomial of differential weight $2i+1$, the relation $\gamma a_i'''\equiv0$ means that there exists a function $h_i$, which is the evaluation along $\gamma$ of a homogeneous differential polynomial of differential weight $2i$, such that
\begin{equation*}
    h_i'
    =
    \gamma a_i'''.
\end{equation*}

In particular, the function
\begin{equation*}
    a_{i+1}:=-\frac14\gamma h_i
\end{equation*}
satisfies the recurrence (\ref{defn:rec-gamma}). Moreover, since multiplication by the undifferentiated function $\gamma$ does not change the differential weight, $a_{i+1}$ is the evaluation along $\gamma$ of a homogeneous differential polynomial of differential weight $2i$. This completes the induction.

Finally, a homogeneous differential polynomial of differential weight $2i-2$ involves derivatives of $\gamma$ of order at most $2i-2$, and hence $a_i$ belongs to $C^{k-2i+2}([0,T])$ for every $i\in\{1,\ldots,\lfloor k/2\rfloor\}$.
\end{proof}


The previous lemma identifies the differential-polynomial structure of the coefficients generated by the recursion. In the general argument, the last remainder will have differential weight exactly $k$, and we shall need to estimate its integral in terms of the $k$-th derivative of $\gamma$. This is provided by the following interpolation lemma.

\begin{lemma}[Interpolation inequality for homogeneous differential polynomials]
\label{lem:weighted-polynomial}

Let $T$ be a positive real number, let $k$ be a positive integer, let $\gamma\in C^k([0,T])$, and let $P$ be a homogeneous differential polynomial of differential weight $k$.

Assume that there exists a constant $M$ such that
\begin{equation}
    |\gamma(t)|\leq M
    \qquad
    \forall t\in[0,T].
    \label{hp:gamma-bounded}
\end{equation}

Then there exists a constant $C$, depending only on $P$, $k$, $T$, and $M$, such that
\begin{equation*}
    \int_0^T |P[\gamma](t)|\,dt
    \leq
    C\left(
        1+
        \int_0^T|\gamma^{(k)}(t)|\,dt
    \right).
\end{equation*}

\end{lemma}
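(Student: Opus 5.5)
By linearity in the monomials, it suffices to treat one differential monomial $X_0^m\prod_{\ell=1}^n X_{j_\ell}$ with $j_1+\cdots+j_n=k$. Thanks to \eqref{hp:gamma-bounded}, the factor $|\gamma|^m$ is bounded by $M^m$. So the task reduces to proving
\begin{equation*}
    \int_0^T\prod_{\ell=1}^n|\gamma^{(j_\ell)}(t)|\,dt
    \leq
    C\left(1+\int_0^T|\gamma^{(k)}(t)|\,dt\right).
\end{equation*}
If $n=1$, then $j_1=k$ and there is nothing to prove. From now on we assume $n\geq2$, so that every $j_\ell$ lies between $1$ and $k-1$.

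The main ingredient is a Gagliardo--Nirenberg inequality in $L^\infty$ on a bounded interval. Set
\begin{equation*}
    N:=\|\gamma^{(k)}\|_{L^1(0,T)}.
\end{equation*}
I would prove that, for every $j\in\{1,\ldots,k-1\}$,
\begin{equation*}
    \|\gamma^{(j)}\|_{L^\infty(0,T)}
    \leq
    C\left(M+M^{1-j/k}N^{j/k}\right),
\end{equation*}
and, at the endpoint, $\|\gamma^{(j)}\|_{L^1}\le C(M+M^{1-j/k}N^{j/k})$ interpreted appropriately. Scaling suggests this form: $\gamma^{(j)}$ behaves like $M\delta^{-j}$ at the length scale $\delta=(M/N)^{1/k}$, capped by $T$.

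The standard route has three steps.
\begin{itemize}
    \item On a subinterval $I$ of length $\delta\leq T$, write $\gamma=p+r$, where $p$ is the Taylor polynomial of degree $k-1$ at a point of $I$.
    \item The remainder satisfies $\|r^{(j)}\|_{L^\infty(I)}\leq\delta^{k-1-j}N$, since it is controlled by integrating $\gamma^{(k)}$.
    \item Equivalence of norms on the finite-dimensional space of polynomials, after rescaling $I$ to $[0,1]$, gives $\|p^{(j)}\|_{L^\infty(I)}\leq C\delta^{-j}\|p\|_{L^\infty(I)}\leq C\delta^{-j}(M+\delta^{k}N)$.
\end{itemize}
Then I would choose $\delta=\min\{T,(M/N)^{1/k}\}$. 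Because $n\geq2$ forces every $j_\ell<k$, the power of $N$ here is strictly less than one, and this is what makes the estimate work.

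The hardest step is getting the exponents right. A pure $L^\infty$ bound on every factor gives
\begin{equation*}
    \int\prod_\ell|\gamma^{(j_\ell)}|\leq T\prod_\ell\|\gamma^{(j_\ell)}\|_\infty\lesssim(1+N)^{\sum j_\ell/k}=(1+N),
\end{equation*}
but this uses the $L^\infty$ interpolation with exponent $j/k$. That exponent is off by one relative to the $L^1$ scaling: by scaling, $\|\gamma^{(j)}\|_\infty\sim M^{1-\theta}N^\theta$ with $\theta=j/(k-1)$, since $N$ has the dimension of $M\delta^{-(k-1)}$. A product of sup norms would then give exponent $k/(k-1)>1$, which is too much.

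To fix this I would estimate one factor in $L^1$ and the others in $L^\infty$. For $1\leq j\leq k-1$, the same Taylor argument gives
\begin{equation*}
    \|\gamma^{(j)}\|_{L^1}\lesssim M+M^{1-(j-1)/(k-1)}N^{(j-1)/(k-1)},
    \qquad
    \|\gamma^{(j)}\|_{L^\infty}\lesssim M+M^{1-j/(k-1)}N^{j/(k-1)}.
\end{equation*}
Take the factor $j_1$ in $L^1$ and the rest in $L^\infty$. The total exponent of $N$ is
\begin{equation*}
    \frac{(j_1-1)+\sum_{\ell\geq2}j_\ell}{k-1}=\frac{k-1}{k-1}=1,
\end{equation*}
so the product is bounded by $C(1+N)$. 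In the cross terms where some factors contribute only the constant $M$, the exponent of $N$ is at most one, and $N^\theta\leq1+N$ for $0\le\theta\le1$ handles them. The constants depend only on $k$, $T$, $M$ and the coefficients of $P$, as required.
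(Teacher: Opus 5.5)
Your reduction to a single monomial, the bound $|\gamma|^m\le M^m$, and the trivial case $n=1$ are fine. You are also right that putting every factor in $L^\infty$ overshoots. The gap is the $L^1$ interpolation inequality you use to close the exponent count,
\[
\|\gamma^{(j)}\|_{L^1(0,T)}\le C\bigl(M+M^{1-\frac{j-1}{k-1}}N^{\frac{j-1}{k-1}}\bigr),\qquad 1\le j\le k-1,
\]
which is false.

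For $j=1$ it asserts $\|\gamma'\|_{L^1(0,T)}\le 2CM$, i.e.\ that a sup bound controls the total variation. More generally, take $\gamma(t)=\sin(\omega t)$ with $M=1$. The left-hand side is of order $T\omega^j$, while the right-hand side is of order $1+(T\omega^k)^{(j-1)/(k-1)}$. Since $k(j-1)/(k-1)<j$ whenever $j<k$, the inequality fails as $\omega\to+\infty$.

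Your scaling heuristic only sees a single bump of width $\delta$. On a fixed interval you must also account for the roughly $T/\delta$ such bumps that fit in $[0,T]$, i.e.\ for oscillations. If you run your Taylor argument on a partition of $[0,T]$ into intervals of length $\delta$, you get $\|\gamma^{(j)}\|_{L^1}\le C(MT\delta^{-j}+\delta^{k-j}N)$. Optimizing in $\delta$ gives the correct exponent $j/k$, not $(j-1)/(k-1)$.

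With this correct exponent, one factor in $L^1$ and the others in $L^\infty$ produces the total power $j_1/k+(k-j_1)/(k-1)>1$ of $N$. Note that your sup-norm exponent $j/(k-1)$ is correct. So no $L^1$--$L^\infty$ splitting can close: bumps are extremal for the sup norms, while oscillations are extremal for the $L^1$ norm. (Relatedly, the bound $\|p\|_{L^\infty(I)}\le M+\delta^kN$ in your bullets should read $M+\delta^{k-1}N$.)

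The missing idea is to use the intermediate Lebesgue exponents $p_\ell=k/j_\ell$. These satisfy $\sum_\ell 1/p_\ell=1$, so H\"older's inequality gives $\int_0^T\prod_\ell|\gamma^{(j_\ell)}|\le\prod_\ell\|\gamma^{(j_\ell)}\|_{L^{k/j_\ell}(0,T)}$. Combine this with the one-dimensional Gagliardo--Nirenberg inequality
\[
\|\gamma^{(j)}\|_{L^{k/j}(0,T)}\le C\bigl(\|\gamma^{(k)}\|_{L^1(0,T)}^{j/k}M^{1-j/k}+M\bigr).
\]
The exponent $k/j$ is exactly the one at which the bump scaling and the oscillation scaling agree. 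The powers of $N$ then add up to $\sum_\ell j_\ell/k=1$, which yields the linear bound $C(1+N)$. This is the argument in the paper.
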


\begin{proof}

Every monomial of $P$ has the form (\ref{defn:diff-mon}). The factor $\gamma^m$ is bounded by $M^m$ and can therefore be absorbed into the constant. Setting $p_\ell:=k/j_\ell$, we observe that
\begin{equation}
    \sum_{\ell=1}^n\frac1{p_\ell}=1,
    \label{eqn:reciprocal}
\end{equation}
and therefore H\"older's inequality gives
\begin{equation}
    \int_0^T
    \prod_{\ell=1}^n
    \left|\gamma^{(j_\ell)}(t)\right|
    \,dt
    \leq
    \prod_{\ell=1}^n
    \left\|\gamma^{(j_\ell)}\right\|_{L^{k/j_\ell}(0,T)}.
    \label{post-holder}
\end{equation}

By the one-dimensional Gagliardo--Nirenberg interpolation inequality, for every integer $j\in\{1,\ldots,k\}$ we have
\begin{equation*}
    \left\|\gamma^{(j)}\right\|_{L^{k/j}(0,T)}
    \leq
    C_1
    \left\|\gamma^{(k)}\right\|_{L^1(0,T)}^{j/k}
    \left\|\gamma\right\|_{L^\infty(0,T)}^{1-j/k}
    +
    C_1\left\|\gamma\right\|_{L^\infty(0,T)},
\end{equation*}
where $C_1$ depends only on $k$, $j$, and $T$. Since $j$ ranges in the finite set $\{1,\ldots,k\}$, and taking~(\ref{hp:gamma-bounded}) into account, there exists a constant $C_2$, depending only on $k$, $T$, and $M$, such that
\begin{equation*}
    \left\|\gamma^{(j)}\right\|_{L^{k/j}(0,T)}
    \leq
    C_2\left(1+\left\|\gamma^{(k)}\right\|_{L^1(0,T)}^{j/k}\right)
    \leq
    C_2\cdot 2^{1-j/k}
    \left(1+\left\|\gamma^{(k)}\right\|_{L^1(0,T)}\right)^{j/k}
\end{equation*}
for every $j\in\{1,\ldots,k\}$, where the last inequality follows from the concavity of the function $x\mapsto x^\alpha$ when $\alpha\in(0,1]$.

Coming back to~(\ref{post-holder}), and recalling~(\ref{eqn:reciprocal}), we obtain
\begin{equation*}
    \int_0^T
    \prod_{\ell=1}^n
    \left|\gamma^{(j_\ell)}(t)\right|
    \,dt
    \leq
    C_2^n
    \prod_{\ell=1}^n
    \left(1+\left\|\gamma^{(k)}\right\|_{L^1(0,T)}^{j_\ell/k}\right)
    \leq
    C_2^n\cdot 2^{n-1}
    \left(1+\left\|\gamma^{(k)}\right\|_{L^1(0,T)}\right).
\end{equation*}

Since $n\leq k$, this proves the required estimate for each monomial. Since $P$ is a finite linear combination of such monomials, the conclusion follows.
\end{proof}


\subsection{The general case}

The previous subsection provides the structural information needed to extend the low-order computations to arbitrary order. We now combine the recursive cancellation mechanism of Theorem~\ref{thm:cascade} with the polynomial representation of the coefficients and the interpolation estimate for the final remainder.

\paragraph{\textmd{\textit{Recursive definition of energies and computation of derivatives}}}

The first four cases have already been proved explicitly. We now give the general inductive construction, which starts from $\mathcal E_{3,\gamma}$ and reproduces all the subsequent energies. For every positive integer $i$, we set
\begin{equation*}
    Q_\gamma(t):=\ul'(t)^2+\frac{\lambda^2}{\gamma(t)^2}\ul(t)^2
    \qquad\quad\text{and}\quad\qquad
    S_i(t):=\sum_{j=1}^i\frac{a_j(t)}{\lambda^{2j-2}}.
\end{equation*}

For brevity, in the sequel we do not write the explicit dependence on $t$. Let $i\geq2$, and assume that the odd energy $\mathcal E_{2i-1,\gamma}$ has already been constructed and satisfies
\begin{equation}
    \mathcal E_{2i-1,\gamma}'
    =
    \frac{1}{2}\frac{a_{i-1}'''}{\lambda^{2i-4}}\ul^2
    +
    \lambda^2\left(c^2-\frac1{\gamma^2}\right)
    \left(S_{i-1}'\ul^2-2S_{i-1}\ul\ul'\right).
    \label{eqn:induction-odd}
\end{equation}
The next two energies are defined by
\begin{equation*}
    \mathcal E_{2i,\gamma}(t)
    :=
    \mathcal E_{2i-1,\gamma}(t)
    +
    \frac1{\lambda^{2i-2}}
    \left[
        a_i(t)Q_\gamma(t)-a_i'(t)\ul(t)\ul'(t)
    \right]
\end{equation*}
and
\begin{equation*}
    \mathcal E_{2i+1,\gamma}(t)
    :=
    \mathcal E_{2i,\gamma}(t)
    +
    \frac{1}{2}\frac1{\lambda^{2i-2}}a_i''(t)\ul(t)^2.
\end{equation*}

A direct computation gives
\begin{equation*}
    \left[a_iQ_\gamma-a_i'\ul\ul'\right]'
    =
    2\lambda^2\frac1\gamma\left(\frac{a_i}{\gamma}\right)'\ul^2
    -
    a_i''\ul\ul'
    +
    \lambda^2\left(c^2-\frac1{\gamma^2}\right)
    \left(a_i'\ul^2-2a_i\ul\ul'\right).
\end{equation*}

When computing the derivative of $\mathcal E_{2i,\gamma}$, we use the recursive identity
\begin{equation*}
    \left(\frac{a_i}{\gamma}\right)'
    =
    -\frac14\gamma a_{i-1}''',
\end{equation*}
and we see that the first term produced by the new block cancels exactly the first term in~(\ref{eqn:induction-odd}). Since
\begin{equation*}
    S_i=S_{i-1}+\frac{a_i}{\lambda^{2i-2}},
\end{equation*}
we obtain
\begin{equation}
    \mathcal E_{2i,\gamma}'
    =
    -\frac{a_i''}{\lambda^{2i-2}}\ul\ul'
    +
    \lambda^2\left(c^2-\frac1{\gamma^2}\right)
    \left(S_i'\ul^2-2S_i\ul\ul'\right).
    \label{eqn:general-even-derivative}
\end{equation}

When computing the derivative of $\mathcal E_{2i+1,\gamma}$, one of the terms coming from the derivative of the correction term cancels $a_i''\ul\ul'$, and we end up with
\begin{equation}
    \mathcal E_{2i+1,\gamma}'
    =
    \frac{1}{2}\frac{a_i'''}{\lambda^{2i-2}}\ul^2
    +
    \lambda^2\left(c^2-\frac1{\gamma^2}\right)
    \left(S_i'\ul^2-2S_i\ul\ul'\right).
    \label{eqn:general-odd-derivative}
\end{equation}

This has exactly the form of~(\ref{eqn:induction-odd}) with $i$ replaced by $i+1$, and hence completes the induction.

\paragraph{\textmd{\textit{Equivalence with the standard energy}}}

The polynomial representation of Lemma~\ref{lemma:pol-representation} now becomes essential. It allows us to convert the smallness assumptions on the derivatives of $\gamma$ into pointwise bounds for all the coefficients appearing in the higher-order corrections.

In what follows, $C$ denotes a positive constant depending only on $k$, $\nu_1$, and $\nu_2$, whose value may change from line to line. When the interpolation estimate is used below, the constant will also be allowed to depend on $T$.

Indeed, for every $j\geq2$, the coefficient $a_j$ is the evaluation along $\gamma$ of a homogeneous differential polynomial of differential weight $2j-2$, and therefore $a_j^{(r)}$ is the evaluation along $\gamma$ of a homogeneous differential polynomial of differential weight $2j-2+r$. Since every monomial involved contains at least one differentiated factor, the assumptions
\begin{equation*}
    |\gamma^{(r)}(t)|
    \leq
    \ep_k\lambda^r
    \qquad
    \forall t\in[0,T],
    \qquad
    \forall r\in\{1,\ldots,k-1\},
\end{equation*}
with $\ep_k\leq1$, imply
\begin{equation}
    |a_j^{(r)}(t)|
    \leq
    C\ep_k\lambda^{2j-2+r}
    \label{est:general-aj}
\end{equation}
whenever
\begin{equation*}
    2j-2+r\leq k-1.
\end{equation*}
The same estimate holds for $a_1^{(r)}=\gamma^{(r)}$ whenever $1\leq r\leq k-1$, while $a_1=\gamma$ remains uniformly bounded from above and below. These estimates cover all the coefficients needed to prove the equivalence of $\mathcal E_{k,\gamma}$ with the standard energy. The derivatives of total differential weight $k$ which appear when differentiating the final energy will instead be estimated separately.

We prove the equivalence with the standard energy along the recursive construction of the cascade. Let $i\geq2$ be such that $2i\leq k$, and assume that $\mathcal E_{2i-1,\gamma}$ is uniformly equivalent to the standard energy. Since
\begin{equation*}
    \mathcal E_{2i,\gamma}(t)
    =
    \mathcal E_{2i-1,\gamma}(t)
    +
    \frac1{\lambda^{2i-2}}
    \left[
        a_i(t)Q_\gamma(t)
        -
        a_i'(t)\ul(t)\ul'(t)
    \right],
\end{equation*}
we have
\begin{equation*}
    |a_i(t)|
    \leq
    C\ep_k\lambda^{2i-2}
    \qquad\quad\text{and}\quad\qquad
    |a_i'(t)|
    \leq
    C\ep_k\lambda^{2i-1}.
\end{equation*}
On the other hand, the equivalence of $\mathcal E_{2i-1,\gamma}$ with the standard energy, together with the uniform bounds on $\gamma$, implies
\begin{equation*}
    Q_\gamma(t)
    \leq
    C\mathcal E_{2i-1,\gamma}(t)
    \qquad\quad\text{and}\quad\qquad
    |\ul(t)\ul'(t)|
    \leq
    \frac{C}{\lambda}
    \mathcal E_{2i-1,\gamma}(t).
\end{equation*}
Therefore
\begin{equation*}
    \left|
        \mathcal E_{2i,\gamma}(t)
        -
        \mathcal E_{2i-1,\gamma}(t)
    \right|
    \leq
    C\ep_k
    \mathcal E_{2i-1,\gamma}(t).
\end{equation*}
Choosing $\ep_k$ sufficiently small, depending only on $k$, $\nu_1$, and $\nu_2$, we obtain
\begin{equation*}
    \frac12\mathcal E_{2i-1,\gamma}(t)
    \leq
    \mathcal E_{2i,\gamma}(t)
    \leq
    \frac32\mathcal E_{2i-1,\gamma}(t)
    \qquad
    \forall t\in[0,T].
\end{equation*}

If, in addition, $2i+1\leq k$, we can pass from the even energy to the next odd one. Since
\begin{equation*}
    \mathcal E_{2i+1,\gamma}(t)
    =
    \mathcal E_{2i,\gamma}(t)
    +
    \frac1{2\lambda^{2i-2}}a_i''(t)\ul(t)^2,
\end{equation*}
the estimate
\begin{equation*}
    |a_i''(t)|
    \leq
    C\ep_k\lambda^{2i}
\end{equation*}
and the equivalence already proved for $\mathcal E_{2i,\gamma}$ imply
\begin{equation*}
    \ul(t)^2
    \leq
    \frac{C}{\lambda^2}
    \mathcal E_{2i,\gamma}(t).
\end{equation*}
Therefore
\begin{equation*}
    \left|
        \mathcal E_{2i+1,\gamma}(t)
        -
        \mathcal E_{2i,\gamma}(t)
    \right|
    \leq
    C\ep_k
    \mathcal E_{2i,\gamma}(t).
\end{equation*}
By possibly reducing $\ep_k$ once more, we obtain
\begin{equation*}
    \frac12\mathcal E_{2i,\gamma}(t)
    \leq
    \mathcal E_{2i+1,\gamma}(t)
    \leq
    \frac32\mathcal E_{2i,\gamma}(t)
    \qquad
    \forall t\in[0,T].
\end{equation*}

Starting from the equivalence already established for the first three energies, the previous two estimates show inductively that all the energies in the cascade up to order $k$ are uniformly equivalent to the standard energy, with positive equivalence constants $A_k$ and $B_k$ depending only on $k$, $\nu_1$, and $\nu_2$.

\paragraph{\textmd{\textit{Differential inequality for the energy}}}

From the equivalence between energies we know that
\begin{equation}
    |\ul(t)\ul'(t)|
    \leq
    \frac{C}{\lambda}\mathcal E_{k,\gamma}(t)
    \qquad\quad\text{and}\quad\qquad
    \ul(t)^2
    \leq
    \frac{C}{\lambda^2}\mathcal E_{k,\gamma}(t),
    \label{est:general-u}
\end{equation}
while from~(\ref{est:general-aj}) and the uniform bounds on $\gamma$ we deduce that
\begin{equation*}
    |S_i(t)|
    \leq
    C
    \qquad\quad\text{and}\quad\qquad
    |S_i'(t)|
    \leq
    C\lambda.
\end{equation*}
Finally, as in~(\ref{est:c^2-gamma2}),
\begin{equation}
    \left|c(t)^2-\frac1{\gamma(t)^2}\right|
    \leq
    2\nu_2^3
    \left|\gamma(t)-\frac1{c(t)}\right|.
    \label{est:general-defect}
\end{equation}

We distinguish the two possible parities of $k$. If $k=2i$, then~(\ref{eqn:general-even-derivative}), together with~(\ref{est:general-u})--(\ref{est:general-defect}), gives
\begin{equation}
    |\mathcal E_{k,\gamma}'(t)|
    \leq
    C
    \left\{
        \frac{|a_i''(t)|}{\lambda^{k-1}}
        +
        \lambda
        \left|\gamma(t)-\frac1{c(t)}\right|
    \right\}
    \mathcal E_{k,\gamma}(t).
    \label{est:general-even}
\end{equation}

If $k=2i+1$, then~(\ref{eqn:general-odd-derivative}) yields
\begin{equation}
    |\mathcal E_{k,\gamma}'(t)|
    \leq
    C
    \left\{
        \frac{|a_i'''(t)|}{\lambda^{k-1}}
        +
        \lambda
        \left|\gamma(t)-\frac1{c(t)}\right|
    \right\}
    \mathcal E_{k,\gamma}(t).
    \label{est:general-odd}
\end{equation}

\paragraph{\textmd{\textit{Reduction to the derivative of $\gamma$ of order $k$}}}

It remains to estimate the integral of the terms involving $a_i''$ or $a_i'''$. At this stage, the new ingredient is the interpolation estimate of Lemma~\ref{lem:weighted-polynomial}, applied to the homogeneous differential polynomials identified above. At this point the constant $C$ is also allowed to depend on $T$.

When $k=2i$ is even, Lemma~\ref{lemma:pol-representation} shows that $a_i$ is the evaluation along $\gamma$ of a homogeneous differential polynomial of differential weight $2i-2$. Therefore $a_i''$ is the evaluation along $\gamma$ of a homogeneous differential polynomial of differential weight $2i=k$, and Lemma~\ref{lem:weighted-polynomial} gives
\begin{equation*}
    \int_0^T|a_i''(t)|\,dt
    \leq
    C
    \left(
        1+
        \int_0^T|\gamma^{(k)}(t)|\,dt
    \right).
\end{equation*}

Similarly, when $k=2i+1$ is odd, $a_i'''$ is the evaluation along $\gamma$ of a homogeneous differential polynomial of differential weight $2i+1=k$. Hence Lemma~\ref{lem:weighted-polynomial} yields
\begin{equation*}
    \int_0^T|a_i'''(t)|\,dt
    \leq
    C
    \left(
        1+
        \int_0^T|\gamma^{(k)}(t)|\,dt
    \right).
\end{equation*}

Combining these estimates with~(\ref{est:general-even}) and~(\ref{est:general-odd}), and integrating the resulting differential inequality, we obtain, for a suitable constant $C_k>0$ depending only on $k$, $T$, $\nu_1$, and $\nu_2$,
\begin{equation*}
    \left|
        \log
        \frac{\mathcal E_{k,\gamma}(t)}
             {\mathcal E_{k,\gamma}(0)}
    \right|
    \leq
    C_k
    \left[
        \frac1{\lambda^{k-1}}
        +
        \F_{k,\lambda}(\gamma;c)
    \right]
    \qquad
    \forall t\in[0,T].
\end{equation*}
This is equivalent to~(\ref{th:approx-energy}) and concludes the proof of Theorem~\ref{thm:approximate-cascade}.


\setcounter{equation}{0}
\section{A family of variational problems}\label{sec:variational}

In this section we introduce a family of variational problems associated with the competition between regularity and fidelity. For every positive integer $k$, the corresponding minimum value measures how well a given function can be approximated by smooth functions, balancing the size of the $k$-th derivative of the approximation against its distance from the original function.

The first aim of the section is purely variational. We compare the minimum problems corresponding to different values of $k$, and show that a qualitative distinction occurs between the first and the higher orders. More precisely, all problems of order $k\geq2$ turn out to be equivalent, in the sense that they generate the same classes of admissible growth as the scale parameter tends to infinity, whereas the first-order problem retains a genuinely different behavior.

This naturally leads to a family of function spaces defined by prescribing the growth of the minimum values. We investigate the relations between these spaces and their connection with fractional Sobolev regularity.

The second aim is to prepare the applications to the evolution problem. The approximate-energy estimates of Section~\ref{sec:approx-en} require smooth competitors satisfying additional pointwise bounds on the function and its derivatives. We therefore introduce also constrained versions of the variational problems and compare them with the unconstrained ones to the extent needed for the subsequent applications.

Most of the analysis carried out in this section is independent of the evolution problem and can be read on its own. The connection between the variational problems and the growth of solutions will be developed in Section~\ref{sec:var2derloss}.

\subsection{Definitions and main results}

\subsubsection{Functionals and minimum values}

Let $(a,b)\subseteq\mathbb R$ be an interval, and let $g\in L^1((a,b))$. For every positive integer $k$ and every positive real number $\lambda$, we consider the functional
\begin{equation*}
    \F_k((a,b),g,\lambda,\gamma)
    :=
    \frac{1}{\lambda^{k-1}}
    \int_a^b\left|\gamma^{(k)}(x)\right|\,dx
    +
    \lambda\int_a^b|\gamma(x)-g(x)|\,dx,
\end{equation*}
defined for every $\gamma\in C^k([a,b])$, and the corresponding minimum value
\begin{equation}
    \M_k((a,b),g,\lambda)
    :=
    \inf\left\{
        \F_k((a,b),g,\lambda,\gamma):
        \gamma\in C^k([a,b])
    \right\}.
    \label{defn:Mk}
\end{equation}

The two terms in $\F_k$ play complementary roles. The first one penalizes the oscillations of the approximating function through its $k$-th derivative, while the second one is a fidelity term that penalizes its distance from $g$. The parameter $\lambda$ determines the scale at which these two effects are balanced.

We shall also use the $L^1$-oscillation of $g$ on $(a,b)$, defined by
\begin{equation}
    \operatorname{osc}_1((a,b),g)
    :=
    \min\left\{
        \int_a^b|g(x)-m|\,dx:m\in\mathbb R
    \right\}.
    \label{defn:osc-1}
\end{equation}
Thus $\operatorname{osc}_1((a,b),g)$ is the distance in $L^1((a,b))$ between $g$ and the subspace of constant functions.

\begin{rmk}\label{rmk:Mk-basic}
\begin{em}

Let us record some elementary properties of the minimum values.

\begin{enumerate}
\renewcommand{\labelenumi}{(\arabic{enumi})}

\item For every positive integer $k$, a constant function realizing the minimum in~(\ref{defn:osc-1}) can be used as a competitor in~(\ref{defn:Mk}). Hence
\begin{equation}
    \M_k((a,b),g,\lambda)
    \leq
    \lambda\,\operatorname{osc}_1((a,b),g)
    \qquad
    \forall\lambda>0.
    \label{est:Mk-osc}
\end{equation}
In particular, $\M_k((a,b),g,\lambda)\to0$ as $\lambda\to0^+$, and $\M_k((a,b),g,\lambda)$ grows at most linearly as $\lambda\to+\infty$.

\item If $g\in C^k([a,b])$, then $g$ itself is an admissible competitor, and therefore
\begin{equation*}
    \M_k((a,b),g,\lambda)
    \leq
    \frac{1}{\lambda^{k-1}}
    \int_a^b|g^{(k)}(x)|\,dx
    \qquad
    \forall\lambda>0.
\end{equation*}
In particular, for every $k\geq2$ and every $g\in C^k([a,b])$, one has
$\M_k((a,b),g,\lambda)\to0$ as $\lambda\to+\infty$.

\item In the case $k=1$, the function
$\lambda\mapsto\M_1((a,b),g,\lambda)$ is nondecreasing. Indeed, for every fixed competitor $\gamma$, the function
$\lambda\mapsto\F_1((a,b),g,\lambda,\gamma)$ is nondecreasing.

\item For $k\geq2$, the function
$\lambda\mapsto\M_k((a,b),g,\lambda)$ is in general not monotone. For instance, when $g\in C^k([a,b])$, monotonicity would be incompatible with the behavior described in the first two points.

\item For fixed $g$, the function $\lambda\mapsto\M_k((a,b),g,\lambda)$ is continuous, while for fixed $\lambda$, the map $g\mapsto\M_k((a,b),g,\lambda)$ is $\lambda$-Lipschitz continuous with respect to the $L^1((a,b))$ norm.

\item The infimum in~(\ref{defn:Mk}) is unchanged if the class of
competitors is enlarged to functions
\begin{equation*}
    \gamma\in W^{k-1,1}((a,b))
    \qquad\text{such that}\qquad
    \gamma^{(k-1)}\in BV((a,b)).
\end{equation*}
Moreover, the infimum is attained in this enlarged class.

\end{enumerate}

\end{em}
\end{rmk}

\subsubsection{Unconstrained variational problems and function spaces}

Since $\M_k$ measures the cost of approximating $g$ at scale $\lambda$, its growth as $\lambda\to+\infty$ provides a natural way of quantifying the regularity of $g$. We begin with the bounded-growth case and define
\begin{equation}
    \Sp_k((a,b))
    :=
    \left\{
        g\in L^1((a,b)):
        \sup_{\lambda>0}\M_k((a,b),g,\lambda)<+\infty
    \right\}.
    \label{defn:Sk-basic}
\end{equation}

The first result already reveals the basic dichotomy of the theory: the first-order problem is exceptional, whereas all higher-order problems generate the same space.

\begin{thm}[Inclusions between spaces -- Basic case]\label{thm:Sk-basic}

Let $(a,b)\subseteq\mathbb R$ be an interval, and let the spaces $\Sp_k((a,b))$ be defined by~(\ref{defn:Sk-basic}). Then
\begin{equation}
    BV((a,b))
    =
    \Sp_1((a,b))
    \subsetneq
    \Sp_2((a,b))
    =
    \Sp_k((a,b))
    \subsetneq
    \bigcap_{s\in(0,1)}W^{s,1}((a,b))
    \qquad
    \forall k\geq2.
    \label{th:Sk-basic}
\end{equation}

\end{thm}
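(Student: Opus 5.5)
The plan is to prove the chain in (\ref{th:Sk-basic}) link by link, working on a bounded interval $(a,b)$.

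\textbf{First link: $BV=\Sp_1$.} If $g\in BV$, I will mollify it to get $\gamma_\ep$ with $\int|\gamma_\ep'|\le|Dg|((a,b))$ and $\|\gamma_\ep-g\|_{L^1}\to0$. Then $\M_1\le|Dg|$ for every $\lambda$, so $\sup_\lambda\M_1$ is bounded. Conversely, if $\M_1\le K$, pick $\gamma_\lambda$ with $\int|\gamma_\lambda'|\le K+1$ and $\|\gamma_\lambda-g\|_{L^1}\le(K+1)/\lambda$. Then $\gamma_\lambda\to g$ in $L^1$, and lower semicontinuity of total variation gives $g\in BV$.

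\textbf{Second link: $\Sp_1\subseteq\Sp_2$ and $\Sp_2=\Sp_k$.} The inclusion $\Sp_1\subseteq\Sp_2$ comes from mollifying a $BV$ function at scale $\delta=1/\lambda$. This gives $\int|\gamma''|\le C|Dg|/\delta$ and $\|\gamma-g\|_{L^1}\le C\delta|Dg|$, so $\F_2\le C|Dg|$.

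For $\Sp_2=\Sp_k$ I will prove two comparison inequalities of the form $\M_j(g,\lambda)\le C(\M_k(g,c\lambda)+\lambda\cdot\text{small})$ in both directions for $j,k\ge2$.

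From order $k$ to order $k+1$, I regularize a near-optimal $\gamma$ by convolution at scale $1/\lambda$. One extra derivative costs a factor $\lambda$, and the error $\|\gamma*\rho-\gamma\|_{L^1}$ is bounded by $\lambda^{-k}\int|\gamma^{(k)}|$ plus a lower-order term. That lower-order term is the dangerous part, since convolution preserves polynomials only up to degree one for even kernels. To remove it I will use kernels with vanishing moments up to order $k-1$, so that the error is $O(\delta^k\|\gamma^{(k)}\|_{L^1})$.

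From order $k+1$ down to order $k$, I estimate $\int|\gamma^{(k)}|$ by interpolation between $\lambda^{-k}\int|\gamma^{(k+1)}|$ and $\lambda\|\gamma-g\|_{L^1}$. This works locally on intervals of length $1/\lambda$: on each such interval, subtract a polynomial best approximation of $g$. Its degree-$k$ coefficient is exactly where the obstruction lives.

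The boundary of $(a,b)$ will be handled by reflection or by one-sided kernels.

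\textbf{Strictness $BV\subsetneq\Sp_2$.} I will use a lacunary sum of smooth bumps $g=\sum_n 2^{-n}\,\phi(4^n x)$-type oscillations, arranged so that the total variation diverges. The second-order cost $\lambda^{-1}\int|\gamma''|$ still stays bounded, because at scale $\lambda$ the finer oscillations are absorbed by the fidelity term and the coarser ones by the regularity term, each contribution being geometrically summable. What makes this work is that each bump has zero mean, so smoothing does not pay the linear cost. The order-one cost behaves differently: there each level contributes $\asymp1$ and the contributions add up.

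\textbf{Last link: $\Sp_2\subseteq W^{s,1}$ for all $s<1$, with strict inclusion.} I will estimate $\int|g(x+h)-g(x)|\,dx$ using the competitor at $\lambda=1/h$. This gives $\le 2\|g-\gamma\|_{L^1}+h\int|\gamma'|$. Then $h\int|\gamma'|\lesssim h\cdot(\lambda\int|\gamma''|)^{1/2}$-type interpolation yields $\omega_1(g,h)\lesssim h\log(1/h)$ or similar, which is enough for every $s<1$. For strictness I will take $g=\sum 2^{-n}\sin(2^n x)\,n^{-1/2}$-type functions lying in all $W^{s,1}$ but with $\M_2\to\infty$ slowly.

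\textbf{Main obstacle.} I expect the hardest step to be the comparison $\Sp_k\subseteq\Sp_2$ for $k\ge3$, i.e.\ lowering the derivative order. There I will first try a local-polynomial argument on dyadic intervals, using the Remark's attainment in the class $\gamma^{(k-1)}\in BV$.
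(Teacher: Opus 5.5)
Your first link is sound, and so is the upward comparison; both match the paper, whose operator $C_{k,\ep}$ is exactly a one-sided kernel with vanishing moments combined with an extension lemma. Mollifying a $BV$ function at scale $1/\lambda$ also correctly gives $\Sp_1\subseteq\Sp_2$.

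The genuine gap is the downward comparison $\Sp_{k+1}\subseteq\Sp_k$ for $k\geq2$, and your argument for $\Sp_2\subseteq W^{s,1}$ fails for the same reason. Any estimate made at the single scale $1/\lambda$ leaves a term that depends only on $g$ and is not controlled by $\F_{k+1}$ at that scale. In your local version this term is $\simeq\lambda^{-k}\sum_{|I|=1/\lambda}|a_{k,I}|$, the leading coefficients you mention. In a global version it is $\lambda\osc_1((a,b),g)$, which grows linearly. Nothing in your sketch uses $k\geq2$, so if it worked it would also prove $\Sp_2\subseteq\Sp_1=BV$, contradicting the strictness you want.

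The missing idea is a telescoping over dyadic scales. Take competitors $h_j$ for $\M_{k+1}$ at $\lambda/2^j$, $0\leq j\leq N(\lambda)\simeq\log_2\lambda$, ending with the best constant. Apply $\int|f^{(k)}|\leq 2^{k+1}v^{-k}\osc_1(f)+2kv\int|f^{(k+1)}|$ to $h_j-h_{j+1}$ with $v=2^j/\lambda$. The $j$-th term then enters $\lambda^{1-k}\int|h_0^{(k)}|$ with weight $2^{-(k-1)j}$, which is summable exactly when $k\geq2$ and accumulates to $O(\log\lambda)$ when $k=1$. In your local-polynomial language, the same mechanism is comparing $a_{k,I}$ with the leading coefficient on the parent interval. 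This gives $2^{-nk}S_n\leq 2^{1-k}\,2^{-(n-1)k}S_{n-1}+C\M_{k+1}(2^{n-1})$ for $S_n:=\sum_{|I|=2^{-n}}|a_{k,I}|$, which is a contraction only if $k\geq2$ (a Marchaud-type inequality).

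The last link needs precisely the case $k=1$. With $\lambda=1/h$ and $\int|\gamma''|\leq K\lambda$, your bound $h\int|\gamma'|\lesssim h(\lambda\int|\gamma''|)^{1/2}$ is only $O(1)$. The best single-scale Gagliardo--Nirenberg bound, $\int|\gamma'|\lesssim\lambda^{1/2}$, yields $W^{s,1}$ only for $s<1/2$. The paper instead combines $\int_a^{b-w}|g(x+w)-g(x)|\,dx\leq 2w\M_1((a,b),g,1/w)$ with $\M_1(\lambda)=O(\log\lambda)$ from the dyadic estimate. This is $\Sp_2\subseteq\Sp_1^{\widehat\varphi}\subseteq W^{\psi,1}$ with $\psi(\sigma)=\sigma\log^3(1+1/\sigma)$.

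Both of your examples also have the wrong exponents. For every competitor, $\int|g(x+2h)-2g(x+h)+g(x)|\,dx\leq4\|g-\gamma\|_{L^1}+h^2\int|\gamma''|$, so these second differences are bounded by $4h\M_2((a,b),g,1/h)$.

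For $g=\sum_n2^{-n}\phi(4^nx)$, at $h\simeq4^{-N}$ the level $n=N$ produces second differences of size $\simeq2^{-N}\simeq h^{1/2}$, which the other lacunary levels cannot cancel. Hence $\M_2(\lambda)\gtrsim\lambda^{1/2}$ and $g\notin\Sp_2$. You need amplitude comparable to wavelength, e.g.\ $\sum_n2^{-n}\phi(2^nx)$, which is what your heuristic ``each level contributes $\asymp1$ at order one'' actually describes; its failure to be $BV$ still needs an argument. Alternatively, use the paper's simpler $\sin(\log x)$, frozen at a maximum point at distance $\simeq1/\lambda$ from the origin.

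Conversely, $\sum_n2^{-n}n^{-1/2}\sin(2^nx)$ lies in $\Sp_2$: truncating at $2^n\leq\lambda$ gives $\M_2(\lambda)=O((\log\lambda)^{-1/2})$. So it cannot witness the last strict inclusion. Weights $2^{-n}n^\beta$ with $\beta>0$ do work, but you must then prove $\M_2(\lambda)\to+\infty$, for instance from the second-difference bound above combined with orthogonality.
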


\begin{ex}[A function in $\Sp_2\setminus\Sp_1$]
\label{ex:S2-not-S1}
\begin{em}

A very simple oscillatory function is enough to show that the inclusion\[
\Sp_1((a,b))
\subsetneq
\Sp_2((a,b))
\]
is strict. Indeed,
\begin{equation}
    g(x):=\sin(\log x)
    \qquad
    \forall x\in(0,1)
    \label{defn:sin-log}
\end{equation}
satisfies
\begin{equation*}
    g\in
    \Sp_2((0,1))
    \setminus
    \Sp_1((0,1)).
\end{equation*}

The reason is that $g$ has infinite total variation near the origin, whereas at the second variational order the oscillations can be removed by freezing the function at suitable maximum points comparable with the wavelength scale $1/\lambda$. A complete verification is given in Subsection~\ref{sec:proof-example}.

\end{em}
\end{ex}

More generally, we can prescribe the admissible growth of the minimum values. Let
$\varphi:(0,+\infty)\to[1,+\infty)$ be a nondecreasing function, and set
\begin{equation}
    \Sp_k^\varphi((a,b))
    :=
    \left\{
        g\in L^1((a,b)):
        \sup_{\lambda>0}
        \frac{\M_k((a,b),g,\lambda)}
             {\varphi(\lambda)}
        <+\infty
    \right\}.
    \label{defn:Sk-general}
\end{equation}

The assumption $\varphi\geq1$, together with~(\ref{est:Mk-osc}), ensures that this definition depends only on the behavior of the minimum values for large $\lambda$.

For later use, we introduce also generalized fractional Sobolev spaces. Given a function $\psi:(0,+\infty)\to(0,+\infty)$, we denote by $W^{\psi,1}((a,b))$ the space of all functions $g\in L^1((a,b))$ such that
\begin{equation*}
    \int_a^b dy
    \int_a^b
    \frac{|g(y)-g(x)|}
         {|y-x|\psi(|y-x|)}
    \,dx
    <+\infty.
\end{equation*}
When $\psi(\sigma)=\sigma^s$ with $s\in(0,1)$, this is the usual fractional Sobolev space $W^{s,1}((a,b))$.

All orders $k\geq2$ generate the same spaces $\Sp_k^\varphi((a,b))$. On the other hand, the comparison between the second- and first-order problems involves a dyadic accumulation of the prescribed growth. Depending on the growth rate, this accumulation may or may not change the resulting space.

\begin{thm}[Inclusions between spaces -- General case]\label{thm:Sk-general}

Let $(a,b)\subseteq\mathbb R$ be an interval, let
$\varphi:(0,+\infty)\to[1,+\infty)$ be nondecreasing, and let
$\Sp_k^\varphi((a,b))$ be defined by~(\ref{defn:Sk-general}).

Then the following statements hold.

\begin{enumerate}
\renewcommand{\labelenumi}{(\arabic{enumi})}

\item \emph{(Equalities for $k\geq2$).}
For every integer $k\geq2$,
\begin{equation*}
    \Sp_k^\varphi((a,b))
    =
    \Sp_2^\varphi((a,b)).
\end{equation*}

\item \emph{(Comparison with the first-order space).}
It turns out that
\begin{equation}
    \Sp_1^\varphi((a,b))
    \subseteq
    \Sp_2^\varphi((a,b))
    \subseteq
    \Sp_1^{\widehat{\varphi}}((a,b)),
    \label{th:S1-S2}
\end{equation}
where
\begin{equation}
    \widehat{\varphi}(\lambda)
    :=
    \sum_{i=0}^{\lfloor\log_2(1+\lambda)\rfloor}
    \varphi\left(\frac{\lambda}{2^i}\right)
    \qquad
    \forall\lambda>0.
    \label{defn:new-phi}
\end{equation}

\item \emph{(Generalized fractional Sobolev regularity).}
For every $g\in L^1((a,b))$ and every function $\psi:(0,+\infty)\to(0,+\infty)$, one has
\begin{equation}
    \int_a^b dy
    \int_a^b
    \frac{|g(y)-g(x)|}
         {|y-x|\psi(|y-x|)}
    \,dx
    \leq
    4\int_0^{b-a}
    \frac{\M_1((a,b),g,1/w)}
         {\psi(w)}
    \,dw.
    \label{th:fractional}
\end{equation}

Consequently, if $g\in\Sp_1^\varphi((a,b))$, then
\begin{equation}
    g\in W^{\psi,1}((a,b))
    \label{th:Sk-Ws}
\end{equation}
whenever
\begin{equation*}
    \int_0^{b-a}
    \frac{\varphi(1/w)}
         {\psi(w)}
    \,dw
    <+\infty.
\end{equation*}

\end{enumerate}

\end{thm}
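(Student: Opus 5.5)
The three statements have different flavours, so I will treat them separately. Throughout, by the observation after~(\ref{defn:Sk-general}) it is enough to control $\M_k$ for $\lambda\ge1$. I will also use that the problem is invariant under translations and that scaling $x\mapsto x/\lambda$ turns each functional into one at $\lambda=1$ on a rescaled interval. For statement~(2), the inclusion $\Sp_1^\varphi\subseteq\Sp_2^\varphi$ will come from a local mollification of a near-minimizer $\gamma_1$ of $\F_1$ at scale $1/\lambda$. Since $\int|\gamma_1'|$ is the variation of $\gamma_1$, convolving with a kernel $\rho_{1/\lambda}$ costs, in the fidelity term, $\lambda\|\gamma_1*\rho_{1/\lambda}-\gamma_1\|_{L^1}\le\lambda\cdot\lambda^{-1}\int|\gamma_1'|$. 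Its second derivative satisfies $\lambda^{-1}\int|(\gamma_1*\rho_{1/\lambda})''|\le C\int|\gamma_1'|$. Hence $\M_2(\lambda)\le C\,\M_1(\lambda)$. To handle the boundary I would first extend $\gamma_1$ by even reflection, which doubles the variation.

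For the reverse inclusion $\Sp_2^\varphi\subseteq\Sp_1^{\widehat\varphi}$, I will use a dyadic telescoping argument. Let $\gamma_i$ be near-minimizers of $\F_2$ at scales $\lambda_i:=\lambda/2^i$, for $i=0,\dots,N$ with $N=\lfloor\log_2(1+\lambda)\rfloor$, so that $\lambda_N\approx1$. I build the competitor for $\F_1$ from $\gamma_N$ together with its successive corrections. The key estimate is an interpolation inequality at a single scale: for $\eta:=\gamma_i-\gamma_{i+1}$ one should have
\[
\int|\eta'|\lesssim \lambda_i\int|\eta|+\lambda_i^{-1}\int|\eta''|,
\]
a localized Gagliardo--Nirenberg bound proved on intervals of length $1/\lambda_i$. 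The right-hand side is $\lesssim \M_2(\lambda_i)+\M_2(\lambda_{i+1})$ once the triangle inequality through $g$ is applied in the fidelity term. Summing over $i$ then gives
\[
\int|\gamma_0'|\le\int|\gamma_N'|+\sum_i\int|(\gamma_i-\gamma_{i+1})'|\lesssim\widehat\varphi(\lambda),
\]
and at the last scale $\int|\gamma_N'|\lesssim\M_2(\lambda_N)+\operatorname{osc}_1$ by the same inequality applied against a constant. So $\gamma_0$ is a competitor with $\F_1\lesssim\widehat\varphi(\lambda)$.

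For statement~(1) I will show $\M_k(\lambda)\le C\M_2(c\lambda)$ and $\M_2(\lambda)\le C\M_k(c\lambda)$ for $\lambda\ge1$. Any constant dilation of the scale is absorbed because $\varphi$ is monotone, provided I arrange $c\le1$. The first bound again comes from mollifying a $\F_2$-competitor at scale $1/\lambda$: the $k$-th derivative becomes $(\gamma''*\rho^{(k-2)}_{1/\lambda})$, with $L^1$ norm $\le C\lambda^{k-2}\int|\gamma''|$, and the fidelity loss is at most $\lambda^{-2}\int|\gamma''|$ times $\lambda$, by a second-order Taylor argument using an even kernel. The second bound uses the same localized interpolation as above: on intervals of length $1/\lambda$, $\lambda^{-1}\int|\gamma''|\lesssim \lambda\int|\gamma-p|+\lambda^{1-k}\int|\gamma^{(k)}|$, where $p$ is a polynomial. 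I expect the main obstacle to be making these localized interpolation inequalities uniform, with the lower-order polynomial parts handled correctly, and matching the boundary behaviour on $(a,b)$. The plan is to partition $(a,b)$ into intervals of length comparable to $1/\lambda$, apply the scaled inequality on each, and sum. If $(a,b)$ is shorter than $1/\lambda$, the bound~(\ref{est:Mk-osc}) suffices instead.

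For statement~(3) the argument is direct. Fix $w$, let $\gamma$ be a near-minimizer of $\F_1$ at $\lambda=1/w$, and split
\[
|g(y)-g(x)|\le|g(y)-\gamma(y)|+|\gamma(y)-\gamma(x)|+|\gamma(x)-g(x)|.
\]
Integrating over the pairs with $|y-x|\le w$, the fidelity parts give at most $2w\int|g-\gamma|$. The middle term gives at most $w\int|\gamma'|$, since $\int_0^w\int|\gamma(x+h)-\gamma(x)|\,dx\,dh\le\frac{w^2}{2}\int|\gamma'|$ already carries an extra factor $w$. Hence
\[
\iint_{|y-x|\le w}|g(y)-g(x)|\le 2w^2\bigl(w^{-1}\textstyle\int|\gamma'|+w^{-1}\int|g-\gamma|\bigr)\cdot\text{const}\lesssim w\,\M_1(1/w)\,w.
\]
I will write the double integral in terms of $h=|y-x|$ (a layer-cake decomposition in $h$) to obtain $\int_0^{b-a}\frac{dh}{h\psi(h)}\int|g(x+h)-g(x)|\,dx$. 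The same splitting with $w=h$, pointwise in $h$, gives $\int|g(\cdot+h)-g|\le 2\int|g-\gamma|+h\int|\gamma'|\le 2h\M_1(1/h)$. Counting both orders $y>x$ and $x>y$ then yields the constant $4$. Consequence~(\ref{th:Sk-Ws}) follows immediately by inserting $\M_1\le C\varphi$.
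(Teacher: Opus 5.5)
Your treatment of statement~(3) coincides with the paper's. The dyadic telescoping for $\Sp_2^\varphi\subseteq\Sp_1^{\widehat\varphi}$ is also the paper's argument; when $b-a$ is small you must stop the chain at scales $\lambda/2^j\ge 2/(b-a)$, as the paper does with $N(\lambda)$, so that the interpolation is admissible. The upward inclusions are fine as well. For $k\geq 3$, mollifying an $\F_2$-competitor with an even kernel at scale $1/\lambda$ gives $\M_k(\lambda)\le C\M_2(\lambda)$ in one step, provided you first extend it \emph{affinely} beyond the endpoints rather than by reflection. This is a legitimate shortcut compared with the paper's operator $C_{k,\ep}$, which climbs one order at a time with fidelity $O(\ep^k)$.

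The gap is the downward half of statement~(1). The inequality $\M_2(\lambda)\le C\,\M_k(c\lambda)$ that you plan to prove is false, even with an additive constant depending on $g$. On $(0,2\pi)$ take
\[
g(x)=\sum_i\frac{\kappa_i}{N_i}\sin(N_ix),\qquad \lambda_i:=N_i\sqrt{\kappa_i},
\]
with $\kappa_i\to+\infty$ and $N_i$ growing fast enough. The competitor $\sum_{l\le i}\kappa_lN_l^{-1}\sin(N_lx)$ gives $\M_k(\lambda_i)\le C\kappa_i^{(3-k)/2}+o(1)=O(1)$ for $k\ge3$. Since $\M_k(t\lambda)\le\max\{t,t^{1-k}\}\M_k(\lambda)$, also $\M_k(c\lambda_i)=O(1)$, so the dilation cannot help. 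On the other hand, testing against $\chi(x)\sin(N_ix)$, with $\chi$ a fixed bump, shows that $\kappa_i/N_i\lesssim N_i^{-2}\int|\gamma''|+\int|\gamma-g|$ for every competitor $\gamma$, hence $\M_2(\lambda_i)\ge c_0\sqrt{\kappa_i}$.

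The localized interpolation fails at the following point. Take $|I|=1/\lambda$ and $p$ affine. The term $\lambda\sum_I\inf_{p}\int_I|\gamma-p|$ is not controlled by the fidelity term of $\F_k$. Splitting through $g$ leaves $\lambda\sum_I\inf_p\int_I|g-p|$, which is of the order of $\M_2(\lambda)$ itself.

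What holds is only a Marchaud-type inequality involving all coarser scales, and this is the missing idea. Run the same telescoping you used for $2\to1$, with near-minimizers $\gamma_j$ of $\F_{k+1}$ at $\lambda_j=\lambda/2^j$. The functional $\F_k(\lambda,\cdot)$ carries the factor $\lambda^{1-k}$, whereas the interpolation on $\gamma_j-\gamma_{j+1}$ is performed at scale $\lambda_j$. Hence each term acquires the weight $(\lambda_j/\lambda)^{k-1}=2^{-(k-1)j}$, giving
\[
\M_k(\lambda)\le C\sum_{j}2^{-(k-1)j}\M_{k+1}(\lambda/2^j)+C\osc_1((a,b),g).
\]
Only after inserting $\M_{k+1}\le C\varphi$ and using that $\varphi$ is nondecreasing does the geometric series yield $\M_k\le C\varphi$ for $k\ge2$. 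This is precisely where the orders $k\ge2$ differ from $k=1$, and it cannot be replaced by a single-scale comparison.
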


Two special cases deserve to be mentioned. The first one is the case of constant weight functions. In this situation, the distinction between the first and the higher orders becomes particularly transparent: Theorem~\ref{thm:Sk-general} gives a more precise description of how the second-order space can be controlled in terms of a first-order problem.

\begin{cor}[Constant weight function]\label{cor:constant}

Let $(a,b)\subseteq\mathbb R$ be an interval, and let us consider the spaces defined in~(\ref{defn:Sk-basic}) and~(\ref{defn:Sk-general}).

Then
\begin{equation}
    \Sp_2((a,b))
    \subseteq
    \Sp_1^{\widehat{\varphi}}((a,b))
    \subseteq
    W^{\psi,1}((a,b)),
    \label{th:cor-constant}
\end{equation}
where
\begin{equation}
    \widehat{\varphi}(\lambda)
    :=
    1+\lfloor\log_2(1+\lambda)\rfloor,
    \qquad\quad\text{and}\quad\qquad
    \psi(\sigma)
    :=
    \sigma\log^3(1+1/\sigma).
    \label{defn:psi-psi-constant}
\end{equation}

\end{cor}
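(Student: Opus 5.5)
The plan is to apply Theorem~\ref{thm:Sk-general} twice, once with the constant weight $\varphi\equiv1$ and once with the weight $\widehat\varphi$ produced by the first application.

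\emph{First inclusion.} With $\varphi\equiv1$ the space $\Sp_k^\varphi((a,b))$ coincides with $\Sp_k((a,b))$ as defined in~(\ref{defn:Sk-basic}). The weight~(\ref{defn:new-phi}) then becomes
\[
\widehat\varphi(\lambda)=\sum_{i=0}^{\lfloor\log_2(1+\lambda)\rfloor}1=1+\lfloor\log_2(1+\lambda)\rfloor,
\]
which is exactly the function in~(\ref{defn:psi-psi-constant}). It is nondecreasing and satisfies $\widehat\varphi\geq1$. The second inclusion in~(\ref{th:S1-S2}) therefore gives $\Sp_2((a,b))\subseteq\Sp_1^{\widehat\varphi}((a,b))$ directly.

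\emph{Second inclusion.} Here I apply statement~(3) of Theorem~\ref{thm:Sk-general} with $\widehat\varphi$ in place of $\varphi$ and with $\psi(\sigma)=\sigma\log^3(1+1/\sigma)$. I need to check that
\[
\int_0^{b-a}\frac{\widehat\varphi(1/w)}{\psi(w)}\,dw=\int_0^{b-a}\frac{1+\lfloor\log_2(1+1/w)\rfloor}{w\log^3(1+1/w)}\,dw<+\infty .
\]
Since $\lfloor\log_2(1+1/w)\rfloor\leq\log_2(1+1/w)$, the integrand is bounded by
\[
\frac{1}{w\log^3(1+1/w)}+\frac{1}{\log 2}\,\frac{1}{w\log^2(1+1/w)} .
\]

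\emph{Integrability near $0$ and away from $0$.} Near $w=0$, substitute $s=\log(1+1/w)$, so that $ds=-\,dw/(w(1+w))$ and hence $dw/w\sim -ds$ as $w\to0$. Both terms then become integrals of the form $\int^{\infty}s^{-p}\,ds$ with $p\in\{2,3\}$, which converge. On any interval $[\delta,b-a]$ with $\delta>0$, the integrand is continuous and bounded, so it is harmless there.

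\emph{The case $b-a=+\infty$.} This is the one delicate point. As $w\to\infty$ we have $\log(1+1/w)\sim1/w$, so $\psi(w)\sim w^{-2}$ and the integrand behaves like $w^{2}$, which diverges. So either the interval is implicitly bounded, or I must note that for $|y-x|$ large the left side of~(\ref{th:fractional}) is controlled separately. I expect the paper intends a bounded interval, and I will state that assumption. If unbounded intervals must be covered, I would split the double integral into the regions $|y-x|\le1$ and $|y-x|>1$. On $|y-x|\le1$ I use~(\ref{th:fractional}) with $\psi$ modified to equal $+\infty$ for $w>1$. On $|y-x|>1$ the bound $|g(y)-g(x)|\le|g(y)|+|g(x)|$ leaves a contribution involving $1/(|y-x|\psi(|y-x|))$ on that region, which is not integrable for this $\psi$. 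This is why I would simply restrict to finite intervals.

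Applying~(\ref{th:Sk-Ws}) with the convergent integral then yields $\Sp_1^{\widehat\varphi}((a,b))\subseteq W^{\psi,1}((a,b))$, which completes~(\ref{th:cor-constant}).
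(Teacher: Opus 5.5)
Your proof is correct and follows the paper's argument. You specialize statement~(2) of Theorem~\ref{thm:Sk-general} to $\varphi\equiv1$ to obtain $\widehat\varphi(\lambda)=1+\lfloor\log_2(1+\lambda)\rfloor$, and then apply statement~(3) after checking that $\widehat\varphi(1/w)/\psi(w)$ is integrable at the origin, being dominated there by a multiple of $1/(w\log^2(1+1/w))$.

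Your caveat about $b-a=+\infty$ is also well taken. The paper's remark that away from the origin there is nothing to prove tacitly assumes a bounded interval. This is the standing assumption of that section, since the comparison results use $b-a$ explicitly.
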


The situation changes for weight functions with power-like growth. In this case the dyadic accumulation appearing in the comparison with the first-order problem has the same growth as the original weight. As a consequence, the distinction between the first and the higher orders disappears, and the collapse of the spaces already occurs at order one.

\begin{cor}[Power-like weight function]\label{cor:power}

Let $(a,b)\subseteq\mathbb R$ be an interval, let $s\in(0,1)$, and consider the weight function
\begin{equation}
    \varphi(\lambda):=(1+\lambda)^s.
    \label{defn:phi-cor-power}
\end{equation}

Then
\begin{equation}
    \Sp_k^\varphi((a,b))
    =
    \Sp_1^\varphi((a,b))
    \subseteq
    W^{\psi,1}((a,b))
    \qquad
    \forall k\geq1,
    \label{th:cor-power}
\end{equation}
where
\begin{equation}
    \psi(\sigma)
    :=
    \sigma^{1-s}\log^2(1+1/\sigma).
    \label{defn:psi-cor-power}
\end{equation}
In particular,
\begin{equation}
    \Sp_k^\varphi((a,b))
    \subsetneq
    \bigcap_{r<1-s}
    W^{r,1}((a,b))
    \qquad
    \forall k\geq1.
    \label{th:cor-power-strict}
\end{equation}

\end{cor}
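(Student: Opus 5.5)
The plan is to deduce Corollary~\ref{cor:power} directly from Theorem~\ref{thm:Sk-general}, with $\varphi(\lambda)=(1+\lambda)^s$.

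\emph{Collapse of the spaces.} First I will show that the dyadic accumulation $\widehat{\varphi}$ defined in~(\ref{defn:new-phi}) is comparable to $\varphi$. For $0\leq i\leq\lfloor\log_2(1+\lambda)\rfloor$ one has
\begin{equation*}
    1+\frac{\lambda}{2^i}\leq\frac{2(1+\lambda)}{2^i},
\end{equation*}
and hence
\begin{equation*}
    \widehat{\varphi}(\lambda)
    \leq
    2^s(1+\lambda)^s\sum_{i\geq0}2^{-is}
    =
    \frac{2^s}{1-2^{-s}}\,\varphi(\lambda).
\end{equation*}
Consequently $\Sp_1^{\widehat{\varphi}}=\Sp_1^{\varphi}$, and~(\ref{th:S1-S2}) collapses to $\Sp_1^\varphi=\Sp_2^\varphi$. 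Part~(1) of Theorem~\ref{thm:Sk-general} then gives $\Sp_k^\varphi=\Sp_1^\varphi$ for every $k\geq1$.

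\emph{Inclusion in $W^{\psi,1}$.} Next I apply~(\ref{th:Sk-Ws}) with $\psi(\sigma)=\sigma^{1-s}\log^2(1+1/\sigma)$. This requires checking that
\begin{equation*}
    \int_0^{b-a}\frac{(1+1/w)^s}{w^{1-s}\log^2(1+1/w)}\,dw<+\infty.
\end{equation*}
Near $w=0$ the integrand behaves like $w^{-1}\log^{-2}(1/w)$, which is integrable. On the rest of $(0,b-a)$ it is bounded when $b-a<\infty$.

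If the interval is unbounded, the last point needs care. In that case I would rely on the fact that the definitions effectively reduce to finite scales, or else restrict to bounded intervals, which is presumably the intended setting.

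\emph{The inclusion in $\bigcap_{r<1-s}W^{r,1}$.} For $r<1-s$ we have $\sigma^r\leq C\psi(\sigma)$ for small $\sigma$, since $\sigma^{1-s-r}\log^2(1+1/\sigma)\to0$. Hence $W^{\psi,1}\subseteq W^{r,1}$ on bounded intervals.

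\emph{Strictness.} This is the main obstacle: I need a function in $\bigcap_{r<1-s}W^{r,1}$ that does not belong to $\Sp_1^\varphi$. My first choice is a characteristic-type function with infinitely many jumps accumulating at a point, for instance
\begin{equation*}
    g=\sum_n \varepsilon_n\,\mathbf 1_{I_n},
\end{equation*}
with intervals $I_n$ of length $\ell_n$ and heights $\varepsilon_n$ tuned so that the following two conditions hold.
\begin{itemize}
    \item The $W^{r,1}$ seminorm, which is about $\sum_n\varepsilon_n\ell_n^{1-r}$, is finite for every $r<1-s$.
    \item $\M_1(g,\lambda)\gg\lambda^s$ along a sequence $\lambda\to\infty$.
\end{itemize}

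For the lower bound on $\M_1$ I would use a localized estimate. On an interval of length $\gg1/\lambda$ around a jump of height $\varepsilon$, any competitor must either pay roughly $\varepsilon$ in variation or roughly $\lambda\varepsilon/\lambda$ in fidelity. So each jump costs about $\min(\varepsilon,\lambda\varepsilon\ell)$.

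A natural choice is to place about $2^{j(s+\delta_j)}$ jumps of height about $1$ at scale $\ell=2^{-j}$. The $W^{r,1}$ seminorm then sums like $2^{j(s-1+r)}$, which converges for $r<1-s$. On the other hand, at $\lambda=2^j$ the cost is at least about $2^{js}\cdot2^{j\delta_j}$, where $\delta_j\to0$ slowly, so $\M_1(g,\lambda)/\lambda^s$ is unbounded.

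Alternatively, one could choose $g$ with a single-scale logarithmic correction at each dyadic scale. I expect the bookkeeping for the lower bound on $\M_1$, in particular localizing the functional to disjoint intervals, to be the delicate part.
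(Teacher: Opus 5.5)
Your collapse argument and the inclusion into $W^{\psi,1}$ match the paper's proof. The bound $\widehat{\varphi}\le 2^s(1-2^{-s})^{-1}\varphi$ is the ``geometric estimate'' the paper leaves implicit, and the check $\varphi(1/w)/\psi(w)\sim 1/(w\log^2(1/w))$ is the same. Two small remarks.

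\emph{The inequality in your third step is reversed.} The limit $\sigma^{1-s-r}\log^2(1+1/\sigma)\to0$ says $\psi(\sigma)\le\sigma^r$ near $0$. That is the direction you need, because the kernel $1/(\sigma\psi(\sigma))$ of $W^{\psi,1}$ must dominate $\sigma^{-1-r}$. As written, $\sigma^r\le C\psi(\sigma)$ would give the opposite inclusion.

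\emph{Your caution about unbounded intervals is justified, not a technicality.} On $(0,+\infty)$ one has $1/(\sigma\psi(\sigma))\sim\sigma^{s}$ as $\sigma\to+\infty$. Hence any nonzero $C^\infty_c$ function belongs to $\Sp_1^\varphi$ but not to $W^{\psi,1}$. The paper's integrability check, like yours, only examines $w\to0^+$, so the statement is tacitly about bounded intervals.

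For the strictness you take a genuinely different route, and it is sound. The paper never bounds $\M_1$ from below. Since $\Sp_k^\varphi\subseteq W^{\psi,1}$, it only needs $W^{\psi,1}\subsetneq\bigcap_{r<1-s}W^{r,1}$, which it quotes as classical. You could take the same shortcut, because you have already proved the inclusion into $W^{\psi,1}$.

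The step you call delicate in your direct route is short:
\begin{itemize}
\item Take pairwise disjoint sets $J_n=I_n\cup L_n$, where $L_n$ is an interval of length $\ell_n$ adjacent to $I_n$ on which $g=0$.
\item Since $\F_1$ is a sum of integrals, it dominates the sum of its restrictions to the $J_n$.
\item If $\lambda\int_{J_n}|\gamma-g|<\lambda\ell_n/4$, there are $x\in I_n$ and $y\in L_n$ with $\gamma(x)>3/4$ and $\gamma(y)<1/4$, so $\int_{J_n}|\gamma'|>1/2$.
\item Hence $\M_1((a,b),g,\lambda)\ge\sum_n\min\{1/2,\lambda\ell_n/4\}$. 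In particular $\M_1((a,b),g,2^j)\ge N_j/4$ when $N_j\approx2^{j(s+\delta_j)}$ of the intervals have length $2^{-j}$.
\end{itemize}
The choice $\delta_j=j^{-1/2}$ gives $2^{j\delta_j}\to\infty$. At the same time $\sum_jN_j2^{-j}<\infty$, so everything fits in $(a,b)$ if you start from a large $j_0$. Also $\sum_jN_j2^{-j(1-r)}<\infty$ for every $r<1-s$.

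The same indicator family also proves the fact the paper calls classical. Test it against the $W^{\psi,1}$ kernel on $I_n\times L_n$: each pair contributes at least a constant times $\ell_n^s/\log^2(1/\ell_n)$, and $\sum_j 2^{j\delta_j}/j^2=+\infty$. So the two routes cost about the same. The paper's is shorter because it outsources the example. Yours is self-contained and shows directly that the variational bound $\M_1=O(\lambda^s)$ fails.
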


\subsubsection{Constrained variational problems}

The unconstrained minimum problems introduced above are the natural objects for the comparison between different differentiation orders. For the applications to the approximate energies of Section~\ref{sec:approx-en}, however, the function to be approximated is bounded from above and below, and the approximating functions have to satisfy corresponding pointwise constraints.

Let $\mu_1$ and $\mu_2$ be positive real numbers with $\mu_1\leq\mu_2$, and assume that
\begin{equation}
    \mu_1\leq g(x)\leq\mu_2
    \qquad
    \text{for almost every }x\in(a,b).
    \label{hp:Mk-constrained-range}
\end{equation}

It is then natural to restrict the competitors to the same range. For every positive integer $k$, let $\ep_k>0$, and for every $\lambda>0$ consider the class $\D_{k,\lambda}$ of all $\gamma\in C^k([a,b])$ that satisfy
\begin{equation*}
    \mu_1\leq\gamma(x)\leq\mu_2
    \qquad
    \forall x\in[a,b],
\end{equation*}
and, if $k\geq2$, also the derivative constraints
\begin{equation*}
    \left|\gamma^{(j)}(x)\right|
    \leq
    \ep_k\lambda^j
    \qquad
    \forall x\in[a,b],
    \quad
    \forall j\in\{1,\ldots,k-1\}.
\end{equation*}

The corresponding constrained minimum value is
\begin{equation*}
    \M_k^*((a,b),g,\lambda)
    :=
    \inf\left\{
        \F_k((a,b),g,\lambda,\gamma):
        \gamma\in\D_{k,\lambda}
    \right\}.
\end{equation*}

Clearly,
\begin{equation*}
    \M_k((a,b),g,\lambda)
    \leq
    \M_k^*((a,b),g,\lambda).
\end{equation*}

At first order the range constraint does not change the minimum value. At second order, the additional bound on the first derivative may change the minimum value, but only up to a multiplicative constant and a harmless additive constant.

\begin{prop}[First-order constrained problem]
\label{prop:M1-constrained}

Let $(a,b)\subseteq\mathbb R$ be an interval, let $\mu_1$, $\mu_2$ be positive real numbers with $\mu_1\leq\mu_2$, and let $g\in L^1((a,b))$ satisfy (\ref{hp:Mk-constrained-range}).

Then for every $\lambda>0$ one has
\begin{equation}
    \M_1^*((a,b),g,\lambda)
    =
    \M_1((a,b),g,\lambda).
    \label{th:M1-constrained}
\end{equation}

\end{prop}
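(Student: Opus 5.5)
Since $\D_{1,\lambda}\subseteq C^1([a,b])$, we already have $\M_1\leq\M_1^*$, so the task is the reverse inequality. The plan is to take any $\gamma\in C^1([a,b])$ and produce $\tilde\gamma\in\D_{1,\lambda}$ with $\F_1((a,b),g,\lambda,\tilde\gamma)\leq\F_1((a,b),g,\lambda,\gamma)+\eta$ for arbitrary $\eta>0$. Taking the infimum over $\gamma$ then gives~(\ref{th:M1-constrained}).

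The natural first candidate is truncation, $\tau(\gamma):=\max\{\mu_1,\min\{\gamma,\mu_2\}\}$. Both terms of the functional improve under this operation:
\begin{itemize}
\item the total variation does not increase, $\int|\tau(\gamma)'|\leq\int|\gamma'|$, since $\tau$ is $1$-Lipschitz and $\tau(\gamma)'=\gamma'$ on $\{\mu_1<\gamma<\mu_2\}$ and vanishes a.e.\ elsewhere;
\item the fidelity does not increase, $|\tau(\gamma)-g|\leq|\gamma-g|$ pointwise a.e., because $g(x)\in[\mu_1,\mu_2]$ by~(\ref{hp:Mk-constrained-range}) and the projection onto $[\mu_1,\mu_2]$ is $1$-Lipschitz and fixes $g(x)$.
\end{itemize}
Hence $\F_1(\tau(\gamma))\leq\F_1(\gamma)$.

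The only defect is that $\tau(\gamma)$ is merely Lipschitz, not $C^1$. This is the main (though mild) obstacle, and I would handle it by replacing the truncation with a smooth one. Take a $C^1$ nondecreasing function $\tau_\delta:\re\to[\mu_1,\mu_2]$ that is $1$-Lipschitz, satisfies $|\tau_\delta(s)-\tau(s)|\leq\delta$ for all $s$, and coincides with the identity on $[\mu_1+\delta,\mu_2-\delta]$ (assume $\mu_1<\mu_2$). Such a function is obtained by integrating a continuous function with values in $[0,1]$ that equals $1$ on the middle interval and tapers to $0$ near the endpoints, with the tapering tuned so that the range is exactly within $[\mu_1,\mu_2]$.

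With this choice, $\tilde\gamma:=\tau_\delta\circ\gamma\in C^1$ takes values in $[\mu_1,\mu_2]$. The variation estimate becomes $|\tilde\gamma'|\leq|\gamma'|$. For fidelity, $|\tilde\gamma-g|\leq|\tau(\gamma)-g|+\delta\leq|\gamma-g|+\delta$. Therefore
\[
\F_1(\tilde\gamma)\leq\F_1(\gamma)+\lambda\delta(b-a),
\]
and letting $\delta\to0$ concludes the case of a bounded interval.

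Two edge cases remain:
\begin{itemize}
\item If $\mu_1=\mu_2$, then $g$ is constant a.e. The constant $\mu_1$ is admissible and gives $\F_1=0$, so both minima vanish.
\item If $(a,b)$ is unbounded, the error $\lambda\delta(b-a)$ is infinite, so I would instead allow $\delta$ to be a function of $x$, or simply note that $C^k([a,b])$ presupposes a compact interval in the paper's setting.
\end{itemize}
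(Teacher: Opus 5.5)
Your proof is correct and shares its core with the paper's proof. Both truncate $\gamma$ to $[\mu_1,\mu_2]$ and use that $g$ takes values in this range, so neither the variation term nor the fidelity term increases.

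The difference is only in how $C^1$ regularity is recovered. The paper keeps the hard truncation $\gamma_*$, extends it constantly outside $[a,b]$, and mollifies. The range is preserved because the kernel is a nonnegative probability density, $(\gamma_*)_\delta\to\gamma_*$ in $L^1$, and $\limsup_{\delta\to0^+}\int_a^b|(\gamma_*)_\delta'|\leq\int_a^b|\gamma_*'|$.

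You instead smooth the truncation map itself. The chain rule then gives the exact pointwise bound $|(\tau_\delta\circ\gamma)'|\leq|\gamma'|$ and an explicit fidelity loss $\lambda\delta(b-a)$, with no extension or mollifier. Such a $\tau_\delta$ does exist for $\delta\leq(\mu_2-\mu_1)/2$: take $\tau_\delta\equiv\mu_1$ below $\mu_1-\delta$, with $\tau_\delta'$ rising linearly from $0$ at $\mu_1-\delta$ to $1$ at $\mu_1+\delta$, and symmetrically near $\mu_2$.

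Your route is more elementary and fully quantitative, but it relies on the modified competitor being a composition $\tau\circ\gamma$. The paper's ``modify, then mollify'' step works for any Lipschitz function with values in $[\mu_1,\mu_2]$. That is why the same device can be reused in the second-order reductions, where the modified competitor is not of composition form (for instance the inf-convolution in Lemma~\ref{lemma:first-derivative-reduction}).

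Your concern about unbounded intervals can be dropped. Since $g\in L^1((a,b))$ and $g\geq\mu_1>0$ almost everywhere, necessarily $b-a\leq\|g\|_{L^1((a,b))}/\mu_1<+\infty$.
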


\begin{prop}[Second-order constrained problem]
\label{prop:M2-constrained}

Let $(a,b)\subseteq\mathbb R$ be an interval, let $\ep_2$, $\mu_1$, $\mu_2$ be positive real numbers with $\mu_1\leq\mu_2$, and let $g\in L^1((a,b))$ satisfy (\ref{hp:Mk-constrained-range}).

Then there exists a constant $C>0$, depending only on $\mu_1$, $\mu_2$, $\ep_2$, and $b-a$, such that
\begin{equation*}
    \M_2^*((a,b),g,\lambda)
    \leq
    C\left(
        \M_2((a,b),g,\lambda)+1
    \right)
    \qquad
    \forall\lambda>0.
\end{equation*}

\end{prop}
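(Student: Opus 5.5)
Given a near-optimal unconstrained competitor $\gamma\in C^2([a,b])$ with $\F_2((a,b),g,\lambda,\gamma)\leq \M_2((a,b),g,\lambda)+1$, I will build a competitor in $\D_{2,\lambda}$. Its cost must be at most a constant multiple of $\F_2(\gamma)+1$. Two defects have to be repaired: the range constraint $\mu_1\leq\gamma\leq\mu_2$ and the slope constraint $|\gamma'|\leq\ep_2\lambda$.

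\textbf{Small $\lambda$.} I treat bounded $\lambda$ separately. If $\lambda\leq\lambda_0$, with $\lambda_0$ depending on $\ep_2$, $\mu_1,\mu_2$ and $b-a$, I use a constant competitor $m\in[\mu_1,\mu_2]$. It lies in $\D_{2,\lambda}$ and costs at most $\lambda_0(\mu_2-\mu_1)(b-a)$, which is absorbed by the additive constant. So assume $\lambda$ is large.

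\textbf{Range constraint.} I compose with a smooth truncation: set $\tilde\gamma:=\Phi\circ\gamma$, where $\Phi$ is smooth, nondecreasing and $1$-Lipschitz. It should map $\re$ into $[\mu_1,\mu_2]$, be the identity on $[\mu_1+\delta,\mu_2-\delta]$, and satisfy $|\Phi''|\leq C$ with $\delta$ a fixed fraction of $\mu_2-\mu_1$. (If $\mu_1=\mu_2$, $g$ is constant and the claim is trivial.) Then $|\Phi(\gamma)-g|\leq C|\gamma-g|$, since $g$ takes values in $[\mu_1,\mu_2]$ and $\Phi$ is near the identity there; the factor $C$ handles the shrunken plateau. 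Also
\[
\tilde\gamma''=\Phi'(\gamma)\gamma''+\Phi''(\gamma)(\gamma')^2 .
\]
The term $\lambda^{-1}\int(\gamma')^2\,\Phi''(\gamma)$ is the dangerous one. I plan to control it by $\int|\gamma''|$ plus the fidelity, using the Gagliardo--Nirenberg step of Lemma~\ref{lem:weighted-polynomial} localized to the set where $\gamma$ leaves $[\mu_1+\delta,\mu_2-\delta]$. An alternative is to first mollify $g$ at scale $1/\lambda$, which keeps values in the range.

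\textbf{Slope constraint: mollification at scale $1/\lambda$.} I think the cleanest route is to replace $\gamma$ by the average of the truncated competitor at scale $h=\eta/\lambda$. Concretely, set
\[
\gamma_\lambda:=\tilde\gamma*\rho_h ,
\]
with $\rho$ a fixed symmetric mollifier, after extending reflected across the endpoints. Convexity of the range gives $\gamma_\lambda\in[\mu_1,\mu_2]$. For the slope,
\[
|\gamma_\lambda'|\leq \|\rho'\|_{L^1}\,(\mu_2-\mu_1)/h=C\lambda/\eta ,
\]
which is wrong in the needed direction. So instead I bound
\[
|\gamma_\lambda'(x)|\leq \int|\tilde\gamma'|\rho_h\leq \|\tilde\gamma'\|_{L^\infty}.
\]
This is also not enough, so the slope constraint needs a genuine argument.

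\textbf{Main obstacle: the slope constraint.} I view it as follows. On any interval of length $h=\eta/\lambda$ where $|\gamma'|>\ep_2\lambda$ somewhere, there are two cases. Either $\int|\gamma''|\gtrsim\ep_2\lambda h$ on that interval, which pays cost $\gtrsim 1$ in $\lambda^{-1}\int|\gamma''|$. Or $\gamma'$ stays large throughout, so $\gamma$ changes by $\gtrsim\ep_2\eta$ there. Because the range is bounded, $\gamma$ cannot stay monotone with large slope for long: the set where $|\gamma'|\geq\ep_2\lambda/2$ has measure at most about $(\mu_2-\mu_1)/(\ep_2\lambda)$ per monotone excursion. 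Each excursion away from $[\mu_1,\mu_2]$ costs fidelity, while each turning point costs second-derivative mass $\gtrsim\ep_2$.

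On these bad intervals I replace $\gamma$ by a $C^2$ spline with slope $\leq\ep_2\lambda$. The number of bad intervals is bounded by $C(\F_2+1)$. Each replacement changes the fidelity by at most $\lambda\cdot(\mu_2-\mu_1)\cdot Ch$, which is $O(1)$, and adds $O(1)$ to $\lambda^{-1}\int|\gamma''|$. This yields $\M_2^*\leq C(\M_2+1)$.

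The delicate point is the counting of bad intervals, in particular excluding a long monotone excursion with slope just above $\ep_2\lambda$ inside the range. Such an excursion can have length at most $(\mu_2-\mu_1)/(\ep_2\lambda)$, so it is covered by $O(1/\eta)$ intervals. This per-excursion count is where the dependence on $\ep_2$ and $\mu_2-\mu_1$ enters the constant.
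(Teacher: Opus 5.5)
Your overall architecture matches the paper's: constant competitors for bounded $\lambda$, then a range reduction, then a slope reduction. Both technical steps, however, have genuine problems.

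\textbf{Range step.} With $\delta$ a fixed fraction of $\mu_2-\mu_1$, the inequality $|\Phi(\gamma)-g|\leq C|\gamma-g|$ is false. A smooth $\Phi$ with values in $[\mu_1,\mu_2]$ cannot coincide with the identity on any interval $[\mu_1,\mu_1+\sigma]$, since $\Phi'(\mu_1)=1$ would push $\Phi$ below $\mu_1$. Hence there is $y_0\in(\mu_1,\mu_1+\delta)$ with $\Phi(y_0)\neq y_0$. For $g\equiv y_0$ and $\gamma=g$ you get $\F_2(\gamma)=0$, but $\F_2(\Phi\circ\gamma)=\lambda(b-a)|\Phi(y_0)-y_0|$, which is unbounded in $\lambda$. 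In general the error $\lambda\int_a^b|\Phi(g)-g|$ is of order $\lambda\delta$ times the measure of the set where $g$ lies in the margins, so the margins must have width $O(1/\lambda)$. But then $|\Phi''|\sim\lambda$, and the term $\lambda^{-1}\int\Phi''(\gamma)(\gamma')^2$ can no longer be handled through $\sup|\Phi''|$ and a Gagliardo--Nirenberg bound. That bound would in any case need a global sup bound on $\gamma$, which an unconstrained competitor lacks. A scale-invariant argument is required, for instance integrating by parts against $G$ with $G'=|\Phi''|$ and $\|G\|_\infty\leq\int_{\re}|\Phi''|$ (which can be kept equal to $2$), and then controlling $|\gamma'|$ at the endpoints. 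The paper avoids all this with the hard truncation $\min\{\mu_2,\max\{\mu_1,\gamma\}\}$. On each component of $\{\gamma\notin[\mu_1,\mu_2]\}$ compactly contained in $(a,b)$, Rolle's theorem shows that the two new kinks are paid for by $\int|\gamma''|$ on that component. The at most two boundary components are controlled by the slope bound~(\ref{est:range-slope}), which comes from fidelity and the range.

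\textbf{Slope step.} This is the heart of the proposition, and your proposal gives no working construction here. Replacing $\gamma$ on a bad interval of length $h=\eta/\lambda$ by a function with slope at most $\ep_2\lambda$ is impossible whenever $\gamma$ increases by more than $\ep_2\eta$ across it, because the replacement cannot match $\gamma$ at both ends. You would have to:
\begin{itemize}
\item enlarge to intervals of length $\gtrsim(\mu_2-\mu_1)/(\ep_2\lambda)$ ending at points where $|\gamma'|$ is small;
\item merge overlapping enlargements;
\item keep the values in $[\mu_1,\mu_2]$ and preserve the regularity;
\item estimate fidelity and second-order cost on each merged piece.
\end{itemize}
None of this is carried out. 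Moreover, your counting appeals to excursions away from $[\mu_1,\mu_2]$, which no longer exist after the range step. The count that actually holds, once $\lambda$ is large, is this: each component of $\{|\gamma'|\geq\ep_2\lambda/2\}$ on which $|\gamma'|$ exceeds $\ep_2\lambda$ costs at least $\ep_2\lambda/2$ of $\int|\gamma''|$, and has length at most $2(\mu_2-\mu_1)/(\ep_2\lambda)$.

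The idea that makes the paper's proof clean is the inf-convolution $\gamma_*=\mathcal I_L\gamma$ with $L=\ep_2\lambda$, the largest $L$-Lipschitz function below $\gamma$. It automatically takes values in $[\mu_1,\mu_2]$. On each component of $\{\gamma_*<\gamma\}$ it is affine with slope $L$, or $-L$, or $L$ followed by $-L$, and $\gamma'$ equals the slope of the branch at the interior endpoint from which that branch emanates. This yields $\operatorname{Var}(\gamma_*';(a,b))\leq\int_a^b|\gamma''|$. Since every affine branch has length at most $(\mu_2-\mu_1)/L$, it also yields $\int_a^b|\gamma-\gamma_*|\leq(\mu_2-\mu_1)^2L^{-2}\int_a^b|\gamma''|$. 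A final mollification after constant extension adds at most $2L$ to the second-order cost, i.e.\ at most $2\ep_2$ to $\F_2$. This gives $\M_2^*\leq C(\M_2+1)$ for large $\lambda$.
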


Thus the constraints required by the approximate-energy method are harmless at the level needed in the sequel: at first order they do not alter the minimum at all, while at second order they do not alter its growth class.


\subsection{Technical tools for the comparison of different orders}

The comparison between variational problems of different orders relies on two approximation mechanisms. The first is based on higher-order difference quotients, while the second provides a regularization procedure that gains one derivative with quantitative control of both the fidelity error and the higher-order derivative.

We collect the corresponding tools in this subsection. Most of them are elementary consequences of the calculus of higher-order finite differences; see, for example,~\cite{DitzianTotik87,DeVoreLorentz93}. We nevertheless include the details for the convenience of the reader and in order to keep the argument self-contained. The subsection may be skipped on a first reading.

\subsubsection{Higher-order difference quotients}

Let $(a,b)\subseteq\mathbb R$ be an interval, and let $f:(a,b)\to\mathbb R$ be a function. For every positive integer $k$, we consider the set
\begin{equation*}
    D_k
    :=
    \left\{
        (x,v)\in\mathbb R^2:
        a<x<x+kv<b
    \right\}.
\end{equation*}

The higher-order difference quotients of $f$ are defined recursively by
\begin{equation*}
    \Delta_f^1(x,v)
    :=
    \frac{f(x+v)-f(x)}{v}
    \qquad
    \forall (x,v)\in D_1,
\end{equation*}
and
\begin{equation*}
    \Delta_f^{i+1}(x,v)
    :=
    \frac{\Delta_f^i(x+v,v)-\Delta_f^i(x,v)}{v}
    \qquad
    \forall (x,v)\in D_{i+1}.
\end{equation*}

It is convenient to extend the notation to order zero by setting
$\Delta_f^0(x,v):=f(x)$, so that the same recursive formula remains valid for $i=0$.

We shall use the following standard properties.

\begin{lemma}[Basic properties of higher-order difference quotients]
\label{lemma:diffq-basic}

Let $(a,b)\subseteq\mathbb R$ be an interval, let $f:(a,b)\to\mathbb R$, and let $k$ be a positive integer.

Then the following statements hold.

\begin{enumerate}
\renewcommand{\labelenumi}{(\arabic{enumi})}

\item \emph{(Binomial representation).}
For every $(x,v)\in D_k$,
\begin{equation}
    v^k\Delta_f^k(x,v)
    =
    (-1)^k
    \sum_{i=0}^{k}
    \binom{k}{i}
    (-1)^i
    f(x+iv).
    \label{eqn:Dk-newton}
\end{equation}

\item \emph{(Integral estimate).}
If $f\in L^1((a,b))$, then
\begin{equation}
    \int_a^{b-kv}
    \left|\Delta_f^k(x,v)\right|\,dx
    \leq
    \frac{2^k}{v^k}
    \int_a^b|f(x)|\,dx
    \qquad
    \forall v\in\left(0,\frac{b-a}{k}\right).
    \label{prop:delta-int}
\end{equation}

\item \emph{(Generalized mean-value theorem).}
If $f\in C^k((a,b))$, then for every $(x,v)\in D_k$ there exists
$\xi\in(x,x+kv)$ such that
\begin{equation}
    \Delta_f^k(x,v)
    =
    f^{(k)}(\xi).
    \label{prop:delta-lagrange}
\end{equation}

\end{enumerate}

\end{lemma}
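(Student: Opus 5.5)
The three statements of Lemma~\ref{lemma:diffq-basic} are standard, and I would prove each one separately by induction on $k$, using the recursive definition of $\Delta_f^k$.

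\textbf{Binomial representation.} I would argue by induction on $k$, starting from the case $k=0$ (or $k=1$), which is just the definition. For the inductive step I would multiply the recursion by $v^{k+1}$, obtaining
\begin{equation*}
    v^{k+1}\Delta_f^{k+1}(x,v)=v^k\Delta_f^k(x+v,v)-v^k\Delta_f^k(x,v).
\end{equation*}
Into each term on the right I would insert the inductive formula~(\ref{eqn:Dk-newton}). I would then shift the index in the first sum, so that both sums run over the points $x+iv$, and combine coefficients via Pascal's rule $\binom{k}{i-1}+\binom{k}{i}=\binom{k+1}{i}$. The only care needed is the bookkeeping of signs: the first sum carries $(-1)^k(-1)^{i-1}$, which equals $(-1)^{k+1}(-1)^i$ after the shift, and the second sum picks up the same overall sign $(-1)^{k+1}$. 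The domain condition $(x,v)\in D_{k+1}$ guarantees that all the points $x+iv$ with $0\le i\le k+1$ lie in $(a,b)$.

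\textbf{Integral estimate.} For fixed $v\in(0,(b-a)/k)$ I would integrate the absolute value of~(\ref{eqn:Dk-newton}) over $x\in(a,b-kv)$ and apply the triangle inequality. For each $i$, the change of variables $y=x+iv$ gives
\begin{equation*}
    \int_a^{b-kv}|f(x+iv)|\,dx\le\int_a^b|f(y)|\,dy .
\end{equation*}
Summing the binomial coefficients, $\sum_i\binom{k}{i}=2^k$, and dividing by $v^k$ then yields~(\ref{prop:delta-int}). Strictly speaking this requires $\Delta_f^k(\cdot,v)$ to be measurable, which is clear from the binomial formula.

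\textbf{Generalized mean-value theorem.} I would induct on $k$, but the statement has to be strengthened to the identity
\begin{equation*}
    \Delta_f^k(x,v)=\int_{[0,1]^k} f^{(k)}\bigl(x+v(s_1+\cdots+s_k)\bigr)\,ds_1\cdots ds_k ,
\end{equation*}
valid for $f\in C^k$. The base case is the fundamental theorem of calculus. In the inductive step I would write $\Delta_f^{k+1}(x,v)$ as the difference of the inductive integrals at $x+v$ and at $x$, divided by $v$. I would then apply the fundamental theorem of calculus inside the integral in one additional variable $s_{k+1}$, which is legitimate because $f^{(k)}\in C^1$.

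Once the identity is established, the argument of $f^{(k)}$ ranges over $[x,x+kv]$. Since $f^{(k)}$ is continuous and the integration is against a probability measure, the intermediate value theorem gives a point $\xi$ in the closed interval $[x,x+kv]$ with $\Delta_f^k(x,v)=f^{(k)}(\xi)$.

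\textbf{Main obstacle.} The step requiring care is getting $\xi$ in the open interval $(x,x+kv)$, rather than the closed one. For this I would use the structure of the integral: if the value $f^{(k)}(\xi)$ were attained only at an endpoint and not inside, then $f^{(k)}$ would stay strictly on one side of that value on the whole open interval. Since the push-forward of Lebesgue measure on the cube under $s\mapsto s_1+\cdots+s_k$ has a density that is positive on $(0,k)$, the average would then be strictly different from the endpoint value, a contradiction. Alternatively, I could give the classical proof: the polynomial interpolating $f$ at the $k+1$ nodes $x+iv$ has leading coefficient $\Delta_f^k/k!$, and $k$ applications of Rolle's theorem to $f$ minus this polynomial produce $\xi$ strictly inside the interval.
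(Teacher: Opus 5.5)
Your proposal is correct. Parts (1) and (2) are exactly what the paper means when it calls them immediate from the recursive definition and the binomial representation: induction with Pascal's rule, then the triangle inequality, the translation $y=x+iv$, and $\sum_i\binom{k}{i}=2^k$.

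For (3) the paper gives no argument and simply cites the classical generalized mean-value theorem. Its standard proof is the interpolation-plus-Rolle argument you list as an alternative, so your primary route is genuinely different. It rests on the cube identity
\[
\Delta_f^k(x,v)=\int_{[0,1]^k}f^{(k)}\bigl(x+v(s_1+\cdots+s_k)\bigr)\,ds_1\cdots ds_k .
\]
This route needs continuity of $f^{(k)}$, which is assumed. It also needs an extra step to land in the open interval, and your step is sound. If $f^{(k)}-\Delta_f^k(x,v)$ had no zero in $(x,x+kv)$, it would have a strict constant sign there. The image measure of the cube is absolutely continuous, so it gives no mass to the endpoints. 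Hence the average of $f^{(k)}-\Delta_f^k(x,v)$ could not vanish, a contradiction.

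In exchange, you get an explicit averaging formula that does real work later in the paper. Integrating it in $x$ and using Fubini gives at once
\[
\int_a^b\left|\Delta_f^k(x,v)\right|dx\le\int_a^{b+kv}\left|f^{(k)}(y)\right|dy .
\]
This is Lemma~\ref{lemma:Dk-int-fk}, which the paper proves by a separate induction through Lemma~\ref{lemma:f-k-f'}. The Rolle route, by contrast, only requires $f^{(k)}$ to exist and places $\xi$ in the open interval directly.
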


The first two properties are immediate from the recursive definition and the binomial representation, while the last one is the classical generalized mean-value theorem for finite differences.

We shall also need an integral identity that relates difference quotients of consecutive orders.

\begin{lemma}[Higher-order fundamental theorem of calculus]
\label{lemma:f-k-f'}

Let $[a,b]\subseteq\mathbb R$, let $v>0$, let $k$ be a nonnegative integer, and let
$f\in C^{k+1}([a,b+(k+1)v])$.

Then
\begin{equation}
    v^{k+1}\Delta_f^{k+1}(x,v)
    =
    (k+1)
    \int_0^v
    s^k
    \Delta_{f'}^k(x+s,s)\,ds
    \qquad
    \forall x\in[a,b].
    \label{th:f-k-f'}
\end{equation}

\end{lemma}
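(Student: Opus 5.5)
Induction on $k$, using the recursive definition of $\Delta_f^{k+1}$ and the fundamental theorem of calculus in the variable $x$, with the base case $k=0$ already containing the whole mechanism. For $k=0$ the claim reads
\begin{equation*}
    v\,\Delta_f^1(x,v)=f(x+v)-f(x)=\int_0^v f'(x+s)\,ds
    =\int_0^v s^0\Delta_{f'}^0(x+s,s)\,ds ,
\end{equation*}
which is exactly the fundamental theorem of calculus, since $\Delta^0_{f'}(x+s,s)=f'(x+s)$.

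For the inductive step, assume (\ref{th:f-k-f'}) holds at order $k$ for all $C^{k+1}$ functions and all admissible $x$, and let $f\in C^{k+2}$. Rather than using the recursion in the outer variable, I would derive an identity that is linear and easy to verify. Define
\begin{equation*}
    \Phi(v):=v^{k+1}\Delta_f^{k+1}(x,v)
\end{equation*}
for fixed $x$, with the explicit binomial form from (\ref{eqn:Dk-newton}):
\begin{equation*}
    \Phi(v)=(-1)^{k+1}\sum_{i=0}^{k+1}\binom{k+1}{i}(-1)^i f(x+iv).
\end{equation*}
Then $\Phi(0)=0$, because $\sum_i\binom{k+1}{i}(-1)^i=0$. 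So it suffices to show that
\begin{equation*}
    \Phi'(v)=(k+1)\,v^k\,\Delta_{f'}^k(x+v,v)
\end{equation*}
and integrate from $0$ to $v$.

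To compute this derivative, differentiate termwise:
\begin{equation*}
    \Phi'(v)=(-1)^{k+1}\sum_{i=1}^{k+1}\binom{k+1}{i}(-1)^i\, i\, f'(x+iv).
\end{equation*}
Use $i\binom{k+1}{i}=(k+1)\binom{k}{i-1}$ and substitute $j=i-1$:
\begin{equation*}
    \Phi'(v)=(k+1)(-1)^{k}\sum_{j=0}^{k}\binom{k}{j}(-1)^j f'(x+v+jv).
\end{equation*}
By (\ref{eqn:Dk-newton}) applied to $f'$ at the point $(x+v,v)$, this equals $(k+1)v^k\Delta^k_{f'}(x+v,v)$. Thus the induction hypothesis is not really needed: the binomial representation does everything, and the statement follows directly for every $k$.

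The main obstacle is mostly bookkeeping. First, the sign conventions in (\ref{eqn:Dk-newton}) and the index shift must match exactly. Second, the domains must be admissible: $(x+s,s)$ has to lie in the domain of $\Delta^k_{f'}$ for all $s\in(0,v)$, which holds because $x+s+ks\le x+(k+1)v\le b+(k+1)v$ and $f\in C^{k+1}$ there. Third, $\Phi$ must be $C^1$ on $[0,v]$, which holds since each $f(x+iv)$ is $C^1$ in $v$. The one point to handle with care is the degenerate endpoint $s=0$. There I would interpret the integrand $s^k\Delta^k_{f'}(x+s,s)$ through the polynomial binomial expression, which is continuous up to $s=0$. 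This keeps the integral a genuine Riemann integral of a continuous function.
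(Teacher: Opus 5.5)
Your argument is correct and is essentially the paper's proof. Both rest on the binomial representation (\ref{eqn:Dk-newton}), the identity $i\binom{k+1}{i}=(k+1)\binom{k}{i-1}$, and the vanishing of the alternating binomial sum. The paper applies the fundamental theorem of calculus termwise to $f(x+iv)-f(x)$, while you apply it once to $\Phi$ after checking $\Phi(0)=0$, which is the same computation; as you noticed yourself, the inductive framing is superfluous and can be dropped.
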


\begin{proof}

Using~(\ref{eqn:Dk-newton}), isolating the term corresponding to $i=0$, and exploiting
\begin{equation*}
    \sum_{i=1}^{k+1}
    \binom{k+1}{i}
    (-1)^{i+1}
    =
    1,
\end{equation*}
we obtain
\begin{equation*}
    v^{k+1}\Delta_f^{k+1}(x,v)
    =
    (-1)^k
    \sum_{i=1}^{k+1}
    \binom{k+1}{i}
    (-1)^{i+1}
    [f(x+iv)-f(x)].
\end{equation*}

Since
\begin{equation*}
    f(x+iv)-f(x)
    =
    i\int_0^v f'(x+is)\,ds
\end{equation*}
and
\begin{equation*}
    i\binom{k+1}{i}
    =
    (k+1)\binom{k}{i-1},
\end{equation*}
another application of~(\ref{eqn:Dk-newton}) yields~(\ref{th:f-k-f'}).
\end{proof}

The next estimate will be used repeatedly.

\begin{lemma}[Integral estimate for higher-order difference quotients]
\label{lemma:Dk-int-fk}

Let $[a,b]\subseteq\mathbb R$, let $v>0$, let $k$ be a nonnegative integer, and let
$f\in C^k([a,b+kv])$.

Then
\begin{equation*}
    \int_a^b
    \left|\Delta_f^k(x,v)\right|\,dx
    \leq
    \int_a^{b+kv}
    \left|f^{(k)}(x)\right|\,dx.
\end{equation*}

\end{lemma}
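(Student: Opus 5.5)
The plan is an induction on $k$, driven by Lemma~\ref{lemma:f-k-f'}, which lowers the order of the difference quotient by one at the cost of averaging over the step. For $k=0$ the statement is trivial, since $\Delta_f^0(x,v)=f(x)$ and the right-hand side integrates $|f|$ over the larger interval $[a,b]\subseteq[a,b+0\cdot v]$.

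Assume the statement holds for order $k$ (on every interval and every step), and let $f\in C^{k+1}([a,b+(k+1)v])$. By Lemma~\ref{lemma:f-k-f'},
\begin{equation*}
    |\Delta_f^{k+1}(x,v)|
    \leq
    \frac{k+1}{v^{k+1}}\int_0^v s^k\,|\Delta_{f'}^k(x+s,s)|\,ds.
\end{equation*}
We integrate in $x\in[a,b]$ and use Fubini. For fixed $s\in(0,v)$, the substitution $y=x+s$ gives
\begin{equation*}
    \int_a^b|\Delta_{f'}^k(x+s,s)|\,dx
    =
    \int_{a+s}^{b+s}|\Delta_{f'}^k(y,s)|\,dy.
\end{equation*}
Now $f'\in C^k([a+s,b+s+ks])$, and $b+s+ks\leq b+(k+1)v$. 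The inductive hypothesis, applied to $f'$ on $[a+s,b+s]$ with step $s$, therefore bounds the last integral by
\begin{equation*}
    \int_{a+s}^{b+(k+1)s}|f^{(k+1)}(y)|\,dy
    \leq
    \int_a^{b+(k+1)v}|f^{(k+1)}(y)|\,dy.
\end{equation*}

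It remains to evaluate the weight:
\begin{equation*}
    \frac{k+1}{v^{k+1}}\int_0^v s^k\,ds=1.
\end{equation*}
This gives exactly the claimed inequality at order $k+1$ with constant one, which closes the induction.

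The only delicate point is the bookkeeping of the domains. One must check that for every $s\in(0,v)$ the shifted interval and the points $y+js$ used by $\Delta^k_{f'}(y,s)$ stay inside $[a,b+(k+1)v]$, so that the inductive hypothesis applies with the required regularity. One must also note that Lemma~\ref{lemma:f-k-f'} is used with its regularity assumption on $f$ over exactly this interval. Measurability for Fubini is not an issue, since the integrand is continuous in $(x,s)$.
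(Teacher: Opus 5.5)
Your proposal is correct and follows the paper's proof essentially verbatim. The steps match: induction on $k$, Lemma~\ref{lemma:f-k-f'} to reduce order $k+1$ to order $k$ for $f'$, integration in $x$ with the inductive hypothesis applied to $f'$ on the shifted interval $[a+s,b+s]$ with step $s$, and the normalization $(k+1)\int_0^v s^k\,ds=v^{k+1}$; your explicit domain check $b+(k+1)s\leq b+(k+1)v$ is the bookkeeping the paper leaves implicit.
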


\begin{proof}

We argue by induction on $k$. The case $k=0$ is immediate.

Assume that the estimate holds at order $k$, and let
$g\in C^{k+1}([a,b+(k+1)v])$. From Lemma~\ref{lemma:f-k-f'} we obtain
\begin{equation*}
    v^{k+1}
    \left|\Delta_g^{k+1}(x,v)\right|
    \leq
    (k+1)
    \int_0^v
    s^k
    \left|\Delta_{g'}^k(x+s,s)\right|\,ds.
\end{equation*}

Integrating with respect to $x$ and applying the inductive assumption to $g'$ on the shifted interval $[a+s,b+s]$, for every $s\in(0,v)$, we obtain
\begin{equation*}
    v^{k+1}
    \int_a^b
    \left|\Delta_g^{k+1}(x,v)\right|\,dx
    \leq
    (k+1)
    \int_0^v
    s^k\,ds
    \int_a^{b+(k+1)v}
    \left|g^{(k+1)}(x)\right|\,dx.
\end{equation*}

Since
\begin{equation*}
    (k+1)\int_0^v s^k\,ds=v^{k+1},
\end{equation*}
the conclusion follows.
\end{proof}

\subsubsection{Higher-order regularization}

We now construct an approximation procedure that gains one derivative. Given $f\in C^k([a,b])$, we seek functions $f_\ep\in C^{k+1}([a,b])$ such that the $L^1$ distance from $f$ is of order $\ep^k$, while the $L^1$ norm of the derivative of order $k+1$ is controlled by $\ep^{-1}$ times the $L^1$ norm of the $k$-th derivative. A standard mollification does not provide the required fidelity estimate in terms of the $k$-th derivative alone. The higher-order averaging operator introduced below is designed precisely to produce the cancellations needed to obtain an error of order $\ep^k$.

For functions defined on a sufficiently large interval, we introduce the higher-order averaging operator
\begin{equation}
    [C_{k,\ep}f](x)
    :=
    \frac1{\ep}
    \int_0^\ep
    \left[
        (-1)^{k+1}v^k\Delta_f^k(x,v)
        +
        f(x)
    \right]\,dv.
    \label{defn:Ckep}
\end{equation}

The relevant estimates are collected in the next lemma.

\begin{lemma}[Estimates for higher-order regularization]
\label{lemma:Ckep}

Let $[a,b]\subseteq\mathbb R$, let $\ep>0$, let $k$ be a positive integer, and let
$f\in C^k([a,b+k\ep])$.

Then $C_{k,\ep}f\in C^{k+1}([a,b])$ and
\begin{equation}
    \int_a^b
    \left|
        [C_{k,\ep}f](x)-f(x)
    \right|\,dx
    \leq
    \ep^k
    \int_a^{b+k\ep}
    |f^{(k)}(x)|\,dx,
    \label{th:Cke-fid}
\end{equation}
while
\begin{equation}
    \int_a^b
    \left|
        [C_{k,\ep}f]^{(k+1)}(x)
    \right|\,dx
    \leq
    \frac{2^{k+1}}{\ep}
    \int_a^{b+k\ep}
    |f^{(k)}(x)|\,dx.
    \label{th:Ckep-reg}
\end{equation}

\end{lemma}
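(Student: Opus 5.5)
We will work directly from the definition~(\ref{defn:Ckep}), first rewriting $C_{k,\ep}f$ through the binomial representation~(\ref{eqn:Dk-newton}). Since $(-1)^{k+1}v^k\Delta_f^k(x,v)+f(x)$ equals $\sum_{i=1}^{k}\binom{k}{i}(-1)^{i+1}f(x+iv)$ (the $i=0$ term cancels against $f(x)$), we get
\begin{equation*}
    [C_{k,\ep}f](x)
    =
    \sum_{i=1}^{k}\binom{k}{i}(-1)^{i+1}\frac1{i\ep}\int_x^{x+i\ep}f(y)\,dy .
\end{equation*}
Each summand is the average of $f$ over $[x,x+i\ep]$. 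Since $f\in C^k$, each summand is of class $C^{k+1}$, because the averaging operator gains one derivative. This proves $C_{k,\ep}f\in C^{k+1}([a,b])$.

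For the fidelity estimate~(\ref{th:Cke-fid}), note that
\[
    C_{k,\ep}f-f=\frac{(-1)^{k+1}}{\ep}\int_0^\ep v^k\Delta_f^k(x,v)\,dv .
\]
We integrate in $x$ over $[a,b]$, use Fubini, and apply Lemma~\ref{lemma:Dk-int-fk} for each fixed $v\in(0,\ep)$. Since $b+kv\leq b+k\ep$, this gives
\[
    \int_a^b|C_{k,\ep}f-f|\,dx
    \leq\frac1\ep\int_0^\ep v^k\,dv\int_a^{b+k\ep}|f^{(k)}|
    =\frac{\ep^k}{k+1}\int_a^{b+k\ep}|f^{(k)}| ,
\]
which is even slightly better than~(\ref{th:Cke-fid}).

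For the regularity estimate~(\ref{th:Ckep-reg}), we differentiate the averaged representation $k+1$ times. The $(k+1)$-th derivative of $\frac1{i\ep}\int_x^{x+i\ep}f$ is
\[
    \frac{f^{(k)}(x+i\ep)-f^{(k)}(x)}{i\ep}.
\]
Summing with the weights $\binom{k}{i}\frac{1}{i}\cdot i$ is not directly what we want, so we keep the factor $1/i$ and simply bound
\[
    \int_a^b|f^{(k)}(x+i\ep)-f^{(k)}(x)|\,dx\leq 2\int_a^{b+k\ep}|f^{(k)}|.
\]
This yields
\[
    \int_a^b\bigl|[C_{k,\ep}f]^{(k+1)}\bigr|
    \leq\frac{2}{\ep}\sum_{i=1}^k\binom{k}{i}\frac1i\int_a^{b+k\ep}|f^{(k)}|
    \leq\frac{2(2^k-1)}{\ep}\int_a^{b+k\ep}|f^{(k)}| ,
\]
and $2(2^k-1)\leq 2^{k+1}$, which is~(\ref{th:Ckep-reg}).

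The only delicate points are the algebraic identity rewriting $C_{k,\ep}$ as a signed combination of moving averages, and checking that all evaluation points $x+i\ep$ stay in $[a,b+k\ep]$ where $f$ is $C^k$. Both are routine once the binomial formula~(\ref{eqn:Dk-newton}) is used.
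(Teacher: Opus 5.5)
Your proof is correct and follows the paper's argument step by step. For the fidelity bound you use Fubini together with Lemma~\ref{lemma:Dk-int-fk} for each fixed $v\in(0,\ep)$. For the regularity bound you rewrite $C_{k,\ep}f$ through the binomial formula as a signed combination of the moving averages $\frac{1}{i\ep}\int_x^{x+i\ep}f$, differentiate $k+1$ times, and estimate crudely. The sharper constant $\ep^k/(k+1)$ you note is also present in the paper's computation, which simply drops the factor $1/(k+1)$.
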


\begin{proof}

The proof is divided into the fidelity and regularity estimates.

\subparagraph{\textmd{\textit{Fidelity estimate}}}

From~(\ref{defn:Ckep}) we obtain
\begin{equation*}
    \left|
        [C_{k,\ep}f](x)-f(x)
    \right|
    \leq
    \frac1{\ep}
    \int_0^\ep
    v^k
    \left|\Delta_f^k(x,v)\right|
    \,dv
    \qquad
    \forall x\in[a,b].
\end{equation*}
Integrating with respect to $x$ and reversing the order of integration, Lemma~\ref{lemma:Dk-int-fk} yields
\begin{align*}
    \int_a^b
    \left|
        [C_{k,\ep}f](x)-f(x)
    \right|\,dx
    &\leq
    \frac1{\ep}
    \int_0^\ep
    v^k
    \left(
        \int_a^b
        \left|\Delta_f^k(x,v)\right|\,dx
    \right)
    dv
    \\
    &\leq
    \frac1{\ep}
    \int_0^\ep
    v^k\,dv
    \int_a^{b+k\ep}
    |f^{(k)}(x)|\,dx
    \\
    &\leq
    \ep^k
    \int_a^{b+k\ep}
    |f^{(k)}(x)|\,dx.
\end{align*}
This proves~(\ref{th:Cke-fid}).

\subparagraph{\textmd{\textit{Regularity estimate}}}

Using the binomial representation~(\ref{eqn:Dk-newton}), the term $f(x)$ in (\ref{defn:Ckep}) cancels the term corresponding to $i=0$, and therefore
\begin{equation*}
    (-1)^{k+1}v^k\Delta_f^k(x,v)+f(x)
    =
    \sum_{i=1}^k
    \binom{k}{i}
    (-1)^{i+1}
    f(x+iv).
\end{equation*}
Since
\begin{equation*}
    \int_0^\ep f(x+iv)\,dv
    =
    \frac1i
    \int_x^{x+i\ep}f(y)\,dy,
\end{equation*}
we obtain
\begin{equation*}
    [C_{k,\ep}f](x)
    =
    \sum_{i=1}^k
    \binom{k}{i}
    \frac{(-1)^{i+1}}{\ep i}
    \int_x^{x+i\ep}f(y)\,dy.
\end{equation*}
In particular, $C_{k,\ep}f$ is of class $C^{k+1}$ on $[a,b]$, and
\begin{equation*}
    [C_{k,\ep}f]^{(k+1)}(x)
    =
    \sum_{i=1}^k
    \binom{k}{i}
    \frac{(-1)^{i+1}}{\ep i}
    \left(
        f^{(k)}(x+i\ep)-f^{(k)}(x)
    \right).
\end{equation*}
Consequently,
\begin{align*}
    \int_a^b
    \left|
        [C_{k,\ep}f]^{(k+1)}(x)
    \right|\,dx
    &\leq
    \frac1{\ep}
    \sum_{i=1}^k
    \binom{k}{i}\frac1i
    \int_a^b
    \left(
        |f^{(k)}(x+i\ep)|
        +
        |f^{(k)}(x)|
    \right)
    dx
    \\
    &\leq
    \frac2{\ep}
    \left(
        \sum_{i=1}^k\binom{k}{i}
    \right)
    \int_a^{b+k\ep}
    |f^{(k)}(x)|\,dx
    \\
    &\leq
    \frac{2^{k+1}}{\ep}
    \int_a^{b+k\ep}
    |f^{(k)}(x)|\,dx.
\end{align*}
This proves~(\ref{th:Ckep-reg}) and completes the proof.
\end{proof}

The previous lemma applies to functions defined on an interval larger than the one where the approximation is needed. We therefore complement it with a simple extension result.

\begin{lemma}[Half-line extension of order $k$]\label{lemma:extender}

Let $[a,b]\subseteq\mathbb R$ be an interval, let $k$ be a positive integer, and let $f\in C^k([a,b])$.

Then there exists a function $f_*\in C^k([a,+\infty))$ such that
\begin{equation*}
    f_*(x)=f(x)
    \qquad
    \forall x\in[a,b],
\end{equation*}
and
\begin{equation}
    \int_a^{+\infty}
    \left|f_*^{(k)}(x)\right|\,dx
    \leq
    2\int_a^b
    \left|f^{(k)}(x)\right|\,dx.
    \label{est:Ek-reg}
\end{equation}

\end{lemma}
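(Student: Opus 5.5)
We will extend $f$ beyond $b$ by a polynomial of degree at most $k$, chosen so that the $k$-th derivative of the extension decays to zero on a bounded interval and vanishes afterwards. A Taylor polynomial of degree $k-1$ at $b$ alone gives a $C^{k-1}$ extension whose $k$-th derivative jumps from $f^{(k)}(b)$ to $0$, so $C^k$ regularity fails. We therefore prescribe the $k$-th derivative directly. Set $L:=b-a$ and define $h:[a,+\infty)\to\re$ by $h(x):=f^{(k)}(x)$ on $[a,b]$, and by
\begin{equation*}
    h(x):=f^{(k)}(b)\,\eta\!\left(\frac{x-b}{L}\right)
    \qquad
    \forall x\geq b,
\end{equation*}
where $\eta\in C^0([0,+\infty))$ is a fixed nonincreasing function with $\eta(0)=1$, $\eta=0$ on $[1,+\infty)$, and $\int_0^1\eta=1/2$ (for instance $\eta(s)=(1-s)^+$). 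Then $h$ is continuous on $[a,+\infty)$ with compact support. We define $f_*$ as the unique solution of $f_*^{(k)}=h$ with $f_*^{(j)}(a)=f^{(j)}(a)$ for $j=0,\ldots,k-1$, that is, $f_*$ is $k$-fold integration of $h$ starting from the Taylor data of $f$ at $a$. Uniqueness of this initial value problem on $[a,b]$ gives $f_*=f$ there, and $f_*\in C^k([a,+\infty))$ because $h$ is continuous.

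It remains to estimate the $L^1$ norm of $h$. We have
\begin{equation*}
    \int_a^{+\infty}|h|
    =
    \int_a^b|f^{(k)}|
    +
    \frac L2\,|f^{(k)}(b)|,
\end{equation*}
so we need $\frac L2|f^{(k)}(b)|\leq\int_a^b|f^{(k)}|$. This is the main obstacle: $|f^{(k)}(b)|$ is not controlled by the $L^1$ norm of $f^{(k)}$ in general. The way around it is to let the decay scale depend on $f$. We keep the same construction but replace $L$ by a length $\ell\in(0,+\infty)$ chosen so that $\frac{\ell}{2}|f^{(k)}(b)|\leq\int_a^b|f^{(k)}|$. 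If $f^{(k)}(b)\neq0$ we take $\ell:=2\int_a^b|f^{(k)}|/|f^{(k)}(b)|$, which is positive since $f^{(k)}$ is continuous and nonzero near $b$. If $f^{(k)}(b)=0$ we extend $h$ by zero. In both cases the tail contributes at most $\int_a^b|f^{(k)}|$, and (\ref{est:Ek-reg}) follows with constant $2$.

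Finally, the lemma only requires a bound on the $k$-th derivative, and the lower-order derivatives of $f_*$ on $[b,+\infty)$ may grow like polynomials. For example, $f_*$ becomes a polynomial of degree $k-1$ beyond $b+\ell$. No control of this growth is needed, so the construction is complete. The only delicate point is the scale choice $\ell$, which makes the decay slow enough when $f^{(k)}(b)$ is small relative to $\int_a^b|f^{(k)}|$ and fast enough when it is large.
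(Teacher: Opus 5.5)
Your proof is correct. The overall scheme matches the paper's: you prescribe $f_*^{(k)}$ as a continuous, compactly supported extension $h$ of $f^{(k)}$ to $[a,+\infty)$ and integrate $k$ times with the Taylor data of $f$. You match the data at $a$ and the paper at $b$, which makes no difference.

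The two routes differ in how the tail of $h$ is paid for. The paper reflects $f^{(k)}$ evenly across $b$ onto $[b,2b-a]$ and multiplies by a fixed cutoff. The tail is then a damped copy of $f^{(k)}$, so its $L^1$ norm is bounded by $\int_a^b|f^{(k)}|$ automatically. That argument uses no pointwise information, the map $f\mapsto f_*$ is linear, and $f_*^{(k)}$ is supported in the fixed interval $[a,2b-a]$.

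You instead anchor the tail on the single value $f^{(k)}(b)$ and compensate by letting the length scale $\ell$ depend on $f$. Your construction is therefore nonlinear, and the support of $f_*^{(k)}$ depends on $f$. Neither point matters for the lemma or for its use in Proposition~\ref{prop:Ckep}, which only needs the global $L^1$ bound on the half-line.

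One comment on how you describe the obstacle. The tail contributes exactly $\frac{\ell}{2}|f^{(k)}(b)|$, so every $\ell$ in $\bigl(0,2\int_a^b|f^{(k)}|/|f^{(k)}(b)|\bigr]$ works. Slow decay is never required, and your choice is simply the largest admissible scale. Taking $\ell$ smaller shows that your construction even yields the constant $1+\delta$, for every $\delta>0$, in place of $2$.
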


\begin{proof}

The argument is standard. We first extend $f^{(k)}$ from $[a,b]$ to $[a,2b-a]$ by reflection, setting
\begin{equation*}
    g(x)
    :=
    \begin{cases}
        f^{(k)}(x) & \text{if }x\in[a,b],\\
        f^{(k)}(2b-x) & \text{if }x\in[b,2b-a].
    \end{cases}
\end{equation*}

Let $\theta:\mathbb R\to[0,1]$ be a smooth cut-off function such that
$\theta(x)=1$ for $x\leq b$ and $\theta(x)=0$ for $x\geq2b-a$. We define
\begin{equation*}
    \widehat g(x)
    :=
    \begin{cases}
        g(x)\theta(x) & \text{if }x\in[a,2b-a],\\
        0 & \text{if }x\geq2b-a.
    \end{cases}
\end{equation*}
Then $\widehat g$ is continuous on $[a,+\infty)$ and
\begin{equation}
    \int_a^{+\infty}
    |\widehat g(x)|\,dx
    \leq
    2\int_a^b
    |f^{(k)}(x)|\,dx.
    \label{est:gk}
\end{equation}

We finally define $f_*$ as the unique function satisfying
\begin{equation*}
    f_*^{(k)}=\widehat g
\end{equation*}
on $[a,+\infty)$ together with
\begin{equation*}
    f_*^{(j)}(b)=f^{(j)}(b)
    \qquad
    \forall j=0,\ldots,k-1.
\end{equation*}
Then $f_*=f$ on $[a,b]$, and~(\ref{est:Ek-reg}) follows from~(\ref{est:gk}).
\end{proof}

We can now combine the extension and regularization procedures into the approximation result that will be used in the comparison of consecutive orders.

\begin{prop}[Approximation of order $k$]\label{prop:Ckep}

Let $[a,b]\subseteq\mathbb R$ be an interval, let $k$ be a positive integer, and let $f\in C^k([a,b])$.

Then for every $\ep>0$ there exists a function
$f_\ep\in C^{k+1}([a,b])$ such that
\begin{equation}
    \int_a^b
    |f_\ep(x)-f(x)|\,dx
    \leq
    2\ep^k
    \int_a^b
    |f^{(k)}(x)|\,dx,
    \label{th:fep-fid}
\end{equation}
and
\begin{equation}
    \int_a^b
    |f_\ep^{(k+1)}(x)|\,dx
    \leq
    \frac{2^{k+2}}{\ep}
    \int_a^b
    |f^{(k)}(x)|\,dx.
    \label{th:fep-reg}
\end{equation}

\end{prop}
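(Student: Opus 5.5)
The plan is to combine Lemma~\ref{lemma:extender} with Lemma~\ref{lemma:Ckep}. We first extend $f$ to a function $f_*\in C^k([a,+\infty))$ that coincides with $f$ on $[a,b]$. By~(\ref{est:Ek-reg}), this extension satisfies
\begin{equation*}
    \int_a^{+\infty}|f_*^{(k)}(x)|\,dx
    \leq
    2\int_a^b|f^{(k)}(x)|\,dx.
\end{equation*}
In particular, $f_*\in C^k([a,b+k\ep])$ for every $\ep>0$, so the higher-order averaging operator~(\ref{defn:Ckep}) can be applied to $f_*$ on $[a,b]$. We then define
\begin{equation*}
    f_\ep:=[C_{k,\ep}f_*]\big|_{[a,b]}.
\end{equation*}

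Lemma~\ref{lemma:Ckep} gives $f_\ep\in C^{k+1}([a,b])$. For fidelity, (\ref{th:Cke-fid}) applied to $f_*$, together with $f_*=f$ on $[a,b]$, yields
\begin{equation*}
    \int_a^b|f_\ep-f|\,dx
    \leq
    \ep^k\int_a^{b+k\ep}|f_*^{(k)}|\,dx
    \leq
    2\ep^k\int_a^b|f^{(k)}|\,dx,
\end{equation*}
which is~(\ref{th:fep-fid}). For regularity, (\ref{th:Ckep-reg}) gives
\begin{equation*}
    \int_a^b|f_\ep^{(k+1)}|\,dx
    \leq
    \frac{2^{k+1}}{\ep}\int_a^{b+k\ep}|f_*^{(k)}|\,dx
    \leq
    \frac{2^{k+2}}{\ep}\int_a^b|f^{(k)}|\,dx,
\end{equation*}
which is~(\ref{th:fep-reg}).

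The only point that needs care is the one that makes the bounds independent of $\ep$. The integrals in Lemma~\ref{lemma:Ckep} run over $[a,b+k\ep]$, which may extend far beyond $b$ when $\ep$ is large. We handle this by bounding them by the integral over the whole half-line $[a,+\infty)$, and Lemma~\ref{lemma:extender} controls that integral by twice the original one. For this we only need the extension to be $C^k$ on the half-line, and its construction provides this. The factor $2$ it introduces accounts exactly for the constants $2\ep^k$ and $2^{k+2}/\ep$ in the statement. No further obstacle arises.
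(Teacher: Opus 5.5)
Your proposal is correct and coincides with the paper's proof. Both extend $f$ to the half-line via Lemma~\ref{lemma:extender} and set $f_\ep:=C_{k,\ep}f_*$. Both then combine (\ref{th:Cke-fid}) and (\ref{th:Ckep-reg}) with (\ref{est:Ek-reg}), bounding the integrals over $[a,b+k\ep]$ by the integral over $[a,+\infty)$ so that the constants do not depend on $\ep$.
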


\begin{proof}

Let $f_*\in C^k([a,+\infty))$ be the extension provided by Lemma~\ref{lemma:extender}. For every $\ep>0$, we set
\begin{equation*}
    f_\ep
    :=
    C_{k,\ep}f_*.
\end{equation*}

Since $f_*$ is defined on the whole half-line, Lemma~\ref{lemma:Ckep} applies on $[a,b]$. The fidelity estimate~(\ref{th:fep-fid}) follows from~(\ref{th:Cke-fid}) and~(\ref{est:Ek-reg}), while the regularity estimate~(\ref{th:fep-reg}) follows from~(\ref{th:Ckep-reg}) and~(\ref{est:Ek-reg}).
\end{proof}

The technical preparation is now complete. We turn to the comparison between the variational problems corresponding to different differentiation orders.


\subsection{Comparison of different orders}

We now compare the variational problems corresponding to consecutive differentiation orders. The two directions are of a different nature. Passing from order $k$ to order $k+1$ is based on the approximation procedure developed above, while the converse direction relies on an interpolation estimate and leads naturally to a dyadic decomposition.

\subsubsection{Increasing the differentiation order}

We begin with the easier direction. Good competitors for the minimum problem of order $k+1$ can be obtained by regularizing competitors for the problem of order $k$. Proposition~\ref{prop:Ckep} provides precisely the required approximation.

\begin{prop}[From order $k$ to order $k+1$]\label{prop:k+1<k}

Let $(a,b)\subseteq\re$ be an interval, let $k$ be a positive integer, and let $g\in L^1((a,b))$ be a function.

Then it turns out that
\begin{equation}
    \M_{k+1}((a,b),g,\lambda)
    \leq
    2(2^{k+1}+1)\M_k((a,b),g,\lambda)
    \qquad
    \forall\lambda>0,
    \label{th:k+1<k}
\end{equation}
and in particular
$\Sp_k^\varphi((a,b))\subseteq\Sp_{k+1}^\varphi((a,b))$ for every nondecreasing weight function $\varphi:(0,+\infty)\to[1,+\infty)$.
\end{prop}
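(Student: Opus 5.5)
The plan is to take an almost-optimal competitor for the problem of order $k$ and regularize it with Proposition~\ref{prop:Ckep}, choosing the parameter $\ep$ at the wavelength scale $1/\lambda$. Fix $\lambda>0$ and $\eta>0$. Choose $\gamma\in C^k([a,b])$ with
\[
\F_k((a,b),g,\lambda,\gamma)\leq\M_k((a,b),g,\lambda)+\eta .
\]
Write
\[
R:=\int_a^b|\gamma^{(k)}(x)|\,dx,
\qquad
D:=\int_a^b|\gamma(x)-g(x)|\,dx,
\]
so that $\lambda^{1-k}R+\lambda D\leq\M_k+\eta$. If $(a,b)$ is unbounded, I would first work on bounded subintervals, or note that Proposition~\ref{prop:Ckep} only uses the half-line extension, so the argument is unchanged. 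In any case I assume $R<+\infty$, since otherwise $\F_k=+\infty$ and there is nothing to prove.

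Apply Proposition~\ref{prop:Ckep} to $f=\gamma$ with $\ep:=1/\lambda$. This gives $\gamma_\ep\in C^{k+1}([a,b])$ satisfying two estimates. The fidelity estimate is
\[
\int_a^b|\gamma_\ep-\gamma|\,dx\leq 2\lambda^{-k}R .
\]
The regularity estimate is
\[
\int_a^b|\gamma_\ep^{(k+1)}|\,dx\leq 2^{k+2}\lambda R .
\]
Use $\gamma_\ep$ as a competitor in $\M_{k+1}$. The regularity term becomes
\[
\lambda^{-k}\int_a^b|\gamma_\ep^{(k+1)}|\,dx\leq 2^{k+2}\lambda^{1-k}R .
\]
By the triangle inequality, the fidelity term satisfies
\[
\lambda\int_a^b|\gamma_\ep-g|\,dx\leq\lambda D+2\lambda^{1-k}R .
\]
Summing, we get
\[
\F_{k+1}((a,b),g,\lambda,\gamma_\ep)\leq (2^{k+2}+2)\lambda^{1-k}R+\lambda D\leq 2(2^{k+1}+1)\bigl(\M_k+\eta\bigr).
\]
Letting $\eta\to0^+$ yields~(\ref{th:k+1<k}). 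The inclusion $\Sp_k^\varphi\subseteq\Sp_{k+1}^\varphi$ then follows immediately by dividing by $\varphi(\lambda)$ and taking the supremum.

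The only delicate point is the choice $\ep=1/\lambda$. It balances the $\ep^k$ gain in fidelity against the $\ep^{-1}$ loss in regularity, so that both contributions become multiples of the same quantity $\lambda^{1-k}R$. There is no real obstacle beyond this, because the heavy lifting, namely the cancellation producing the $\ep^k$ fidelity error, is already contained in Lemma~\ref{lemma:Ckep} and Proposition~\ref{prop:Ckep}.
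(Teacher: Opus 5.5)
Your proposal is correct and follows essentially the same route as the paper: regularize a competitor of order $k$ via Proposition~\ref{prop:Ckep} with $\ep=1/\lambda$, then use the triangle inequality to get $\F_{k+1}((a,b),g,\lambda,\gamma_{1/\lambda})\leq 2(2^{k+1}+1)\F_k((a,b),g,\lambda,\gamma)$. The only difference is cosmetic: you start from an $\eta$-almost minimizer, while the paper takes the infimum over all competitors at the end. Your aside on unbounded intervals is unnecessary, since the paper's setting with competitors in $C^k([a,b])$ implicitly takes the interval bounded.
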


\begin{proof}

For every function $\gamma\in C^k([a,b])$, and for every $\ep>0$, we consider the function $\gamma_\ep\in C^{k+1}([a,b])$ provided by Proposition~\ref{prop:Ckep}. Since
\begin{multline*}
    \F_{k+1}((a,b),g,\lambda,\gamma_\ep)
    =
    \frac{1}{\lambda^k}
    \int_a^b
    \bigl|\gamma_\ep^{(k+1)}(x)\bigr|\,dx
    +
    \lambda
    \int_a^b
    |\gamma_\ep(x)-g(x)|\,dx
    \\
    \leq
    \frac{1}{\lambda^k}
    \int_a^b
    \bigl|\gamma_\ep^{(k+1)}(x)\bigr|\,dx
    +
    \lambda
    \int_a^b
    |\gamma_\ep(x)-\gamma(x)|\,dx
    +
    \lambda
    \int_a^b
    |\gamma(x)-g(x)|\,dx,
\end{multline*}
from (\ref{th:fep-reg}) and (\ref{th:fep-fid}) we obtain that
\begin{eqnarray*}
    \F_{k+1}((a,b),g,\lambda,\gamma_\ep)
    & \leq &
    \frac{1}{\lambda^{k}}
    \cdot
    \frac{2^{k+2}}{\ep}
    \int_a^b
    \bigl|\gamma^{(k)}(x)\bigr|\,dx
    \\
    & &
    \mbox{}+
    \lambda\cdot 2\ep^k
    \int_a^b
    \bigl|\gamma^{(k)}(x)\bigr|\,dx
    +
    \lambda
    \int_a^b
    |\gamma(x)-g(x)|\,dx.
\end{eqnarray*}

Setting $\ep:=1/\lambda$, we conclude that
\begin{eqnarray*}
    \F_{k+1}((a,b),g,\lambda,\gamma_{1/\lambda})
    & \leq &
    2(2^{k+1}+1)
    \cdot
    \frac{1}{\lambda^{k-1}}
    \int_a^b
    \bigl|\gamma^{(k)}(x)\bigr|\,dx
    \\
    & &
    \mbox{}+
    \lambda
    \int_a^b
    |\gamma(x)-g(x)|\,dx
    \\
    & \leq &
    2(2^{k+1}+1)
    \cdot
    \F_k((a,b),g,\lambda,\gamma).
\end{eqnarray*}

If we consider the infimum over all admissible functions $\gamma$, we obtain (\ref{th:k+1<k}).
\end{proof}


\subsubsection{Decreasing the differentiation order}

We now turn to the converse comparison. A competitor for the variational problem of order $k+1$ is of course also admissible for the problem of order $k$, but the functional of order $k+1$ does not directly control its $k$-th derivative.

The first step is therefore an interpolation estimate which bounds the $L^1$ norm of the $k$-th derivative in terms of the $L^1$ norm of the derivative of order $k+1$ and the oscillation of the function. When applied to competitors at different scales, this estimate leads naturally to a dyadic decomposition.

The resulting multiscale estimate is precisely where the distinction between the first and the higher orders appears. For $k\geq2$, the dyadic contributions carry a summable geometric weight, whereas for $k=1$ no such decay is available and the contributions accumulate over the dyadic scales.

The interpolation estimate needed for the first step is in the spirit of classical Glaeser and Gagliardo--Nirenberg inequalities.

\begin{lemma}[Interpolation inequality]\label{lemma:Dk}

Let $(a,b)\subseteq\re$ be an interval, and let $k$ be a positive integer.

Then for every $f\in C^{k+1}([a,b])$, and every real number $v\in(0,(b-a)/(2k)]$, one has
\begin{equation*}
    \int_a^b
    \left|f^{(k)}(x)\right|\,dx
    \leq
    \frac{2^{k+1}}{v^k}
    \osc_1((a,b),f)
    +
    2kv
    \int_a^b
    \left|f^{(k+1)}(x)\right|\,dx,
\end{equation*}
where $\osc_1((a,b),f)$ is defined according to (\ref{defn:osc-1}).

\end{lemma}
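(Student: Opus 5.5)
We will compare $f^{(k)}$ with the $k$-th order difference quotient of $f$ at step $v$. We then bound that quotient by the $L^1$-oscillation, using the integral estimate~(\ref{prop:delta-int}), and bound the error by the $(k+1)$-th derivative. Since both $f^{(k)}$ and $\Delta_f^k$ are unchanged when a constant is subtracted from $f$, we may replace $f$ by $f-m$, where $m$ realizes the minimum in~(\ref{defn:osc-1}). Hence we can assume $\int_a^b|f|\,dx=\osc_1((a,b),f)$.

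The main obstacle is that $\Delta_f^k(x,v)$ is only defined for $x\in(a,b-kv)$, while we need $f^{(k)}$ on all of $(a,b)$. To handle this we split $(a,b)$ into two halves: the left half $(a,m_0)$ and the right half $(m_0,b)$, where $m_0=(a+b)/2$. The condition $v\leq(b-a)/(2k)$ is exactly what guarantees $kv\leq(b-a)/2$. On the left half we use forward differences, which are defined there because $x+kv\leq m_0+kv\leq b$. On the right half we use backward differences $\Delta_f^k(x-kv,v)$, which are symmetric and defined because $x-kv\geq a$.

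On the left half we apply the generalized mean value theorem~(\ref{prop:delta-lagrange}). For each $x$ there is $\xi\in(x,x+kv)$ with $\Delta_f^k(x,v)=f^{(k)}(\xi)$, and therefore
\[
|f^{(k)}(x)|\leq|\Delta_f^k(x,v)|+\int_x^{x+kv}|f^{(k+1)}(y)|\,dy .
\]
We integrate over $x\in(a,m_0)$. For the first term we use~(\ref{prop:delta-int}) on the full interval, which gives at most $2^k v^{-k}\int_a^b|f|$. For the second term we apply Fubini: each $y$ is counted over an $x$-set of length at most $kv$, so this term contributes at most $kv\int_a^b|f^{(k+1)}|$.

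The right half is treated by the mirror argument and gives the same two bounds. Adding the two halves yields exactly
\[
\frac{2^{k+1}}{v^k}\osc_1((a,b),f)+2kv\int_a^b|f^{(k+1)}|\,dx .
\]
The only point that needs care is the reflected version of~(\ref{prop:delta-int}) on the right half. We plan to obtain it by applying the left-half statement to $x\mapsto f(a+b-x)$. This transformation leaves $\osc_1$ and the $L^1$ norms of all derivatives unchanged, so the constants are the same.
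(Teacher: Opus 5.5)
Your proof is correct and follows essentially the same argument as the paper. You compare $f^{(k)}$ with $\Delta_f^k(x,v)$ through the generalized mean-value theorem, bound the difference quotients by (\ref{prop:delta-int}), bound the error term by Fubini, and use the reflection $x\mapsto f(a+b-x)$ for the remaining part of the interval. The only differences are cosmetic: you split $(a,b)$ at the midpoint instead of covering it by the overlapping intervals $[a,b-kv]$ and $[a+kv,b]$, and you subtract the optimal constant $m$ at the start rather than taking the infimum over $m$ at the end.
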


\begin{proof}

Let us consider the $k$-th order difference quotient $\Delta_f^k(x,v)$ introduced above and defined whenever $a\leq x<x+kv\leq b$.

For every admissible value of $x$ and $v$, let us consider the number $\xi(x,v)$ for which (\ref{prop:delta-lagrange}) holds. Since $|\xi(x,v)-x|\leq kv$, from the standard equality
\begin{equation*}
    f^{(k)}(x)
    =
    f^{(k)}(\xi(x,v))
    +
    \int_{\xi(x,v)}^{x}
    f^{(k+1)}(s)\,ds
    =
    \Delta_f^k(x,v)
    +
    \int_{\xi(x,v)}^{x}
    f^{(k+1)}(s)\,ds
\end{equation*}
we obtain that
\begin{equation*}
    \left|f^{(k)}(x)\right|
    \leq
    \left|\Delta_f^k(x,v)\right|
    +
    \int_x^{x+kv}
    \left|f^{(k+1)}(s)\right|\,ds
    \qquad
    \forall x\in[a,b-kv],
\end{equation*}
and hence
\begin{equation}
    \int_a^{b-kv}
    \left|f^{(k)}(x)\right|\,dx
    \leq
    \int_a^{b-kv}
    \left|\Delta_f^k(x,v)\right|\,dx
    +
    \int_a^{b-kv}
    dx
    \int_x^{x+kv}
    \left|f^{(k+1)}(s)\right|\,ds.
    \label{est:delta-k}
\end{equation}

Let us estimate the two integrals in the right-hand side. As for the integral of the difference quotient, we can apply directly (\ref{prop:delta-int}). As for the double integral, after reversing the order of integration, we observe that for every fixed $s\in[a,b]$ the corresponding section of the integration region in the $x$-variable has length at most $kv$, and therefore
\begin{equation}
    \int_a^{b-kv}
    dx
    \int_x^{x+kv}
    \left|f^{(k+1)}(s)\right|\,ds
    \leq
    kv
    \int_a^b
    \left|f^{(k+1)}(s)\right|\,ds.
    \label{est:delta-k-2}
\end{equation}

Plugging (\ref{prop:delta-int}) and (\ref{est:delta-k-2}) into (\ref{est:delta-k}) we conclude that
\begin{equation}
    \int_a^{b-kv}
    \left|f^{(k)}(x)\right|\,dx
    \leq
    \frac{2^k}{v^k}
    \int_a^b
    |f(x)|\,dx
    +
    kv
    \int_a^b
    \left|f^{(k+1)}(x)\right|\,dx.
    \label{est:delta-k-dir}
\end{equation}

By applying the same estimate (\ref{est:delta-k-dir}) to the function $x\mapsto f(b+a-x)$ we obtain that
\begin{equation}
    \int_{a+kv}^{b}
    \left|f^{(k)}(x)\right|\,dx
    \leq
    \frac{2^k}{v^k}
    \int_a^b
    |f(x)|\,dx
    +
    kv
    \int_a^b
    \left|f^{(k+1)}(x)\right|\,dx.
    \label{est:delta-k-rev}
\end{equation}

When $kv\leq(b-a)/2$, as we assumed in the statement, the two intervals $[a,b-kv]$ and $[a+kv,b]$ cover the whole interval $[a,b]$. Therefore, by adding (\ref{est:delta-k-dir}) and (\ref{est:delta-k-rev}) we deduce that
\begin{equation*}
    \int_a^b
    \left|f^{(k)}(x)\right|\,dx
    \leq
    \frac{2^{k+1}}{v^k}
    \int_a^b
    |f(x)|\,dx
    +
    2kv
    \int_a^b
    \left|f^{(k+1)}(x)\right|\,dx.
\end{equation*}

Now we observe that the function $f(x)-m$ satisfies an analogous estimate for every $m\in\re$, and we conclude by considering the infimum over all possible choices of $m$.
\end{proof}


We now apply the interpolation inequality to differences of competitors chosen at consecutive dyadic scales. This yields the quantitative comparison between minimum values that lies at the core of the downward argument.

\begin{prop}[From order $k+1$ to order $k$ -- Estimates for minima]\label{prop:k<k+1}

Let $(a,b)\subseteq\re$ be an interval, let $k$ be a positive integer, and let $g\in L^1((a,b))$ be a function.

Then, for every $\lambda\geq 2k/(b-a)$, one has
\begin{eqnarray}
    \M_k((a,b),g,\lambda)
    & \leq &
    2^{2k+2}
    \sum_{j=0}^{N(\lambda)}
    \frac{1}{2^{(k-1)j}}
    \M_{k+1}
    \left(
        (a,b),
        g,
        \frac{\lambda}{2^j}
    \right)
    \nonumber
    \\
    & &
    \mbox{}+
    \frac{k2^{k+3}}{b-a}
    \osc_1((a,b),g),
    \label{th:k<k+1}
\end{eqnarray}
where
\begin{equation*}
    N(\lambda)
    :=
    \max
    \left\{
        i\geq0:
        \frac{\lambda}{2^i}
        \geq
        \frac{2k}{b-a}
    \right\}
    =
    \left\lfloor
        \log_2
        \frac{\lambda(b-a)}{2k}
    \right\rfloor.
\end{equation*}

\end{prop}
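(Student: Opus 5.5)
Fix $\lambda\ge 2k/(b-a)$ and set $\lambda_j:=\lambda/2^j$ for $j=0,\dots,N:=N(\lambda)$, so that $\lambda_N\ge 2k/(b-a)>\lambda_{N+1}$. For each $j$ we pick a competitor $\gamma_j\in C^{k+1}([a,b])$ with $\F_{k+1}((a,b),g,\lambda_j,\gamma_j)\le 2\M_{k+1}((a,b),g,\lambda_j)$. The factor $2$ can be removed by a limiting argument, or it is absorbed into the constants; if $\M_{k+1}=0$ we take an $\eta$-almost minimizer and let $\eta\to0$ at the end. We also set $\gamma_{N+1}:=m$, a constant realizing $\osc_1((a,b),g)$. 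The competitor for the order-$k$ problem will be $\gamma_0$ itself. Its fidelity term is bounded by $\lambda\|\gamma_0-g\|_{L^1}\le\F_{k+1}(\ldots,\lambda_0,\gamma_0)$, so everything reduces to estimating $\lambda^{-(k-1)}\|\gamma_0^{(k)}\|_{L^1}$.

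We telescope
\[
\gamma_0^{(k)}=\sum_{j=0}^{N}\bigl(\gamma_j-\gamma_{j+1}\bigr)^{(k)},
\]
using that $\gamma_{N+1}^{(k)}=0$. To each difference $f_j:=\gamma_j-\gamma_{j+1}$ with $j\le N-1$ we apply Lemma~\ref{lemma:Dk} with $v=1/\lambda_j$; this is admissible because $1/\lambda_j\le 1/\lambda_N\le (b-a)/(2k)$. Since $\osc_1(f_j)\le\|f_j\|_{L^1}\le\|\gamma_j-g\|_{L^1}+\|\gamma_{j+1}-g\|_{L^1}$, we get
\[
\|f_j^{(k)}\|_{L^1}\le 2^{k+1}\lambda_j^k\bigl(\|\gamma_j-g\|_1+\|\gamma_{j+1}-g\|_1\bigr)+\frac{2k}{\lambda_j}\bigl(\|\gamma_j^{(k+1)}\|_1+\|\gamma_{j+1}^{(k+1)}\|_1\bigr).
\]
Now $\lambda_j^k\|\gamma_j-g\|_1\le\lambda_j^{k-1}\F_{k+1}(\lambda_j)$ and $\lambda_j^{-1}\|\gamma_j^{(k+1)}\|_1\le\lambda_j^{k-1}\F_{k+1}(\lambda_j)$. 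Using $\lambda_{j+1}=\lambda_j/2$, the same bounds hold for the index $j+1$ up to factors $2^{k}$. Hence
\[
\lambda^{-(k-1)}\|f_j^{(k)}\|_1\lesssim 2^{-(k-1)j}\bigl(\F_{k+1}(\lambda_j)+\F_{k+1}(\lambda_{j+1})\bigr),
\]
with constants of size $2^{2k+1}$ or so. Summing over $j$ and shifting the index produces the geometric weights $2^{-(k-1)j}$ in (\ref{th:k<k+1}).

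The last step $j=N$, with $f_N=\gamma_N-m$, is where the oscillation term arises, and this is the step to watch. We apply Lemma~\ref{lemma:Dk} with $v=1/\lambda_N$ and use $\osc_1(\gamma_N)\le\|\gamma_N-g\|_1+\osc_1(g)$. The $\gamma_N$ contributions are treated as above. The remaining term is
\[
\lambda^{-(k-1)}2^{k+1}\lambda_N^k\osc_1(g)=2^{k+1}2^{-(k-1)N}\lambda_N\osc_1(g).
\]
Since $\lambda_N<4k/(b-a)$ and $2^{-(k-1)N}\le1$, this is at most $k2^{k+3}\osc_1(g)/(b-a)$, which is exactly the extra term in the statement. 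The main obstacle is therefore only bookkeeping: keeping the powers of $2$ so that the final constants are $2^{2k+2}$ and $k2^{k+3}/(b-a)$. Each $\M_{k+1}(\lambda_j)$ appears at most twice, once from the pair $(j-1,j)$ and once from $(j,j+1)$, each time with weight at most $2^{2k+1}2^{-(k-1)j}$. Together with the term from $\gamma_0$'s fidelity, this is consistent with the constant $2^{2k+2}$.
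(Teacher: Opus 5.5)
Your proposal is correct and follows essentially the same route as the paper: a telescoping decomposition over the dyadic scales $\lambda_j=\lambda/2^j$ ending at the constant $m$ that realizes $\osc_1((a,b),g)$, Lemma~\ref{lemma:Dk} applied to $f_j=\gamma_j-\gamma_{j+1}$ with $v=1/\lambda_j$, and the triangle inequality through $g$ to bound $\osc_1(f_j)$; the constants $2^{2k+2}$ and $k2^{k+3}/(b-a)$ do come out as you claim. The differences are only cosmetic. The paper takes arbitrary $h_j$ and passes to the infimum at the end, so you should use your $\F_{k+1}\le\M_{k+1}+\eta$ limiting version rather than absorbing a factor $2$, since the constants are explicit. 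The paper also handles the last step uniformly, regarding $m$ as a competitor with $\F_{k+1}(\lambda_{N+1},m)=\lambda_{N+1}\osc_1(g)$ and $\lambda_{N+1}<2k/(b-a)$, instead of treating it separately via $\lambda_N<4k/(b-a)$ as you do.
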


\begin{proof}
Since $(a,b)$ and $g$ are fixed throughout the proof, we omit them from the notation for functionals and oscillations whenever no ambiguity can arise.

For every $j=0,\ldots,N(\lambda)$, let $h_j\in C^{k+1}([a,b])$ be arbitrary, and set
\begin{equation*}
    h_{N(\lambda)+1}(x):=m
    \qquad
    \forall x\in[a,b],
\end{equation*}
where $m$ is the constant that minimizes (\ref{defn:osc-1}). Finally, for every admissible value of $j$ we define
\begin{equation*}
    \lambda_j:=\frac{\lambda}{2^j}
    \qquad\quad\text{and}\quad\qquad
    f_j(x)
    :=
    h_j(x)-h_{j+1}(x)
    \quad
    \forall x\in[a,b].
\end{equation*}

Since
\begin{equation*}
    h_0(x)
    =
    h_{N(\lambda)+1}(x)
    +
    \sum_{j=0}^{N(\lambda)}f_j(x),
\end{equation*}
and $h_{N(\lambda)+1}$ is constant, we obtain that
\begin{equation}
    \int_a^b
    \left|h_0^{(k)}(x)\right|\,dx
    \leq
    \sum_{j=0}^{N(\lambda)}
    \int_a^b
    \left|f_j^{(k)}(x)\right|\,dx.
    \label{est:h0k-basic}
\end{equation}

For every $j=0,\ldots,N(\lambda)$, we apply Lemma~\ref{lemma:Dk} with
\begin{equation*}
    f:=f_j,
    \qquad\quad\text{and}\quad\qquad
    v
    :=
    \frac{1}{\lambda_j}
    =
    \frac{2^j}{\lambda}.
\end{equation*}
Since $\lambda_j\geq 2k/(b-a)$, the choice of $v$ is admissible, and hence
\begin{equation}
    \int_a^b
    \left|f_j^{(k)}(x)\right|\,dx
    \leq
    2^{k+1}\lambda_j^k\osc_1(f_j)
    +
    \frac{2k}{\lambda_j}
    \int_a^b
    \left|f_j^{(k+1)}(x)\right|\,dx.
    \label{est:fj}
\end{equation}

Now we observe that
\begin{eqnarray*}
    \osc_1(f_j)
    & \leq &
    \int_a^b
    |h_j(x)-h_{j+1}(x)|\,dx
    \\
    & \leq &
    \int_a^b
    |h_j(x)-g(x)|\,dx
    +
    \int_a^b
    |h_{j+1}(x)-g(x)|\,dx,
\end{eqnarray*}
while
\begin{equation*}
    \int_a^b
    \left|f_j^{(k+1)}(x)\right|\,dx
    \leq
    \int_a^b
    \left|h_j^{(k+1)}(x)\right|\,dx
    +
    \int_a^b
    \left|h_{j+1}^{(k+1)}(x)\right|\,dx.
\end{equation*}

Since $\lambda_{j+1}=\lambda_j/2$ and $k\leq2^k$, from these estimates and~(\ref{est:fj}) we deduce that
\begin{equation}
    \int_a^b
    \left|f_j^{(k)}(x)\right|\,dx
    \leq
    \frac{\lambda^{k-1}}{2^{(k-1)j}}
    2^{k+2}
    \left\{
        \F_{k+1}(\lambda_j,h_j)
        +
        \F_{k+1}(\lambda_{j+1},h_{j+1})
    \right\}.
    \label{est:fj-F}
\end{equation} 

Combining (\ref{est:h0k-basic}) and (\ref{est:fj-F}) we obtain
\begin{eqnarray}
    \frac{1}{\lambda^{k-1}}
    \int_a^b
    \left|h_0^{(k)}(x)\right|\,dx
    & \leq &
    2^{k+2}\F_{k+1}(\lambda_0,h_0)
    \nonumber
    \\
    & &
    \mbox{}+
    2^{k+2}
    \sum_{j=1}^{N(\lambda)}
    \frac{1+2^{k-1}}{2^{(k-1)j}}
    \F_{k+1}(\lambda_j,h_j)
    \nonumber  
    \\
    & &
    \mbox{}+
    \frac{2^{k+2}}{2^{(k-1)N(\lambda)}}
    \F_{k+1}
    (\lambda_{N(\lambda)+1},h_{N(\lambda)+1}).
    \label{est:h0k-dyadic}
\end{eqnarray}

On the other hand,
\begin{equation*}
    \lambda
    \int_a^b
    |h_0(x)-g(x)|\,dx
    \leq
    \F_{k+1}(\lambda_0,h_0),
\end{equation*}
and since
\begin{equation*}
    2^{k+2}+1
    \leq
    2^{2k+2}
    \qquad\quad\text{and}\quad\qquad
    2^{k+2}(1+2^{k-1})
    \leq
    2^{2k+2},
\end{equation*}
from (\ref{est:h0k-dyadic}) we conclude that
\begin{eqnarray}
    \F_k(\lambda,h_0)
    & \leq &
    2^{2k+2}
    \sum_{j=0}^{N(\lambda)}
    \frac{1}{2^{(k-1)j}}
    \F_{k+1}(\lambda_j,h_j)
    \nonumber
    \\
    & &
    \mbox{}+
    \frac{2^{k+2}}{2^{(k-1)N(\lambda)}}
    \F_{k+1}
    (\lambda_{N(\lambda)+1},h_{N(\lambda)+1}).
    \label{est:Fk-dyadic}
\end{eqnarray}

By the definition of $N(\lambda)$, we have
\begin{equation*}
    \lambda_{N(\lambda)+1}
    <
    \frac{2k}{b-a}.
\end{equation*}
Since $h_{N(\lambda)+1}\equiv m$, it follows that
\begin{equation*}
    \F_{k+1}
    (\lambda_{N(\lambda)+1},h_{N(\lambda)+1})
    =
    \lambda_{N(\lambda)+1}\osc_1(g)
    \leq
    \frac{2k}{b-a}\osc_1(g).
\end{equation*}
Therefore (\ref{est:Fk-dyadic}) yields
\begin{equation}
    \M_k((a,b),g,\lambda)
    \leq
    \F_k(\lambda,h_0)
    \leq
    2^{2k+2}
    \sum_{j=0}^{N(\lambda)}
    \frac{1}{2^{(k-1)j}}
    \F_{k+1}(\lambda_j,h_j)+
    \frac{k2^{k+3}}{b-a}
    \osc_1(g).
    \label{est:Fk-dyadic-final}
\end{equation}

The functions $h_0,\ldots,h_{N(\lambda)}$ are arbitrary and can be chosen independently. Therefore, taking the infimum of the right-hand side of (\ref{est:Fk-dyadic-final}) with respect to each $h_j$ yields the corresponding sum of the minimum values $\M_{k+1}$. This proves (\ref{th:k<k+1}).
\end{proof}


Finally, we are in a position to prove the main result of this subsection,
namely the inclusion from spaces of order $k+1$ to spaces of order $k$.

\begin{prop}[From order $k+1$ to order $k$ -- Inclusions between spaces]

Let $(a,b)\subseteq\re$ be an interval, and let $k$ be a positive integer.

Then for every nondecreasing weight function
$\varphi:(0,+\infty)\to[1,+\infty)$ one has
\begin{equation*}
    \Sp_{k+1}^\varphi((a,b))
    \subseteq
    \Sp_k^{\varphi_*}((a,b)),
\end{equation*}
where
\begin{equation}
    \varphi_*(\lambda)
    :=
    \sum_{i=0}^{\lfloor\log_2(1+\lambda)\rfloor}
    \frac{1}{2^{(k-1)i}}
    \varphi\left(\frac{\lambda}{2^i}\right)
    \qquad
    \forall\lambda>0.
    \label{defn:phi*}
\end{equation}

\end{prop}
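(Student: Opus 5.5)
The plan is to deduce the inclusion directly from Proposition~\ref{prop:k<k+1}, by bounding each term of the dyadic sum by $\varphi$ and then reindexing. Let $g\in\Sp_{k+1}^\varphi((a,b))$, and set
\[
L:=\sup_{\mu>0}\frac{\M_{k+1}((a,b),g,\mu)}{\varphi(\mu)}<+\infty .
\]
We must show that $\M_k((a,b),g,\lambda)\leq C\varphi_*(\lambda)$ for every $\lambda>0$. We split according to whether $\lambda$ is small or large relative to the threshold $\lambda_0:=2k/(b-a)$ of Proposition~\ref{prop:k<k+1}. For a bounded interval this threshold is positive. If $(a,b)$ is unbounded the threshold is $0$ and only the large-$\lambda$ case occurs; there the additive oscillation term has the factor $1/(b-a)$, which I would interpret as vanishing. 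Since $g\in L^1$ has finite $\osc_1$, only a mild remark is needed there.

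\emph{Small $\lambda$.} For $\lambda<\lambda_0$ I use~(\ref{est:Mk-osc}):
\[
\M_k\leq\lambda\osc_1((a,b),g)\leq\lambda_0\osc_1((a,b),g).
\]
The $i=0$ term of $\varphi_*$ gives $\varphi_*(\lambda)\geq\varphi(\lambda)\geq1$, so the bound holds with $C\geq\lambda_0\osc_1(g)$.

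\emph{Large $\lambda$.} For $\lambda\geq\lambda_0$, Proposition~\ref{prop:k<k+1} gives
\[
\M_k(\lambda)\leq 2^{2k+2}L\sum_{j=0}^{N(\lambda)}2^{-(k-1)j}\varphi(\lambda/2^j)+\frac{k2^{k+3}}{b-a}\osc_1(g).
\]
The additive constant is absorbed into $C\varphi_*(\lambda)$, again because $\varphi_*\geq1$.

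The only real point, which is a bookkeeping one, is to compare the ranges of summation. I need $N(\lambda)\leq\lfloor\log_2(1+\lambda)\rfloor$, which holds when $(b-a)/(2k)\leq1$. In general it holds only up to a bounded shift
\[
N(\lambda)\leq\lfloor\log_2(1+\lambda)\rfloor+c_0,\qquad c_0\approx\log_2^+\frac{b-a}{2k}.
\]
The extra indices $j$ satisfy $j>\lfloor\log_2(1+\lambda)\rfloor$, so $\lambda/2^j<1$. Monotonicity of $\varphi$ then bounds each extra term by $\varphi(1)$. There are at most $c_0$ such terms, each with weight $2^{-(k-1)j}\leq1$, so their total is at most $c_0\varphi(1)$. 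This is a constant, which is again absorbed into $C\varphi_*(\lambda)$ using $\varphi_*\geq1$.

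Combining the two cases gives $\sup_\lambda\M_k/\varphi_*<\infty$, which is the claimed inclusion $g\in\Sp_k^{\varphi_*}((a,b))$. I expect no genuine obstacle beyond this index-range comparison. Note also that for $k\geq2$ the geometric weights together with the monotonicity of $\varphi$ give $\varphi_*\leq C\varphi$, which is presumably how part~(1) of Theorem~\ref{thm:Sk-general} will follow, while for $k=1$ the same argument yields exactly $\widehat\varphi$.
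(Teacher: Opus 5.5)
Your argument is correct and is essentially the paper's proof: you split at $\lambda_0=2k/(b-a)$, use $\M_k\le\lambda\osc_1$ below that threshold, apply Proposition~\ref{prop:k<k+1} above it, and absorb both the additive constant and the boundedly many surplus dyadic indices using $\varphi_*\ge1$. The paper bounds each surplus term by $\varphi(\lambda)\le\varphi_*(\lambda)$ rather than by $\varphi(1)$, which makes no difference. Your aside on unbounded intervals should be dropped rather than waved through: there $N(\lambda)=+\infty$, so the bounded-shift comparison fails. In any case the whole setting (competitors in $C^k([a,b])$, constants depending on $b-a$) tacitly assumes $(a,b)$ bounded, and so does the paper's proof.
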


\begin{proof}

Let $g\in\Sp_{k+1}^\varphi((a,b))$, so that by definition there exists a constant $C_0>0$ such that
\begin{equation}
    \M_{k+1}((a,b),g,\lambda)
    \leq
    C_0\varphi(\lambda)
    \qquad
    \forall\lambda>0.
    \label{hp:C0}
\end{equation}

Let us set
\begin{equation*}
    \alpha
    :=
    \frac{2k}{b-a}.
\end{equation*}
For every $\lambda\geq\alpha$, Proposition~\ref{prop:k<k+1} and (\ref{hp:C0}) give
\begin{equation}
    \M_k((a,b),g,\lambda)
    \leq
    C_0 2^{2k+2}
    \sum_{j=0}^{N(\lambda)}
    \frac{1}{2^{(k-1)j}}
    \varphi\left(\frac{\lambda}{2^j}\right)+
    \frac{k2^{k+3}}{b-a}
    \osc_1((a,b),g),
    \label{est:Mk-phi-dyadic}
\end{equation}
where
\begin{equation*}
    N(\lambda)
    =
    \left\lfloor
        \log_2\frac{\lambda}{\alpha}
    \right\rfloor.
\end{equation*}

On the other hand, let
\begin{equation*}
    N_*(\lambda)
    :=
    \lfloor\log_2(1+\lambda)\rfloor.
\end{equation*}
The excess of $N(\lambda)$ over $N_*(\lambda)$ is bounded independently
of $\lambda$. More precisely,
\begin{equation}
    N(\lambda)-N_*(\lambda)
    \leq
    |\log_2\alpha|+1.
    \label{est:N-N*}
\end{equation}

If $N(\lambda)\leq N_*(\lambda)$, then
\begin{equation*}
    \sum_{j=0}^{N(\lambda)}
    \frac{1}{2^{(k-1)j}}
    \varphi\left(\frac{\lambda}{2^j}\right)
    \leq
    \varphi_*(\lambda).
\end{equation*}

If $N(\lambda)>N_*(\lambda)$, then from (\ref{est:N-N*}), the fact that
$\varphi$ is nondecreasing, and the inequality
$2^{-(k-1)j}\leq1$, we obtain
\begin{equation*}
    \sum_{j=0}^{N(\lambda)}
    \frac{1}{2^{(k-1)j}}
    \varphi\left(\frac{\lambda}{2^j}\right)
    \leq
    \varphi_*(\lambda)
    +
    \sum_{j=N_*(\lambda)+1}^{N(\lambda)}
    \varphi(\lambda)
    \leq
    \left(
        |\log_2\alpha|+2
    \right)
    \varphi_*(\lambda),
\end{equation*}
because the term with $j=0$ in~(\ref{defn:phi*}) implies $\varphi(\lambda)\leq\varphi_*(\lambda)$. Therefore, in both cases there exists a constant $C_1>0$, depending
only on $\alpha$, such that
\begin{equation*}
    \sum_{j=0}^{N(\lambda)}
    \frac{1}{2^{(k-1)j}}
    \varphi\left(\frac{\lambda}{2^j}\right)
    \leq
    C_1\varphi_*(\lambda)
    \qquad
    \forall\lambda\geq\alpha.
\end{equation*}

Since $\varphi_*(\lambda)\geq\varphi(\lambda)\geq1$, from
(\ref{est:Mk-phi-dyadic}) we conclude that
\begin{equation*}
    \M_k((a,b),g,\lambda)
    \leq
    C_2\varphi_*(\lambda)
    \qquad
    \forall\lambda\geq\alpha
\end{equation*}
for a suitable constant $C_2>0$.

Finally, when $0<\lambda\leq\alpha$, estimate~(\ref{est:Mk-osc}) yields
\begin{equation*}
    \M_k((a,b),g,\lambda)
    \leq
    \lambda\osc_1((a,b),g)
    \leq
    \alpha\osc_1((a,b),g)\varphi_*(\lambda),
\end{equation*}
again because $\varphi_*(\lambda)\geq1$.

We have therefore proved that
\begin{equation*}
    \sup_{\lambda>0}
    \frac{\M_k((a,b),g,\lambda)}
         {\varphi_*(\lambda)}
    <+\infty,
\end{equation*}
and hence $g\in\Sp_k^{\varphi_*}((a,b))$.
\end{proof}


\subsection{Proofs of the remaining main results}

We complete the proofs of the results stated at the beginning of the section. We first establish the fractional Sobolev estimate, and then collect the consequences of the comparison results obtained in the previous subsection.

\subsubsection{Generalized fractional Sobolev regularity}

We begin with statement~(3) of Theorem~\ref{thm:Sk-general}.

To begin with, thanks to a symmetry argument and a change of variable, we rewrite the left-hand side of~(\ref{th:fractional}) in the form
\begin{equation}
    2\int_a^b dx
    \int_x^b
    \frac{|g(y)-g(x)|}
         {|y-x|\psi(|y-x|)}
    \,dy
    =
    2\int_0^{b-a}
    \frac{G(w)}
         {w\psi(w)}
    \,dw,
    \label{eqn:psi-w}
\end{equation}
where
\begin{equation*}
    G(w)
    :=
    \int_a^{b-w}
    |g(x+w)-g(x)|\,dx
    \qquad
    \forall w\in(0,b-a).
\end{equation*}

For every function $\gamma\in C^1([a,b])$ we can write
\begin{eqnarray}
    G(w)
    & \leq &
    \int_a^{b-w}
    |g(x+w)-\gamma(x+w)|\,dx
    +
    \int_a^{b-w}
    |\gamma(x)-g(x)|\,dx
    \nonumber
    \\
    & &
    \mbox{}+
    \int_a^{b-w}
    |\gamma(x+w)-\gamma(x)|\,dx
    \nonumber
    \\
    & \leq &
    2\int_a^b
    |\gamma(x)-g(x)|\,dx
    +
    \int_a^{b-w}
    |\gamma(x+w)-\gamma(x)|\,dx.
    \label{eqn:triangular}
\end{eqnarray}

As for the last integral, we observe that
\begin{equation*}
    \int_a^{b-w}
    |\gamma(x+w)-\gamma(x)|\,dx
    \leq
    \int_a^{b-w}
    dx
    \int_x^{x+w}
    |\gamma'(y)|\,dy.
\end{equation*}

In the last double integral, the sections of the integration region with fixed $y\in[a,b]$ have length less than or equal to $w$. Therefore, by reversing the integration order, we obtain
\begin{equation*}
    \int_a^{b-w}
    |\gamma(x+w)-\gamma(x)|\,dx
    \leq
    w\int_a^b|\gamma'(y)|\,dy.
\end{equation*}

Plugging these estimates into~(\ref{eqn:triangular}), we deduce that
\begin{equation*}
    G(w)
    \leq
    2\int_a^b|\gamma(x)-g(x)|\,dx
    +
    w\int_a^b|\gamma'(y)|\,dy
    \leq
    2w\F_1((a,b),g,1/w,\gamma).
\end{equation*}

Taking the infimum over all functions $\gamma\in C^1([a,b])$, we conclude that
\begin{equation*}
    G(w)
    \leq
    2w\M_1((a,b),g,1/w)
    \qquad
    \forall w\in(0,b-a).
\end{equation*}

Plugging this estimate into~(\ref{eqn:psi-w}), we obtain~(\ref{th:fractional}).

If now $g\in\Sp_1^\varphi((a,b))$, then
\begin{equation*}
    \M_1((a,b),g,\lambda)
    \leq
    C\varphi(\lambda)
    \qquad
    \forall\lambda>0
\end{equation*}
for some constant $C>0$. Therefore~(\ref{th:fractional}) yields
\begin{equation*}
    \int_a^b dy
    \int_a^b
    \frac{|g(y)-g(x)|}
         {|y-x|\psi(|y-x|)}
    \,dx
    \leq
    4C
    \int_0^{b-a}
    \frac{\varphi(1/w)}
         {\psi(w)}
    \,dw.
\end{equation*}
This proves~(\ref{th:Sk-Ws}) under the integrability assumption stated in Theorem~\ref{thm:Sk-general}.
\qed

\subsubsection{Proof of Theorem~\ref{thm:Sk-general}}

\paragraph{\textmd{\textit{Statement~(1)}}}

We need to prove that
\begin{equation*}
    \Sp_k^\varphi((a,b))
    =
    \Sp_{k+1}^\varphi((a,b))
    \qquad
    \forall k\geq2.
\end{equation*}

The inclusion
\begin{equation*}
    \Sp_k^\varphi((a,b))
    \subseteq
    \Sp_{k+1}^\varphi((a,b))
\end{equation*}
is exactly Proposition~\ref{prop:k+1<k}.

As for the opposite inclusion, we know from the comparison obtained in the previous subsection that
\begin{equation*}
    \Sp_{k+1}^\varphi((a,b))
    \subseteq
    \Sp_k^{\varphi_*}((a,b)),
\end{equation*}
where $\varphi_*$ is given by~(\ref{defn:phi*}). Since $\varphi$ is nondecreasing,
\begin{equation*}
    \varphi_*(\lambda)
    \leq
    \varphi(\lambda)
    \sum_{i=0}^{\lfloor\log_2(1+\lambda)\rfloor}
    \frac{1}{2^{(k-1)i}}.
\end{equation*}

If $k\geq2$, the sum on the right-hand side is bounded by~2. Moreover, the term corresponding to $i=0$ in the definition of $\varphi_*$ shows that
\begin{equation*}
    \varphi(\lambda)
    \leq
    \varphi_*(\lambda)
    \leq
    2\varphi(\lambda)
    \qquad
    \forall\lambda>0.
\end{equation*}
Therefore the two weight functions are equivalent, and hence
\begin{equation*}
    \Sp_k^{\varphi_*}((a,b))
    =
    \Sp_k^\varphi((a,b)).
\end{equation*}
This proves statement~(1).

\paragraph{\textmd{\textit{Statement~(2)}}}

The first inclusion in~(\ref{th:S1-S2}) follows again from Proposition~\ref{prop:k+1<k}, which is valid also for $k=1$.

For the opposite direction, we apply the comparison from order $2$ to order $1$. When $k=1$, all the factors
\begin{equation*}
    \frac{1}{2^{(k-1)i}}
\end{equation*}
in~(\ref{defn:phi*}) are equal to~1. Therefore $\varphi_*$ coincides with $\widehat{\varphi}$ defined in~(\ref{defn:new-phi}), and we obtain
\begin{equation*}
    \Sp_2^\varphi((a,b))
    \subseteq
    \Sp_1^{\widehat{\varphi}}((a,b)).
\end{equation*}

\paragraph{\textmd{\textit{Statement~(3)}}}

This is the generalized fractional Sobolev estimate proved above.
\qed

\subsubsection{Proof of Corollary~\ref{cor:constant}}

If $\varphi(\lambda)\equiv1$, then~(\ref{defn:new-phi}) gives
\begin{equation*}
    \widehat{\varphi}(\lambda)
    =
    1+\lfloor\log_2(1+\lambda)\rfloor.
\end{equation*}
The first inclusion in~(\ref{th:cor-constant}) therefore follows from statement~(2) of Theorem~\ref{thm:Sk-general}.

For the second inclusion, we apply statement~(3) of Theorem~\ref{thm:Sk-general}. It is enough to verify that
\begin{equation*}
    \int_0^{b-a}\frac{\widehat{\varphi}(1/w)}{\psi(w)}\,dw<+\infty.
\end{equation*}

From (\ref{defn:psi-psi-constant}) it follows that the integrand is bounded, for $w$ close to zero, by a constant multiple of
\begin{equation*}
    \frac{1}{w\log^2(1/w)},
\end{equation*}
which is integrable at the origin. Away from the origin there is nothing to prove. Therefore statement~(3) of Theorem~\ref{thm:Sk-general} yields the required inclusion.
\qed

\subsubsection{Proof of Corollary~\ref{cor:power}}

The inclusion
\begin{equation*}
    \Sp_1^\varphi((a,b))
    \subseteq
    \Sp_2^\varphi((a,b))
\end{equation*}
is exactly the first inclusion in~(\ref{th:S1-S2}).

As for the opposite inclusion, we know from~(\ref{th:S1-S2}) that
\begin{equation*}
    \Sp_2^\varphi((a,b))
    \subseteq
    \Sp_1^{\widehat{\varphi}}((a,b)).
\end{equation*}
Therefore, when
\begin{equation*}
    \varphi(\lambda)=(1+\lambda)^s,
    \qquad s\in(0,1),
\end{equation*}
it is enough to prove that $\widehat{\varphi}$ is bounded from above by a constant multiple of $\varphi$.

For bounded $\lambda$ the estimate is immediate, while for $\lambda\geq1$ a geometric estimate yields 
\begin{equation*}
    \widehat{\varphi}(\lambda)
    \leq
    C_s\varphi(\lambda)
    \qquad
    \forall\lambda>0
\end{equation*}
for a suitable constant $C_s>0$. Combining the two cases,
\begin{equation*}
    \Sp_2^\varphi((a,b))
    =
    \Sp_1^\varphi((a,b)).
\end{equation*}

Together with statement~(1) of Theorem~\ref{thm:Sk-general}, this also shows that all spaces $\Sp_k^\varphi((a,b))$ coincide in this case.

The inclusion in~(\ref{th:cor-power}) follows from statement~(3) of Theorem~\ref{thm:Sk-general}. Indeed, when $\varphi$ and $\psi$ are defined by (\ref{defn:phi-cor-power}) and (\ref{defn:psi-cor-power}), one has
\begin{equation*}
    \frac{\varphi(1/w)}{\psi(w)}
    \sim
    \frac{1}{w\log^2(1/w)}
    \qquad
    \text{as }w\to0^+,
\end{equation*}
and the latter function is integrable at the origin, and hence $\Sp_1^\varphi((a,b))\subseteq W^{\psi,1}((a,b))$.

Since
\[
\psi(\sigma)
=
\sigma^{1-s}\log^2(1+1/\sigma)
=
o(\sigma^r)
\qquad
\text{as }\sigma\to0^+
\]
for every $r<1-s$, one has
\[
W^{\psi,1}((a,b))
\subseteq
\bigcap_{r<1-s}W^{r,1}((a,b)).
\]
The strictness of this last inclusion is classical, which completes the proof of (\ref{th:cor-power-strict}). 
\qed

\subsubsection{Proof of Theorem~\ref{thm:Sk-basic}}

\paragraph{\textmd{\textit{Proof that
$BV((a,b))=\Sp_1((a,b))$}}}

The inclusion
\begin{equation*}
    BV((a,b))
    \subseteq
    \Sp_1((a,b))
\end{equation*}
follows by approximation of $BV$ functions by smooth functions with uniformly bounded total variation.

Conversely, let $g\in\Sp_1((a,b))$, so that there exists a constant $C_0$ such that
\begin{equation*}
    \M_1((a,b),g,\lambda)
    \leq
    C_0
    \qquad
    \forall\lambda>0.
\end{equation*}
For every $\lambda>0$, we can therefore find
$\gamma_\lambda\in C^1([a,b])$ such that
\begin{equation*}
    \F_1
    ((a,b),g,\lambda,\gamma_\lambda)
    \leq
    2C_0.
\end{equation*}
In particular,
\begin{equation*}
    \int_a^b
    |\gamma_\lambda'(x)|\,dx
    \leq
    2C_0
    \qquad\quad\text{and}\quad\qquad
    \int_a^b
    |\gamma_\lambda(x)-g(x)|\,dx
    \leq
    \frac{2C_0}{\lambda}.
\end{equation*}

Hence $\gamma_\lambda\to g$ in $L^1((a,b))$ as $\lambda\to+\infty$, while their total variations remain uniformly bounded. By lower semicontinuity of the total variation, we conclude that $g\in BV((a,b))$, as required.

\paragraph{\textmd{\textit{Inclusions and saturation}}}

The inclusion
\[
\Sp_1((a,b))
\subseteq
\Sp_2((a,b))
\]
is a special case of~(\ref{th:S1-S2}), while its strictness follows from Example~\ref{ex:S2-not-S1}.

Moreover,
\[
\Sp_2((a,b))
=
\Sp_k((a,b))
\qquad
\forall k\geq2
\]
is a special case of statement~(1) of Theorem~\ref{thm:Sk-general}.

\paragraph{\textmd{\textit{Fractional Sobolev regularity}}}

From Corollary~\ref{cor:constant}, we know that
\begin{equation*}
    \Sp_2((a,b))
    \subseteq
    W^{\psi,1}((a,b))
\end{equation*}
with
\begin{equation*}
    \psi(\sigma)
    :=
    \sigma\log^3(1+1/\sigma).
\end{equation*}
Since $\psi(\sigma)=o(\sigma^s)$ as $\sigma\to0^+$ for every $s\in(0,1)$, one has
\begin{equation*}
    W^{\psi,1}((a,b))
    \subseteq
    \bigcap_{s\in(0,1)}
    W^{s,1}((a,b)).
\end{equation*}
The strictness of this last inclusion is classical, and this completes the proof of (\ref{th:Sk-basic}).
\qed

\subsubsection{Proof of Example~\ref{ex:S2-not-S1}}
\label{sec:proof-example}

Let $g$ be the function defined by (\ref{defn:sin-log}). It is rather standard to verify that
\[
\int_0^1|g'(x)|\,dx=+\infty,
\]
and therefore
\[
g\notin BV((0,1))
=
\Sp_1((0,1)).
\]

We now prove that $g\in\Sp_2((0,1))$. The positive maximum points of $g$ form a geometric sequence converging to zero. Hence there exists a constant $c_0>0$ such that, for every sufficiently large $\lambda$, one can choose a maximum point $\delta_\lambda$ satisfying
\[
\frac{c_0}{\lambda}
\leq
\delta_\lambda
\leq
\frac1\lambda
\]
so that in particular $g(\delta_\lambda)=1$ and $g'(\delta_\lambda)=0$. If we define
\[
\gamma_\lambda(x)
:=
\begin{cases}
1 & \text{if }x\in[0,\delta_\lambda],\\
g(x) & \text{if }x\in[\delta_\lambda,1],
\end{cases}
\]
then $\gamma_\lambda\in C^1([0,1])$, and
\[
\lambda\int_0^1|\gamma_\lambda-g|
\leq
2\lambda\delta_\lambda
\leq
2.
\]

Moreover, since $|g''(x)|\leq 2/x^2$ for every $x\in(0,1)$, we deduce that
\[
\frac1\lambda\int_0^1|\gamma_\lambda''(x)|\,dx
=
\frac1\lambda\int_{\delta_\lambda}^1|g''(x)|\,dx
\leq
\frac{2}{\lambda\delta_\lambda}
\leq
\frac{2}{c_0}.
\]

After an arbitrarily small mollification around $\delta_\lambda$, we obtain competitors in $C^2([0,1])$ with the same bounds up to an arbitrarily small error. Hence
\[
\sup_{\lambda\geq\lambda_0}
\M_2((0,1),g,\lambda)
<+\infty.
\]
For bounded values of $\lambda$, the minimum values are bounded by~(\ref{est:Mk-osc}), and therefore $g\in\Sp_2((0,1))$.
\qed


\subsection{Proofs for the constrained variational problems}

\subsubsection{First order: proof of Proposition~\ref{prop:M1-constrained}}

The inequality
\begin{equation*}
    \M_1((a,b),g,\lambda)
    \leq
    \M_1^*((a,b),g,\lambda)
\end{equation*}
is immediate from the definitions.

For the converse inequality, let $\gamma\in C^1([a,b])$, and define its truncation by
\begin{equation*}
    \gamma_*(x)
    :=
    \min\left\{
        \mu_2,
        \max\{\mu_1,\gamma(x)\}
    \right\}
    \qquad
    \forall x\in[a,b].
\end{equation*}
By~(\ref{hp:Mk-constrained-range}),
\begin{equation*}
    |\gamma_*(x)-g(x)|
    \leq
    |\gamma(x)-g(x)|
\end{equation*}
for almost every $x\in(a,b)$, while
\begin{equation*}
    |\gamma_*'(x)|
    \leq
    |\gamma'(x)|
\end{equation*}
for almost every $x\in(a,b)$. Hence
\begin{equation*}
    \int_a^b|\gamma_*'(x)|\,dx
    +
    \lambda\int_a^b|\gamma_*(x)-g(x)|\,dx
    \leq
    \F_1((a,b),g,\lambda,\gamma).
\end{equation*}

The function $\gamma_*$ is Lipschitz continuous and takes values in $[\mu_1,\mu_2]$. Extend it constantly outside $[a,b]$ and mollify with a standard nonnegative mollifier. The resulting functions $(\gamma_*)_\delta$ belong to $C^1([a,b])$, still take values in $[\mu_1,\mu_2]$, converge to $\gamma_*$ in $L^1((a,b))$, and satisfy
\begin{equation*}
    \limsup_{\delta\to0^+}
    \int_a^b|(\gamma_*)_\delta'(x)|\,dx
    \leq
    \int_a^b|\gamma_*'(x)|\,dx.
\end{equation*}
Therefore
\begin{equation*}
    \M_1^*((a,b),g,\lambda)
    \leq
    \F_1((a,b),g,\lambda,\gamma).
\end{equation*}
Taking the infimum over all $\gamma\in C^1([a,b])$ proves~(\ref{th:M1-constrained}).
\qed


\subsubsection{Second order: reduction to the admissible range}

We begin by showing that the competitors can be restricted to the admissible range. The key observation is that truncation on interior components does not increase the second-order regularity cost; any additional contribution can only come from components touching the endpoints.

\begin{lemma}[Reduction to the admissible range]
\label{lemma:range-reduction}

Let $(a,b)\subseteq\mathbb R$ be an interval, let $\mu_1$ and $\mu_2$ be positive real numbers with $\mu_1\leq\mu_2$, and let $g\in L^1((a,b))$ satisfy the range constraint (\ref{hp:Mk-constrained-range}).

Then there exists a constant $C>0$, depending only on $\mu_1$, $\mu_2$, and $b-a$, such that, for every $\lambda\geq1$ and every $\gamma\in C^2([a,b])$, there exists $\widehat\gamma\in C^2([a,b])$ satisfying
\begin{equation*}
    \mu_1\leq\widehat\gamma(x)\leq\mu_2
    \qquad
    \forall x\in[a,b],
\end{equation*}
and
\begin{equation*}
    \F_2((a,b),g,\lambda,\widehat\gamma)
    \leq
    C\left(
        \F_2((a,b),g,\lambda,\gamma)
        +
        \frac1\lambda
    \right).
\end{equation*}

\end{lemma}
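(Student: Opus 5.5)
The plan is to modify $\gamma$ only where it leaves $[\mu_1,\mu_2]$, and to treat the connected components of the sets $\{\gamma>\mu_2\}$ and $\{\gamma<\mu_1\}$ separately. By symmetry it is enough to describe the upper truncation. The set $U:=\{x\in[a,b]:\gamma(x)>\mu_2\}$ is relatively open, so it is a countable union of intervals. A component $I$ is \emph{interior} if its closure does not meet $\{a,b\}$; otherwise it is a \emph{boundary component}, and there are at most two of these.

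On an interior component $I=(\alpha,\beta)$ one has $\gamma(\alpha)=\gamma(\beta)=\mu_2$, and $\gamma'(\alpha)\geq0\geq\gamma'(\beta)$. Since $g\leq\mu_2$, the fidelity term only improves when $\gamma$ is replaced by $\mu_2$ on $I$. For the regularity term, write $\gamma'(\alpha)=\gamma'(\xi)-\int_\alpha^\xi\gamma''$, where $\xi$ is an interior maximum point with $\gamma'(\xi)=0$. This gives $|\gamma'(\alpha)|+|\gamma'(\beta)|\leq\int_I|\gamma''|$. Setting $\gamma\equiv\mu_2$ on $I$ produces a $C^{1,1}$-type function whose second distributional derivative has total mass $\int_{[a,b]\setminus I}|\gamma''|$ plus the two jumps of $\gamma'$. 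Those jumps are bounded by $\int_I|\gamma''|$. Hence the total mass of the second derivative does not increase. Doing this on all interior components at once is legitimate because the masses add up and are countably additive, and the result lies in the class of Remark~\ref{rmk:Mk-basic}(6), namely $\gamma'\in BV$. Smoothing afterwards by mollification gives a $C^2$ function; I would keep the range by convolving with a nonnegative kernel after extending, which preserves $[\mu_1,\mu_2]$. The cost of the smoothing is only arbitrarily small errors, which the constant absorbs.

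The boundary components are the main obstacle, because there $\gamma'$ at the endpoint of the component inside $(a,b)$ need not be controlled by $\int_I|\gamma''|$. For example, $\gamma$ may increase monotonically up to $b$. Take $I=(\alpha,b]$ with $\gamma(\alpha)=\mu_2$. I would first bound $|\gamma'|$ on $I$ using the functional. Set $L:=\F_2(\gamma)$, and note $\lambda\int_I(\gamma-\mu_2)\leq L$. If $|\gamma'(\alpha)|=s$, then $\gamma'\geq s-\int_I|\gamma''|\geq s-\lambda L$ on $I$. So either $s\leq 2\lambda L$, or $\gamma$ grows linearly with slope at least $s/2$. The latter forces $\int_I(\gamma-\mu_2)\gtrsim s\,|I|^2$ if $|I|$ is not small, while a small $|I|$ must be handled separately.

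A cleaner route, which is the one I would try first, is a one-sided cutoff: replace $\gamma$ on a boundary layer by a quadratic (or cubic Hermite) spline joining $\gamma$ at a point $x_0$ to the constant $\mu_2$. The transition should take place over a length of order $1/\lambda$ (or over all of $I$ if $I$ is shorter), starting at a point where $\gamma-\mu_2$ is small. Such a point exists by the fidelity bound, via a mean-value argument on $(\gamma-\mu_2)$. The second-derivative cost of the transition is about $|\gamma'(x_0)|+\lambda|\gamma(x_0)-\mu_2|$. Divided by $\lambda$, this becomes $|\gamma'(x_0)|/\lambda+|\gamma(x_0)-\mu_2|$. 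Using the interpolation inequality (Lemma~\ref{lemma:Dk} with $k=1$, $v\sim1/\lambda$ on a suitable subinterval, or on a fixed-size interval when $\lambda$ is small) to average $|\gamma'|$, one expects this to be $\lesssim \F_2+\mu_2$-dependent constants times $1/\lambda$. The additive $1/\lambda$ term accounts for the boundary layers when the oscillation is comparable to $\mu_2-\mu_1$. Choosing $x_0$ so that both $|\gamma'(x_0)|$ and $\lambda|\gamma(x_0)-\mu_2|$ are bounded by averages is where the constants depending on $b-a$ and $\mu_1,\mu_2$ enter, together with the restriction $\lambda\geq1$. Finally, the lower truncation at $\mu_1$ is handled identically, and the two operations commute because their supports are disjoint.
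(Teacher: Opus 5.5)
\textbf{The boundary components are not handled.} Your treatment of the interior components is the paper's: Rolle's theorem gives $|\gamma'(\alpha)|+|\gamma'(\beta)|\le\int_\alpha^\beta|\gamma''|$. But the boundary components, which you correctly single out as the crux, are never actually closed.

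Your first route stalls because you only use information on $I$. When $|I|$ is small, the fidelity on $I$ says nothing about $s=|\gamma'(\alpha)|$.

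The spline route does not remove the difficulty either. Its cost contains $|\gamma'(x_0)|/\lambda$, so you still need a bound on a slope. Range preservation is also unclear:
\begin{itemize}
\item if $x_0\in I$, the transition lies above $\mu_2$;
\item if $x_0\notin I$, fidelity does not make $\gamma(x_0)-\mu_2$ small, because $g$ may be close to $\mu_1$ there;
\item a Hermite transition that starts with a large slope over a length $1/\lambda$ can overshoot.
\end{itemize}
Your sentence ``one expects this to be $\lesssim\dots$'' is exactly the step that needs proof.

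\textbf{The missing estimate.} What is needed is a global, $\lambda$-independent slope bound at the interior endpoint $\alpha$ of a boundary component; after that, plain clipping suffices. Set $T=b-a$, $R=\int_a^b|\gamma''|$ and $F=\int_a^b|\gamma-g|$.
\begin{itemize}
\item Among $(a,a+T/4)$ and $(b-T/4,b)$, choose one at distance at least $T/4$ from $\alpha$. Pick a point $y$ in it with $|\gamma(y)-g(y)|\le 4F/T$.
\item Since $\gamma(\alpha)$ and $g(y)$ both lie in $[\mu_1,\mu_2]$, the mean value theorem gives a point $z$ with $|\gamma'(z)|\le 4(\mu_2-\mu_1)/T+16F/T^2$.
\item Hence $|\gamma'(\alpha)|\le R+4(\mu_2-\mu_1)/T+16F/T^2$.
\end{itemize}
Each of the at most two boundary junctions then costs at most $R/\lambda+4(\mu_2-\mu_1)/(T\lambda)+16F/(T^2\lambda)$ in $\F_2$. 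The hypothesis $\lambda\ge1$ gives $F/\lambda\le\lambda F$. This is precisely where the additive $1/\lambda$, the dependence on $b-a$, and the restriction $\lambda\ge1$ enter.

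\textbf{Repairing your first route.} The bound $|\gamma'(x)-\gamma'(\alpha)|\le R$ holds on all of $[a,b]$, not just on $I$. So if $s>2R$, then $\gamma$ has slope at least $s/2$ on the whole interval. Since $g\in[\mu_1,\mu_2]$, this gives $|\gamma-g|\ge (s/2)|x-\alpha|-(\mu_2-\mu_1)$ on $[a,b]$. Integrating yields $s\le 8F/T^2+8(\mu_2-\mu_1)/T$, with no separate small-$|I|$ case.

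\textbf{The final smoothing step.} Mollifying after a constant extension is not an ``arbitrarily small'' perturbation. It adds the endpoint jumps $|\gamma_*'(a^+)|$ and $|\gamma_*'(b^-)|$ of the derivative, which are not small. You must either bound them by the same slope estimate, as the paper does, or avoid them, for example by slightly dilating $\gamma_*$ onto a larger interval before mollifying.
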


\begin{proof}

Set
\begin{equation*}
    T:=b-a,
    \qquad
    R:=\int_a^b|\gamma''(x)|\,dx,
    \qquad
    F:=\int_a^b|\gamma(x)-g(x)|\,dx,
\end{equation*}
and define
\begin{equation*}
    \gamma_*(x)
    :=
    \min\left\{
        \mu_2,
        \max\{\mu_1,\gamma(x)\}
    \right\}
    \qquad
    \forall x\in[a,b].
\end{equation*}
Clearly,
\begin{equation}
    \mu_1\leq\gamma_*\leq\mu_2,
    \qquad\quad\text{and}\quad\qquad
    \int_a^b|\gamma_*-g|
    \leq F.
    \label{est:range-clipping}
\end{equation}

The function $\gamma_*$ is Lipschitz continuous, but in general it is not of class $C^1$. We estimate the total variation of its weak derivative by analyzing the connected components of the set where the truncation is active. To this end, we consider the open set
\begin{equation*}
    \Omega
    :=
    \left\{
        x\in(a,b):
        \gamma(x)\notin[\mu_1,\mu_2]
    \right\}.
\end{equation*}

If $\Omega=(a,b)$, then $\gamma$ lies entirely above $\mu_2$ or entirely below $\mu_1$. In this case $\gamma_*$ is constant, so that
\begin{equation*}
    \operatorname{Var}(\gamma_*';(a,b))=0,
\end{equation*}
and the regularity term is trivially improved. We may therefore assume that $\Omega\neq(a,b)$.

We first show that truncation on every connected component of $\Omega$ which is compactly contained in $(a,b)$ does not increase the total variation of the first derivative.

Indeed, let $(\alpha,\beta)$ be any such a component. Then $\gamma$ lies either above $\mu_2$ or below $\mu_1$ on $(\alpha,\beta)$, and in both cases
\begin{equation*}
    \gamma(\alpha)=\gamma(\beta).
\end{equation*}
By Rolle's theorem there exists $\xi\in(\alpha,\beta)$ such that $\gamma'(\xi)=0$. Hence
\begin{equation}
    |\gamma'(\alpha)|+|\gamma'(\beta)|
    \leq
    \int_\alpha^\xi|\gamma''(x)|\,dx
    +
    \int_\xi^\beta|\gamma''(x)|\,dx
    =
    \int_\alpha^\beta|\gamma''(x)|\,dx.
    \label{est:range-interior-component}
\end{equation}
After truncation, $\gamma_*'$ vanishes on $(\alpha,\beta)$, and the left-hand side of~(\ref{est:range-interior-component}) is exactly the contribution of the two new junctions. Thus the variation created at the endpoints of the component is paid for by the variation removed inside it.

Consequently, if all components of $\Omega$ are compactly contained in $(a,b)$, then
\begin{equation*}
    \operatorname{Var}(\gamma_*';(a,b))
    \leq R.
\end{equation*}
The only possible additional contributions therefore come from components touching $a$ or $b$, and there are at most two of them.

We now estimate the slope at the interior endpoint of such a boundary component. More generally, we show that if
\begin{equation*}
    \gamma(x)\in[\mu_1,\mu_2],
\end{equation*}
then
\begin{equation}
    |\gamma'(x)|
    \leq
    R
    +
    \frac{4(\mu_2-\mu_1)}{T}
    +
    \frac{16F}{T^2}.
    \label{est:range-slope}
\end{equation}

Indeed, among the two intervals
\begin{equation*}
    \left(a,a+\frac{T}{4}\right)
    \qquad\mbox{and}\qquad
    \left(b-\frac{T}{4},b\right)
\end{equation*}
we may choose one whose distance from $x$ is at least $T/4$. Since its length is $T/4$, there exists a point $y$ in this interval such that
\begin{equation*}
    |\gamma(y)-g(y)|
    \leq
    \frac{4F}{T}.
\end{equation*}
Since both $\gamma(x)$ and $g(y)$ belong to $[\mu_1,\mu_2]$, we have
\begin{equation*}
    |\gamma(y)-\gamma(x)|
    \leq
    |\gamma(y)-g(y)|
    +
    |g(y)-\gamma(x)|
    \leq
    \frac{4F}{T}
    +
    \mu_2-\mu_1.
\end{equation*}
The mean value theorem gives a point $z$ between $x$ and $y$ such that
\begin{equation*}
    |\gamma'(z)|
    \leq
    \frac{4(\mu_2-\mu_1)}{T}
    +
    \frac{16F}{T^2}.
\end{equation*}
Since
\begin{equation*}
    |\gamma'(x)-\gamma'(z)|
    \leq R,
\end{equation*}
we obtain~(\ref{est:range-slope}).

Applying~(\ref{est:range-slope}) to the interior endpoints of the at most two boundary components, and combining this with~(\ref{est:range-interior-component}), we obtain
\begin{equation*}
    \operatorname{Var}(\gamma_*';(a,b))
    \leq
    3R
    +
    \frac{8(\mu_2-\mu_1)}{T}
    +
    \frac{32F}{T^2}.
\end{equation*}

It remains to recover smoothness. We extend $\gamma_*$ constantly outside $[a,b]$. This extension may create a jump of the first derivative at each endpoint. Whenever such a jump is nonzero, the corresponding endpoint value of $\gamma$ belongs to $[\mu_1,\mu_2]$, and therefore~(\ref{est:range-slope}) applies. Since there are at most two such jumps, we obtain
\begin{equation}
    \operatorname{Var}_{\mathbb R}(\gamma_*')
    \leq
    5R
    +
    \frac{16(\mu_2-\mu_1)}{T}
    +
    \frac{64F}{T^2}.
    \label{est:range-extended-TV}
\end{equation}

Let $\rho_\delta$ be a standard nonnegative mollifier and set
\begin{equation*}
    (\gamma_*)_\delta
    :=
    \rho_\delta*\gamma_*.
\end{equation*}
Since $[\mu_1,\mu_2]$ is convex,
\begin{equation*}
    \mu_1\leq(\gamma_*)_\delta\leq\mu_2.
\end{equation*}
Moreover, the standard BV estimate gives
\begin{equation*}
    \int_a^b|(\gamma_*)_\delta''(x)|\,dx
    \leq
    \operatorname{Var}_{\mathbb R}(\gamma_*').
\end{equation*}

Since $(\gamma_*)_\delta\to\gamma_*$ in $L^1((a,b))$ as $\delta\to0^+$, we may choose $\delta$ sufficiently small to ensure
\begin{equation*}
    \lambda
    \int_a^b
    |(\gamma_*)_\delta-\gamma_*|\,dx
    \leq
    \frac1\lambda.
\end{equation*}

Using~(\ref{est:range-clipping}) and~(\ref{est:range-extended-TV}), we obtain
\begin{equation*}
    \F_2((a,b),g,\lambda,(\gamma_*)_\delta)
    \leq
    \frac{5R}{\lambda}
    +
    \left(
        \lambda+\frac{64}{T^2\lambda}
    \right)F
    +
    \left(
        1+\frac{16(\mu_2-\mu_1)}{T}
    \right)\frac1\lambda.
\end{equation*}
Since $\lambda\geq1$, it follows that
\begin{equation*}
    \F_2((a,b),g,\lambda,(\gamma_*)_\delta)
    \leq
    C\left(
        \F_2((a,b),g,\lambda,\gamma)
        +
        \frac1\lambda
    \right),
\end{equation*}
where $C$ depends only on $\mu_1$, $\mu_2$, and $T=b-a$.

The conclusion follows by setting $\widehat\gamma:=(\gamma_*)_\delta$.
\end{proof}


\subsubsection{Second order: reduction of the first derivative}

The reduction of the first derivative is based on a classical inf-convolution construction. Given a function $f\in C([a,b])$ and a constant $L>0$, we denote by
\begin{equation*}
    \mathcal I_Lf(x)
    :=
    \inf_{y\in[a,b]}
    \left\{
        f(y)+L|x-y|
    \right\}
\end{equation*}
the inf-convolution of $f$ with the function $x\mapsto L|x|$. The function $\mathcal I_Lf$ is the largest $L$-Lipschitz function lying below $f$.

In the present application we choose $L=\ep\lambda$. The one-dimensional geometry of the inf-convolution is particularly simple on the set where $\mathcal I_Lf$ detaches from $f$, and makes it possible to control both the total variation of its derivative and its $L^1$ distance from $f$ in terms of the second derivative of $f$. A final mollification then provides the required smooth competitor.

\begin{lemma}[Reduction of the first derivative]
\label{lemma:first-derivative-reduction}

Let $(a,b)\subseteq\mathbb R$ be an interval, and let $\ep$, $\mu_1$, $\mu_2$, $\lambda$ be positive real numbers with $\mu_1\leq\mu_2$ and
\begin{equation}
    \lambda>
    \frac{2(\mu_2-\mu_1)}
         {\ep(b-a)}.
    \label{hp:first-derivative-lambda}
\end{equation}
Let $\gamma\in C^2([a,b])$ satisfy
\begin{equation}
    \mu_1\leq\gamma(x)\leq\mu_2
    \qquad
    \forall x\in[a,b].
    \label{hp:first-derivative-range}
\end{equation}

Then there exists $\widehat\gamma\in C^2([a,b])$ that satisfies the pointwise estimates
\begin{equation*}
    \mu_1\leq\widehat\gamma(x)\leq\mu_2
    \qquad\quad\text{and}\quad\qquad
    |\widehat\gamma\,'(x)|\leq\ep\lambda
\end{equation*}
for every $x\in[a,b]$, and the integral estimates
\begin{equation}
    \int_a^b|\widehat\gamma\,''(x)|\,dx
    \leq
    \int_a^b|\gamma''(x)|\,dx
    +
    2\ep\lambda,
    \label{est:first-derivative-second}
\end{equation}
and
\begin{equation}
    \int_a^b|\widehat\gamma(x)-\gamma(x)|\,dx
    \leq
    \left(
        1+\frac{(\mu_2-\mu_1)^2}{\ep^2}
    \right)
    \frac1{\lambda^2}
    \int_a^b|\gamma''(x)|\,dx.
    \label{est:first-derivative-fidelity}
\end{equation}

\end{lemma}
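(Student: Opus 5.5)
The plan is to follow the inf-convolution strategy announced before the statement. Set $L:=\ep\lambda$ and first form the two-sided Lipschitz envelope: $\gamma_1:=\mathcal I_L\gamma$, the largest $L$-Lipschitz function below $\gamma$, and then $\gamma_2:=-\mathcal I_L(-\gamma_1)$. Since $\gamma_1$ is already $L$-Lipschitz, $\gamma_2=\gamma_1$. So it is cleaner to treat separately the sets where the slope exceeds $L$ from above and from below. I will use the symmetric construction: apply $\mathcal I_L$ to cut peaks where $\gamma'$ is too large in one direction, then apply the sup-convolution $\mathcal S_L f:=-\mathcal I_L(-f)$ to handle the other direction. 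Both operations preserve the range $[\mu_1,\mu_2]$. Indeed, constants are $L$-Lipschitz, so $\mathcal I_L\gamma\geq\mu_1$, and $\mathcal I_L\gamma\leq\gamma\leq\mu_2$; the same holds for $\mathcal S_L$.

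The geometric core is the one-dimensional structure of the detachment set $\Omega:=\{\mathcal I_L\gamma<\gamma\}$. On each connected component $(\alpha,\beta)$ of $\Omega$, the function $\mathcal I_L\gamma$ is affine with slope $\pm L$, or a "V" made of two such pieces. At an interior endpoint where it reattaches, $\gamma'=\pm L$ by tangency. Comparing with the affine piece, $\gamma$ must have slope varying from at most $-L$ to exceeding the line, which is paid by $\int_\alpha^\beta|\gamma''|$.

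\textbf{Variation estimate.} The jumps of the derivative of $\mathcal I_L\gamma$ at junctions are bounded by the variation of $\gamma'$ removed inside the component, exactly as in the Rolle argument of Lemma~\ref{lemma:range-reduction}. Components touching $a$ or $b$ contribute at most an extra $2L$ in total, which gives~(\ref{est:first-derivative-second}) with the $2\ep\lambda$ term. Hypothesis~(\ref{hp:first-derivative-lambda}) guarantees $L(b-a)>2(\mu_2-\mu_1)$, so no component can fill the whole interval with slope $L$. This keeps the structure of at most one "V" per component and the boundary components under control.

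\textbf{Fidelity estimate.} On a component, $|\gamma'|$ exceeds $L$ somewhere while $\gamma$ oscillates by at most $\mu_2-\mu_1$. Hence the component length $\ell$ is at most about $(\mu_2-\mu_1)/L$, plus a term controlled by $\int|\gamma''|/L$ coming from the regions where the slope passes from $L$ to its excess. The gap $\gamma-\mathcal I_L\gamma$ is bounded by $\ell\cdot\int_{\text{comp}}|\gamma''|\cdot\ell$, since the slope difference is at most the local variation of $\gamma'$. Summing over components gives $\int|\widehat\gamma-\gamma|\leq \sup\ell^2\int|\gamma''|$, with $\ell^2\lesssim(1+(\mu_2-\mu_1)^2/\ep^2)/\lambda^2$, matching~(\ref{est:first-derivative-fidelity}).

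Finally I will mollify, after extending constantly, as in the previous lemma. Convexity preserves both the range and the bound $|\widehat\gamma'|\leq L$, the BV estimate controls $\int|\widehat\gamma''|$, and the $L^1$ error can be made arbitrarily small. To get the exact constants, I will run the construction with $L$ slightly smaller, or absorb the mollification error using strict inequalities.

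The main obstacle is the fidelity estimate, specifically bounding the length of the detachment components. A component can be long when $\gamma$ is nearly linear with slope just above $L$, but then the $L^1$ gap is small. The estimate must therefore be organized as the product of gap height and width, not as width alone. I would try first to bound the gap pointwise by $\ell\int_{\alpha}^{\beta}|\gamma''|$ and then use $\ell\leq C/\lambda$ from the range constraint.
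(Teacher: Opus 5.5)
Your route is the paper's: a single inf-convolution $\gamma_*:=\mathcal I_L\gamma$ with $L=\ep\lambda$, analysis of the detachment components, tangency at contact points, then constant extension and mollification. However, the bookkeeping you describe does not prove the lemma with its stated constants, and in one case it fails outright. (The sup-convolution step is vacuous, as you noticed: $\mathcal I_L\gamma$ is already $L$-Lipschitz, which controls the slope in both directions. Also, on each component $\gamma_*$ is a tent, slope $+L$ then $-L$, not a V.)

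For (\ref{est:first-derivative-second}), the whole budget $2\ep\lambda$ is consumed by the constant extension you need before mollifying. That extension creates endpoint jumps $|\gamma_*'(a+)|+|\gamma_*'(b-)|\leq 2L$. Hence you must show that components touching $a$ or $b$ cost \emph{nothing} extra inside $(a,b)$, not ``at most $2L$''; with your accounting the two contributions add up to $4L$. The claim is true. If the left branch of a component $(a,d)$ is nonempty, then $\gamma'(a)\geq L$. Contact at $d<b$ forces $\gamma'(d)\leq L$ if the right branch is absent, and $\gamma'(d)=-L$ if it is present. So the jump at $d$, or the corner of size $2L$, is bounded by $\gamma'(a)-\gamma'(d)\leq\int_a^d|\gamma''|$. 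If only the right branch is present, no jump arises at all.

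Moreover, when $\int_a^b|\gamma''|=0$ the right-hand side of (\ref{est:first-derivative-fidelity}) vanishes, so $\widehat\gamma=\gamma$ is forced. But mollifying the constant extension of an affine $\gamma$ with nonzero slope changes it near the endpoints. This is where (\ref{hp:first-derivative-lambda}) really enters: an affine $\gamma$ with values in $[\mu_1,\mu_2]$ has $|\gamma'|\leq(\mu_2-\mu_1)/(b-a)<L$, so $\widehat\gamma:=\gamma$ works. When $\int_a^b|\gamma''|>0$, the term $1$ in the constant of (\ref{est:first-derivative-fidelity}) is precisely the budget for the mollification error, obtained by choosing $\delta$ with $\int_a^b|(\gamma_*)_\delta-\gamma_*|\leq\lambda^{-2}\int_a^b|\gamma''|$. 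It has nothing to do with the length of the components, and there is no need to shrink $L$.

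The fidelity estimate is simpler than you fear, but your version loses constants. There are no long components. On each branch $\gamma_*$ is affine with slope $\pm L$ and takes values in $[\mu_1,\mu_2]$, so every branch has length at most $(\mu_2-\mu_1)/L$. There is no additional term of the form $L^{-1}\int|\gamma''|$, and such a term would make your bound superlinear in $\int|\gamma''|$. This observation is also what shows that (\ref{hp:first-derivative-lambda}) rules out $\Omega=(a,b)$.

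Set $w:=\gamma-\gamma_*$. On a branch $[c,z]$ issuing from an interior contact point $c$, one has $w(c)=w'(c)=0$ and $w''=\gamma''$. Hence $w(x)=\int_c^x(x-t)\gamma''(t)\,dt$ and $\int_c^z|w|\leq\frac12(z-c)^2\int_c^z|\gamma''|$. This must be done branch by branch, expanding from the contact end of each branch, because the corner at $z$ prevents a single expansion across the component. On a component touching $a$, the initial slope $w'(a)=\gamma'(a)-L$ need not vanish. You then need the extra bound $0\leq w'(a)\leq\int_a^d|\gamma''|$, which again comes from the contact condition at $d$. Summing gives exactly $(\mu_2-\mu_1)^2L^{-2}\int_a^b|\gamma''|$. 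Your bound $\sup\ell^2\int|\gamma''|$, with $\ell\leq 2(\mu_2-\mu_1)/L$ the length of a whole component, only gives the constant $4(\mu_2-\mu_1)^2/L^2$.

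Finally, since there may be countably many components, the variation estimate requires a limiting argument. Modify $\gamma$ on finitely many components and observe that $\gamma_*'=\gamma'$ almost everywhere on the contact set, since almost every contact point is an accumulation point of contact points. Then conclude by $L^1$ convergence of the derivatives and lower semicontinuity of the total variation.
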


\begin{proof}

Set
\begin{equation*}
    L:=\ep\lambda
    \qquad\quad\text{and}\quad\qquad
    \gamma_*:=\mathcal I_L\gamma.
\end{equation*}

\paragraph{\textmd{\textit{Basic properties}}}

Taking $y=x$ in the definition of the inf-convolution gives $\gamma_*\leq\gamma$, while~(\ref{hp:first-derivative-range}) immediately yields $\gamma_*\geq\mu_1$. Therefore
\begin{equation}
    \mu_1\leq\gamma_*(x)\leq\gamma(x)\leq\mu_2
    \qquad
    \forall x\in[a,b].
    \label{est:inf-convolution-range}
\end{equation}
Moreover, $\gamma_*$ is $L$-Lipschitz, and hence
\begin{equation}
    |\gamma_*'(x)|\leq L
    \qquad
    \text{for almost every }x\in(a,b).
    \label{est:inf-convolution-slope}
\end{equation}

\paragraph{\textmd{\textit{Structure of the detachment set}}}
Let
\begin{equation*}
    \Omega:=
    \left\{
        x\in(a,b):
        \gamma_*(x)<\gamma(x)
    \right\}
\end{equation*}
be the set where $\gamma_*$ and $\gamma$ do not coincide. Since $\Omega$ is open, it is the union of at most countably many pairwise disjoint open intervals.

Let $(c,d)$ be any such connected component. We show that there exists $z\in[c,d]$ with the following properties.

\begin{itemize}

    \item If $z>c$, then
    \begin{equation}
        \gamma'(c)=L
        \quad\text{if $c>a$}
        \qquad\quad\text{or}\quad\qquad
        \gamma'(c)\geq L
        \quad\text{if $c=a$},
        \label{th:gamma'c}
    \end{equation}
    and
    \begin{equation*}
        \gamma_*(x)=\gamma(c)+L(x-c)
        \qquad
        \forall x\in[c,z].
    \end{equation*}

    \item If $z<d$, then
    \begin{equation}
        \gamma'(d)=-L
        \quad\text{if $d<b$}
        \qquad\quad\text{or}\quad\qquad
        \gamma'(d)\leq -L
        \quad\text{if $d=b$},
        \label{th:gamma'd}
    \end{equation}
    and
    \begin{equation*}
        \gamma_*(x)=\gamma(d)+L(d-x)
        \qquad
        \forall x\in[z,d].
    \end{equation*}

\end{itemize}

In words, this means that on every connected component of the detachment set the function $\gamma_*$ is either affine with slope $L$, affine with slope $-L$, or consists of two affine pieces with slopes $L$ and $-L$, in this order, with switch at $z$.

In addition, we show that
\begin{equation}
    z-c\leq\frac{\mu_2-\mu_1}{L}
    \qquad\quad\text{and}\quad\qquad
    d-z\leq\frac{\mu_2-\mu_1}{L}.
    \label{est:detachment-branch-length}
\end{equation}
In particular, (\ref{hp:first-derivative-lambda}) excludes the possibility that the whole interval $(a,b)$ is a single component of $\Omega$.

To this end, we take any point $x\in(c,d)$. Since the minimum in the definition of $\gamma_*(x)$ is attained, there exists a point $y\in[a,b]$ such that
\begin{equation*}
    \gamma_*(x)=\gamma(y)+L|x-y|.
\end{equation*}
When this is the case, we say that $y$ supports $x$.

To begin with, we show that we can assume that $x$ is supported by either $c$ or $d$. Indeed, since $\gamma_*$ is Lipschitz continuous and less than or equal to $\gamma$, one has
\begin{equation*}
    \gamma_*(y)+L|x-y|
    \leq
    \gamma(y)+L|x-y|
    =
    \gamma_*(x)
    \leq
    \gamma_*(y)+L|x-y|,
\end{equation*}
which proves that $\gamma_*(y)=\gamma(y)$. Hence either $y\leq c$ or $y\geq d$.

Let us assume that $y\leq c$. Notice that $\gamma_*(c)=\gamma(c)$. Indeed, this follows from the definition of the connected component if $c>a$, while if $c=a$, then $y\leq c$ implies $y=c$, and the equality just proved gives the same conclusion. Since
\begin{equation*}
    \gamma_*(x)
    \leq
    \gamma(c)+L(x-c)
\end{equation*}
and
\begin{equation*}
    \gamma(c)
    \leq
    \gamma(y)+L(c-y),
\end{equation*}
we obtain
\begin{equation*}
    \gamma(y)+L(x-y)
    =
    \gamma_*(x)
    \leq
    \gamma(c)+L(x-c)
    \leq
    \gamma(y)+L(c-y)+L(x-c)
    =
    \gamma(y)+L(x-y).
\end{equation*}
Hence all inequalities are equalities, and therefore
\begin{equation*}
    \gamma_*(x)=\gamma(c)+L(x-c),
\end{equation*}
which means that $c$ supports $x$ as well. A symmetric argument applies if $y\geq d$.

Once we know that $c$ supports some $x\in(c,d)$, the derivative information in~(\ref{th:gamma'c}) follows immediately. Indeed, $c$ minimizes
\begin{equation*}
    y\longmapsto\gamma(y)+L(x-y).
\end{equation*}
If $c>a$, the minimum is attained at an interior point, and therefore $\gamma'(c)=L$. If $c=a$, the corresponding one-sided condition yields $\gamma'(a)\geq L$. The proof of~(\ref{th:gamma'd}) is symmetric.

We now know that every point $x\in(c,d)$ is supported by either $c$ or $d$. Assume that $x$ is supported by $c$. Then every $x_1\in(c,x)$ is supported by $c$ as well. Indeed, from the Lipschitz continuity of $\gamma_*$ we have
\begin{equation*}
    \gamma(c)+L(x_1-c)
    \geq
    \gamma_*(x_1)
    \geq
    \gamma_*(x)-L(x-x_1)
    =
    \gamma(c)+L(x-c)-L(x-x_1).
\end{equation*}
The left-hand side and the right-hand side coincide, and therefore all inequalities are equalities. Hence $c$ supports $x_1$. Symmetrically, if $x$ is supported by $d$, then every point of $(x,d)$ is supported by $d$.

Let $C$ and $D$ denote the set of points in $(c,d)$ that are supported, respectively, by $c$ and $d$. We have proved that $C\cup D=(c,d)$, that $C$ is an initial portion of $(c,d)$, and that $D$ is a final portion of $(c,d)$. Moreover, $C\cap D$ contains at most one point, because otherwise $\gamma_*$ would be affine with slopes $L$ and $-L$ on the same nontrivial interval.

Let $z:=\sup C$, with the convention $z=c$ if $C$ is empty. Then
\begin{equation*}
    \gamma_*(x)=\gamma(c)+L(x-c)
    \qquad
    \forall x\in(c,z),
\end{equation*}
and
\begin{equation*}
    \gamma_*(x)=\gamma(d)+L(d-x)
    \qquad
    \forall x\in(z,d).
\end{equation*}
By continuity, the corresponding identities hold also at $z$ whenever the relevant branch is nonempty. This proves the announced structure.

Finally, (\ref{est:detachment-branch-length}) follows because $\gamma_*$ takes values in $[\mu_1,\mu_2]$ and is affine with slope of absolute value $L$ on each nonempty branch.

\paragraph{\textmd{\textit{Variation estimate}}}

We now prove that
\begin{equation}
    \operatorname{Var}(\gamma_*';(a,b))
    \leq
    \int_a^b|\gamma''(x)|\,dx.
    \label{est:inf-convolution-variation}
\end{equation}

Let $\{J_j\}$ be an enumeration of the connected components of $\Omega$. For every positive integer $N$, define
\begin{equation*}
    \gamma_N(x)
    :=
    \begin{cases}
        \gamma_*(x),
        & x\in J_1\cup\cdots\cup J_N,\\[1ex]
        \gamma(x),
        & \text{otherwise}.
    \end{cases}
\end{equation*}
At every interior endpoint of one of the components $J_j$ the functions $\gamma_*$ and $\gamma$ coincide, and therefore $\gamma_N$ is continuous in $(a,b)$. Moreover, $\gamma_N'\in BV((a,b))$, since only finitely many components are modified.

Consider first a component $(c,d)$ with $a<c<d<b$. If $\gamma_*$ is affine with slope $L$ on $(c,d)$, then~(\ref{th:gamma'c}) gives $\gamma'(c)=L$. Hence no jump is produced at $c$, while
\begin{equation*}
    |L-\gamma'(d)|
    =
    |\gamma'(c)-\gamma'(d)|
    \leq
    \int_c^d|\gamma''(x)|\,dx.
\end{equation*}
If $\gamma_*$ is affine with slope $-L$, then $\gamma'(d)=-L$, and symmetrically
\begin{equation*}
    |\gamma'(c)+L|
    =
    |\gamma'(c)-\gamma'(d)|
    \leq
    \int_c^d|\gamma''(x)|\,dx.
\end{equation*}
Finally, if $\gamma_*$ has a genuine corner in some $z\in(c,d)$, then
\begin{equation*}
    \gamma'(c)=L
    \qquad\quad\text{and}\quad\qquad
    \gamma'(d)=-L.
\end{equation*}
No jump is produced at the endpoints, while the jump at the corner has size
\begin{equation*}
    2L
    =
    |\gamma'(c)-\gamma'(d)|
    \leq
    \int_c^d|\gamma''(x)|\,dx.
\end{equation*}

The same estimate holds for a component touching the boundary. For example, let $(a,d)$ be this component. If only the right branch is present, then $\gamma_*'=-L$ on $(a,d)$ and $\gamma'(d)=-L$, so there is no contribution to the variation. If only the left branch is present, then $\gamma'(d)\leq L$, while~(\ref{th:gamma'c}) gives $\gamma'(a)\geq L$. Hence
\begin{equation*}
    |L-\gamma'(d)|
    =
    L-\gamma'(d)
    \leq
    \gamma'(a)-\gamma'(d)
    \leq
    \int_a^d|\gamma''(x)|\,dx.
\end{equation*}
If both branches are present, then $\gamma'(d)=-L$ and~(\ref{th:gamma'c}) gives
\begin{equation*}
    2L
    \leq
    \gamma'(a)-\gamma'(d)
    \leq
    \int_a^d|\gamma''(x)|\,dx.
\end{equation*}
The case of a component touching $b$ is symmetric.

It follows that
\begin{equation*}
    \operatorname{Var}(\gamma_N';(a,b))
    \leq
    \int_{(a,b)\setminus\bigcup_{j=1}^N J_j}
    |\gamma''(x)|\,dx
    +
    \sum_{j=1}^N
    \int_{J_j}|\gamma''(x)|\,dx
    =
    \int_a^b|\gamma''(x)|\,dx.
\end{equation*}

For almost every $x\in\Omega$, the point $x$ belongs to one of the components $J_j$, and therefore
\begin{equation*}
    \gamma_N'(x)=\gamma_*'(x)
\end{equation*}
for all sufficiently large $N$.

On the complement of $\Omega$, the function $\gamma-\gamma_*$ vanishes. Since the isolated points of the contact set are at most countably many, almost every point of the contact set is a limit of other contact points. Therefore, at every such point where $\gamma_*$ is differentiable,
\begin{equation*}
    \gamma_*'(x)=\gamma'(x),
\end{equation*}
and therefore
\begin{equation*}
    \gamma_N'(x)\longrightarrow\gamma_*'(x)
    \qquad
    \text{for almost every }x\in(a,b).
\end{equation*}
Moreover, they satisfy a uniform bound because
\begin{equation*}
    |\gamma_N'(x)|
    \leq
    \max\{|\gamma'(x)|,L\}
    \qquad
    \text{for almost every }x\in(a,b),
\end{equation*}
and therefore the dominated convergence theorem yields
\begin{equation*}
    \gamma_N'\longrightarrow\gamma_*'
    \qquad
    \text{in }L^1((a,b)).
\end{equation*}
By the lower semicontinuity of the total variation with respect to $L^1$ convergence, we conclude that
\begin{equation*}
    \operatorname{Var}(\gamma_*';(a,b))
    \leq
    \liminf_{N\to\infty}
    \operatorname{Var}(\gamma_N';(a,b))
    \leq
    \int_a^b|\gamma''(x)|\,dx,
\end{equation*}
which proves~(\ref{est:inf-convolution-variation}).

\paragraph{\textmd{\textit{Fidelity estimate}}}

We now prove that
\begin{equation}
    \int_a^b|\gamma(x)-\gamma_*(x)|\,dx
    \leq
    \frac{(\mu_2-\mu_1)^2}{L^2}
    \int_a^b|\gamma''(x)|\,dx.
    \label{est:inf-convolution-fidelity}
\end{equation}

To this end, we consider the difference
\begin{equation*}
    w(x):=\gamma(x)-\gamma_*(x)
    \qquad
    \forall x\in[a,b].
\end{equation*}
Let $(c,d)$ be a connected component of $\Omega$, and let $z\in[c,d]$ be the point provided by the structure above.

Suppose first that $c>a$ and that the left branch is nonempty. Then
\begin{equation*}
    w(c)=0
    \qquad\quad\text{and}\quad\qquad
    w'(c)=0.
\end{equation*}
Since $\gamma_*$ is affine on $(c,z)$, one has
\begin{equation*}
    w(x)
    =
    \int_c^x(x-t)\gamma''(t)\,dt
    \qquad
    \forall x\in[c,z].
\end{equation*}
Consequently
\begin{equation*}
    \int_c^z|w(x)|\,dx
    \leq
    \frac12
    \int_c^z(z-t)^2|\gamma''(t)|\,dt
    \leq
    \frac{(\mu_2-\mu_1)^2}{2L^2}
    \int_c^z|\gamma''(t)|\,dt,
\end{equation*}
where we used~(\ref{est:detachment-branch-length}). Similarly, if $d<b$ and the right branch is nonempty, then
\begin{equation}
    \int_z^d|w(x)|\,dx
    \leq
    \frac{(\mu_2-\mu_1)^2}{2L^2}
    \int_z^d|\gamma''(t)|\,dt.
    \label{est:fidelity-right}
\end{equation}

Thus every component compactly contained in $(a,b)$ satisfies
\begin{equation*}
    \int_c^d|\gamma(x)-\gamma_*(x)|\,dx
    \leq
    \frac{(\mu_2-\mu_1)^2}{2L^2}
    \int_c^d|\gamma''(x)|\,dx.
\end{equation*}

We now consider a component touching the boundary, say of the form $(a,d)$. The right branch, if nonempty, is estimated exactly as in~(\ref{est:fidelity-right}). If the left branch is nonempty, then
\begin{equation*}
    w(a)=0,
    \qquad
    w'(a)=\gamma'(a)-L\geq0.
\end{equation*}
If only the left branch is present, the contact condition at $d$ gives $\gamma'(d)\leq L$, while if both branches are present one has $\gamma'(d)=-L$. In either case
\begin{equation}
    0\leq w'(a)
    \leq
    \int_a^d|\gamma''(x)|\,dx.
    \label{est:boundary-initial-slope}
\end{equation}
Hence, on the left branch,
\begin{equation*}
    w(x)
    =
    w'(a)(x-a)
    +
    \int_a^x(x-t)\gamma''(t)\,dt,
\end{equation*}
and therefore
\begin{equation*}
    \int_a^z|w(x)|\,dx
    \leq
    \frac{(z-a)^2}{2}
    \left(
        w'(a)
        +
        \int_a^z|\gamma''(t)|\,dt
    \right).
\end{equation*}
Using~(\ref{est:detachment-branch-length}) and~(\ref{est:boundary-initial-slope}), we obtain
\begin{equation*}
    \int_a^z|w(x)|\,dx
    \leq
    \frac{(\mu_2-\mu_1)^2}{2L^2}
    \left(
        \int_a^d|\gamma''(t)|\,dt
        +
        \int_a^z|\gamma''(t)|\,dt
    \right).
\end{equation*}
Combining this estimate with~(\ref{est:fidelity-right}) when the right branch is nonempty, we conclude that
\begin{equation*}
    \int_a^d|\gamma(x)-\gamma_*(x)|\,dx
    \leq
    \frac{(\mu_2-\mu_1)^2}{L^2}
    \int_a^d|\gamma''(x)|\,dx.
\end{equation*}
The case of a component touching $b$ is symmetric.

Summing the corresponding estimates over the at most countably many connected components of $\Omega$ proves~(\ref{est:inf-convolution-fidelity}).

\paragraph{\textmd{\textit{Final regularization}}}

It remains to regularize $\gamma_*$ without losing the range and first-derivative bounds, while increasing the second-order cost by at most $2L$.

If
\begin{equation*}
    \int_a^b|\gamma''(x)|\,dx=0,
\end{equation*}
then $\gamma$ is affine and~(\ref{hp:first-derivative-range}) gives
\begin{equation*}
    |\gamma'|
    \leq
    \frac{\mu_2-\mu_1}{b-a}
    <
    L.
\end{equation*}
In this case we simply take $\widehat\gamma:=\gamma$.

Assume therefore that
\begin{equation*}
    \int_a^b|\gamma''(x)|\,dx>0.
\end{equation*}
Extend $\gamma_*$ constantly outside $[a,b]$, and denote the resulting function by $\widetilde\gamma_*$. Let
\begin{equation*}
    (\gamma_*)_\delta:=\rho_\delta*\widetilde\gamma_*,
\end{equation*}
where $\rho_\delta$ is a standard nonnegative mollifier. By~(\ref{est:inf-convolution-range}) and~(\ref{est:inf-convolution-slope}),
\begin{equation*}
    \mu_1\leq(\gamma_*)_\delta(x)\leq\mu_2
    \qquad\quad\text{and}\quad\qquad
    |(\gamma_*)_\delta'(x)|\leq L
\end{equation*}
for every $x\in[a,b]$.

The constant extension adds the two endpoint jumps of the first derivative, with sizes $|\gamma_*'(a+)|$ and $|\gamma_*'(b-)|$. Therefore, using~(\ref{est:inf-convolution-variation}) and the fact that convolution does not increase total variation, we obtain
\begin{equation*}
    \int_a^b|(\gamma_*)_\delta''(x)|\,dx
    \leq
    \operatorname{Var}(\gamma_*';(a,b))
    +
    |\gamma_*'(a+)|
    +
    |\gamma_*'(b-)|
    \leq
    \int_a^b|\gamma''(x)|\,dx
    +
    2L.
\end{equation*}

Since $(\gamma_*)_\delta\to\gamma_*$ in $L^1((a,b))$ as $\delta\to0^+$, we may choose $\delta$ sufficiently small that
\begin{equation*}
    \int_a^b|(\gamma_*)_\delta(x)-\gamma_*(x)|\,dx
    \leq
    \frac1{\lambda^2}
    \int_a^b|\gamma''(x)|\,dx.
\end{equation*}
Combining this estimate with~(\ref{est:inf-convolution-fidelity}) and recalling that $L=\ep\lambda$, we obtain
\begin{equation*}
    \int_a^b|(\gamma_*)_\delta(x)-\gamma(x)|\,dx
    \leq
    \left(
        1+\frac{(\mu_2-\mu_1)^2}{\ep^2}
    \right)
    \frac1{\lambda^2}
    \int_a^b|\gamma''(x)|\,dx.
\end{equation*}

The conclusion follows by setting $\widehat\gamma:=(\gamma_*)_\delta$.
\end{proof}


\subsubsection{Second order: proof of Proposition~\ref{prop:M2-constrained}}

We first consider $\lambda$ sufficiently large. Choose $\gamma\in C^2([a,b])$ such that
\begin{equation*}
    \F_2((a,b),g,\lambda,\gamma)
    \leq
    \M_2((a,b),g,\lambda)
    +
    \frac1\lambda.
\end{equation*}

By Lemma~\ref{lemma:range-reduction}, after replacing $\gamma$ with another competitor, still denoted by $\gamma$, we may assume that
\begin{equation*}
    \mu_1\leq\gamma(x)\leq\mu_2
    \qquad
    \forall x\in[a,b],
\end{equation*}
and
\begin{equation}
    \F_2((a,b),g,\lambda,\gamma)
    \leq
    C\left(
        \M_2((a,b),g,\lambda)
        +
        \frac1\lambda
    \right).
    \label{est:M2-after-range}
\end{equation}

We now apply Lemma~\ref{lemma:first-derivative-reduction} with $\ep=\ep_2$. For $\lambda$ sufficiently large, condition~(\ref{hp:first-derivative-lambda}) is satisfied, and hence there exists $\widehat\gamma\in C^2([a,b])$ such that
\begin{equation*}
    \mu_1\leq\widehat\gamma(x)\leq\mu_2
    \qquad\quad\text{and}\quad\qquad
    |\widehat\gamma\,'(x)|\leq\ep_2\lambda
\end{equation*}
for every $x\in[a,b]$. Moreover, by~(\ref{est:first-derivative-second}) and~(\ref{est:first-derivative-fidelity}),
\begin{equation*}
    \int_a^b|\widehat\gamma\,''(x)|\,dx
    \leq
    \int_a^b|\gamma''(x)|\,dx
    +
    2\ep_2\lambda,
\end{equation*}
and
\begin{equation*}
    \int_a^b|\widehat\gamma(x)-\gamma(x)|\,dx
    \leq
    \left(
        1+\frac{(\mu_2-\mu_1)^2}{\ep_2^2}
    \right)
    \frac1{\lambda^2}
    \int_a^b|\gamma''(x)|\,dx.
\end{equation*}

Therefore $\widehat\gamma$ is admissible for the constrained problem. By the triangle inequality,
\begin{eqnarray*}
    \F_2((a,b),g,\lambda,\widehat\gamma)
    & \leq &
    \F_2((a,b),g,\lambda,\gamma)
    +
    2\ep_2
    +
    \left(
        1+\frac{(\mu_2-\mu_1)^2}{\ep_2^2}
    \right)
    \frac1\lambda
    \int_a^b|\gamma''(x)|\,dx
    \\
    & \leq &
    C\left(
        \F_2((a,b),g,\lambda,\gamma)
        +
        1
    \right).
\end{eqnarray*}
Together with~(\ref{est:M2-after-range}), this yields
\begin{equation*}
    \M_2^*((a,b),g,\lambda)
    \leq
    C\left(
        \M_2((a,b),g,\lambda)
        +
        1
    \right)
\end{equation*}
for all sufficiently large $\lambda$.

For the remaining bounded values of $\lambda$, the same estimate follows by using any constant function with value in $[\mu_1,\mu_2]$ as an admissible competitor and enlarging $C$ if necessary. This completes the proof.
\qed


\setcounter{equation}{0}
\section{Existence through absolute integrability}
\label{sec:var2derloss}

We now connect the variational problems introduced in the previous section with the growth function $\G(c,\lambda,T)$ introduced in Section~\ref{sec:background}. Since this function is the logarithm of the maximal amplification of the standard energy, the exponential estimates obtained through the approximate energies translate directly into upper bounds in terms of the corresponding variational minimum values.

\begin{thm}[Variational bounds for the growth function]
\label{thm:G-vs-Mk}

Let $T$, $\nu_1$ and $\nu_2$ be positive real numbers with $\nu_1\leq\nu_2$. Let $c:(0,T)\to\mathbb R$ be a measurable function satisfying the strict hyperbolicity assumption~(\ref{hp:sh}).

Then, for every positive integer $k$, there exists a constant $C_k>0$ such that
\begin{equation}
    \G(c,\lambda,T)
    \leq
    C_k\left(
        1+\M_k^*\left((0,T),\frac1c,\lambda\right)
    \right)
    \qquad
    \forall\lambda>0.
    \label{est:G-vs-Mk-star}
\end{equation}
The constant $C_k$ depends only on $k$, $T$, $\nu_1$ and $\nu_2$.

\end{thm}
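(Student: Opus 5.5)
The plan is to combine Theorem~\ref{thm:approximate-cascade} with the definition of the constrained minimum values. We apply the latter on $(a,b)=(0,T)$ with $g=1/c$, $\mu_1=1/\nu_2$, $\mu_2=1/\nu_1$, and $\ep_k$ equal to the constant given by Theorem~\ref{thm:approximate-cascade}. With these choices, a competitor $\gamma\in\D_{k,\lambda}$ satisfies exactly the hypotheses of Theorem~\ref{thm:approximate-cascade}: the range bounds $1/\nu_2\leq\gamma\leq1/\nu_1$ and the derivative bounds $|\gamma^{(j)}|\leq\ep_k\lambda^j$ for $j=1,\dots,k-1$. Moreover $\F_k((0,T),1/c,\lambda,\gamma)$ coincides with $\F_{k,\lambda}(\gamma;c)$.

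For such $\gamma$ and any solution $\ul$, we chain the equivalence $A_kE_\lambda\leq\mathcal E_{k,\gamma}\leq B_kE_\lambda$ with the growth estimate~(\ref{th:approx-energy}). This gives
\[
E_\lambda(t)\leq\frac{B_k}{A_k}\exp\Bigl\{C_k\Bigl[\lambda^{1-k}+\F_{k,\lambda}(\gamma;c)\Bigr]\Bigr\}E_\lambda(0)
\qquad\forall t\in[0,T].
\]
Taking logarithms and the supremum in the definition of $\G$, then the infimum over $\gamma\in\D_{k,\lambda}$, yields
\[
\G(c,\lambda,T)\leq\log(B_k/A_k)+C_k\lambda^{1-k}+C_k\M_k^*\bigl((0,T),1/c,\lambda\bigr).
\]
If $\D_{k,\lambda}$ is empty the right-hand side is $+\infty$ and there is nothing to prove, although in fact constants $\gamma\in[1/\nu_2,1/\nu_1]$ always belong to it.

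The term $\lambda^{1-k}$ is harmless for $\lambda\geq1$ but blows up as $\lambda\to0^+$ when $k\geq2$, and this is the only real obstacle. We handle small frequencies separately. For $0<\lambda\leq1$ we use a direct bound on $\G$ that does not go through the cascade. With $\mathcal E_1$ built from the actual $c$ we cannot differentiate, so instead we use the elementary estimate from~(\ref{eqn:ODE}):
\[
\frac{d}{dt}\bigl(\ul'^2+\lambda^2\ul^2\bigr)=2\lambda^2(1-c^2)\ul\ul'\leq\lambda(1+\nu_2^2)E_\lambda .
\]
Gronwall's inequality then gives $\G(c,\lambda,T)\leq(1+\nu_2^2)\lambda T\leq(1+\nu_2^2)T$ for $\lambda\leq1$. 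The same argument also gives $\G\leq(1+\nu_2^2)\lambda T$ for all $\lambda$, but that is not needed here.

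Combining the two ranges, and enlarging $C_k$ so that it absorbs $\log(B_k/A_k)$, the constant $C_k$ of Theorem~\ref{thm:approximate-cascade}, and $(1+\nu_2^2)T$, gives~(\ref{est:G-vs-Mk-star}) for all $\lambda>0$. All these constants depend only on $k$, $T$, $\nu_1$, $\nu_2$. The one point to check carefully is that the $\ep_k$ entering the definition of $\D_{k,\lambda}$ is taken to be the one provided by Theorem~\ref{thm:approximate-cascade}. If a smaller $\ep_k$ were used, the constraint set would shrink and the argument would still go through unchanged.
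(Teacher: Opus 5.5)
Your proposal is correct and follows essentially the same route as the paper: a Gronwall bound on the standard energy $E_\lambda$ for $0<\lambda<1$, and for $\lambda\geq1$ the equivalence and growth estimates of Theorem~\ref{thm:approximate-cascade} followed by the infimum over constrained competitors, with the term $\lambda^{1-k}$ absorbed into the constant. You also state explicitly the choices $\mu_1=1/\nu_2$, $\mu_2=1/\nu_1$, and $\ep_k$ taken from Theorem~\ref{thm:approximate-cascade}, which the paper leaves implicit.
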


\begin{proof}

For $0<\lambda<1$, differentiating the standard energy gives
\begin{equation*}
    |E_\lambda'(t)|
    \leq
    \lambda|1-c(t)^2|E_\lambda(t)
    \leq
    (1+\nu_2^2)E_\lambda(t),
\end{equation*}
and hence
\begin{equation*}
    \G(c,\lambda,T)
    \leq
    (1+\nu_2^2)T.
\end{equation*}
Thus~(\ref{est:G-vs-Mk-star}) holds in this range after increasing $C_k$ if necessary.

Let now $\lambda\geq1$, and let $\gamma$ be any admissible competitor for $\M_k^*((0,T),1/c,\lambda)$. By Theorem~\ref{thm:approximate-cascade}, the corresponding approximate energy is uniformly equivalent to the standard energy, and its growth is bounded by
\begin{equation*}
    E_\lambda(t)
    \leq
    C E_\lambda(0)
    \exp\left(
        C\left[
            \frac1{\lambda^{k-1}}
            +
            \F_k\left((0,T),\frac1c,\lambda,\gamma\right)
        \right]
    \right)
\end{equation*}
for every $t\in[0,T]$, where $C$ depends only on $k$, $T$, $\nu_1$ and $\nu_2$. Since $\lambda\geq1$, it follows that
\begin{equation*}
    \G(c,\lambda,T)
    \leq
    C\left(
        1+\F_k\left((0,T),\frac1c,\lambda,\gamma\right)
    \right).
\end{equation*}
Taking the infimum over all admissible competitors $\gamma$ gives~(\ref{est:G-vs-Mk-star}).
\end{proof}

The main consequence of the variational theory developed in the previous section is that, although the approximate-energy construction is naturally formulated in terms of constrained problems of arbitrary order, the resulting growth estimate already saturates at second order. In particular, all the higher-order machinery can be reduced, at the level of general growth bounds, to the unconstrained second-order variational problem.

\begin{cor}[Saturation at second order]
\label{cor:second-order-saturation}

Under the assumptions of Theorem~\ref{thm:G-vs-Mk}, there exists a constant $C>0$ such that
\begin{equation*}
    \G(c,\lambda,T)
    \leq
    C\left(
        1+\M_2\left((0,T),\frac1c,\lambda\right)
    \right)
    \qquad
    \forall\lambda>0.
\end{equation*}

\end{cor}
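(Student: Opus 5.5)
The plan is to apply Theorem~\ref{thm:G-vs-Mk} with $k=2$ and then use Proposition~\ref{prop:M2-constrained} to pass from the constrained minimum value $\M_2^*$ to the unconstrained one $\M_2$. Concretely, Theorem~\ref{thm:G-vs-Mk} with $k=2$ gives
\begin{equation*}
    \G(c,\lambda,T)
    \leq
    C_2\left(1+\M_2^*\left((0,T),\frac1c,\lambda\right)\right)
    \qquad
    \forall\lambda>0,
\end{equation*}
where the constrained class $\D_{2,\lambda}$ uses the range $[\mu_1,\mu_2]=[1/\nu_2,1/\nu_1]$. It also uses the constant $\ep_2$ required by Theorem~\ref{thm:approximate-cascade}, which depends only on $\nu_1,\nu_2$; in the low-order computation one may simply take $\ep_2=1$.

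The next step is to check that the hypotheses of Proposition~\ref{prop:M2-constrained} are satisfied for $g=1/c$ on $(a,b)=(0,T)$. By~(\ref{hp:sh}) the function $1/c$ is measurable, takes values in $[1/\nu_2,1/\nu_1]$ almost everywhere, and hence belongs to $L^1((0,T))$, so~(\ref{hp:Mk-constrained-range}) holds. The proposition then yields
\begin{equation*}
    \M_2^*\left((0,T),\frac1c,\lambda\right)
    \leq
    C'\left(\M_2\left((0,T),\frac1c,\lambda\right)+1\right)
    \qquad
    \forall\lambda>0,
\end{equation*}
with $C'$ depending only on $\nu_1,\nu_2,\ep_2,T$. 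Substituting this into the previous bound gives the claim with $C:=C_2(1+C')$.

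There is no real obstacle here. The only point that needs care is bookkeeping: the constant $\ep_2$ appearing in the constrained class used in Theorem~\ref{thm:G-vs-Mk} must match the one fed into Proposition~\ref{prop:M2-constrained}, and the final constant must be seen to depend only on $T,\nu_1,\nu_2$.

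For a stronger statement one could combine the argument with Theorem~\ref{thm:Sk-general}, statement (1), and Proposition~\ref{prop:k+1<k}. Together with $\M_k\lesssim\M_2$, these would show that no order $k\geq3$ can give a better growth class. That remark is not needed for the inequality itself.
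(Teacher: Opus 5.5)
Your proposal is correct and matches the paper's proof: apply Theorem~\ref{thm:G-vs-Mk} with $k=2$, then apply Proposition~\ref{prop:M2-constrained} to $g=1/c$ with range $[1/\nu_2,1/\nu_1]$. Your bookkeeping of $\ep_2$, $\mu_1$, $\mu_2$ and of how the final constant depends on the data is also consistent with the paper.
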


\begin{proof}

Apply Theorem~\ref{thm:G-vs-Mk} with $k=2$ and then Proposition~\ref{prop:M2-constrained}.
\end{proof}

In terms of the spaces introduced in the previous section, Corollary~\ref{cor:second-order-saturation} immediately implies that, for every nondecreasing function
$\varphi:(0,+\infty)\to[1,+\infty)$,
\begin{equation*}
    \frac1c\in\Sp_2^\varphi((0,T))
    \qquad\Longrightarrow\qquad
    \G(c,\lambda,T)=O(\varphi(\lambda))
    \quad\text{as }\lambda\to+\infty.
\end{equation*}
In particular,
\begin{equation*}
    \frac1c\in\Sp_2((0,T))
    \qquad\Longrightarrow\qquad
    \G(c,\lambda,T)=O(1).
\end{equation*}

Since $\Sp_k^\varphi((0,T))=\Sp_2^\varphi((0,T))$ for every $k\geq2$, all higher-order unconstrained variational problems yield the same general growth bounds for the evolution problem. Thus second order is the natural saturation level for growth estimates obtained through absolute integrability, and membership of $1/c$ in $\Sp_2((0,T))$ provides a purely variational sufficient condition for well-posedness without derivative loss.

We now apply these variational bounds to several regularity assumptions on the propagation speed. In all the examples below we set
\begin{equation*}
    g:=\frac1c.
\end{equation*}
Under strict hyperbolicity, the regularity properties considered below are transferred from $c$ to $g$, up to harmless multiplicative constants. The general strategy is to construct a suitable competitor for either the first- or the second-order variational problem and then convert the resulting variational bound into an estimate for the growth function.  

In the constructions below we shall repeatedly use, without further comment, the relaxed formulations of the variational problems described in the last item of Remark~\ref{rmk:Mk-basic}. Since only the high-frequency behavior is relevant for derivative loss, throughout the examples below $\lambda$ is understood to be sufficiently large whenever this is required by the construction. Finally, the letter $C$ denotes a positive constant whose value may change from line to line and which is independent of $\lambda$ and of the auxiliary parameters introduced in the constructions, unless otherwise specified.

\subsection{Examples based on the first-order variational problem}

\begin{ex}[Modulus of continuity]
\label{ex:modulus-continuity}
\begin{em}

Assume that $c$ is continuous on $[0,T]$ with modulus of continuity $\omega$, namely
\begin{equation*}
    |c(t)-c(s)|
    \leq
    \omega(|t-s|)
    \qquad
    \forall s,t\in[0,T].
\end{equation*}
Then $g=1/c$ has the same modulus of continuity up to a multiplicative constant depending only on the strict hyperbolicity constants.

Let $\rho$ be a standard nonnegative mollifier and, after extending $g$ constantly outside $[0,T]$, set
\begin{equation*}
    \gamma_\lambda:=\rho_{1/\lambda}*g.
\end{equation*}
The standard estimates for mollification give
\begin{equation*}
    |\gamma_\lambda-g|
    \leq
    C\omega(1/\lambda)
    \qquad\quad\text{and}\quad\qquad
    |\gamma_\lambda'|
    \leq
    C\lambda\,\omega(1/\lambda).
\end{equation*}
Therefore
\begin{equation*}
    \F_1((0,T),g,\lambda,\gamma_\lambda)
    \leq
    C\lambda\,\omega(1/\lambda).
\end{equation*}
Since convolution with a nonnegative kernel preserves the range of $g$, the function $\gamma_\lambda$ is admissible for the first-order constrained problem. Applying Theorem~\ref{thm:G-vs-Mk} with $k=1$, we obtain
\begin{equation}
    \G(c,\lambda,T)
    \leq
    C\left(
        1+\lambda\,\omega(1/\lambda)
    \right).
    \label{est:G-modulus}
\end{equation}

This estimate is the natural general form of the classical regularization argument originating in~\cite{1979-DGCS}. The early literature was mostly formulated in terms of specific moduli of continuity and the corresponding regularity regimes. The quantity $\lambda\,\omega(1/\lambda)$ appears explicitly in the general formulation of~\cite{GhisiGobbino09}; see also~\cite{CicognaniColombini06} for subsequent developments concerning general moduli of continuity and derivative loss.

Two classical cases are immediately recovered from~(\ref{est:G-modulus}). In the Lipschitz case, when $\omega(\sigma)\leq C\sigma$, one has $\lambda\,\omega(1/\lambda)=O(1)$, and therefore there is no derivative loss. In the log-Lipschitz case, when $\omega(\sigma)\leq C\sigma(1+|\log\sigma|)$ for $\sigma$ close to zero, one has $\lambda\,\omega(1/\lambda)=O(\log\lambda)$, and therefore the derivative loss is finite.

Thus the variational formulation packages the classical regularization argument into the single quantity $\lambda\,\omega(1/\lambda)$, independently of the particular modulus under consideration.

\end{em}
\end{ex}

\begin{ex}[Fractional Sobolev regularity]
\label{ex:fractional-regularity}
\begin{em}

Assume that $c\in W^{s,1}((0,T))$ for some $s\in(0,1)$. Then $g=1/c$ belongs to the same space and
\begin{equation*}
    [g]_{W^{s,1}}
    \leq
    C[c]_{W^{s,1}},
\end{equation*}
where $C$ depends only on the strict hyperbolicity constants.

After extending $g$ outside $(0,T)$ by means of a bounded extension operator for $W^{s,1}$, let
\begin{equation*}
    \gamma_\lambda:=\rho_{1/\lambda}*g.
\end{equation*}
The standard fractional estimates for mollification yield
\begin{equation*}
    \|\gamma_\lambda-g\|_{L^1((0,T))}
    \leq
    C\lambda^{-s}[g]_{W^{s,1}}
    \qquad\quad\text{and}\quad\qquad
    \|\gamma_\lambda'\|_{L^1((0,T))}
    \leq
    C\lambda^{1-s}[g]_{W^{s,1}}.
\end{equation*}
Consequently,
\begin{equation*}
    \M_1((0,T),g,\lambda)
    \leq
    C\lambda^{1-s}[g]_{W^{s,1}}.
\end{equation*}
By Proposition~\ref{prop:M1-constrained} and Theorem~\ref{thm:G-vs-Mk} with $k=1$, we obtain
\begin{equation*}
    \G(c,\lambda,T)
    \leq
    C\left(
        1+\lambda^{1-s}[c]_{W^{s,1}}
    \right).
\end{equation*}

Thus fractional Sobolev regularity of order $s$ leads naturally to a growth bound of order $\lambda^{1-s}$. To the best of our knowledge, this fractional Sobolev criterion does not seem to have appeared explicitly in the classical literature for time-dependent propagation speeds.

\end{em}
\end{ex}

\begin{ex}[Control of the first derivative away from the origin]
\label{ex:first-derivative-away-zero}
\begin{em}

Assume that $c\in C^1((0,T])$, with no uniform control on $c'$ near the origin. Let
\begin{equation*}
    \delta:=\frac1\lambda,
\end{equation*}
and consider the function
\begin{equation*}
    \gamma_\lambda(t):=
    \begin{cases}
        g(\delta), & 0\leq t\leq\delta,\\
        g(t), & \delta<t\leq T.
    \end{cases}
\end{equation*}
This function is continuous and belongs to $BV((0,T))$.

Its total variation is
\begin{equation*}
    \operatorname{Var}(\gamma_\lambda;(0,T))
    =
    \int_{1/\lambda}^T|g'(t)|\,dt.
\end{equation*}
On the other hand, the fidelity cost is confined to $(0,1/\lambda)$. Since strict hyperbolicity gives
\begin{equation*}
    \frac1{\nu_2}\leq g(t)\leq\frac1{\nu_1},
\end{equation*}
we have
\begin{equation*}
    \lambda\int_0^{1/\lambda}
    |\gamma_\lambda(t)-g(t)|\,dt
    \leq
    \frac1{\nu_1}-\frac1{\nu_2}.
\end{equation*}
It follows that
\begin{equation*}
    \M_1((0,T),g,\lambda)
    \leq
    \frac1{\nu_1}-\frac1{\nu_2}
    +
    \int_{1/\lambda}^T|g'(t)|\,dt.
\end{equation*}
Since
\begin{equation*}
    |g'(t)|
    =
    \frac{|c'(t)|}{c(t)^2}
    \leq
    \frac1{\nu_1^2}|c'(t)|,
\end{equation*}
Proposition~\ref{prop:M1-constrained} and Theorem~\ref{thm:G-vs-Mk} with $k=1$ yield
\begin{equation}
    \G(c,\lambda,T)
    \leq
    C\left(
        1+\int_{1/\lambda}^T|c'(t)|\,dt
    \right).
    \label{est:G-first-derivative}
\end{equation}

This estimate is the variational counterpart of the classical frequency-dependent splitting used for coefficients with a singular first derivative near the origin; see, in particular,~\cite{CDSK2002}. In the classical argument the transition occurs at the scale $t\sim1/\lambda$: an approximate energy is used before this scale, while the standard hyperbolic energy is used afterwards. The competitor above encodes the same two-region mechanism in a single variational object.

Two familiar consequences are immediate from~(\ref{est:G-first-derivative}). If $c'\in L^1((0,T))$, then $\G(c,\lambda,T)=O(1)$ and there is no derivative loss. If instead $|c'(t)|\leq C/t$ near the origin, then $\G(c,\lambda,T)=O(\log\lambda)$ and the derivative loss is finite.

\end{em}
\end{ex}

\begin{ex}[Modulus of continuity and first derivative]
\label{ex:modulus-first-derivative}
\begin{em}

Let us assume that $c$ is $\omega$-continuous on $[0,T]$ and belongs to $C^1((0,T])$. This combines the assumptions of Examples~\ref{ex:modulus-continuity} and~\ref{ex:first-derivative-away-zero}.

Let $\rho$ be a standard mollifier and extend $g$ constantly outside $[0,T]$. Set
\begin{equation*}
    g_\lambda:=\rho_{1/\lambda}*g.
\end{equation*}
Fix $s\in(0,T)$, and define
\begin{equation*}
    \gamma_{\lambda,s}(t)
    :=
    \begin{cases}
        g_\lambda(t)-g_\lambda(s)+g(s),
        & 0\leq t\leq s,\\
        g(t),
        & s<t\leq T.
    \end{cases}
\end{equation*}
The function $\gamma_{\lambda,s}$ is continuous and piecewise of class $C^1$.

The standard estimates for mollification give
\begin{equation*}
    |g_\lambda(t)-g(t)|
    \leq
    C\omega(1/\lambda)
    \qquad
    \forall t\in[0,T],
\end{equation*}
and
\begin{equation*}
    |g_\lambda'(t)|
    \leq
    C\lambda\,\omega(1/\lambda)
    \qquad
    \forall t\in[0,T].
\end{equation*}
Therefore, for every $t\in[0,s]$,
\begin{equation*}
    |\gamma_{\lambda,s}(t)-g(t)|
    \leq
    |g_\lambda(t)-g(t)|
    +
    |g_\lambda(s)-g(s)|
    \leq
    C\omega(1/\lambda),
\end{equation*}
and hence
\begin{equation*}
    \lambda\int_0^T
    |\gamma_{\lambda,s}(t)-g(t)|\,dt
    \leq
    C\lambda\,\omega(1/\lambda)\,s.
\end{equation*}

On the other hand,
\begin{equation*}
    \operatorname{Var}(\gamma_{\lambda,s};(0,T))
    \leq
    C\lambda\,\omega(1/\lambda)\,s
    +
    \int_s^T|g'(t)|\,dt.
\end{equation*}
It follows that
\begin{equation*}
    \M_1((0,T),g,\lambda)
    \leq
    C\left(
        \lambda\,\omega(1/\lambda)\,s
        +
        \int_s^T|g'(t)|\,dt
    \right).
\end{equation*}
Taking the infimum with respect to $s$, and using
\begin{equation*}
    |g'(t)|
    \leq
    \frac1{\nu_1^2}|c'(t)|,
\end{equation*}
Proposition~\ref{prop:M1-constrained} and Theorem~\ref{thm:G-vs-Mk} with $k=1$ yield
\begin{equation*}
    \G(c,\lambda,T)
    \leq
    C\left[
        1+
        \inf_{0<s<T}
        \left\{
            \lambda\,\omega(1/\lambda)\,s
            +
            \int_s^T|c'(t)|\,dt
        \right\}
    \right].
\end{equation*}

This estimate belongs to a line of results in which a modulus of continuity is combined with information on a possibly singular first derivative. Several specific regimes were treated in~\cite{CicognaniColombini2003,CDSK2002,CDSR2003,KinoshitaReissig2005,DelSantoKinoshitaReissig2007}. A general formulation of the compensation between the two assumptions, based on the optimization of the transition scale, was obtained in~\cite{gg:OptDerLoss}. In the present variational framework, the same mechanism is encoded directly by the family of competitors $\gamma_{\lambda,s}$.

\end{em}
\end{ex}

\subsection{Examples based on the second-order variational problem}

We now consider examples in which the second-order variational problem provides information that is not visible at first order.

\begin{ex}[Control of the second derivative away from the origin]
\label{ex:second-derivative-away-zero}
\begin{em}

Assume that $c\in C^2((0,T])$. Fix $s\in(0,T)$, and consider the continuous function
\begin{equation*}
    \gamma_s(t):=
    \begin{cases}
        g(s), & 0\leq t\leq s,\\
        g(t), & s<t\leq T.
    \end{cases}
\end{equation*}
ts weak derivative is equal to $0$ on $(0,s)$ and coincides with $g'$ on $(s,T)$, while globally it belongs to $BV((0,T))$. More precisely,
\begin{equation*}
    \operatorname{Var}(\gamma_s';(0,T))
    =
    |g'(s)|
    +
    \int_s^T|g''(t)|\,dt.
\end{equation*}

Using $\gamma_s$ as a competitor in the relaxed second-order variational problem, we obtain
\begin{equation*}
    \M_2((0,T),g,\lambda)
    \leq
    \frac1\lambda
    \left(
        |g'(s)|
        +
        \int_s^T|g''(t)|\,dt
    \right)
    +
    \lambda\int_0^s|g(s)-g(t)|\,dt.
\end{equation*}
Strict hyperbolicity gives
\begin{equation*}
    \lambda\int_0^s|g(s)-g(t)|\,dt
    \leq
    \left(
        \frac1{\nu_1}-\frac1{\nu_2}
    \right)\lambda s.
\end{equation*}
Moreover,
\begin{equation*}
    |g'(s)|
    \leq
    |g'(T)|
    +
    \int_s^T|g''(t)|\,dt.
\end{equation*}
Therefore
\begin{equation*}
    \M_2((0,T),g,\lambda)
    \leq
    C\left[
        1+
        \lambda s
        +
        \frac1\lambda
        \int_s^T|g''(t)|\,dt
    \right],
\end{equation*}
where $C$ may also depend on $|g'(T)|$.

Taking the infimum with respect to $s$ and applying Corollary~\ref{cor:second-order-saturation}, we obtain
\begin{equation*}
    \G(c,\lambda,T)
    \leq
    C\left[
        1+
        \inf_{0<s<T}
        \left\{
            \lambda s
            +
            \frac1\lambda
            \int_s^T|g''(t)|\,dt
        \right\}
    \right].
\end{equation*}

Since
\begin{equation*}
    g''=-\frac{c''}{c^2}+2\frac{(c')^2}{c^3},
\end{equation*}
strict hyperbolicity yields
\begin{equation}
    \G(c,\lambda,T)
    \leq
    C\left[
        1+
        \inf_{0<s<T}
        \left\{
            \lambda s
            +
            \frac1\lambda
            \int_s^T
            \left(
                |c''(t)|+|c'(t)|^2
            \right)\,dt
        \right\}
    \right].
    \label{est:G-second-derivative}
\end{equation}

This estimate provides a variational formulation of a classical line of results based on the simultaneous control of the first and second derivatives of the propagation speed. A first result of this type was obtained by T.~Yamazaki~\cite{Yamazaki1990}, who proved well-posedness without derivative loss under the critical assumptions
\begin{equation*}
    |c'(t)|\leq\frac{C}{t}
    \qquad\quad\text{and}\quad\qquad
    |c''(t)|\leq\frac{C}{t^2}.
\end{equation*}
Indeed, choosing $s=1/\lambda$ in~(\ref{est:G-second-derivative}) gives $\G(c,\lambda,T)=O(1)$.

Logarithmically stronger singularities were considered in~\cite{CDSR2003}; see also~\cite{Hirosawa2003Loss,Hirosawa2003Cauchy,DelSantoKinoshitaReissig2007}. For instance, under assumptions of the form
\begin{equation*}
    |c'(t)|
    \leq
    C\frac{|\log t|}{t}
    \qquad\quad\text{and}\quad\qquad
    |c''(t)|
    \leq
    C\left(
        \frac{|\log t|}{t}
    \right)^2
\end{equation*}
near the origin, the choice
\begin{equation*}
    s\sim\frac{\log\lambda}{\lambda}
\end{equation*}
in~(\ref{est:G-second-derivative}) yields $\G(c,\lambda,T)=O(\log\lambda)$ and hence a finite derivative loss.

Thus, in the present variational framework, the choice of the transition scale is encoded directly by the minimization with respect to $s$.

A substantially more flexible mechanism, based on a three-region decomposition with two transition scales and involving the simultaneous control of the first two derivatives, was developed in~\cite{GhisiGobbino2021}. We shall return to this construction below.

\end{em}
\end{ex}

\begin{ex}[Modulus of continuity and second derivative]
\label{ex:modulus-second-derivative}
\begin{em}

Assume that $c$ is $\omega$-continuous on $[0,T]$ and belongs to $C^2((0,T])$. We combine the regularization argument of Example~\ref{ex:modulus-continuity} with the second-order construction of Example~\ref{ex:second-derivative-away-zero}.

Let $\rho$ be a standard mollifier, extend $g$ constantly outside $[0,T]$, set
\begin{equation*}
    g_\lambda:=\rho_{1/\lambda}*g,
\end{equation*}
and, for $s\in(0,T)$, define
\begin{equation*}
    \gamma_{\lambda,s}(t)
    :=
    \begin{cases}
        g_\lambda(t)-g_\lambda(s)+g(s),
        & 0\leq t\leq s,\\
        g(t),
        & s<t\leq T.
    \end{cases}
\end{equation*}
Arguing as in the previous examples, and using the standard mollification estimates
\begin{equation*}
    |g_\lambda-g|
    \leq
    C\omega(1/\lambda),
    \qquad\quad\text{and}\quad\qquad
    |g_\lambda^{(j)}|
    \leq
    C\lambda^j\omega(1/\lambda)
    \quad (j=1,2),
\end{equation*}
we obtain
\begin{equation*}
    \M_2((0,T),g,\lambda)
    \leq
    C\left[
        1+
        \lambda\,\omega(1/\lambda)\,s
        +
        \frac1\lambda
        \int_s^T|g''(t)|\,dt
    \right].
\end{equation*}
Taking the infimum with respect to $s$ and applying Corollary~\ref{cor:second-order-saturation}, we conclude that
\begin{equation*}
    \G(c,\lambda,T)
    \leq
    C\left[
        1+
        \inf_{0<s<T}
        \left\{
            \lambda\,\omega(1/\lambda)\,s
            +
            \frac1\lambda
            \int_s^T
            \left(
                |c''(t)|+|c'(t)|^2
            \right)\,dt
        \right\}
    \right].
\end{equation*}

Thus the modulus of continuity improves the cost of the initial region from $\lambda s$ to $\lambda\,\omega(1/\lambda)\,s$, while the second-order cost on the remaining interval is unchanged.

\end{em}
\end{ex}

\subsection{The three-region mechanism}
\label{subsec:three-region}

The previous examples use only two regions: a neighborhood of the origin where the coefficient is replaced or regularized, and a region where its actual derivatives are exploited. A more flexible construction is obtained by inserting an intermediate region in which the coefficient is approximated at the wavelength scale. This leads to a variational counterpart of the classical three-region strategy.

\begin{prop}[A three-region estimate]
\label{prop:three-region-estimate}

Let $T>0$ and let
\begin{equation*}
    g\in L^\infty((0,T))\cap C^2((0,T]).
\end{equation*}
Then there exists a constant $C>0$, depending only on $T$, $\|g\|_{L^\infty((0,T))}$ and $|g'(T)|$, such that, for every $\lambda>0$, one has
\begin{equation}
    \M_2((0,T),g,\lambda)
    \leq
    C\left[
        1+
        \inf_{0\leq a\leq b\leq T}
        \left\{
            \lambda a
            +
            \int_a^b|g'(t)|\,dt
            +
            \frac1\lambda\int_b^T|g''(t)|\,dt
        \right\}
    \right].
    \label{est:three-region}
\end{equation}

\end{prop}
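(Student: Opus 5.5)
We fix $0\leq a\leq b\leq T$ and build a single competitor whose cost is at most $C$ times the three terms in (\ref{est:three-region}), up to an additive constant. The result then follows by taking the infimum over $a$ and $b$. We use the relaxed class of Remark~\ref{rmk:Mk-basic}(6), so $\gamma'$ is only required to be $BV$. For $0<\lambda\leq1$ we simply invoke (\ref{est:Mk-osc}), which gives $\M_2\leq 2\|g\|_{L^\infty}$. Hence we assume $\lambda\geq1$, set $\delta:=1/\lambda$, and treat two cases.

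\emph{Case $b<2\delta$.} We take $\gamma:=g(b)$ on $[0,b]$ and $\gamma:=g$ on $[b,T]$, exactly as in Example~\ref{ex:second-derivative-away-zero}. The fidelity cost is at most $2\|g\|_\infty\lambda b\leq4\|g\|_\infty$. The regularity cost is
\begin{equation*}
    \frac1\lambda\left(|g'(b)|+\int_b^T|g''|\right)
    \leq
    \frac1\lambda\left(|g'(T)|+2\int_b^T|g''|\right),
\end{equation*}
which is controlled by $C\left(1+\frac1\lambda\int_b^T|g''|\right)$.

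\emph{Case $b\geq2\delta$.} We take three regions. Let $h:=g(a)$ on $[0,a]$ and $h:=g$ on $[a,T]$. Let $\rho$ be a smooth nonnegative kernel supported in $[0,1]$, and set
\begin{equation*}
    h_\delta(t):=\int\rho_\delta(t-y)\,h(y)\,dy .
\end{equation*}
This mollifier is one-sided: $h_\delta(t)$ depends only on the values of $h$ on $[t-\delta,t]$, extending $h$ constantly to the left of $0$. Let $\theta$ be a smooth cut-off with $\theta=1$ on $[0,b-\delta]$ and $\theta=0$ on $[b-\delta/2,T]$, with $|\theta^{(j)}|\leq C\lambda^j$. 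Define
\begin{equation*}
    \gamma:=\theta h_\delta+(1-\theta)h .
\end{equation*}
Then $\gamma=h_\delta$ on $[0,b-\delta]$ and $\gamma=g$ near and after $b$. In particular no jump of $\gamma'$ is created at $b$, and on $[b,T]$ the regularity cost is $\frac1\lambda\int_b^T|g''|$.

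Everything on $[0,b]$ must be controlled by $J:=\int_a^b|g'|$ together with $\lambda a$. This uses the one-sidedness: on $[0,b]$ only $h|_{[0,b]}$ enters, and $h'=g'\chi_{(a,b)}$ there. We need four bounds.
\begin{enumerate}
\item For the mollified part, $h_\delta''=\rho_\delta'*h'$, so $\int_0^b|h_\delta''|\leq C\lambda J$, which contributes $CJ$ after multiplication by $1/\lambda$.
\item For the fidelity on $[0,b]$, we have $|h_\delta-h|(t)\leq\int_{t-\delta}^{t}|h'|$. Hence $\lambda\int_0^b|\gamma-h|\leq J$, while $\lambda\int_0^a|h-g|\leq2\|g\|_\infty\lambda a$.
\item For the transition terms, write $(\theta(h_\delta-h))''=\theta''(h_\delta-h)+2\theta'(h_\delta-h)'+\theta(h_\delta-h)''$. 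On $[b-\delta,b-\delta/2]$ the first two pieces are bounded by
\begin{equation*}
    C\lambda^2\int_{t-\delta}^t|h'|+C\lambda\left(|h'|+\rho_\delta*|h'|\right).
\end{equation*}
Integrating over an interval of length $\delta$ and multiplying by $1/\lambda$ gives at most $CJ$.
\item For the term $(1-\theta)h''$, the regularity cost uses $g''$ only on $[b-\delta,b]$. We avoid this by shifting the cut-off into $[b,b+\delta]$ instead, or equivalently by replacing $b$ with $b-\delta$ in the roles. That is admissible because the one-sided mollifier still reads only $[a,b]$ data.
\end{enumerate}
The kink of $h$ at $a$, where $h'$ jumps by $g'(a)$, is smoothed by $h_\delta$ and is already counted in (1). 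This is because $\rho'_\delta*h'$ involves only the integral of $|h'|$.

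The main obstacle is the bookkeeping of the gluing errors near $b$. The cut-off derivatives cost $\lambda$ and $\lambda^2$, and the terms $h'$ and $g''$ appear near $b$. The point I must check carefully is that the one-sided kernel together with placing the transition inside $[b,b+\delta]$ makes every term involve either $g'$ on $[a,b]$ or $g''$ on $[b,T]$. For terms of the form $|g'(t)|$ with $t\in[b,b+\delta]$, I would use $|g'(t)|\leq|g'(T)|+\int_b^T|g''|$, integrated over an interval of length $\delta$. Summing all contributions gives $\F_2\leq C\left[1+\lambda a+J+\frac1\lambda\int_b^T|g''|\right]$.
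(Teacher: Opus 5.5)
Your construction differs from the paper's and can be made to work. As written, however, the step you yourself flag as the main obstacle, the gluing near $b$, is not correct.

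With $\theta$ switching on $[b-\delta,b-\delta/2]$ you have $\gamma=h$ on $[b-\delta/2,b]$. The regularity cost therefore contains exactly $\frac1\lambda\int_{\max\{a,b-\delta/2\}}^{b}|g''|$, plus $\frac1\lambda|g'(a)|$ from the kink of $h$ if $a\in(b-\delta/2,b)$. Neither is controlled by $\lambda a+\int_a^b|g'|+\frac1\lambda\int_b^T|g''|$: take $g'$ of fixed small amplitude oscillating very fast just to the left of $b$.

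In item~4, only the version with the transition placed inside $[b,b+\delta]$ works, and your justification for it is wrong. For $t\in(b,b+\delta]$ the one-sided mollifier does read $g$ on $(b,t]$. Also, the regularity cost on $[b,T]$ is then no longer just $\frac1\lambda\int_b^T|g''|$. What actually closes the argument is the single bound
\[
    \int_{-\delta}^{b+\delta}|h'|
    \leq
    \int_a^b|g'|
    +\delta\Bigl(|g'(T)|+\int_b^T|g''|\Bigr),
\]
valid when $b+\delta\leq T$. It follows from $h'=g'\chi_{(a,b)}$ on $(-\infty,b)$ and from $|g'(t)|\leq|g'(T)|+\int_b^T|g''|$ for $t\geq b$.

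Take $\theta=1$ on $(-\infty,b]$, $\theta=0$ on $[b+\delta,+\infty)$, and $|\theta^{(j)}|\leq C\lambda^j$. Then every term of $\frac1\lambda\int_0^{b+\delta}|\gamma''|$ other than $(1-\theta)g''$ is at most $C$ times this quantity. These terms are $\theta\,\rho_\delta'*h'$, $\theta''(h_\delta-h)$ and $\theta'(h_\delta-h)'$. The same holds for the fidelity $\lambda\int_0^{b+\delta}|h_\delta-h|$. This gives the claimed estimate.

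You also need the case $b+\delta>T$, where the cut-off does not fit. There, simply take $\gamma=h_\delta$ on all of $[0,T]$; the same bound applies because $T-b<\delta$.

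A minor slip: for $\lambda\leq1$, (\ref{est:Mk-osc}) gives $\M_2\leq\lambda T\|g\|_{L^\infty}\leq T\|g\|_{L^\infty}$, not $2\|g\|_{L^\infty}$.

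For comparison, the paper avoids mollifiers and cut-offs entirely. It works in the relaxed class. When $\lambda(b-a)\geq1$, it replaces $g$ on $[a,b]$ by its piecewise affine interpolant with $N=\lceil\lambda(b-a)\rceil$ cells, so the mesh satisfies $1/(2\lambda)\leq h\leq1/\lambda$. Then $v'$ is piecewise constant, and its total variation on $[0,b]$ is a sum of jumps. The junction at $b$ is just one more jump, $|g'(b)-m_N|$, and altogether the variation is bounded by $\frac2h\int_a^b|g'|+|g'(b)|$. The fidelity is the elementary interpolation error $\frac h2\int|g'|$ per cell. When $\lambda(b-a)<1$ no such mesh exists, and the paper freezes at $b$ as in your first case.

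The discrete construction makes the gluing at $b$ essentially free, which is exactly where your smooth construction needs the extra bookkeeping above. In return, once repaired, your argument produces a genuinely smooth competitor and stays closer to the classical regularization-at-scale-$1/\lambda$ arguments.
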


\begin{proof}

We prove that, for every $\lambda>0$ and every $0\leq a\leq b\leq T$,
\begin{equation}
    \M_2((0,T),g,\lambda)
    \leq
    C\left[
        1+\lambda a
        +
        \int_a^b|g'(t)|\,dt
        +
        \frac1\lambda\int_b^T|g''(t)|\,dt
    \right].
    \label{est:three-region-fixed-ab}
\end{equation}
If one of the integrals on the right-hand side is infinite, there is nothing to prove. In particular, when an endpoint is equal to zero, the corresponding finite integral guarantees the existence of the limits needed in the constructions below.

We distinguish three cases.

\paragraph{\textmd{\textit{The case $0<\lambda<1$.}}}
The constant competitor $v\equiv0$ gives
\begin{equation*}
    \F_2((0,T),g,\lambda,v)
    =
    \lambda\int_0^T|g(t)|\,dt
    \leq
    T\|g\|_{L^\infty((0,T))},
\end{equation*}
and therefore~(\ref{est:three-region-fixed-ab}) follows.

\paragraph{\textmd{\textit{The case $\lambda\geq1$ and $\lambda(b-a)<1$.}}}
We use the same freezing construction as in Example~\ref{ex:second-derivative-away-zero}, now with $s=b$. Namely, we take $v$ equal to $g(b)$ on $[0,b]$ and to $g$ on $[b,T]$. Arguing exactly as there, and using the boundedness of $g$, the relaxed formulation gives
\begin{equation*}
    \M_2((0,T),g,\lambda)
    \leq
    2\|g\|_{L^\infty((0,T))}\lambda b
    +
    \frac{|g'(T)|}{\lambda}
    +
    \frac2\lambda\int_b^T|g''(t)|\,dt.
\end{equation*}
Since
\begin{equation*}
    \lambda b
    =
    \lambda a+\lambda(b-a)
    <
    1+\lambda a,
\end{equation*}
and $\lambda\geq1$, estimate~(\ref{est:three-region-fixed-ab}) follows.

\paragraph{\textmd{\textit{The case $\lambda\geq1$ and $\lambda(b-a)\geq1$.}}}
Set
\begin{equation*}
    N:=\left\lceil\lambda(b-a)\right\rceil,
    \qquad\qquad
    h:=\frac{b-a}{N},
\end{equation*}
and then
\begin{equation*}
    x_j:=a+jh
    \quad
    (j=0,\ldots,N),
    \qquad\qquad
    m_j:=\frac{g(x_j)-g(x_{j-1})}{h}
    \qquad
    (j=1,\ldots,N).
\end{equation*}

We choose $v$ to be equal to $g(a)$ on $[0,a]$, to the piecewise affine interpolant of $g$ at the nodes $x_0,\ldots,x_N$ on $[a,b]$, and to $g$ on $[b,T]$.

Let us first estimate the fidelity term. A standard estimate for affine interpolation gives
\begin{equation*}
    \int_{x_{j-1}}^{x_j}|v(t)-g(t)|\,dt
    \leq
    \frac h2
    \int_{x_{j-1}}^{x_j}|g'(t)|\,dt.
\end{equation*}
Since $\lambda h\leq1$, it follows that
\begin{equation*}
    \lambda\int_0^T|v(t)-g(t)|\,dt
    \leq
    2\|g\|_{L^\infty((0,T))}\lambda a
    +
    \frac12\int_a^b|g'(t)|\,dt.
\end{equation*}

We now estimate the second-order regularity cost. The function $v'$ has bounded variation, and more precisely vanishes on $(0,a)$, is equal to $m_j$ on $(x_{j-1},x_j)$, and coincides with $g'$ on $(b,T)$. Its total variation therefore satisfies
\begin{eqnarray}
    \operatorname{Var}(v';(0,T))
    & \leq &
    |m_1|+\sum_{j=1}^{N-1}|m_{j+1}-m_j|
    +|g'(b)-m_N|+\int_b^T|g''(t)|\,dt
    \nonumber
    \\
    & \leq &
    2\sum_{j=1}^{N}|m_j|
    +
    |g'(b)|
    +
    \int_b^T|g''(t)|\,dt.
    \label{est:var-v'}
\end{eqnarray}
We observe that the first inequality is an equality whenever $0<a<b<T$; when $a=0$ or $b=T$, the corresponding boundary jump is simply absent. Now we use as usual that
\begin{equation*}
    |g'(b)|
    \leq
    |g'(T)|+\int_b^T|g''(t)|\,dt,
\end{equation*}
and we observe that
\begin{equation*}
    \sum_{j=1}^N|m_j|
    \leq
    \frac1h\int_a^b|g'(t)|\,dt.    
\end{equation*}

Finally, from $\lambda(b-a)\geq 1$ we deduce that $1/(\lambda h)\leq 2$. Plugging all these estimates into (\ref{est:var-v'}) we conclude that
\begin{equation*}
    \frac1\lambda\operatorname{Var}(v';(0,T))
    \leq
    4\int_a^b|g'(t)|\,dt
    +
    \frac{|g'(T)|}{\lambda}
    +
    \frac2\lambda\int_b^T|g''(t)|\,dt.
\end{equation*}
Recalling that $\lambda\geq1$, we obtain~(\ref{est:three-region-fixed-ab}) also in this case.

Thus~(\ref{est:three-region-fixed-ab}) holds for every $\lambda>0$ and every $0\leq a\leq b\leq T$. Taking the infimum with respect to $a$ and $b$ yields~(\ref{est:three-region}).
\end{proof}

The three contributions in Proposition~\ref{prop:three-region-estimate} reflect the three regimes of the classical argument. The initial cost $\lambda a$ corresponds to the region where no regularity is exploited, the term with the integral of $|g'|$ to the intermediate region where first-order information is used, and the term with the integral of $|g''|$ to the genuinely second-order regime. In the classical energy method these roles are played, respectively, by the Kovalevskayan, hyperbolic, and Tarama energies. In the present framework the same mechanism is encoded by a single variational competitor.

\begin{ex}[Mixed first- and second-order singularities]
\label{ex:mixed-first-second-singularities}
\begin{em}

Assume that $c\in C^2((0,T])$ satisfies the strict hyperbolicity assumption, and let us assume that there exist two nonincreasing functions $\omega:(0,T]\to(0,+\infty)$ and $\psi:(0,T]\to[0,+\infty)$ such that
\begin{equation*}
    |c'(t)|
    \leq
    \frac{\omega(t)}{t}
    \qquad\text{and}\qquad
    |c''(t)|
    \leq
    \frac{\omega(t)^2}{t^2}\exp(\psi(t))
    \qquad
    \forall t\in(0,T].
\end{equation*}
Set
\begin{equation*}
    H(t):=\omega(t)(1+\psi(t)).
\end{equation*}
Then 
\begin{equation}
    \G(c,\lambda,T)
    \leq
    C\left(1+H(1/\lambda)\right).
    \label{est:mixed-singularities-growth}
\end{equation}

Indeed, setting $g:=1/c$, strict hyperbolicity gives
\begin{equation}
    |g'(t)|
    \leq
    C\frac{\omega(t)}{t},
    \qquad\quad\text{and}\quad\qquad
    |g''(t)|
    \leq
    C\frac{\omega(t)^2}{t^2}\exp(\psi(t)).
    \label{est:g'-g''}
\end{equation}
Let us set
\begin{equation*}
    \omega_\lambda:=\omega(1/\lambda)
    \qquad\quad\text{and}\quad\qquad
    \psi_\lambda:=\psi(1/\lambda),
\end{equation*}
and choose
\begin{equation}
    a_\lambda:=\min\left\{T,\frac{1+\omega_\lambda}{\lambda}\right\}
    \qquad\quad\text{and}\quad\qquad
    b_\lambda:=
    \min\left\{
        T,\,
        a_\lambda\exp(1+\psi_\lambda)
    \right\}.
    \label{defn:al-bl}
\end{equation}

Let us estimate the contribution of the three terms in Proposition~\ref{prop:three-region-estimate}. The first term satisfies
\begin{equation*}
    \lambda a_\lambda
    \leq
    1+\omega_\lambda.
\end{equation*}

As for the second term, we can assume that $a_\lambda<T$, because otherwise there is nothing to estimate. In this case
\begin{equation*}
    a_\lambda
    =
    \frac{1+\omega_\lambda}{\lambda}
    \geq
    \frac1\lambda,
\end{equation*}
and therefore, by~(\ref{est:g'-g''}) and the monotonicity of $\omega$, we deduce that $\omega(a_\lambda)\leq\omega_\lambda$, so that
\begin{equation*}
    \int_{a_\lambda}^{b_\lambda}|g'(t)|\,dt
    \leq
    C\omega_\lambda
    \log\left(\frac{b_\lambda}{a_\lambda}\right)
    \leq
    C\omega_\lambda(1+\psi_\lambda).
\end{equation*}

As for the third term, we can assume that $b_\lambda<T$, because otherwise there is nothing to estimate. In this case also $a_\lambda<T$, and hence~(\ref{defn:al-bl}) gives
\begin{equation*}
    \lambda b_\lambda
    =
    (1+\omega_\lambda)\exp(1+\psi_\lambda).
\end{equation*}
Moreover, since $b_\lambda\geq a_\lambda\geq1/\lambda$, from~(\ref{est:g'-g''}) and the monotonicity of $\omega$ and $\psi$ we obtain
\begin{equation*}
    \frac1\lambda\int_{b_\lambda}^T|g''(t)|\,dt
    \leq
    C\frac{\omega_\lambda^2\exp(\psi_\lambda)}{\lambda}
    \int_{b_\lambda}^T\frac{dt}{t^2}
    \leq
    C\frac{\omega_\lambda^2\exp(\psi_\lambda)}
    {\lambda b_\lambda}
    \leq
    C\frac{\omega_\lambda^2}{1+\omega_\lambda}
    \leq
    C\omega_\lambda.
\end{equation*}

Thus, applying Proposition~\ref{prop:three-region-estimate} with $a=a_\lambda$ and $b=b_\lambda$, we obtain
\begin{equation*}
    \M_2((0,T),g,\lambda)
    \leq
    C\left(1+H(1/\lambda)\right),
\end{equation*}
so that now Corollary~\ref{cor:second-order-saturation} yields~(\ref{est:mixed-singularities-growth}).

For the class of singular propagation speeds considered in~\cite{GhisiGobbino2021}, this variational estimate recovers the three-region mechanism used there and provides directly a quantitative bound for the growth function. In particular, bounded $H$ yields no derivative loss, $H(t)=o(|\log t|)$ yields an arbitrarily small derivative loss, and $H(t)=O(|\log t|)$ yields at most a finite derivative loss.

\end{em}
\end{ex}


\subsection{Zygmund-type assumptions}
\label{subsec:zygmund-difference-quotients}

The second-order variational problem is also naturally related to Zygmund-type regularity. Indeed, the quantities appearing in the definition of Zygmund classes are precisely second-order differences, and therefore fit directly into the framework introduced in Section~\ref{sec:variational}.

Zygmund-type assumptions have a long history in the theory of hyperbolic equations with low-regularity coefficients. In the time-dependent case, a fundamental contribution is due to S.~Tarama~\cite{Tarama2007}, who obtained energy estimates under integral conditions on second-order differences by combining a frequency-dependent regularization with a corrected energy. Related Zygmund and log-Zygmund assumptions were subsequently treated in more general settings in~\cite{ColombiniDelSantoFanelliMetivier2013LogZygmund,ColombiniDelSantoFanelliMetivier2013Zygmund,CDFM2015}.

For a function $g\in L^\infty((0,T))$ and $\sigma\in(0,T/2)$, let us set
\begin{equation*}
    Z_2(g,\sigma)
    :=
    \frac1{\sigma}
    \int_\sigma^{T-\sigma}
    |g(x+\sigma)+g(x-\sigma)-2g(x)|\,dx.
\end{equation*}
Thus an integral generalized Zygmund condition can be written in the form
\begin{equation}
    Z_2(g,\sigma)
    \leq
    K\psi(\sigma)
    \qquad
    \forall \sigma\in(0,\sigma_0]
    \label{hp:psi-zygmund}
\end{equation}
for some positive function $\psi$. At the level of~(\ref{hp:psi-zygmund}), the Zygmund and log-Zygmund scales correspond, respectively, to bounded $\psi$ and to $\psi(\sigma)$ of order $1+|\log\sigma|$.

The connection with the second-order variational problem can be seen by means of a particularly simple regularization. For $\ep\in(0,T/2)$, let
\begin{equation*}
    K_\ep(r)
    :=
    \frac{\max\{\ep-|r|,0\}}{\ep^2},
\end{equation*}
and define $\gamma_\ep$ on $[\ep,T-\ep]$ by
\begin{equation*}
    \gamma_\ep(x)
    :=
    \int_{-\ep}^{\ep}
    K_\ep(r)g(x-r)\,dr.
\end{equation*}

The kernel $K_\ep$ is even, nonnegative, and has integral one. The function $\gamma_\ep$ belongs to $C^1([\ep,T-\ep])$, with absolutely continuous first derivative, and a standard calculation shows that
\begin{equation*}
    \gamma_\ep''(x)
    =
    \frac{
        g(x+\ep)-2g(x)+g(x-\ep)
    }{\ep^2}
    \qquad
    \text{for almost every }x\in(\ep,T-\ep),
\end{equation*}
and in particular
\begin{equation*}
    \int_\ep^{T-\ep}
    |\gamma_\ep''(x)|\,dx
    =
    \frac1\ep Z_2(g,\ep).
\end{equation*}

This is the basic mechanism behind the classical use of Zygmund-type assumptions: second-order differences control the second derivative of a frequency-dependent regularization of the coefficient; see, in particular,~\cite{Tarama2007}.

We now extend $\gamma_\ep$ to the whole interval $[0,T]$ by affine continuation at the endpoints, namely by keeping the same definition on $[\ep,T-\ep]$ and setting
\begin{equation*}
    \gamma_\ep(x)
    :=
    \gamma_\ep(\ep)
    +(x-\ep)\gamma_\ep'(\ep)
    \qquad
    \forall x\in[0,\ep],
\end{equation*}
and
\begin{equation*}
    \gamma_\ep(x)
    :=
    \gamma_\ep(T-\ep)
    +(x-T+\ep)\gamma_\ep'(T-\ep)
    \qquad
    \forall x\in[T-\ep,T].
\end{equation*}
Then $\gamma_\ep\in C^1([0,T])$, its first derivative belongs to $BV((0,T))$, and no additional variation is created at the two junctions. Hence
\begin{equation}
    \operatorname{Var}(\gamma_\ep';(0,T))
    =
    \frac1\ep Z_2(g,\ep).
    \label{est:triangular-variation}
\end{equation}

The same second-order differences also control the fidelity in the interior region. Indeed, by symmetry of the kernel,
\begin{equation*}
    \gamma_\ep(x)-g(x)
    =
    \frac12
    \int_{-\ep}^{\ep}
    K_\ep(r)
    \left(
        g(x+r)+g(x-r)-2g(x)
    \right)\,dr
\end{equation*}
for almost every $x\in[\ep,T-\ep]$. Therefore, after integration with respect to $x$ and reversing the order of integration, the inclusion $[\ep,T-\ep]\subseteq[r,T-r]$ for $0<r<\ep$, together with the definition of $Z_2(g,r)$, gives
\begin{equation}
    \int_\ep^{T-\ep}
    |\gamma_\ep(x)-g(x)|\,dx
    \leq
    \frac1{\ep^2}
    \int_0^\ep
    (\ep-r)r Z_2(g,r)\,dr.
    \label{est:triangular-fidelity}
\end{equation}

It remains to estimate the fidelity on the two boundary intervals. Set
\begin{equation*}
    M:=\|g\|_{L^\infty((0,T))}.
\end{equation*}
Since $K_\ep$ is nonnegative and has integral one,
\begin{equation*}
    |\gamma_\ep(x)|
    \leq
    M
    \qquad
    \forall x\in[\ep,T-\ep],
\end{equation*}
and the derivative satisfies
\begin{equation*}
    |\gamma_\ep'(x)|
    \leq
    \frac{2M}{\ep}
    \qquad
    \forall x\in[\ep,T-\ep].
\end{equation*}
It follows from the affine definition of $\gamma_\ep$ on the two boundary intervals that in these intervals $|\gamma_\ep(x)|\leq 3M$, and therefore
\begin{equation}
    \int_0^\ep
    |\gamma_\ep(x)-g(x)|\,dx
    +
    \int_{T-\ep}^T
    |\gamma_\ep(x)-g(x)|\,dx
    \leq
    8M\ep.
    \label{est:triangular-boundary-fidelity}
\end{equation}

Using the relaxed second-order variational problem, together with estimates~(\ref{est:triangular-variation}), (\ref{est:triangular-fidelity}), and~(\ref{est:triangular-boundary-fidelity}), we obtain
\begin{equation*}
    \M_2((0,T),g,\lambda)
    \leq
    \frac{1}{\lambda\ep}Z_2(g,\ep)
    +
    \frac{\lambda}{\ep^2}
    \int_0^\ep
    (\ep-\sigma)\sigma Z_2(g,\sigma)\,d\sigma
    +
    8M\lambda\ep.
\end{equation*}

Choosing $\ep=1/\lambda$, we obtain
\begin{equation}
    \M_2((0,T),g,\lambda)
    \leq
    8M
    +
    Z_2(g,1/\lambda)
    +
    \lambda^3
    \int_0^{1/\lambda}
    \left(
        \frac1\lambda-\sigma
    \right)
    \sigma Z_2(g,\sigma)\,d\sigma.
    \label{est:M2-triangular}
\end{equation}

This suggests introducing the averaged modulus
\begin{equation}
    \widehat\psi(\sigma)
    :=
    \psi(\sigma)
    +
    \frac1{\sigma^3}
    \int_0^\sigma
    (\sigma-r)r\psi(r)\,dr.
    \label{defn:psi-hat}
\end{equation}
If~(\ref{hp:psi-zygmund}) holds, then~(\ref{est:M2-triangular}) gives
\begin{equation*}
    \M_2((0,T),g,\lambda)
    \leq
    8\|g\|_{L^\infty((0,T))}
    +
    K\widehat\psi(1/\lambda).
\end{equation*}
Consequently, if $g=1/c$, strict hyperbolicity and Corollary~\ref{cor:second-order-saturation} yield
\begin{equation*}
    \G(c,\lambda,T)
    \leq
    C\left(
        1+K\widehat\psi(1/\lambda)
    \right).
\end{equation*}

In the classical energy approach, this second-order information is used to construct suitable corrections of the energy or of the symmetrizer; see, for example,~\cite{Tarama2007,ColombiniDelSantoFanelliMetivier2013LogZygmund,ColombiniDelSantoFanelliMetivier2013Zygmund,CDFM2015}. In the present framework, the same information is converted directly into a competitor for the second-order variational problem.

For the usual Zygmund scales, the averaging in~(\ref{defn:psi-hat}) does not change the order of magnitude. In particular, 
\begin{equation*}
    \psi(\sigma)\equiv1
    \qquad
    \leadsto
    \qquad
    \widehat\psi\text{ bounded}
    \qquad
    \leadsto
    \qquad
    \G(c,\lambda,T)\leq C,
\end{equation*}
which gives well-posedness without derivative loss.

Analogously,
\begin{equation*}
    \psi(\sigma):=1+|\log\sigma|
    \quad
    \leadsto
    \quad
    \widehat\psi(\sigma)
    \leq
    C(1+|\log\sigma|)
    \quad
    \leadsto
    \quad
    \G(c,\lambda,T)
    \leq
    C(1+\log\lambda),
\end{equation*}
which corresponds to a finite derivative loss.

More generally,
\begin{equation*}
    \psi(\sigma)
    :=
    (1+|\log\sigma|)^\alpha
\end{equation*}
yields
\begin{equation*}
    \G(c,\lambda,T)
    \leq
    C(1+\log\lambda)^\alpha.
\end{equation*}
For $0<\alpha<1$ this growth is $o(\log\lambda)$, and therefore corresponds to an arbitrarily small derivative loss. The case $\alpha=1$ gives a finite derivative loss, while $\alpha=0$ gives no derivative loss.

The argument also clarifies the role of the higher-order difference quotients introduced earlier. The classical Zygmund condition is not an additional regularity mechanism external to the variational theory: it is precisely a scale-invariant bound on second-order differences. The triangular regularization above shows that both the fidelity cost and the second-order regularity cost can be controlled by the same family of second differences. In this sense, Zygmund regularity is intrinsically adapted to the second-order variational problem.


\setcounter{equation}{0}
\section{Existence beyond absolute integrability}\label{sec:construction}

The results developed so far fit naturally into the classical philosophy of the theory: the regularity of the propagation speed is used to control suitable energy functionals, and the relevant error terms are estimated through their absolute values. This leads to sufficient conditions that remain closely connected with absolute integrability.

There is, however, another mechanism that is largely invisible to estimates of this kind. The terms appearing in the energy identities are oscillatory, and oscillations may produce cancellations. As in Dirichlet-type arguments, boundedness may persist even when the corresponding absolute values are no longer integrable. From this point of view, absolute integrability marks a limit of these methods rather than necessarily a limit of the equation.

In this section we exploit this mechanism to construct propagation speeds whose regularity lies beyond the scope of the previous criteria, while the associated wave equation remains well posed in Sobolev spaces. The key point is a suitable non-resonance between the oscillations of the propagation speed and those of the solutions.

The following result shows how far this mechanism can go. We construct a smooth propagation speed on $(0,+\infty)$ which does not belong to $W^{s,1}((0,T))$ for any $T>0$ and any $s>1/2$, while the corresponding abstract wave equation remains well posed in the natural energy space for every nonnegative multiplication operator.

\begin{thm}[Well-posedness in Sobolev spaces for an irregular propagation speed]\label{thm:main-irregular}

There exists a function $c:(0,+\infty)\to[1/2,3/2]$ of class $C^\infty$ with the following two properties:
\begin{enumerate}
\renewcommand{\labelenumi}{(\arabic{enumi})}
\item $c\notin W^{s,1}((0,T))$ for every $T>0$ and every $s>1/2$;

\item for every Hilbert space $H$ and every nonnegative self-adjoint operator $A$ on $H$, the abstract wave equation~(\ref{eqn:basic}) is well posed in $D(A^{1/2})\times H$.
\end{enumerate}

\end{thm}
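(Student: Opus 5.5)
We plan to take $c$ of the form
\begin{equation*}
    c(t)=1+\sum_{n\geq1}\ep_n\,\theta_n(t)\sin(\omega_n t),
\end{equation*}
where $\theta_n$ are smooth cut-offs supported in disjoint intervals $I_n=[t_n-\ell_n,t_n+\ell_n]$ accumulating at $t=0$. The amplitudes satisfy $\sum\ep_n\leq1/2$, the frequencies $\omega_n\to+\infty$ very fast, and the widths $\ell_n$ are chosen so that $\ep_n\omega_n\ell_n$ diverges. A countable sum of disjointly supported smooth blocks away from the origin is $C^\infty$ on $(0,+\infty)$, and it takes values in $[1/2,3/2]$. Since $A$ is a multiplication operator after the spectral theorem, statement~(2) reduces to a uniform bound $\sup_{\lambda>0}\G(c,\lambda,T)<+\infty$ for every $T$. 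The proof therefore splits into an irregularity part and a uniform energy bound.

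\emph{Irregularity.} For a single block the $W^{s,1}$ seminorm is of order $\ep_n\,\omega_n^{s}\,\ell_n$, as long as $\omega_n\ell_n\gg1$. Each block contributes to the double integral essentially independently, because the blocks are disjoint and the cross terms are only helpful lower bounds. We will therefore choose the parameters so that $\sum_n\ep_n\omega_n^{s}\ell_n=+\infty$ for every $s>1/2$, with the blocks accumulating at $0$ so that this fails on every $(0,T)$. A natural choice is $\ep_n\omega_n^{1/2}\ell_n\approx n^{-2}$, with $\omega_n$ growing super-exponentially; then the extra factor $\omega_n^{s-1/2}$ kills summability. The lower bound for the $W^{s,1}$ norm of $\ep\theta\sin(\omega t)$ will come from the local oscillation estimate $\int|g(x+w)-g(x)|\,dx\gtrsim \ep\ell$ for $w\approx\pi/\omega$, integrated over a range of $w$ comparable to $1/\omega$.

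\emph{Uniform energy bound.} For fixed $\lambda$ we use the hyperbolic energy $\mathcal E_1$ of Theorem~\ref{thm:cascade}. On each block its logarithmic derivative is
\begin{equation*}
    -\frac{c'}{c}\cos\bigl(2\phi_\lambda(t)\bigr),
    \qquad
    \phi_\lambda(t)\approx\lambda\int c,
\end{equation*}
after writing $\ul$ in polar/WKB form. The blocks split into three regimes.
\begin{itemize}
    \item Non-resonant, $\lambda\not\approx\omega_n/2$ (more precisely $|2\lambda-\omega_n|\gtrsim\omega_n$). Here the product $\cos(\omega_n t)\cos(2\lambda t+\dots)$ is a non-resonant oscillation. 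One integration by parts, equivalently a first-order normal-form correction of the energy analogous to the $\mathcal E_2$ step, bounds the contribution by $C\ep_n\omega_n/|2\lambda\pm\omega_n|$ plus terms quadratic in $\ep_n$. This is $O(\ep_n)$, which is summable.
    \item Resonant, $|2\lambda-\omega_n|\lesssim\ep_n\omega_n$ or so. This is the parametric resonance of Mathieu type. Its growth exponent is at most $C\ep_n\omega_n\ell_n$.
    \item Low frequencies, $\lambda\ll\omega_n$. Here the block acts like a fast perturbation, and averaging gives an $O(\ep_n^2+\ep_n\lambda/\omega_n)$ contribution.
\end{itemize}

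This is the main obstacle. Divergence of $\ep_n\omega_n\ell_n$ seems to force genuine resonance growth. The fix I would try is to destroy resonance by chirping rather than using a pure frequency. Replace $\sin(\omega_n t)$ by $\sin(\Phi_n(t))$ with $\Phi_n'$ sweeping through a range of width $\omega_n$ across $I_n$. Each fixed $\lambda$ then stays resonant only on a time window of length $\sim(\omega_n/\ell_n)^{-1/2}$ (stationary phase). The phase-stationary contribution is of size $\ep_n\omega_n(\ell_n/\omega_n)^{1/2}=\ep_n(\omega_n\ell_n)^{1/2}$. This is exactly the $W^{1/2,1}$ scale, which explains the threshold $1/2$.

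We then choose $\ep_n(\omega_n\ell_n)^{1/2}\leq n^{-2}$, while $\ep_n\omega_n^{s}\ell_n$ still diverges for $s>1/2$; this is possible since $\ell_n$ can be taken of order one relative to the block spacing only if blocks are large, so instead we take $\ell_n$ small but $\omega_n\ell_n\to\infty$. The van der Corput bound must be made uniform in $\lambda$ and combined with a Gronwall-type bootstrap, because the WKB phase depends on the unknown energy only through $c$. The total exponent is then $\sum_n O(n^{-2})$ plus summable non-resonant terms, uniformly in $\lambda$. This gives $\G(c,\lambda,T)\leq C$, and hence well-posedness in $D(A^{1/2})\times H$.
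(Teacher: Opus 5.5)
Up to the switch to chirps, your plan follows the paper's mechanism. The paper uses one global chirp $c=1+g\sin\Phi$ with $\Phi'=\varphi=e^{2/t}$ and a resonance point $\cl$ where $\varphi(\cl)=2\lambda$. It then integrates $c'$ in absolute value over a window of half-width $\el=|\varphi'(\cl)|^{-1/2}$, which is your stationary-phase window. Disjoint chirped blocks would be a legitimate variant, and your irregularity argument is fine. The gap is in the step you yourself call the main obstacle: the uniform-in-$\lambda$ bound for the oscillatory integral, which you leave to ``van der Corput plus a Gronwall bootstrap''. As stated, this does not close at the threshold $1/2$.

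In polar coordinates the phase is $\theta_\lambda=\lambda t+\psi_\lambda$ with $\psi_\lambda'=\lambda(c-1)-\frac12\frac{c'}{c}\sin(2\theta_\lambda)$. On the resonant block this gives $|\psi_\lambda'|\sim\ep_n\omega_n$, and $\theta_\lambda''$ contains $\lambda c'\sim\ep_n\omega_n^2$. This dominates the chirp rate $\Phi_n''\sim\omega_n/\ell_n$ precisely because $\ep_n\omega_n\ell_n\to\infty$, so no second-derivative test is available for the true phase $\Phi_n\pm2\theta_\lambda$. Suppose instead you move $\cos(2\psi_\lambda)$ into the amplitude and apply van der Corput to $\Phi_n-2\lambda t$. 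Then the amplitude variation $\int\ep_n\omega_n|\psi_\lambda'|\sim\ep_n^2\omega_n^2\ell_n$ costs $(\ell_n/\omega_n)^{1/2}\ep_n^2\omega_n^2\ell_n=a_n^2(\omega_n\ell_n)^{1/2}$, where $a_n:=\ep_n(\omega_n\ell_n)^{1/2}$. Keeping this bounded forces $a_n\lesssim(\omega_n\ell_n)^{-1/4}$, hence $\ep_n\omega_n^s\ell_n=a_n\omega_n^{s-1/2}\ell_n^{1/2}\lesssim\omega_n^{s-3/4}$. With your super-exponential $\omega_n$ this is summable for every $s<3/4$, so you would only obtain $c\notin W^{s,1}$ for $s>3/4$. (Incidentally, $a_n$ is not the $W^{1/2,1}$ size of a block; that size is $\ep_n\omega_n^{1/2}\ell_n$.)

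What is missing is the device of Proposition~\ref{prop:non-res}. No bootstrap is needed, since the equation for $\theta_\lambda$ does not involve $\rho_\lambda$, and one uses only the a priori bound $|\psi_\lambda'|\leq\lambda g+|c'|$. Away from the window one integrates by parts separately on each side, dividing by $\Phi'\mp2\lambda$, which has constant sign there. The factor $\cos(2\psi_\lambda)$ stays in the amplitude; this is the role of $\Hl$. The $\lambda(c-1)$ part of $\psi_\lambda'$ is absorbed by writing $\lambda=\frac{2\lambda-\Phi'}{2}+\frac{\Phi'}{2}$. The boundary terms at the window are then $O(a_n)$. The $\psi_\lambda'$-terms cost $\int(\ep_n\Phi_n')^2|\Phi_n'-2\lambda|^{-1}\,dt\approx a_n^2\log(\omega_n\ell_n)$, a logarithmic loss rather than the power loss $(\omega_n\ell_n)^{1/2}$.

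This near-resonant logarithm is the block analogue of~(\ref{hp:Hl-g-phi}) and~(\ref{hp:limsup}), and of Steps~9--10 in Lemma~\ref{lemma:non-res}. It genuinely constrains the parameters. With $a_n=n^{-2}$, the choice $\omega_n=e^{n^2}$ works, but $\omega_n=e^{e^n}$ gives $a_n^2\log\omega_n=n^{-4}e^n\to\infty$; so ``super-exponential'' alone is not enough. Your claimed total $\sum_nO(n^{-2})$ does not account for this term.
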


The proof of Theorem~\ref{thm:main-irregular} will be completed in the final subsection, where we introduce a refined exponential construction whose critical threshold is $s=1/2$. The underlying idea is to introduce a propagation speed of the form
\begin{equation}
    c(t):=1+g(t)\sin(\Phi(t))
    \qquad
    \forall t>0,
    \label{defn:c(t)-example}
\end{equation}
where $g:(0,+\infty)\to[0,1/2]$ is a function of class $C^\infty$ that vanishes as $t\to0^+$, and $\Phi:(0,+\infty)\to\re$ is a function of class $C^\infty$ such that $\Phi(t)\to-\infty$ as $t\to0^+$. The factor $\sin(\Phi(t))$ produces increasingly rapid oscillations near the origin. The crucial point is to balance the increasing oscillation rate encoded by $\Phi$ with the decay of $g$ so that $c$ is irregular enough to fail the stated fractional Sobolev regularity, while a suitable non-resonance mechanism still guarantees well-posedness in Sobolev spaces for the associated wave equation.

The remainder of this section is devoted to implementing this strategy. We first identify conditions that guarantee the required non-resonance uniformly with respect to the frequency parameter, and then construct explicit examples satisfying them and verify their fractional Sobolev irregularity.

\subsection{Non-resonance and well-posedness in Sobolev spaces}

The main issue is to control uniformly with respect to $\lambda$ the interaction between the oscillations of the propagation speed and the natural oscillations of the solutions at frequency $\lambda$. The following assumption separates a central region, where the two frequencies may be close, from two non-resonant regions, where oscillatory cancellations can be exploited.

\begin{defn}[Non-resonance assumption]\label{defn:non-res}
\begin{em}

Let $t_0$ be a positive real number, and let $g:(0,t_0]\to[0,+\infty)$ and $\varphi:(0,t_0]\to[0,+\infty)$ be two functions of class $C^1$ with $\varphi(t)\to +\infty$ as $t\to 0^+$.

We say that $g$ and $\varphi$ satisfy the \emph{non-resonance assumption} in $(0,t_0]$ if there exist four nonnegative real numbers $M_1$, $M_2$, $M_3$, $M_4$, and a positive real number $\lambda_0$ such that, for every $\lambda\geq\lambda_0$, there exist two real numbers $\al$ and $\bl$ that satisfy the following properties.
\begin{enumerate}
\renewcommand{\labelenumi}{(\roman{enumi})}

\item The points $\al$ and $\bl$ satisfy 
\begin{equation}
    \frac{1}{\lambda}<\al<\bl<t_0.
    \label{hp:al-bl-position}
\end{equation}

\item The function $\varphi$ satisfies 
\begin{equation}
    \varphi(t)>2\lambda
    \quad
    \forall t\in[1/\lambda,\al]
    \qquad\quad\text{and}\quad\qquad
    \varphi(t)<2\lambda
    \quad
    \forall t\in[\bl,t_0].
    \label{hp:phi-2lambda}
\end{equation}

\item In the central interval we have
\begin{equation}
    \int_{\al}^{\bl}g(t)\varphi(t)\,dt\leq M_1.
    \label{hp:al-bl}
\end{equation}

\item In the left and right interval, the function defined by
\begin{equation*}
    \Hl(t):=\frac{g(t)\varphi(t)}{\varphi(t)-2\lambda}
\end{equation*}
satisfies
\begin{equation}
    |\Hl(t)|\leq M_2
    \qquad
    \forall t\in[1/\lambda,\al]\cup[\bl,t_0],
    \label{hp:Hl-infty}
\end{equation}
\begin{equation}
    \int_{1/\lambda}^{\al}|\Hl'(t)|\,dt
    +
    \int_{\bl}^{t_0}|\Hl'(t)|\,dt
    \leq M_3,
    \label{hp:Hl-BV}
\end{equation}
and
\begin{equation}
    \int_{1/\lambda}^{\al}g(t)\varphi(t)|\Hl(t)|\,dt
    +
    \int_{\bl}^{t_0}g(t)\varphi(t)|\Hl(t)|\,dt
    \leq M_4.
    \label{hp:Hl-g-phi}
\end{equation}

\end{enumerate}

\end{em}    
\end{defn}

The relevance of these conditions is that they control both the boundary terms and the differentiated amplitudes arising from integration by parts in the two non-resonant regions, and therefore provide a frequency-independent energy estimate for the corresponding oscillatory propagation speed.

\begin{prop}[Non-resonance implies well-posedness in Sobolev spaces]\label{prop:non-res}

    Let $g:[0,+\infty)\to [0,1/2]$ be a function of class $C^1$, let $\varphi:(0,+\infty)\to(0,+\infty)$ be a function of class $C^1$ such that $\varphi(t)\to +\infty$ as $t\to 0^+$, and let $t_0>0$ be a positive real number.

    We set
    \begin{equation}
        \Phi(t):=\int_{t_0}^t \varphi(s)\,ds
        \qquad
        \forall t>0,
        \label{defn:Phi}
    \end{equation}
    and we consider the propagation speed given by (\ref{defn:c(t)-example}).

    Let us assume that the restrictions of $g$ and $\varphi$ to $(0,t_0]$ satisfy the non-resonance conditions of Definition~\ref{defn:non-res}, and the integrability conditions
    \begin{equation}
        \int_0^{t_0}\frac{g(t)|\varphi'(t)|}{\varphi(t)}\,dt<+\infty,
        \label{hp:g-phi-int-1}
    \end{equation}
    and
    \begin{equation}
        \int_0^{t_0} g(t)^2\varphi(t)\,dt<+\infty.
        \label{hp:g-phi-int-2}
    \end{equation}

    Then for every $T>0$, there exists a constant $C_T$ such that, for all $\lambda\geq 0$, all solutions to (\ref{eqn:ODE}) satisfy
    \begin{equation*}
        \ul'(t)^2+\lambda^2\ul(t)^2\leq
        C_T\left(\ul'(0)^2+\lambda^2 \ul(0)^2\right)
        \qquad
        \forall t\in[0,T].
    \end{equation*}
    
\end{prop}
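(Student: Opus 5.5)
Our plan is to work with a hyperbolic-type energy adapted to $c$ and to estimate its logarithmic derivative by integrating by parts against the oscillations, in the regions where the two frequencies are separated. On $[t_0,T]$ the function $c$ is $C^1$ with bounded derivative and bounded away from zero, so the case $k=1$ of Theorem~\ref{thm:cascade} applies there with constants independent of $\lambda$. The same is true for $\lambda\leq\lambda_0$ by Gronwall applied to $E_\lambda$. Hence it suffices to bound, uniformly in $\lambda\geq\lambda_0$, the growth on $(0,t_0]$.

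On $(0,1/\lambda]$ we would use $E_\lambda$ itself. Since $|E_\lambda'|\leq\lambda|1-c^2|E_\lambda\leq C\lambda E_\lambda$, the growth on an interval of length $1/\lambda$ is bounded. On the central interval $[\al,\bl]$ we would use the hyperbolic energy $\mathcal E_1=u'^2/c+\lambda^2c\,u^2$. Its logarithmic derivative is at most $|c'|/c\leq C(g\varphi+|g'|)$. Condition~(\ref{hp:al-bl}) controls $\int g\varphi$. The term $|g'|$ is harmless if $g$ has bounded variation; failing that, we would treat the $g'$-part together with the pieces below by the same integration by parts.

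The core is the two non-resonant intervals $I=[1/\lambda,\al]$ and $I'=[\bl,t_0]$. We would use the Liouville/polar representation with a phase: write $c\,\lambda u=\rho\cos\theta$ and $u'=\rho\sin\theta$ (modified appropriately), where $\theta'=\lambda c+O(|c'|/c)$ and $(\log\mathcal E_1)'=-(c'/c)\cos(2\theta)$. The dangerous term is $c'/c\approx g\varphi\cos\Phi$ multiplied by $\cos 2\theta$. Using product-to-sum, it becomes $g\varphi\cos(\Phi\pm2\theta)$. The combination $\Phi+2\theta$ has derivative $\geq\varphi$ and is always non-resonant. The combination $\Phi-2\theta$ has derivative $\varphi-2\lambda c$. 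Its nonvanishing (up to $c\approx1$) is exactly the content of (\ref{hp:phi-2lambda}), and it is this difference that produces $\Hl=g\varphi/(\varphi-2\lambda)$. We would then write $g\varphi\cos(\Phi-2\theta)=\Hl\cdot(\Phi-2\theta)'\cos(\Phi-2\theta)$ up to corrections and integrate by parts. The boundary terms are controlled by (\ref{hp:Hl-infty}), and the term with the derivative falling on $\Hl$ by (\ref{hp:Hl-BV}).

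The corrections come from three sources: the difference between $2\lambda c$ and $2\lambda$, the $O(|c'|)$ term in $\theta'$, and the $g'$ part of $c'$. The first gives $\lambda g\sin\Phi\cdot\Hl/\varphi$-type terms. We expect to control these by (\ref{hp:Hl-g-phi}) and (\ref{hp:g-phi-int-2}), the latter via $g^2\varphi$ from squaring the oscillation. The second gives $g\varphi|\Hl|$, handled by (\ref{hp:Hl-g-phi}). Terms carrying $\varphi'/\varphi$ arise when differentiating amplitudes such as $g/(\ldots)$; for these we need (\ref{hp:g-phi-int-1}).

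We expect the main obstacle to be the bookkeeping on the left interval. There $\varphi\gg\lambda$, so $\theta$-corrections of size $|c'|\sim g\varphi$ are not small relative to $\lambda$. The integration by parts must therefore be organized as a Gronwall argument on $\log\mathcal E_1$, in which the error after one integration by parts is bounded by $\int(g\varphi|\Hl|+|\Hl'|)$ times $\sup\log\mathcal E_1$-oscillation. We would first try the simplest route: carry out a single integration by parts, then close the estimate with Gronwall using (\ref{hp:Hl-g-phi}).
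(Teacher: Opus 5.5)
Your architecture matches the paper's proof: Kovalevskian energy on $[0,1/\lambda]$, polar coordinates on $[1/\lambda,t_0]$, product-to-sum into the phases $\Phi\pm2\theta$, a crude absolute bound on $[\al,\bl]$ via (\ref{hp:al-bl}), and a single integration by parts on the two side intervals with amplitude $\Hl$. The paper instead writes $\thel=\lambda t+\psil$ and integrates by parts against $\Phi(s)\pm2\lambda s$, whose derivative is exactly $\varphi\pm2\lambda$; the corrections then appear as $\psil'$ when the derivative falls on $\cos(2\psil)$. Keeping the full phase, as you do, gives the same correction terms, so that variant is harmless.

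The real problem is your last paragraph, which misdiagnoses the difficulty. The closure you propose, with error bounded by $\int(g\varphi|\Hl|+|\Hl'|)$ times the oscillation of $\log\mathcal E_1$, would only close if that integral were below $1$. The hypotheses do not give that. But no Gronwall step is needed at all. The equation for $\theta$ does not involve $\rho$, and $\log\rho$ is given explicitly by $\int (c'/c)\cos^2\theta$. So after one integration by parts every remainder is an absolutely integrable function, bounded uniformly in $\lambda$ and $t$, with no feedback from the energy. It does not matter that the $\theta$-corrections are large compared with $\lambda$; what matters is their size relative to $|\varphi-2\lambda|$. After writing $g\varphi=\Hl(\varphi-2\lambda)$, they contribute at most a constant times $|\Hl|(|g'|+g\varphi)$. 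That is integrable by (\ref{hp:Hl-infty}) and (\ref{hp:Hl-g-phi}), and $g'$ is bounded on $[0,t_0]$ since $g\in C^1([0,+\infty))$, so your hedge about $g$ lacking bounded variation is unnecessary.

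Two steps of your bookkeeping are also wrong as stated. First, the correction from $2\lambda c-2\lambda=2\lambda g\sin\Phi$ has size of order $\lambda g|\Hl|$, not $\lambda g|\Hl|/\varphi$. It is controlled not by ``squaring the oscillation'' but by the splitting $\lambda=\frac12(2\lambda-\varphi)+\frac12\varphi$. By the definition of $\Hl$ this gives
\[
\lambda g|\Hl|\leq\tfrac12 g^2\varphi+\tfrac12 g\varphi|\Hl|,
\]
and this is exactly where (\ref{hp:g-phi-int-2}) and (\ref{hp:Hl-g-phi}) enter.

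Second, $(\Phi+2\theta)'\geq\varphi$ is false. The term $(c'/c)\sin2\theta$ in $2\theta'$ has size comparable to $g\varphi$, which on the left interval can far exceed $\lambda$. In any case you should never divide by a true phase derivative, since differentiating it brings in $c''$. Treat $\Phi+2\theta$ exactly like $\Phi-2\theta$: use the amplitude $g\varphi/(\varphi+2\lambda)\leq g$ and absorb the rest as corrections. Differentiating this amplitude produces the term $g|\varphi'|/\varphi$, and that is where (\ref{hp:g-phi-int-1}) is used.
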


\begin{proof}

To begin with, we observe that it is enough to establish this estimate on the fixed interval $[0,t_0]$. Indeed, if $T\leq t_0$, the corresponding estimate follows by restriction, while if $T>t_0$, the estimate on $[0,t_0]$ can be extended to $[0,T]$ by means of the standard hyperbolic energy estimate, since $c$ is of class $C^1$ on $[t_0,T]$.

Our goal is therefore to prove that
\begin{equation}
    \ul'(t)^2+\lambda^2 \ul(t)^2\leq
    C_0\left(\ul'(0)^2+\lambda^2 \ul(0)^2\right)
    \qquad
    \forall t\in[0,t_0]
    \label{th:C1}
\end{equation}
for all solutions to (\ref{eqn:ODE}), where $C_0$ is independent of both $\lambda$ and the solution $\ul$.

We observe also that the case $\lambda=0$ is trivial, since in this case $\ul''=0$ and the energy reduces to $\ul'(t)^2=\ul'(0)^2$. We therefore assume that $\lambda>0$.

\paragraph{\textmd{\textit{The Kovalevskian phase}}}

We first consider the Kovalevskian energy
\begin{equation}
    \Ekov(t):=\ul'(t)^2+\lambda^2\ul(t)^2.
    \label{defn:Ekov}
\end{equation}

By computing the time-derivative of (\ref{defn:Ekov}), and exploiting (\ref{eqn:ODE}), we deduce that
\begin{equation*}
    \Ekov'(t)=2\lambda^2\left(1-c(t)^2\right)\ul(t)\ul'(t)
    \leq
    \lambda|1-c(t)^2|\cdot\Ekov(t).
\end{equation*}

Since $c(t)\in[1/2,3/2]$, integrating this differential inequality yields
\begin{equation}
    \ul'(t)^2+\lambda^2 \ul(t)^2\leq
    \exp\left(\frac{5}{4}\lambda t\right)
    \left(\ul'(0)^2+\lambda^2 \ul(0)^2\right)
    \qquad
    \forall t>0.
    \label{est:Ekov}
\end{equation}

In particular, (\ref{est:Ekov}) provides a uniform estimate on $[0,1/\lambda]$. If $1/\lambda\geq t_0$, this already proves (\ref{th:C1}) by restriction. Moreover, if $0<\lambda<\lambda_0$, the same estimate applied on the whole interval $[0,t_0]$ gives (\ref{th:C1}) with a constant bounded by $\exp(5\lambda_0t_0/4)$. Therefore, in the remainder of the proof we may assume that
\[
    \lambda\geq\lambda_0
    \qquad\quad\text{and}\quad\qquad
    \frac{1}{\lambda}<t_0,
\]
and we only need to extend the estimate from time $1/\lambda$ up to time $t_0$.

\paragraph{\textmd{\textit{Polar coordinates in the hyperbolic phase}}}

In the interval $[1/\lambda, t_0]$, we consider the hyperbolic energy
\begin{equation*}
    \Ehyp(t) := \ul'(t)^2 + \lambda^2 c(t)^2\ul(t)^2,
\end{equation*}
and we observe that it is equivalent to $\Ekov(t)$ since $c(t)$ is bounded between two positive constants. We claim that
\begin{equation}
    \Ehyp(t) \leq C_1 \Ehyp(1/\lambda)
    \qquad
    \forall t\in[1/\lambda,t_0]
    \label{th:Ehyp}
\end{equation}
for a suitable constant $C_1$ independent of both $\lambda$ and the solution $\ul$.  
Once this claim is established, (\ref{th:C1}) follows immediately from (\ref{est:Ekov}) with $t=1/\lambda$ and the equivalence between $\Ekov(t)$ and $\Ehyp(t)$.

In order to prove (\ref{th:Ehyp}), we follow the approach already applied in~\cite{gg:2024-JDE-Resonance,gg:2025-JDE-Model}. We write every nontrivial solution in polar coordinates as
\begin{equation*}
    \ul(t)=\frac{1}{\lambda c(t)}\rhol(t)\cos(\thel(t)),
    \qquad\qquad
    \ul'(t)=-\rhol(t)\sin(\thel(t)),
\end{equation*}
where $\rhol:(0,+\infty)\to[0,+\infty)$ and $\thel:(0,+\infty)\to\re$ are solutions for $t>0$ to the system of ordinary differential equations
\begin{eqnarray}
    \rhol'(t) & = & \frac{c'(t)}{c(t)}\rhol(t)\cos^2(\thel(t)),
    \label{eqn:rho}
    \\
    \thel'(t) & = & \lambda+\lambda g(t)\sin(\Phi(t))-\frac{1}{2}\frac{c'(t)}{c(t)}\sin(2\thel(t)).
    \label{eqn:theta}
\end{eqnarray}

By integrating (\ref{eqn:rho}) in the interval $[1/\lambda,t]$ we deduce that
\begin{equation}
    \rhol(t)=\rhol(1/\lambda)\cdot
    \exp\left(\int_{1/\lambda}^{t}\frac{c'(s)}{c(s)}\cos^2(\thel(s))\,ds\right)
    \qquad
    \forall t\in[1/\lambda,t_0].
    \label{est:rho-1}
\end{equation}

Since $\rhol(t)^2=\Ehyp(t)$, in order to prove (\ref{th:Ehyp}) it is enough to establish a uniform upper bound for the oscillatory integral in (\ref{est:rho-1}), independently of $\lambda$ and of $t\in[1/\lambda,t_0]$. In fact, we shall prove the stronger statement that this integral is uniformly bounded in absolute value.

\paragraph{\textmd{\textit{Two useful estimates}}}

For future use, from (\ref{defn:c(t)-example}) we deduce that
\begin{equation}
    c'(t)=g'(t)\sin(\Phi(t))+g(t)\varphi(t)\cos(\Phi(t))
    \qquad
    \forall t>0,
    \label{eqn:c'}
\end{equation}
and hence
\begin{equation}
    |c'(t)|\leq |g'(t)|+g(t)\varphi(t)
    \qquad
    \forall t>0.
    \label{est:c'}
\end{equation}

Moreover, by integrating (\ref{eqn:theta}) in the interval $[1/\lambda,t]$, we obtain that
\begin{equation}
    \thel(t)=\lambda t+\psil(t)
    \qquad
    \forall t>0,
    \label{defn:psil}
\end{equation}
where
\begin{equation*}
    \psil(t):=\thel(1/\lambda)-1+
    \int_{1/\lambda}^t\left(\lambda g(s)\sin(\Phi(s))-
    \frac{1}{2}\frac{c'(s)}{c(s)}\sin(2\thel(s))\right)\,ds.
\end{equation*}

In particular, the time-derivative of $\psil$ is given by
\begin{equation*}
    \psil'(t)=
    \lambda g(t)\sin(\Phi(t))-\frac{1}{2}\frac{c'(t)}{c(t)}\sin(2\thel(t))
    \qquad
    \forall t>0,
\end{equation*}
and hence
\begin{equation}
    |\psil'(t)|\leq
    \lambda g(t)+|c'(t)|\leq
    \lambda g(t)+|g'(t)|+g(t)\varphi(t).
    \qquad
    \forall t>0.
    \label{est:psil'}
\end{equation}

\paragraph{\textmd{\textit{Preliminary transformations of the oscillatory integral}}}

For every $t\in[1/\lambda,t_0]$, the integral in (\ref{est:rho-1}) can be rewritten as
\begin{equation}
    \int_{1/\lambda}^{t}\frac{c'(s)}{c(s)}\cos^2(\thel(s))\,ds=
    \frac{1}{2}\int_{1/\lambda}^{t}\frac{c'(s)}{c(s)}\,ds+
    \frac{1}{2}\int_{1/\lambda}^{t}\frac{c'(s)}{c(s)}\cos(2\thel(s))\,ds.
    \label{eqn:split}
\end{equation}

The first term in the right-hand side is equal to 
\begin{equation*}
    \frac{1}{2}\log\left(\frac{c(t)}{c(1/\lambda)}\right),
\end{equation*}
and therefore it is bounded independently of $\lambda$ because the propagation speed is bounded between two positive constants. As a consequence, it is enough to show that also the second integral in the right-hand side is bounded independently of $\lambda$ and of the solution $\ul$ to (\ref{eqn:ODE}).

Thanks to (\ref{eqn:c'}), the integral appearing in the second term on the right-hand side of (\ref{eqn:split}) is equal to
\begin{equation}
    \int_{1/\lambda}^{t}\frac{g'(s)}{c(s)}\sin(\Phi(s))\cos(2\thel(s))\,ds+
    \int_{1/\lambda}^{t}\frac{g(s)\varphi(s)}{c(s)}\cos(\Phi(s))\cos(2\thel(s))\,ds.
    \label{eqn:2-int}
\end{equation}

The first integral in (\ref{eqn:2-int}) is uniformly bounded, since $c$ is bounded from below by a positive constant and $g'$ is continuous on $[0,t_0]$. Therefore, in what follows we focus only on the second integral. We express $\thel(t)$ in the form (\ref{defn:psil}) and exploit the trigonometric identity
\begin{eqnarray*}
    2\cos(a)\cos(b+c) & = & 
    \cos(a+b)\cos(c) - \sin(a+b)\sin(c)
    \\[1ex]
    & & + \cos(a-b)\cos(c) + \sin(a-b)\sin(c).
\end{eqnarray*}

In this way, we decompose the second integral in (\ref{eqn:2-int}) as the sum of four terms, two of which are
\begin{eqnarray}
    I_1 & := & \frac{1}{2}\int_{1/\lambda}^{t} \frac{g(s)\varphi(s)}{c(s)} \cos(\Phi(s) + 2\lambda s)\cos(2\psil(s))\,ds,
    \label{defn:I1}
    \\[1ex]
    I_2 & := & \frac{1}{2}\int_{1/\lambda}^{t} \frac{g(s)\varphi(s)}{c(s)} \cos(\Phi(s) - 2\lambda s)\cos(2\psil(s))\,ds,
    \label{defn:I2}
\end{eqnarray}
while the other two have the same structure, with $\cos(\Phi(s)\pm2\lambda s)\cos(2\psil(s))$ replaced by $\sin(\Phi(s)\pm2\lambda s)\sin(2\psil(s))$, up to a change of sign.

We claim that these four oscillatory integrals are uniformly bounded with respect to $\lambda$ and $t\in[1/\lambda,t_0]$. We prove this result in the case of $I_1$ and $I_2$, because the other ones are analogous.

\paragraph{\textmd{\textit{Convergence of $I_1$}}}

The key idea is to multiply and divide the integrand in (\ref{defn:I1}) by $\varphi(s)+2\lambda$. At this point we can integrate by parts, and rewrite (\ref{defn:I1}) as
\begin{eqnarray*}
    2I_1 & = &
    \left[\frac{g(s)\varphi(s)}{c(s)(\varphi(s)+2\lambda)}
    \sin(\Phi(s)+2\lambda s)\cos(2\psil(s))\right]_{s=1/\lambda}^{s=t}
    \\[0.5ex]
    & & 
    -\int_{1/\lambda}^{t}\sin(\Phi(s)+2\lambda s)
    \left[\frac{g(s)\varphi(s)\cos(2\psil(s))}{c(s)(\varphi(s)+2\lambda)}\right]'\,ds.
\end{eqnarray*}

The boundary terms are uniformly bounded, since $g$ is bounded and $c$ is bounded below by a positive constant. Now we show that the last integral is absolutely convergent, with a bound independent of both $\lambda$ and $t$. To this end, it is sufficient to obtain a uniform bound for the integral of the absolute value of the derivative appearing in the integrand. Computing this derivative yields five terms, which can be estimated using inequalities such as
\begin{equation*}
    c(s) \geq \frac{1}{2},
    \qquad\qquad
    0 \leq \frac{\varphi(s)}{\varphi(s)+2\lambda} \leq 1,
    \qquad\qquad
    0 \leq \frac{\lambda}{\varphi(s)+2\lambda}\leq \frac{1}{2}.
\end{equation*}

Specifically, for the three terms arising from differentiating the three factors in the numerator, we obtain that
\begin{equation*}
    \left|\frac{g'(s)\varphi(s)\cos(2\psil(s))}{c(s)(\varphi(s)+2\lambda)}\right|\leq
    2|g'(s)|,    
    \qquad
    \left|\frac{g(s)\varphi'(s)\cos(2\psil(s))}{c(s)(\varphi(s)+2\lambda)}\right|\leq
    \frac{2g(s)|\varphi'(s)|}{\varphi(s)},
\end{equation*}
and, recalling (\ref{est:psil'}),
\begin{align*}
    \left|
    -\frac{2g(s)\varphi(s)\psil'(s)\sin(2\psil(s))}
    {c(s)(\varphi(s)+2\lambda)}
    \right|
    &\leq
    \frac{2g(s)\varphi(s)}
    {c(s)(\varphi(s)+2\lambda)}
    \left(\lambda g(s)+|g'(s)|+g(s)\varphi(s)\right)
    \\[1ex]
    &\leq
    2g(s)^2\varphi(s)
    +4g(s)|g'(s)|
    +4g(s)^2\varphi(s).
\end{align*}

As for the two terms arising from differentiating the two factors in the denominator, from (\ref{est:c'}) we obtain that 
\begin{equation*}
    \left|-\frac{c'(s)}{c(s)^2}\cdot
    \frac{g(s)\varphi(s)\cos(2\psil(s))}{\varphi(s)+2\lambda}\right|\leq
    4g(s)|c'(s)|\leq 
    4g(s)|g'(s)|+4g(s)^2\varphi(s),
\end{equation*}
and
\begin{equation*}
    \left|-\frac{\varphi'(s)}{(\varphi(s)+2\lambda)^2}\cdot
    \frac{g(s)\varphi(s)\cos(2\psil(s))}{c(s)}\right|\leq
    \frac{2g(s)|\varphi'(s)|}{\varphi(s)}.
\end{equation*}

In conclusion, the absolute value of the sum of the five terms is bounded by
\begin{equation*}
    2|g'(s)|
    +\frac{4g(s)|\varphi'(s)|}{\varphi(s)}
    +8g(s)|g'(s)|
    +10g(s)^2\varphi(s),
\end{equation*}
and this sum is integrable because $g$ and $g'$ are bounded on $[0,t_0]$, while the remaining terms are integrable by assumptions (\ref{hp:g-phi-int-1}) and (\ref{hp:g-phi-int-2}).

\paragraph{\textmd{\textit{Convergence of $I_2$}}}

Following the approach used for $I_1$, we would like to multiply and divide the integrand in (\ref{defn:I2}) by $\varphi(s) - 2\lambda$. However, the difficulty here is that this quantity changes sign in the interval $[1/\lambda, t_0]$. This is precisely why we introduced $\al$ and $\bl$ in the non-resonance assumptions.  

The idea is to decompose $I_2$ as the sum of the integrals over $[1/\lambda, \al]$, $[\al, \bl]$, and $[\bl, t]$, at least when $t>\bl$ (the other case is simpler, because only one or two intervals need to be considered). The integral on $[\al, \bl]$ can be directly bounded by assumption~(\ref{hp:al-bl}). On $[1/\lambda, \al]$ and $[\bl, t]$, the function $\varphi(s) - 2\lambda$ has constant sign (positive in the first interval and negative in the second), which allows us to proceed as in the case of $I_1$.  
In what follows, we detail the computation on $[1/\lambda, \al]$; the other case is analogous.

After integration by parts, the integral over $[1/\lambda,\al]$ is equal to
\begin{multline*}
\qquad
\left[
\frac{g(s)\varphi(s)}{c(s)(\varphi(s)-2\lambda)}
\sin(\Phi(s)-2\lambda s)\cos(2\psil(s))
\right]_{s=1/\lambda}^{s=\al}
\\[0.5ex]
-\int_{1/\lambda}^{\al}
\sin(\Phi(s)-2\lambda s)
\left[
\frac{g(s)\varphi(s)\cos(2\psil(s))}
{c(s)(\varphi(s)-2\lambda)}
\right]'ds.
\qquad
\end{multline*}

The boundary term is bounded in absolute value by $2\Hl(\al)+2\Hl(1/\lambda)$, and hence it is uniformly bounded because of (\ref{hp:Hl-infty}). We claim that the last integral is absolutely convergent, with a bound independent of $\lambda$.

Indeed, the derivative in the integral is equal to
\begin{equation*}
    \Hl'(s)\cdot\frac{\cos(2\psil(s))}{c(s)}
    -\frac{c'(s)}{c(s)^2}\cdot\Hl(s)\cos(2\psil(s))
    -2\psil'(s)\sin(2\psil(s))\cdot\frac{\Hl(s)}{c(s)},
\end{equation*}
and therefore its absolute value is bounded by
\begin{equation*}
    2|\Hl'(s)|+4|\Hl(s)|\cdot|c'(s)|+4|\Hl(s)|\cdot|\psil'(s)|.
\end{equation*}

Now the integral of $|\Hl'(s)|$ is uniformly bounded because of (\ref{hp:Hl-BV}). Taking (\ref{est:c'}) and (\ref{est:psil'}) into account, the other two terms are bounded by
\begin{equation*}
    4|\Hl(s)|\left(2|g'(s)|+\lambda g(s)+2g(s)\varphi(s)\strut\right).
\end{equation*}

The term $|\Hl(s)|\cdot|g'(s)|$ is uniformly integrable, since
(\ref{hp:Hl-infty}) yields
\[
    \int_{1/\lambda}^{\al}|\Hl(s)|\cdot|g'(s)|\,ds
    \leq
    M_2\int_0^{t_0}|g'(s)|\,ds.
\]

As for the two remaining terms, we write
\[
    \lambda=
    \frac{2\lambda-\varphi(s)}{2}
    +\frac{\varphi(s)}{2},
\]
and hence
\begin{eqnarray*}
    4|\Hl(s)|\left(\lambda g(s)+2g(s)\varphi(s)\right)
    & = &
    2|\Hl(s)|\cdot(2\lambda-\varphi(s))g(s)
    +10|\Hl(s)|\cdot g(s)\varphi(s)
    \\[0.5ex]
    & \leq &
    2|\Hl(s)|\cdot|2\lambda-\varphi(s)|g(s)
    +10|\Hl(s)|\cdot g(s)\varphi(s)
    \\[0.5ex]
    & = &
    2g(s)^2\varphi(s)
    +10|\Hl(s)|\cdot g(s)\varphi(s).
\end{eqnarray*}
The right-hand side is uniformly integrable by assumptions
(\ref{hp:g-phi-int-2}) and (\ref{hp:Hl-g-phi}).

The same argument applies to the integral over $[\bl,t]$, and the two oscillatory integrals involving the sine factors are treated in exactly the same way. Therefore all four terms in the decomposition of (\ref{eqn:2-int}) are uniformly bounded. This proves (\ref{th:Ehyp}), and hence (\ref{th:C1}), completing the proof.
\end{proof}


\subsection{A sufficient condition for non-resonance}

The non-resonance conditions introduced in Definition~\ref{defn:non-res} are tailored to the oscillatory estimates in the proof of Proposition~\ref{prop:non-res}, but they are not particularly convenient to verify directly. We therefore introduce a structural sufficient condition, expressed in terms of the monotonicity properties of $g$ and $\varphi$ and of the auxiliary quantity
\begin{equation}
    P(t):=\frac{\varphi(t)^2}{|\varphi'(t)|}.
    \label{defn:P}
\end{equation}
The advantage of this criterion is that it can be checked directly for the explicit families considered below.

\begin{lemma}[A sufficient condition for non-resonance]\label{lemma:non-res}

Let $t_0$ be a positive real number, let $g:(0,t_0]\to\re$ be a function of class $C^1$, and let $\varphi:(0,t_0]\to\re$ be a function of class $C^2$.

Let us assume that
\begin{enumerate}
\renewcommand{\labelenumi}{(\roman{enumi})}

    \item $g(t)\geq 0$ and $g'(t)\geq 0$ for every $t\in(0,t_0]$;

    \item  $\varphi(t)\to +\infty$ as $t\to 0^+$;

    \item $\varphi(t)>0$, $\varphi'(t)< 0$ and $\varphi''(t)\geq 0$ for every $t\in(0,t_0]$;

    \item $(g\varphi)'(t)\leq 0$ for every $t\in(0,t_0]$, and
    \begin{equation}
        \int_0^{t_0}g(t)^2\varphi(t)\,dt<+\infty;
        \label{hp:non-res-int}
    \end{equation}

    \item the function $P$ defined in (\ref{defn:P}) satisfies $P(t)\to +\infty$ as $t\to 0^+$, $P'(t)\leq 0$ for every $t\in(0,t_0]$, and
    \begin{equation}
        \limsup_{t\to 0^+}g(t)^2 P(t)\log P(t)<+\infty.
        \label{hp:limsup}
    \end{equation}
\end{enumerate}

Then $g$ and $\varphi$ satisfy the non-resonance conditions of Definition~\ref{defn:non-res}.

\end{lemma}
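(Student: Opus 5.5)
The plan is to place the central interval around the unique resonance point $t_\lambda$ where $\varphi(t_\lambda)=2\lambda$. By (ii)--(iii), $\varphi$ decreases strictly from $+\infty$, so $t_\lambda$ exists and tends to $0$ as $\lambda\to+\infty$. Throughout I change variable $u=\varphi(t)$. Since $dt=du/|\varphi'|=P\,du/u^2$, every quantity in Definition~\ref{defn:non-res} becomes an integral in $u$ weighted by $g^2P$ or $gP$. I take
\[
\varphi(\al)=2\lambda(1+\eta_\lambda),\qquad \varphi(\bl)=2\lambda(1-\eta_\lambda),\qquad \eta_\lambda:=\min\Bigl\{\tfrac12,\ \frac{1}{g(t_\lambda)P(t_\lambda)}\Bigr\}.
\]
The right-hand side is small because $g^2P$ is bounded by (\ref{hp:limsup}) while $P\to+\infty$. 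Condition (\ref{hp:phi-2lambda}) then holds by monotonicity of $\varphi$, and $\bl<t_0$ holds for $\lambda$ large.

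For $\al>1/\lambda$ I use $(1/\varphi)'=1/P\to0$, which gives $1/\varphi(t)=o(t)$. Hence $\varphi(1/\lambda)\gg\lambda>2\lambda(1+\eta_\lambda)$ for large $\lambda$.

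The key steps, in order, are the following.

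\emph{Comparability.} First I show that $g$ and $P$ on $[\al,\bl]$, and on moderately larger neighbourhoods, are comparable to $g(t_\lambda)$ and $P(t_\lambda)$. I use $g$ nondecreasing, $g\varphi$ nonincreasing, $P$ nonincreasing, and that $\varphi$ only varies by a factor $1\pm\eta_\lambda$ there. For instance, on the right interval $g\le g\varphi/\varphi\le g(t_\lambda)\cdot 2\lambda/\varphi$.

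\emph{Central bound (\ref{hp:al-bl}).} We have
\[
\int_{\al}^{\bl}g\varphi\,dt=\int_{2\lambda(1-\eta)}^{2\lambda(1+\eta)}\frac{gP}{u}\,du\lesssim g(t_\lambda)P(t_\lambda)\eta_\lambda\le 1.
\]

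\emph{Sup bound (\ref{hp:Hl-infty}).} Outside the window $|\varphi-2\lambda|\ge2\lambda\eta_\lambda$. So on the left interval, writing $\Hl=g\cdot\varphi/(\varphi-2\lambda)$, the second factor is at most $(1+\eta)/\eta$. Together with $g\le g(t_\lambda)$ (by monotonicity of $g$ this holds for $t\le t_\lambda$), this gives $|\Hl|\lesssim g/\eta_\lambda\lesssim g(t_\lambda)^2P(t_\lambda)$, which is bounded. On the right interval I instead use that $g\varphi$ is nonincreasing: $|\Hl|\le g(\bl)\varphi(\bl)/(2\lambda\eta_\lambda)$, which is comparable to $g/\eta$ at $t_\lambda$ by the comparability step.

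\emph{Variation bound (\ref{hp:Hl-BV}).} I expect this to be the easiest step. On $[1/\lambda,\al]$, $g$ is nondecreasing and $\varphi/(\varphi-2\lambda)$ is nondecreasing in $t$, since $\varphi$ decreases and stays above $2\lambda$. Both are nonnegative, so $\Hl$ is monotone and its variation is at most $\sup|\Hl|\le M_2$. On $[\bl,t_0]$, $|\Hl|=g\varphi/(2\lambda-\varphi)$ is a nonincreasing numerator over an increasing positive denominator. Hence it is monotone, again with variation at most $M_2$.

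\emph{Bound (\ref{hp:Hl-g-phi}).} This is where I expect the main obstacle. After the change of variable the integrand is $g^2P\,du/|u-2\lambda|$, and the difficulty is that $g^2P$ is not constant along the intervals. Near $u=2\lambda$, for $|u-2\lambda|$ up to order $\lambda$, I bound $g^2P$ by its value at $t_\lambda$ via the comparability step. The integral of $du/|u-2\lambda|$ over $2\lambda\eta\le|u-2\lambda|\le\lambda$ gives $\log(1/\eta_\lambda)\lesssim\log P(t_\lambda)$. The product is therefore controlled by $g^2P\log P$ at $t_\lambda$, which is bounded by (\ref{hp:limsup}); this is exactly where that hypothesis enters.

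For the far tails I treat the two sides separately.
\begin{itemize}
\item For $u\ge3\lambda$ (times $t<t_\lambda$), $|\Hl|\le 3g$, so the contribution is at most $3\int g^2\varphi\,dt$, which is bounded by (\ref{hp:non-res-int}).
\item For $u\le\lambda$ (times after the point where $\varphi=\lambda$), $|\Hl|\le g\varphi/\lambda$. Since $g\varphi$ is nonincreasing this is bounded, and the integral reduces to $\lambda^{-1}\int g^2\varphi^2\,dt=\lambda^{-1}\int g^2P\,du$ over $u\le\lambda$.
\end{itemize}
The right tail is the delicate piece. If the crude bound fails I would fall back on $g\varphi$ being nonincreasing and (\ref{hp:non-res-int}) to absorb it. I would first try to check that convexity of $\varphi$ together with $P'\le0$ makes $g^2P$ essentially monotone in $u$, so that the bound at $t_\lambda$ extends to the whole right interval.
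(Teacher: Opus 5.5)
\textbf{Overall.} Your route works and differs from the paper's. The paper uses a $t$-window of half-width $|\varphi'(c_\lambda)|^{-1/2}$, analysed through $q=|\varphi'|^{-1/2}$. You instead pass to $u=\varphi(t)$, take the adaptive relative width $\eta_\lambda=\min\{1/2,1/(g(t_\lambda)P(t_\lambda))\}$, and bound $g^2P$ near $u=2\lambda$ by its value at $t_\lambda$. Your positioning argument via $(1/\varphi)'=1/P$ is shorter than the paper's Steps~1--3, and the adaptive window makes the central integral bounded by construction.

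\textbf{The gap.} You leave one required estimate open: the right tail of (\ref{hp:Hl-g-phi}). You call it delicate and propose to show that $g^2P$ is essentially monotone in $u$, which the hypotheses do not provide in general. No such argument is needed. On $\{\varphi\le\lambda\}$ one has $2\lambda-\varphi\ge\lambda\ge\varphi$, hence $|H_\lambda|\le g$ and $g\varphi|H_\lambda|\le g^2\varphi$, which is integrable by (\ref{hp:non-res-int}). In your own variables this reads $\lambda^{-1}\int_{u\le\lambda}g^2P\,du\le\int_{u\le\lambda}g^2P\,u^{-1}\,du=\int g^2\varphi\,dt$. This is exactly the paper's Step~10, which splits at the point where $\varphi=\lambda$.

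\textbf{Two smaller corrections.}
\begin{enumerate}
\item $\eta_\lambda$ is not small in general. Boundedness of $g^2P$ only gives the upper bound $gP=\sqrt{g^2P}\sqrt P\lesssim\sqrt P$, and $gP$ may stay bounded, in which case $\eta_\lambda=1/2$. This is harmless, since you only use $\eta_\lambda\le1/2$ (e.g.\ $\varphi(1/\lambda)>3\lambda\ge2\lambda(1+\eta_\lambda)$). The bound $gP\lesssim\sqrt P$ is precisely what yields $\log(1/\eta_\lambda)\lesssim\log P(t_\lambda)$.
\item In the comparability step, the monotonicity of $P$ bounds $P(t)$ only from below for $t<t_\lambda$, which is the wrong direction. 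You need an upper bound there, both for the left half of the central integral and for the logarithmic region $2\lambda(1+\eta_\lambda)\le u\le3\lambda$. It comes from $\varphi''\ge0$: the inequality $|\varphi'(t)|\ge|\varphi'(t_\lambda)|$ gives $P(t)\le P(t_\lambda)\bigl(\varphi(t)/2\lambda\bigr)^2\le\tfrac94P(t_\lambda)$.
\end{enumerate}
With these repairs your argument is complete.
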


\begin{proof}

Let us first observe that, from the assumption (\ref{hp:limsup}) and the fact that $P(t)\to+\infty$ as $t\to0^+$, we have
\begin{equation}
    g(t)^2P(t)\to 0
    \qquad\text{as }t\to0^+.
    \label{est:g2P}
\end{equation}

For brevity, we introduce the function
\begin{equation*}
    q(t):=\frac{1}{|\varphi'(t)|^{1/2}}
    \qquad
    \forall t\in(0,t_0].
\end{equation*}

Since $\varphi(t)\to +\infty$ as $t\to 0^+$, for every $\lambda$ large enough, there exists a unique $\cl\in(0,t_0]$ such that $\varphi(\cl)=2\lambda$. At this point we define
\begin{equation*}
    \el:=q(\cl),
    \qquad\quad
    \al:=\cl-\el,
    \qquad\quad
    \bl:=\cl+\el,
\end{equation*}
and we claim that the non-resonance conditions of Definition~\ref{defn:non-res} are satisfied with these choices.

\paragraph{\textmd{\textit{Step 1}}}

We show that
\begin{equation}
    \lim_{t\to 0^+}q'(t)=0.
    \label{th:lim-q'}
\end{equation}

To this end, we observe that the condition $P'(t)\leq 0$ is equivalent to
\begin{equation*}
    \varphi(t)\varphi''(t)\leq 2\varphi'(t)^2,
\end{equation*}
and hence
\begin{equation*}
    q'(t)=
    \frac{1}{2}\frac{\varphi''(t)}{|\varphi'(t)|^{3/2}}\leq
    \frac{|\varphi'(t)|^{1/2}}{\varphi(t)}=
    \frac{1}{P(t)^{1/2}}.
\end{equation*}

Due to the sign condition on $\varphi''$, this implies that
\begin{equation*}
    0\leq q'(t)\leq\frac{1}{\sqrt{P(t)}},
\end{equation*}
and the conclusion follows because $P(t)\to +\infty$.

\paragraph{\textmd{\textit{Step 2}}}

We show that
\begin{equation}
    \lim_{t\to 0^+}q(t)=0,
    \qquad\qquad
    \lim_{t\to 0^+}\frac{q(t)}{t}=0,
    \label{th:lim-q-1}
\end{equation}
and
\begin{equation}
    \lim_{t\to 0^+}q(t)\varphi(t)=+\infty,
    \qquad\qquad
    \lim_{t\to 0^+}t\,\varphi(t)=+\infty.
    \label{th:lim-q-2}
\end{equation}

Indeed, from (\ref{th:lim-q'}) we deduce that $q'$ is bounded in a right neighborhood of the origin, and therefore $q(t)$ has a real limit $\ell$ as $t\to0^+$. If $\ell>0$, then $|\varphi'(t)|\longrightarrow 1/\ell^2$, but this is impossible because we assumed that $\varphi(t)\to+\infty$ as $t\to0^+$. This proves the first limit in (\ref{th:lim-q-1}).

At this point the second limit follows from (\ref{th:lim-q'}) by L'Hôpital's rule.

Now we observe that
\begin{equation*}
    q(t)\varphi(t)=\sqrt{P(t)},
\end{equation*}
and therefore the first limit in (\ref{th:lim-q-2}) follows from our assumption that $P(t)\to +\infty$. 

Finally, we observe that
\begin{equation*}
    t\varphi(t)=\frac{t}{q(t)}\cdot q(t)\varphi(t),
\end{equation*}
and hence also the last limit in (\ref{th:lim-q-2}) follows from the previous ones.

\paragraph{\textmd{\textit{Step 3}}}

We show that $\al$ and $\bl$ satisfy (\ref{hp:al-bl-position}) provided that $\lambda$ is large enough. Moreover, since $\varphi$ is strictly decreasing and $\varphi(\cl)=2\lambda$, condition~(\ref{hp:phi-2lambda}) is automatically satisfied.

Indeed, from the definition we immediately have $\cl\to 0^+$, hence also $\el\to 0^+$, and this is enough to establish that $\al<\bl<t_0$ for every large enough $\lambda$. In addition, we observe that
\begin{equation*}
    \lambda\al=
    \lambda(\cl-\el)=
    \lambda\cl\left(1-\frac{\el}{\cl}\right)=
    \frac{1}{2}\varphi(\cl)\cl\left(1-\frac{q(\cl)}{\cl}\right).
\end{equation*}

Therefore, from the second limits in (\ref{th:lim-q-1}) and (\ref{th:lim-q-2}) we deduce that $\lambda\al\to +\infty$, and hence $\al>1/\lambda$ when $\lambda$ is large enough.

\paragraph{\textmd{\textit{Step 4}}}

We show that
\begin{equation}
    \frac{1}{2}\el\leq q(t)\leq\frac{3}{2}\el
    \qquad
    \forall t\in[\al,\bl]
    \label{est:q-al-bl}
\end{equation}
when $\lambda$ is large enough.

Indeed, since $|t-\cl|\leq\el$, from the mean value theorem we deduce that
\begin{equation*}
    |q(t)-\el|=
    |q(t)-q(\cl)|\leq
    \el\cdot\max\{|q'(\tau)|:\tau\in[\al,\bl]\},
\end{equation*}
and from (\ref{th:lim-q'}) we know that the maximum is less than or equal to $1/2$ when $\lambda$ is large enough. This is enough to conclude (\ref{est:q-al-bl}).

\paragraph{\textmd{\textit{Step 5}}}

We prove condition~(\ref{hp:al-bl}) in the non-resonance assumption.

Since $\bl-\al=2\el$, from the monotonicity of $g\varphi$ and the lower bound in (\ref{est:q-al-bl}) with $t=\al$, we deduce that
\begin{equation*}
    \int_{\al}^{\bl} g(t)\varphi(t)\,dt\leq
    2\el g(\al)\varphi(\al)\leq
    4q(\al)g(\al)\varphi(\al)=
    4g(\al)\sqrt{P(\al)}.
\end{equation*}

Recalling (\ref{est:g2P}), this proves that the integral is uniformly bounded in $\lambda$.

\paragraph{\textmd{\textit{Step 6}}}

We show that
\begin{equation}
    \frac{4}{9}\frac{1}{\el}\leq\varphi(\al)-2\lambda\leq\frac{4}{\el}
    \qquad\quad\text{and}\quad\qquad
    \frac{4}{9}\frac{1}{\el}\leq2\lambda-\varphi(\bl)\leq\frac{4}{\el}
    \label{est:Hl-denom}
\end{equation}
when $\lambda$ is large enough, and
\begin{equation}
    \lim_{\lambda\to +\infty}\frac{\varphi(\al)}{2\lambda}
    =
    \lim_{\lambda\to +\infty}\frac{\varphi(\bl)}{2\lambda}
    =1.
    \label{eqn:lim-phi-al-bl}
\end{equation}

Indeed, from the mean value theorem we know that
\begin{equation*}
    \varphi(\al)-2\lambda=
    \varphi(\al)-\varphi(\cl)=
    \el|\varphi'(t_1)|=
    \frac{\el}{q(t_1)^2}
\end{equation*}
for some $t_1\in(\al,\cl)$, and
\begin{equation*}
    2\lambda-\varphi(\bl)=
    \varphi(\cl)-\varphi(\bl)=
    \el|\varphi'(t_2)|=
    \frac{\el}{q(t_2)^2}
\end{equation*}
for some $t_2\in(\cl,\bl)$.

On the other hand, from (\ref{est:q-al-bl}) we know that
\begin{equation*}
    \frac{4}{9}\frac{1}{\el^2}\leq
    \frac{1}{q(t)^2}\leq
    \frac{4}{\el^2}
    \qquad
    \forall t\in[\al,\bl].
\end{equation*}
Applying these inequalities with $t=t_1$ and $t=t_2$ to the previous two identities, we obtain
(\ref{est:Hl-denom}).

In particular,
\begin{equation*}
    1\leq
    \frac{\varphi(\al)}{2\lambda}=
    1+\frac{\varphi(\al)-2\lambda}{2\lambda}
    \leq
    1+\frac{4}{2\lambda\el}
    =
    1+\frac{4}{\varphi(\cl)q(\cl)},
\end{equation*}
while
\begin{equation*}
    1-\frac{4}{\varphi(\cl)q(\cl)}
    =
    1-\frac{4}{2\lambda\el}
    \leq
    1-\frac{2\lambda-\varphi(\bl)}{2\lambda}=
    \frac{\varphi(\bl)}{2\lambda}
    \leq1.
\end{equation*}
Therefore (\ref{eqn:lim-phi-al-bl}) follows from the first limit in
(\ref{th:lim-q-2}).

\paragraph{\textmd{\textit{Step 7}}}

The function $\Hl$ is nondecreasing on both intervals $(0,\cl)$ and $(\cl,t_0)$.

Indeed, its derivative is given by
\begin{equation*}
    \Hl'(t)=
    \frac{\varphi(t)^2g'(t)-2\lambda(g\varphi)'(t)}
    {(\varphi(t)-2\lambda)^2},
\end{equation*}
and therefore $\Hl'(t)\geq0$ because $g'(t)\geq0$ and $(g\varphi)'(t)\leq0$.

\paragraph{\textmd{\textit{Step 8}}}

We show that both $\Hl$ and its total variation are uniformly bounded in
$[1/\lambda,\al]$ and in $[\bl,t_0]$.

Indeed, since $\Hl$ is nondecreasing, positive in $[1/\lambda,\al]$, and negative in $[\bl,t_0]$, both its supremum norm and its total variation are controlled by $\Hl(\al)$ on the first interval and by $|\Hl(\bl)|$ on the second one.

From the lower bounds in (\ref{est:Hl-denom}) and (\ref{est:q-al-bl}) with $t=\al$ and $t=\bl$, we deduce that
\begin{equation*}
    \Hl(\al)
    \leq
    \frac{9}{4}\el g(\al)\varphi(\al)
    \leq
    \frac{9}{2}q(\al)g(\al)\varphi(\al)
    =
    \frac{9}{2}\sqrt{g(\al)^2P(\al)},
\end{equation*}
and, analogously,
\begin{equation*}
    |\Hl(\bl)|
    \leq
    \frac{9}{4}\el g(\bl)\varphi(\bl)
    \leq
    \frac{9}{2}q(\bl)g(\bl)\varphi(\bl)
    =
    \frac{9}{2}\sqrt{g(\bl)^2P(\bl)}.
\end{equation*}

Recalling (\ref{est:g2P}), both quantities tend to zero as $\lambda\to+\infty$. This proves both the uniform boundedness of $\Hl$ and the uniform boundedness of its total variation.

\paragraph{\textmd{\textit{Step 9}}}

We show that the integral
\begin{equation}
    \int_{1/\lambda}^{\al}g(t)\varphi(t)|\Hl(t)|\,dt
    \label{eqn:int-g-phi-Hl-left}
\end{equation}
is bounded independently of $\lambda$. 

To this end, we introduce $\albar$ such that $\varphi(\albar)=3\lambda$. To begin with, we claim that $1/\lambda<\albar<\al$ when $\lambda$ is large enough.

Indeed, the inequality $\albar<\al$ follows from (\ref{eqn:lim-phi-al-bl}), while for the remaining one we observe that
\begin{equation*}
    \lambda\albar=
    \frac{1}{3}\varphi(\albar)\albar\to+\infty
\end{equation*}
because of the second limit in (\ref{th:lim-q-2}).

Now we split the integral in (\ref{eqn:int-g-phi-Hl-left}) into the two subintervals $[1/\lambda,\albar]$ and $[\albar,\al]$. In the first interval we know that $\varphi(t)\geq3\lambda$, so that
\begin{equation*}
    \frac{\varphi(t)}{\varphi(t)-2\lambda}\leq3,
\end{equation*}
and therefore
\begin{equation*}
    \int_{1/\lambda}^{\albar}g(t)\varphi(t)|\Hl(t)|\,dt
    \leq
    3\int_0^{t_0}g(t)^2\varphi(t)\,dt.
\end{equation*}

In the second interval we write
\begin{equation*}
    \int_{\albar}^{\al}g(t)\varphi(t)|\Hl(t)|\,dt=
    \int_{\albar}^{\al}
    g(t)^2P(t)\log P(t)\cdot
    \frac{1}{\log P(t)}\cdot
    \frac{|\varphi'(t)|}{\varphi(t)-2\lambda}\,dt.
\end{equation*}

From (\ref{hp:limsup}) and the monotonicity of $P$ we know that there exists a constant $C$ such that
\begin{equation*}
    g(t)^2P(t)\log P(t)\leq C
    \qquad\quad\text{and}\quad\qquad
    \frac{1}{\log P(t)}\leq\frac{1}{\log P(\cl)},
\end{equation*}
while
\begin{equation*}
    \int_{\albar}^{\al}
    \frac{|\varphi'(t)|}{\varphi(t)-2\lambda}\,dt
    =
    \log
    \frac{\varphi(\albar)-2\lambda}
    {\varphi(\al)-2\lambda}.
\end{equation*}

Recalling that $\varphi(\albar)=3\lambda$, from the lower bound in (\ref{est:Hl-denom}) we obtain
\begin{equation*}
    \frac{\varphi(\albar)-2\lambda}
    {\varphi(\al)-2\lambda}
    =
    \frac{\lambda}{\varphi(\al)-2\lambda}
    \leq
    \frac{9}{4}\el\lambda
    =
    \frac{9}{8}\varphi(\cl)q(\cl)
    =
    \frac{9}{8}\sqrt{P(\cl)},
\end{equation*}
and therefore
\begin{equation*}
    \int_{\albar}^{\al}g(t)\varphi(t)|\Hl(t)|\,dt
    \leq
    C\left(
    \frac12+
    \frac{\log(9/8)}{\log P(\cl)}
    \right).
\end{equation*}

Since $P(\cl)\to+\infty$, this proves that the left-hand side is uniformly bounded in $\lambda$ for $\lambda$ large enough.

\paragraph{\textmd{\textit{Step 10}}}

We prove that the integral
\begin{equation*}
    \int_{\bl}^{t_0}g(t)\varphi(t)|\Hl(t)|\,dt
\end{equation*}
is bounded independently of $\lambda$. 

Let $\blbar$ be defined by $\varphi(\blbar)=\lambda$. From (\ref{eqn:lim-phi-al-bl}) we have $\bl<\blbar$ for $\lambda$ large enough, while $\blbar<t_0$ follows from the monotonicity of $\varphi$ for $\lambda$ large enough.

We split the interval into $[\bl,\blbar]$ and $[\blbar,t_0]$. In the second interval we have $\varphi(t)\leq\lambda$, and therefore, exactly as in Step~9,
\begin{equation*}
    \int_{\blbar}^{t_0}g(t)\varphi(t)|\Hl(t)|\,dt
    \leq
    \int_0^{t_0}g(t)^2\varphi(t)\,dt.
\end{equation*}

It remains to consider the interval $[\bl,\blbar]$. The only additional ingredient with respect to Step~9 is a lower bound for $P(t)$ in terms of $P(\cl)$. From the definition of $P$ and the sign condition on $\varphi''$, we obtain that
\begin{equation*}
    \frac{P'(t)}{P(t)}
    \geq
    2\frac{\varphi'(t)}{\varphi(t)}.
\end{equation*}

We now integrate this differential inequality from $\cl$ to $t\in[\cl,\blbar]$. Recalling that $\varphi(\cl)=2\lambda$ and $\varphi(t)\geq\lambda$ in this interval, we deduce that
\begin{equation*}
    \log\frac{P(t)}{P(\cl)}\geq
    2\log\frac{\varphi(t)}{\varphi(\cl)}\geq
    \log\frac{1}{4},
\end{equation*}
and in particular, since $P(\cl)\to +\infty$ as $\lambda\to +\infty$,
\begin{equation*}
    \log P(t)\geq
    \log P(\cl)-\log 4\geq
    \frac{1}{2}\log P(\cl)
    \qquad
    \forall t\in[\cl,\blbar].
\end{equation*}

At this point we argue as in Step~9 and write
\begin{equation*}
    \int_{\bl}^{\blbar}g(t)\varphi(t)|\Hl(t)|\,dt
    \leq
    \frac{2C}{\log P(\cl)}
    \int_{\bl}^{\blbar}
    \frac{|\varphi'(t)|}{2\lambda-\varphi(t)}\,dt
    =
    \frac{2C}{\log P(\cl)}\log\frac{2\lambda-\varphi(\blbar)}{2\lambda-\varphi(\bl)}.
\end{equation*}

Recalling that $\varphi(\blbar)=\lambda$, from the lower bound in (\ref{est:Hl-denom}), we obtain
\begin{equation*}
    \frac{2\lambda-\varphi(\blbar)}{2\lambda-\varphi(\bl)}
    =
    \frac{\lambda}{2\lambda-\varphi(\bl)}
    \leq
    \frac{9}{4}\el\lambda
    =
    \frac{9}{8}\varphi(\cl)q(\cl)
    =
    \frac{9}{8}\sqrt{P(\cl)}.
\end{equation*}
Therefore
\begin{equation*}
    \int_{\bl}^{\blbar}g(t)\varphi(t)|\Hl(t)|\,dt
    \leq
    2C\left(
    \frac{1}{2}
    +
    \frac{\log(9/8)}{\log P(\cl)}
    \right),
\end{equation*}
which is uniformly bounded for $\lambda$ large enough.
\end{proof}


\subsection{A simple example based on powers}

We now consider the propagation speed defined by (\ref{defn:c(t)-example}) starting with
\begin{equation}
    g(t):=t^\alpha
    \qquad\quad\text{and}\quad\qquad
    \varphi(t):=\frac{1}{t^{\beta+1}},
    \label{defn:g-phi-power}
\end{equation}
where $\alpha$ is a positive integer and $\beta$ is a real number such that
\begin{equation*}
    \alpha<\beta<2\alpha.
\end{equation*}

With this choice, $g$ is of class $C^\infty$ up to the origin. More precisely, we choose $t_0>0$ small enough that $t_0^\alpha<1/2$, we set $g(t)=t^\alpha$ on $[0,t_0]$, and then extend $g$ smoothly to $[0,+\infty)$ with values in $[0,1/2]$. This modification does not affect the argument, since only the behavior near the origin is relevant for the frequency-dependent estimates.

\paragraph{\textmd{\textit{Well-posedness in Sobolev spaces}}}

In order to show that in this case the abstract wave equation (\ref{eqn:basic}) is well posed in Sobolev spaces, it is enough to show that $g$ and $\varphi$ fit into the setting of Proposition~\ref{prop:non-res}.

The integrability conditions (\ref{hp:g-phi-int-1}) and (\ref{hp:g-phi-int-2}) follow immediately from
\begin{equation*}
    \frac{g(t)|\varphi'(t)|}{\varphi(t)}
    =
    (\beta+1)t^{\alpha-1}
    \qquad\quad\text{and}\quad\qquad
    g(t)^2\varphi(t)=t^{2\alpha-\beta-1}
\end{equation*}
since $\alpha>0$ and $\beta<2\alpha$.

In order to show the non-resonance conditions, we now verify the assumptions of Lemma~\ref{lemma:non-res}. Clearly, $g$ is nonnegative and nondecreasing, while $\varphi$ is positive, decreasing, convex, and tends to $+\infty$ as $t\to0^+$. Moreover, $g(t)\varphi(t)=t^{\alpha-\beta-1}$ is decreasing because $\beta>\alpha$, while (\ref{hp:non-res-int}) coincides with (\ref{hp:g-phi-int-2}).

Finally, the function defined in (\ref{defn:P}) is given by
\begin{equation*}
    P(t)=\frac{1}{\beta+1}\frac{1}{t^\beta}.
\end{equation*}
In particular, $P(t)\to+\infty$ as $t\to0^+$ and $P'(t)<0$. Furthermore,
\begin{equation*}
    g(t)^2P(t)\log P(t)
    =
    \frac{t^{2\alpha-\beta}}{\beta+1}
    \log\left(\frac{1}{(\beta+1)t^\beta}\right)
    \longrightarrow0
    \qquad
    \text{as }t\to0^+,
\end{equation*}
because $2\alpha>\beta$.

Therefore all the assumptions of Lemma~\ref{lemma:non-res} are satisfied, and Proposition~\ref{prop:non-res} applies.

\paragraph{\textmd{\textit{Lack of fractional Sobolev regularity}}}

We claim that, for every positive real number $T$, the propagation speed defined by (\ref{defn:c(t)-example}) with the choices (\ref{defn:g-phi-power}) satisfies
\begin{equation}
    c\not\in W^{s,1}((0,T))
    \qquad
    \forall s\geq s_*:=\frac{\alpha+1}{\beta+1}.
    \label{th:c-no-ws}
\end{equation}
We observe that $s_*<1$ because $\alpha<\beta$, and therefore, by the standard Sobolev inclusions, it is enough to prove (\ref{th:c-no-ws}) when $s_*\leq s<1$.

To this end, we observe that (\ref{defn:Phi}) in this case reads as
\begin{equation}
    \Phi(t):=\frac{1}{\beta}\left(\frac{1}{t_0^\beta}-\frac{1}{t^\beta}\right)
    \qquad
    \forall t>0,
    \label{defn:Phi-power}
\end{equation}
and, for every positive integer $k$, we define $a_k$, $b_k$, $c_k$, $d_k$ in such a way that
\begin{gather}
    \Phi(a_k)=\frac{\pi}{6}-2k\pi,
    \qquad\qquad
    \Phi(b_k)=\frac{5\pi}{6}-2k\pi,
    \label{defn:ak-bk-power}
    \\[1ex]
    \Phi(c_k)=\pi-2k\pi,
    \qquad\qquad
    \Phi(d_k)=2\pi-2k\pi.
    \label{defn:ck-dk-power}
\end{gather}

We observe that $d_{k+1}<a_k<b_k<c_k<d_k$ for every positive integer $k$. Moreover,
\[
    \sin(\Phi(t))\geq\frac{1}{2}
    \qquad
    \forall t\in[a_k,b_k],
\]
and therefore, from (\ref{defn:c(t)-example}),
\begin{equation*}
    c(t)\geq 1+\frac{1}{2}g(t)\geq 1+\frac{1}{2}g(a_k)
    \qquad
    \forall t\in[a_k,b_k].
\end{equation*}
Similarly,
\begin{equation*}
    c(\tau)\leq 1
    \qquad
    \forall \tau\in[c_k,d_k].
\end{equation*}
It follows that
\begin{equation}
    \frac{|c(t)-c(\tau)|}{|t-\tau|^{1+s}}
    \geq
    \frac{1}{2}\frac{g(a_k)}{(d_k-a_k)^{1+s}}
    \qquad
    \forall (t,\tau)\in[a_k,b_k]\times[c_k,d_k].
    \label{est:ws-below}
\end{equation}

Now we observe that, for every $T>0$, the square $(0,T)\times(0,T)$ contains all the rectangles $[a_k,b_k]\times[c_k,d_k]$ for $k$ large enough, and these rectangles are pairwise disjoint. Therefore, the integral of the left-hand side of (\ref{est:ws-below}) over $(0,T)\times(0,T)$ can be estimated from below by the sum of the integrals over these rectangles. As a consequence, we deduce that $c\not\in W^{s,1}((0,T))$ whenever
\begin{equation}
    \sum_{k=k_T}^{\infty}
    (b_k-a_k)(d_k-c_k)\cdot
    \frac{g(a_k)}{(d_k-a_k)^{1+s}}
    =+\infty
    \label{series:power}
\end{equation}
for some sufficiently large $k_T$.

Finally, from (\ref{defn:Phi-power}), (\ref{defn:ak-bk-power}), and (\ref{defn:ck-dk-power}), a straightforward computation shows that $g(a_k)$ has the same order as $k^{-\alpha/\beta}$, while $b_k-a_k$, $d_k-a_k$, and $d_k-c_k$ have the same order as $k^{-(\beta+1)/\beta}$. Hence
\begin{equation*}
    (b_k-a_k)(d_k-c_k)\cdot
    \frac{g(a_k)}{(d_k-a_k)^{1+s}}
\end{equation*}
has the same order as
\begin{equation*}
    \left(\frac{1}{k}\right)^{
    1+\frac{\beta+1}{\beta}(s_*-s)}.
\end{equation*}
Therefore, the series in (\ref{series:power}) diverges if and only if $s\geq s_*$. This completes the proof of (\ref{th:c-no-ws}).


\subsection{A refined example based on exponentials}

The power-like examples of the previous subsection yield propagation speeds that fail to belong to $W^{s,1}$ above a threshold that can be made arbitrarily close to $1/2$, but never equal to $1/2$. We now refine the construction in order to reach the critical threshold and complete the proof of Theorem~\ref{thm:main-irregular}.

We consider the propagation speed defined by (\ref{defn:c(t)-example}) starting with
\begin{equation}
    g(t):=\frac{e^{-1/t}}{\sqrt{t}\,|\log t|}
    \qquad\quad\text{and}\quad\qquad
    \varphi(t):=e^{2/t}.
    \label{defn:g-phi-exp}
\end{equation}
As in the case of powers, these functions are defined as above only in a right neighborhood $(0,t_0]$ of the origin, with $t_0$ small enough. We then extend $g$ smoothly to $[0,+\infty)$ with values in $[0,1/2]$, and $\varphi$ smoothly to $(0,+\infty)$ with positive values.

\paragraph{\textmd{\textit{Well-posedness in Sobolev spaces}}}

As in the previous example, it is enough to verify the assumptions of Proposition~\ref{prop:non-res}.

The integrability conditions (\ref{hp:g-phi-int-1}) and (\ref{hp:g-phi-int-2}) follow from
\begin{equation*}
    \frac{g(t)|\varphi'(t)|}{\varphi(t)}
    =
    \frac{2e^{-1/t}}{t^{5/2}|\log t|}
    \qquad\quad\text{and}\quad\qquad
    g(t)^2\varphi(t)
    =
    \frac{1}{t\log^2 t},
\end{equation*}
which are integrable in a right neighborhood of the origin.

We now verify the assumptions of Lemma~\ref{lemma:non-res}. If $t_0$ is small enough, then $g$ is nonnegative and nondecreasing, while $\varphi$ is positive, decreasing, convex, and tends to $+\infty$ as $t\to0^+$. Moreover,
\begin{equation*}
    g(t)\varphi(t)
    =
    \frac{e^{1/t}}{\sqrt{t}\,|\log t|}
\end{equation*}
is decreasing in a right neighborhood of the origin, while (\ref{hp:non-res-int}) coincides with (\ref{hp:g-phi-int-2}).

Finally, the function defined in (\ref{defn:P}) is given by
\begin{equation*}
    P(t)=\frac{t^2}{2}e^{2/t}.
\end{equation*}
Therefore $P(t)\to+\infty$ as $t\to0^+$ and, after reducing $t_0$ if necessary,
$P'(t)<0$. Furthermore,
\begin{equation*}
    g(t)^2P(t)\log P(t)
    =
    \frac{t}{2\log^2 t}
    \log\left(\frac{t^2}{2}e^{2/t}\right)
    \longrightarrow0
    \qquad
    \text{as }t\to0^+.
\end{equation*}

Therefore all the assumptions of Lemma~\ref{lemma:non-res} are satisfied, and Proposition~\ref{prop:non-res} applies.

\paragraph{\textmd{\textit{Lack of fractional Sobolev regularity}}}

We claim that, for every positive real number $T$, the propagation speed defined by (\ref{defn:c(t)-example}) with the choices (\ref{defn:g-phi-exp}) satisfies
\begin{equation}
    c\not\in W^{s,1}((0,T))
    \qquad
    \forall s>\frac{1}{2}.
    \label{th:c-no-ws-exp}
\end{equation}

By the standard Sobolev inclusions, it is enough to prove~(\ref{th:c-no-ws-exp}) for $1/2<s<1$. As in the case of powers, we define $a_k$, $b_k$, $c_k$, and $d_k$ by (\ref{defn:ak-bk-power}) and (\ref{defn:ck-dk-power}), where now $\Phi$ is the antiderivative of the function $\varphi$ in (\ref{defn:g-phi-exp}). The same argument as in the previous example reduces the proof of (\ref{th:c-no-ws-exp}) to showing that the series in (\ref{series:power}) diverges for every $s>1/2$.

We shall prove that
\begin{equation}
    a_k\sim\frac{2}{\log k},
    \qquad
    e^{-1/a_k}\sim\frac{1}{\sqrt{\pi k}\log k},
    \label{asympt:exp-1}
\end{equation}
and
\begin{equation}
    b_k-a_k\sim\frac{2}{3}\frac{1}{k\log^2 k},
    \qquad
    d_k-a_k\sim\frac{11}{6}\frac{1}{k\log^2 k},
    \qquad
    d_k-c_k\sim\frac{1}{k\log^2 k}.
    \label{asympt:exp-2}
\end{equation}
It follows from (\ref{asympt:exp-1}) that
\begin{equation*}
    g(a_k)
    \sim
    \frac{1}{\sqrt{2\pi}}
    \frac{1}{(k\log k)^{1/2}\log(\log k)}.
\end{equation*}
Therefore, up to a positive multiplicative constant depending on $s$,
\begin{equation*}
    (b_k-a_k)(d_k-c_k)\cdot
    \frac{g(a_k)}{(d_k-a_k)^{1+s}}
\end{equation*}
is asymptotic to
\begin{equation}
    \frac{1}{(k\log k)^{1/2}\log(\log k)}
    \cdot
    \frac{1}{(k\log^2 k)^{1-s}}.
    \label{asympt:series-exp}
\end{equation}
For every $s>1/2$, the series whose general term is given by (\ref{asympt:series-exp}) diverges, and hence (\ref{series:power}) holds. This proves (\ref{th:c-no-ws-exp}).

It remains to prove (\ref{asympt:exp-1}) and (\ref{asympt:exp-2}). The starting point is
\begin{equation*}
    \Phi(t)\sim-\frac{1}{2}t^2e^{2/t}
    \qquad
    \text{as }t\to0^+.
\end{equation*}

Let $u_k$ be any sequence such that
\begin{equation*}
    \Phi(u_k)=A-Bk
\end{equation*}
for some $A\in\mathbb R$ and $B>0$. Since
\[
    k=\frac{A-\Phi(u_k)}{B},
\]
we have
\begin{equation*}
    u_k\log k
    =
    u_k\log\left(u_k^2e^{2/u_k}\right)
    +
    u_k\log\left(
        \frac{A-\Phi(u_k)}
        {B u_k^2e^{2/u_k}}
    \right)
    \longrightarrow 2,
\end{equation*}
and hence
\begin{equation}
    u_k\sim\frac{2}{\log k}.
    \label{asympt:uk-exp}
\end{equation}
Moreover,
\begin{equation*}
    k(\log k)^2e^{-2/u_k}
    =
    (u_k\log k)^2
    \cdot
    \frac{\Phi(u_k)}{u_k^2e^{2/u_k}}
    \cdot
    \frac{k}{\Phi(u_k)}
    \longrightarrow
    \frac{2}{B},
\end{equation*}
and therefore
\begin{equation}
    e^{-1/u_k}
    \sim
    \sqrt{\frac{2}{B}}\,
    \frac{1}{\sqrt{k}\log k}.
    \label{asympt:exp-uk}
\end{equation}

Let now $v_k$ be another sequence such that
\begin{equation*}
    \Phi(v_k)=C-Bk
\end{equation*}
for some $C>A$. Since $\Phi'=\varphi$ and $\varphi$ is decreasing, the mean value theorem yields
\begin{equation*}
    \frac{C-A}{\varphi(u_k)}
    \leq
    v_k-u_k
    \leq
    \frac{C-A}{\varphi(v_k)}.
\end{equation*}
By (\ref{asympt:exp-uk}),
\[
    \varphi(u_k)\sim\varphi(v_k)\sim\frac{B}{2}k\log^2 k,
\]
and therefore
\begin{equation}
    v_k-u_k
    \sim
    \frac{2(C-A)}{B}\,
    \frac{1}{k\log^2 k}.
    \label{asympt:uk-vk}
\end{equation}

We now apply these relations to the sequences defined in (\ref{defn:ak-bk-power}) and (\ref{defn:ck-dk-power}), for which $B=2\pi$. Relations (\ref{asympt:uk-exp}) and (\ref{asympt:exp-uk}) yield (\ref{asympt:exp-1}), while (\ref{asympt:uk-vk}), with the corresponding values of $A$ and $C$, yields (\ref{asympt:exp-2}).

This completes the proof of Theorem~\ref{thm:main-irregular}.
\qed

\begin{rmk}
\begin{em}

The propagation speed obtained in the proof through the exponential choices of the amplitude and the phase as in (\ref{defn:g-phi-exp}) has an additional borderline H\"older regularity property. More precisely, after setting $c(0):=1$, one has
\[
    c\in C^{0,\alpha}([0,T])
    \qquad
    \text{for every } T>0 \text{ and every } \alpha<\frac12,
\]
whereas
\[
    c\notin C^{0,1/2}([0,T])
    \qquad
    \text{for every } T>0.
\]
Thus this example is H\"older continuous of every order strictly smaller than $1/2$, but not of order $1/2$.

\end{em}
\end{rmk}

\setcounter{equation}{0}
\section{Open problems}\label{sec:open}

\paragraph{\textmd{\textit{The boundary between good and bad propagation speeds}}}

The results of this paper suggest a three-region picture for strictly hyperbolic propagation speeds.

There is first a region where uniform energy estimates are guaranteed. In the variational language developed above, this region is precisely the class $\Sp_2$ associated with bounded second-order minimum values. By the saturation results of Section~\ref{sec:variational}, the same class is obtained from every higher-order variational problem. For propagation speeds in $\Sp_2$, the absolute-integrability approach is sufficient to guarantee well-posedness in the Sobolev scale without derivative loss.

Outside $\Sp_2$ the situation becomes substantially less rigid. On the one hand, counterexamples show that pathological behavior, including derivative loss, can be residual in suitable classes of propagation speeds~\cite{GhisiGobbino2023Residual}. On the other hand, the constructions of Section~\ref{sec:construction} show that propagation speeds outside $\Sp_2$ may still satisfy uniform energy estimates. For these exceptional coefficients, oscillatory cancellations compensate for the lack of absolute integrability required by the variational theory. Thus, beyond $\Sp_2$, good and bad propagation speeds may coexist.

This naturally raises the question of whether there is a further threshold beyond which such a coexistence is no longer possible. In other words, one may ask whether there exists a regularity region where oscillatory cancellations can no longer prevent derivative loss.

The constructions of Section~\ref{sec:construction} suggest that the fractional Sobolev exponent $1/2$ may play a critical role in this respect. The power-like examples approach this threshold arbitrarily closely, while the refined exponential example fails to belong to $W^{s,1}$ for every $s>1/2$, without settling the endpoint. Attempts to accelerate further the oscillations lead again to borderline contributions at $s=1/2$. This suggests that the obstruction at the endpoint may be structural rather than an artifact of the particular examples considered here.

It is therefore natural to ask whether $W^{1/2,1}$ regularity is necessary for uniform energy estimates. More precisely, one may ask whether a propagation speed $c$ for which the abstract wave equation is well posed in $D(A^{1/2})\times H$ for every Hilbert space $H$ and every nonnegative self-adjoint operator $A$ must satisfy
\begin{equation*}
    c\in W^{1/2,1}_{\mathrm{loc}}.
\end{equation*}

A positive answer would identify $W^{1/2,1}$ as a natural boundary between a region where good and bad propagation speeds may coexist and a region where uniform energy estimates are impossible. A negative answer would reveal the existence of an even more irregular regime in which oscillatory cancellations can still compensate for the mechanisms responsible for derivative loss.


\subsubsection*{\centering Acknowledgments}

Both authors are members of the Italian {\selectlanguage{italian}%
``Gruppo Nazionale per l'Analisi Matematica, la Probabilit\`{a} e le loro Applicazioni'' (GNAMPA) of the ``Istituto Nazionale di Alta Matematica'' (INdAM)}. 

The authors acknowledge the MIUR Excellence Department Project awarded to the Department of Mathematics, University of Pisa, CUP I57G22000700001.



\label{NumeroPagine}

\end{document}